\def\ds{\displaystyle}
\newtheorem{thm}{Theorem}[section]
\newtheorem{cor}[thm]{Corollary}
\newtheorem{lem}[thm]{Lemma}
\newtheorem{prop}[thm]{Proposition}
\theoremstyle{definition}
\newtheorem{defn}[thm]{Definition}
\newtheorem{rmk}[thm]{Remark}
\newtheorem{claim}[thm]{Claim}
\theoremstyle{definition}
\newtheorem{ex}[thm]{Example}
\theoremstyle{remark}
\newcommand{\sm}{\wedge}
\newcommand{\Tor}{{\rm Tor}}
\newcommand{\Top}{\textrm{\rm Top}}
\newcommand{\Mor}{\textrm{Mor}}
\newcommand{\Bcom}{B_{\rm com}}
\newcommand{\ol}[1]{\overline{#1}}
\newcommand{\goesto}{\mapsto}
\newcommand{\xmaps}{\xrightarrow}
\newcommand{\srm}[1]{\stackrel{#1}{\maps}}
\newcommand{\srt}[1]{\stackrel{#1}{\to}}
\newcommand{\e}{\emph}
\def\co{\colon\thinspace}
\DeclareMathOperator{\colim}{colim}
\newcommand{\R}{{\mathbb R}}
\newcommand{\C}{{\mathbb C}}
\newcommand{\Z}{{\mathbb Z}}
\newcommand{\X}{{\it X}}
\newcommand{\Susp}{\Sigma}
\renewcommand{\P}{\mathcal{P}}
\newcommand{\Sym}{\mathrm{Sym}}
\newcommand{\Map}{\mathrm{Map}}
\newcommand{\wt}{\widetilde}
\newcommand{\K}{{\mathcal{K}}}
\renewcommand{\H}{{\mathcal{H}}}
\newcommand{\Rep}{{\rm Rep}}
\newcommand{\SL}{{\rm SL}}
\newcommand{\GL}{{\rm GL}}
\newcommand{\U}{{\rm U}}
\newcommand{\SU}{{\rm SU}}
\newcommand{\SO}{{\rm SO}}
\newcommand{\Sp}{{\rm Sp}}
\newcommand{\Spin}{{\rm Spin}}
\newcommand{\FI}{{\rm FI}}
\newcommand{\FIs}{{\rm FI\#}}
\newcommand{\n}{{\rm \bf n}}
\newcommand{\p}{{\rm \bf p}}
\renewcommand{\k}{{\rm \bf k}}
\renewcommand{\r}{{\rm \bf r}}
\newcommand{\s}{{\rm \bf s}}
\newcommand{\m}{{\rm \bf m}}
\newcommand{\0}{{\rm \bf 0}}
\newcommand{\Aut}{{\rm Aut}}
\newcommand{\Ind}{{\rm Ind}}
\renewcommand{\C}{{\mathbb C}}
\newcommand{\Q}{{\mathbb Q}}
\newcommand{\bbN}{\mathbb{N}}
\newcommand{\bbR}{\mathbb{R}}
\newcommand{\bbZ}{\mathbb{Z}}
\newcommand{\bbQ}{\mathbb{Q}}
\newcommand{\Hom}{{\rm Hom}}
\newcommand{\Comm}{{\rm Comm}}
\newcommand{\leqs}{\leqslant}
\newcommand{\geqs}{\geqslant}
\newcommand{\heq}{\simeq}
\newcommand{\maps}{\longrightarrow}
\newcommand{\lmaps}{\longleftarrow}
\newcommand{\injects}{\hookrightarrow}
\newcommand{\homeo}{\cong}
\newcommand{\surjects}{\twoheadrightarrow}
\newcommand{\isom}{\cong}
\newcommand{\cross}{\times}
\def\ds{\displaystyle}
\title[Homological stability for commuting elements in Lie groups]{Homological stability for spaces of commuting elements in Lie groups}
\author{Daniel A. Ramras}
\address{Mathematical Sciences, Indiana University-Purdue University, Indianapolis, IN 46202}
\email{dramras@iupui.edu}
\author{Mentor Stafa}
\address{Department of Mathematics, Tulane University, New Orleans, LA 70118}
\email{mstafa@tulane.edu}
\date{\today}
\subjclass[2010]{Primary 57T10, 20C30; Secondary   57T35, 57T15, 55N91}
\keywords{representation space, representation stability, homological stability, Weyl group, $\FI_W$--module}
\thanks{D. Ramras was partially supported by the Simons Foundation (Collaboration Grant \#279007).}
\begin{document}

\begin{abstract}
In this paper we study homological stability for spaces $\Hom(\Z^n,G)$ of pairwise commuting $n$--tuples in a Lie group $G$. We prove that for each $n\geqs 1$, these spaces satisfy rational homological stability as $G$ ranges through any of the classical sequences of compact, connected Lie groups, or their complexifications. We prove similar results for rational equivariant homology, for character varieties, and for the infinite-dimensional analogues of these spaces, $\Comm(G)$ and $\Bcom G$, introduced by Cohen--Stafa and Adem--Cohen--Torres-Giese respectively. In addition, we show that the rational homology of the space of unordered commuting $n$--tuples in a fixed group $G$ stabilizes as $n$ increases.

Our proofs use the theory of representation stability -- in particular, the theory of $\FI_W$--modules developed by Church--Ellenberg--Farb and Wilson. In all of the these results, we obtain specific bounds on the stable range, and we show that the homology isomorphisms are induced by maps of spaces.

\end{abstract}

\maketitle

\tableofcontents

\section{Introduction}

Let $G$ be a compact, connected Lie group. The focus of this article is the space $\Hom(\Z^n,G)$ 
consisting of all group homomorphisms $\rho: \Z^n \to G$. 
The standard basis for $\Z^n$ defines an embedding of $\Hom(\Z^n, G)\injects G^n$, and  
$\Hom(\Z^n, G)$ has the resulting subspace topology.
 This space, then, consists of ordered commuting $n$--tuples in $G$.
We will consider various related spaces as well, such as  
the character variety, or representation space, $\Rep(\Z^n,G)=\Hom(\Z^n,G)/G$.

These spaces can also be interpreted as  
moduli spaces of flat connections on principal $G$--bundles over the $n$--torus,
as studied  by Witten~\cite{witten1998toroidal} and Kac--Smilga~\cite{kac2000Smilga}.
The cases of commuting pairs and triples were examined in detail in
the monograph by Borel, Friedman and Morgan~\cite{borel2002almost}. More recently, homological and homotopical properties of these spaces have been studied by a variety of authors, including Baird~\cite{baird2007cohomology}, Adem--Cohen~\cite{adem2007commuting}, Florentino--Lawton~\cite{florentino2014topology}, and Pettet--Souto~\cite{pettet2013souto}.

In three of the most classical cases, namely
$G = \U(r)$, $\SU(r)$, or $\Sp(r)$, the representation space $\Hom(\Z^n, G)$ is 
connected for all $n\geqs 1$. In fact, these are the only 
 semisimple compact connected Lie groups with this 
property~\cite{adem2007commuting,kac2000Smilga}. For instance 
Sjerve and Torres-Giese \cite{torres2008fundamental} show that 
 $\Hom(\Z^n, \SO(3))$ is disconnected for $n\geqs 2$.

The authors \cite{ramras2017hilbert} gave an explicit formula for 
the Poincar\'e series of the path component of the trivial representation 
$\Hom(\Z^n,G)_1$ using methods from \cite{cohen2016spaces}, and the second author \cite{stafa2017poincare}
gave a similar formula for the path component $\Rep(\Z^n,G)_1$:
\begin{equation}\label{eqn: Hom-PP}
P(\Hom(\Z^n,G)_1;q)=\frac{1}{|W|}  \left( \prod_{i=1}^r (1-q^{2d_i}) \right)
\left(\sum_{w\in W} \frac{ \det(1+qw)^n}{\det(1-q^2w)} \right),
\end{equation}
and
\begin{equation}\label{eqn: Rep-PP}
P(\Rep(\Z^n,G)_1;q)=\frac{1}{|W|} \sum_{w\in W} \det(1+qw)^n,
\end{equation}
where $T$ is a maximal torus, $W$ is the Weyl group, and the positive integers $d_1,\dots,d_r$ 
are the characteristic degrees of   $W$. (For complex reductive groups, the mixed Hodge structure of these character varieties was subsequently determined by Florentino--Silva~\cite{FS2017}.) 
Explicit computations based on these formulas indicated a stability pattern, for fixed $n$, as $G$ varies through one of the classical sequences of Lie groups (e.g. the unitary groups). The aim of this article is to prove that this phenomenon does in fact hold. We establish similar results for a wide range of spaces built from commuting tuples, summarized in Table~\ref{table: all sequences of spaces}.
These stability results can be seen as extending the well-known fact that the homology of $G$ itself stabilizes in each of the classical families of Lie groups.

In Section~\ref{sec: nil}, we extend our results to reductive Lie groups and to nilpotent discrete groups. In the case of homomorphism spaces, it is a theorem of Pettet and Souto~\cite{pettet2013souto} that if $G$ is the group of real or complex points of a (Zariski) connected, reductive algebraic group with maximal compact subgroup $K$, then $\Hom(\Z^n, G)$ deformation retracts to $\Hom(\bbZ^n, K)$. The corresponding result for $\Rep(\Z^n, G)$ was established by Florentino--Lawton~\cite{florentino2014topology} (note that when $G$ is reductive, $\Rep(\Z^n, G)$ should be interpreted as a GIT quotient), and generalizations to nilpotent groups discrete groups were obtained by Bergeron~\cite{bergeron2015topology}.
Moreover,
Bergeron and Silberman  showed in \cite{bergeron2016note} that when $G$ is compact and $\Gamma$ is nilpotent, the 
connected component of the identity in $\Hom(\Gamma, G)$ contains only abelian representations, and hence is the same as $\Hom(H_1 (\Gamma),G)_1$. 
These results allow us to extend most of the stability statements in the paper (Section~\ref{sec: nil}). In Section~\ref{sec: covers}, we show that the homology of the spaces considered here is invariant under finite coverings of Lie groups.

We approach  homological stability using the machinery of \e{representation stability}
in the sense of Church and Farb \cite{church2013representation}. In particular, we use J. Wilson's extension of this theory to the classical sequences of Weyl groups. 
Each of our  homological stability results is obtained by first proving representation stability for an associated sequence.
For instance, work of Baird identifies the homology of  $\Hom(\bbZ^n, G)_1$ as the fixed points of a certain Weyl group representation. 
We use representation  stability to control the behavior of this sequence of Weyl group representations as $G$ varies through one of the classical sequences of Lie groups, finding in particular that the ranks of the fixed-point subspaces (the isotypical components of the trivial representation) stabilize. Stability bounds are obtained using the theory of $\FIs$--modules introduced by Church--Ellenberg--Farb~\cite{church2015fi}, and Wilson's extension of this theory to Weyl groups of type B/C~\cite{Wilson-Math-Z}. 

We give a minimalist introduction to 
representation stability, using the language of $\FI_W$-- and $\FI_W\#$--modules,  
in Section \ref{FIW-modules}. We  show that if an $\FI_W$--module $V = \{V_i\}_{i\geqs 1}$ is finitely generated in stage $d$, then for $n\geqs d$,
there are canonical surjections $V_n^{W_n} \surjects V_{n+1}^{W_{n+1}}$ 
between the subspaces of invariants (Proposition~\ref{avg}). 
This statement has the following topological consequence:
if the degree--$k$ rational homology of an $\FI_W$--space $\{X_r\}_{r\geqs 1}$ is generated in stage $N$, then the maps $H_k (X_n/W_n) \to H_k (X_{n+1}/W_{n+1})$ are surjective for $n\geqs N$ (Theorem~\ref{quot-stab}).

The reader familiar with the theory of $\FIs$--modules may notice that the spaces $\Hom(\bbZ^n, G)$ \e{do not} have the structure of $\FIs$--spaces as $G$ varies. We exploit work of Baird~\cite{baird2007cohomology}, which shows that their \e{equivariant} homology does have this extended structure. This allows us to deduce stability bounds for equivariant homology (Section~\ref{sec: equivariant stability}), from which we derive bounds on ordinary homological stability (Section~\ref{sec: stable range for Hom}) using a comparison of Eilenberg--Moore spectral sequences.

 It is worth mentioning that homological stability results have a long history, going back to work of Arnold 
\cite{arnol1969cohomology} and Cohen \cite{cohen1972thesis} on braid groups, and work of Quillen on general linear groups~\cite{Quillen}. The methods we use here are quite different.

Before  describing the main results in this article, we make precise our terminology regarding homological stability.
A sequence of topological spaces $\{X_n\}_{n\geqslant 0}$ with maps
 $\phi_{n}: X_n \to X_{n+1} $ between them
is {\it strongly rationally homologically stable}  if for each $k\geqs 0$,
there exists   $N = N(k)$ such that
$$(\phi_n)_*\co H_k (X_n)\maps H_k (X_{n+1})$$
is an isomorphism for  $n\geqs N$. Since our main results are all for rational homology, we will often drop the coefficient group $\Q$ from the notation. While we focus on rational homology, the corresponding statements for rational cohomology are equivalent.

\subsection{Ordered commuting tuples}
Let $\{G_r\}_{r\geqs 1}$ denote one of the classical  families of 
compact, connected Lie groups -- namely $G_r = \SU(r)$, $\U(r)$, $\SO(2r+1)$,  $\Sp(r)$, or $\SO(2r)$ -- or the complexifications thereof. We have standard inclusions $G_r\injects G_{r+1}$ (see Section~\ref{Lie}).
 
\begin{thm}
Fix $k, n\geqs 0$ and let $\{G_r\}_{r\geqs 1}$ be one of the classical sequences of Lie groups listed above.
Then the standard inclusions $G_r\injects G_{r+1}$ induce isomorphisms 
$$H_k(\Rep(\Z^n,G_r)_1; \bbQ)\srm{\isom} H_k(\Rep(\Z^n,G_{r+1})_1; \bbQ)$$
for $r\geqslant k$, and
$$H_k(\Hom(\Z^n,G_r)_1; \bbQ)\srm{\isom} H_k(\Hom(\Z^n,G_{r+1})_1; \bbQ)$$
for $r  - \lfloor \sqrt{r} \rfloor \geqs k$, where $\lfloor \sqrt{r} \rfloor$ is the floor function.
\end{thm}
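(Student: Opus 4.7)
The plan is to deduce both stability statements from the representation stability machinery developed earlier in the paper (especially Proposition~\ref{avg} and Theorem~\ref{quot-stab}) applied to the rational cohomology of these spaces, as described by Baird.

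For the $\Rep$ case, Baird's theorem gives an isomorphism
\[
H^*(\Rep(\Z^n, G_r)_1; \bbQ) \cong H^*(T_r^n; \bbQ)^{W_r},
\]
where $T_r \subset G_r$ is a maximal torus with Weyl group $W_r$. Since $H^*(T_r^n; \bbQ) \cong \Lambda^*((\bbQ^r)^{\oplus n})$, a class of cohomological degree $k$ is supported on at most $k$ of the $r$ toral coordinates. This should exhibit $\{T_r^n\}_r$ as an $\FI_W$--space (of the appropriate type: $A$ for $\SU, \U$, and $B/C$ for $\Sp, \SO$) whose degree-$k$ cohomology is finitely generated as an $\FI_W$--module in stage at most $k$. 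Theorem~\ref{quot-stab} then produces canonical surjections $H^k(\Rep(\Z^n,G_r)_1) \to H^k(\Rep(\Z^n,G_{r+1})_1)$ for $r \geq k$. To upgrade surjectivity to bijectivity, I would compare Betti numbers directly via the Poincar\'e series~\eqref{eqn: Rep-PP}: expanding $\det(1 + qw)^n$ by conjugacy class of $w$, one verifies that each class involving at most $k$ non-fixed coordinates contributes the same amount to the degree-$k$ coefficient once $r \geq k$.

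For the $\Hom$ case, Baird's corresponding identification
\[
H^*(\Hom(\Z^n, G_r)_1; \bbQ) \cong \bigl[H^*(G_r/T_r; \bbQ) \otimes H^*(T_r^n; \bbQ)\bigr]^{W_r}
\]
introduces the factor $H^*(G_r/T_r; \bbQ)$, whose total dimension is $|W_r|$. As noted in the introduction, the sequence $\{\Hom(\Z^n, G_r)\}$ does not carry a natural $\FI_W\#$-space structure, so one cannot apply the $\FI_W$-machinery to $\Hom(\Z^n,G_r)_1$ directly. Following the strategy outlined in the introduction, I would instead first establish stability for the equivariant rational cohomology $H^*_{G_r}(\Hom(\Z^n,G_r)_1)$, which by Baird's work admits an $\FI_W$--sharp module structure and is therefore finitely generated in a stage linear in $k$. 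Ordinary stability is then recovered by comparing the Eilenberg--Moore spectral sequences of the Borel fibrations $\Hom(\Z^n, G_r)_1 \to \Hom(\Z^n, G_r)_1 \times_{G_r} EG_r \to BG_r$ across the inclusion $G_r \hookrightarrow G_{r+1}$.

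The main obstacle is pinning down the specific stable range $r - \lfloor \sqrt{r}\rfloor \geq k$ in the $\Hom$ case. The square-root correction should arise from the polynomial structure of $H^*(BG_r; \bbQ)$: the base of the Borel fibration has cohomology a polynomial algebra whose generators have strictly increasing degrees $2d_1 < 2d_2 < \cdots < 2d_r$, and a monomial of total degree at most $2k$ involves at most $O(\sqrt{k})$ generators (since $d_1 + d_2 + \cdots + d_j \geq j(j+1)/2$). Thus only the first $\sim \sqrt{r}$ generators of $H^*(BG_r)$ interact with degree-$k$ classes in the fiber, and the Eilenberg--Moore comparison requires equivariant stability up through this window. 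Converting this heuristic into a precise spectral sequence bound, and checking that the loss of $\lfloor\sqrt{r}\rfloor$ from the equivariant stable range is exactly what the comparison consumes, should be the main technical task; the $\Rep$ case, by contrast, is essentially a direct application of Theorem~\ref{quot-stab} plus a Poincar\'e series computation.
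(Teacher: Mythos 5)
Your overall architecture matches the paper's: $\Rep$ via the identification with $T_r^n/W_r$ plus the $\FI_W$ machinery, and $\Hom$ by first proving equivariant stability and then running an Eilenberg--Moore comparison across $G_r \injects G_{r+1}$, with the $\sqrt{r}$ correction tracing back to the growing degrees of the polynomial generators of $H^*(BG_r;\bbQ)$. However, you diverge from the paper, and leave some real gaps, at the following points.

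For the $\Rep$ case, you use Theorem~\ref{quot-stab} only to get surjectivity and then propose to establish injectivity by comparing Betti numbers via the Poincar\'e series~(\ref{eqn: Rep-PP}). The paper takes a cleaner route: it shows (Proposition~\ref{prop: T is FI}) that, except for $\SU(r)$, the $\FI_W$--space $r \mapsto T_r^n$ actually extends to $\FI_A\#$ (type A) or $\FI_{BC}\#$ (types B/C), or is the restriction of an $\FI_{BC}\#$--space (type D), and the sharp stable range $r \geqs k$ then comes for free from Proposition~\ref{prop: stability isotypical component} and the final clause of Theorem~\ref{quot-stab}, with no Poincar\'e series needed. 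Your Betti-number route is not obviously wrong, but it is not ``essentially a direct application'': expanding $\tfrac{1}{|W_r|}\sum_{w}\det(1+qw)^n$ by conjugacy class requires tracking both the degree-$k$ coefficient of $\det(1+qw)^n$ (which factors as $\det(1+qw')^n(1+q)^{n(r-k)}$ with an $r$-dependent second factor) and the normalization by $|W_r|$, and showing the two effects cancel. That combinatorial verification is real work, not carried out here. You also gloss over two case distinctions the paper handles explicitly: $\SO(2r)$ is type $D$, not $B/C$, and there is no $\FI_D\#$ category, so one must instead restrict $\FI_{BC}\#$--modules (Corollary~\ref{stable-range-D}); and for $G_r = \SU(r)$ the torus sequence does \emph{not} carry the $\FI\#$ structure, so the paper proves injectivity by a separate comparison with $\U(r)$ via the Serre spectral sequence of $T(\SU(r)) \to T(\U(r)) \to S^1$.

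For the $\Hom$ case, your plan is the correct one, and your heuristic for the $\sqrt{r}$ correction correctly anticipates Lemma~\ref{grading}, namely that the Koszul resolution of $\bbQ$ over $H^*(BG_r;\bbQ)$ satisfies $(F_p)_q = 0$ for $q < p(p+1)$, so that the line of total cohomological degree $k$ meets the vanishing curve at $p = \sqrt{k}$. But you do not supply, and the paper's argument actually needs, two further ingredients. First, one must show that the equivariant stability established in Theorem~\ref{thm: equiv.coh. stability Hom, Comm,Bcom} (for the range $r \geqs k$) propagates through a base change of fibration $BG_r \to BG_{r+1}$: this is Lemma~\ref{lem: EM}, which exploits the structure of $H^*(BG_{r+1}) \to H^*(BG_r)$ as a regular quotient (with a special two-step or three-step resolution in type D because of the Euler class $y_{2r}$). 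Second, the passage from the $E_2$ page to the abutment loses one degree of stability per page that carries a possible differential, and the role of the vanishing curve is to guarantee the differentials shut off after $\lfloor \sqrt{r} \rfloor$ pages (since a $d_m$-differential with $m > \sqrt{r}$ moves out of the first quadrant, or lands in the zero region). So the ``loss'' is not a static degree count on $E_2$ but an inductive accounting across pages, and that induction is the content of Claim~\ref{claim: stab-reg} and its sequel. Your proposal identifies the right shape of the estimate but would need to add exactly this page-by-page bookkeeping to actually land on $r - \lfloor\sqrt{r}\rfloor \geqs k$.
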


These results are proved in  Theorems~\ref{stability-wrt-r-Rep} and~\ref{Hom-bound}, and Corollary~\ref{cor: nil complex} respectively. We prove similar results for $G_r$--equivariant homology in Section~\ref{sec: equivariant stability}.  
 This result, and the others discussed below, also apply to the  groups $\Spin(r)$ (Example~\ref{ex: Spin}), and to the projectivizations of the above sequences (Example~\ref{ex: proj}), although for the projective groups there are no maps inducing the  isomorphisms.

\subsection{Unordered commuting tuples}
Let $G$ be a compact and connected Lie group (not necessarily of classical type).
The  symmetric group $S_n$ acts on $\Z^n$ by permuting the standard basis, and hence acts on
 $\Hom(\Z^n,G)$ and $\Rep(\Z^n,G)$ by permuting the entries of commuting $n$--tuples.
The space of
\textit{unordered pairwise commuting $n$--tuples} in $G$ is the quotient $\Hom(\Z^n,G)/S_n$, and the corresponding moduli space is 
$\Rep(\Z^n,G)/S_n.$ 
We have natural inclusions
$$\Hom(\Z^n,G)/S_n \injects \Hom(\Z^{n+1},G)/S_{n+1}$$
sending $[(g_1, \ldots, g_n)]$ to $[(g_1, \ldots, g_n, 1)]$, where $1\in G$ is the identity, and similarly for unordered representation spaces.
 
\begin{thm}\label{unordered-thm}
Let $G$ be the group of 
complex or real points of a reductive linear algebraic group,
defined over $\R$ in the latter case. Then for each $k\geqs 0$ the 
sequences
$$n\goesto H_k(\Hom(\Z^n,G)_1; \bbQ) \textrm{ and } n\goesto H_k(\Rep(\Z^n,G)_1; \bbQ)$$
satisfy uniform representation stability, with stable range $n\geqs 2k$. Moreover, the
natural maps
$$H_k(\Hom(\Z^n,G)_1/S_{n}; \bbQ)\maps H_k(\Hom(\Z^{n+1},G)_1/S_{n+1}; \bbQ)$$
and
$$H_k(\Rep(\Z^n,G)_1/S_{n}; \bbQ)\maps H_k(\Rep(\Z^{n+1},G)_1/S_{n+1}; \bbQ)$$
are isomorphisms for $n\geqs k$.
\end{thm}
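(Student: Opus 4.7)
The plan is to exhibit both sequences as $\FIs$--modules finitely generated in degree $k$, and then invoke the general theory of Church--Ellenberg--Farb together with Proposition~\ref{avg}. As a preliminary reduction, I would use the Pettet--Souto deformation retraction $\Hom(\Z^n,G) \simeq \Hom(\Z^n,K)$ (and the Florentino--Lawton analogue for $\Rep$), where $K \subseteq G$ is a maximal compact subgroup. Both retractions commute with the natural operations of inserting a trivial entry and forgetting a coordinate, so the $\FIs$--structure transfers between $G$ and $K$, and it suffices to treat the case of compact connected $G$.

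For $G$ compact and connected with maximal torus $T$ of rank $r$ and Weyl group $W$, Baird's theorem identifies
$$H^*(\Hom(\Z^n,G)_1;\Q) \isom \bigl(H^*(G/T;\Q) \otimes H^*(T^n;\Q)\bigr)^W \quad \text{and} \quad H^*(\Rep(\Z^n,G)_1;\Q) \isom H^*(T^n;\Q)^W,$$
with $W$ acting diagonally. Since $H^*(T^n;\Q) \isom \Lambda^*(\Q^r \otimes \Q^n)$, its degree--$k$ summand $\Lambda^k(\Q^r \otimes \Q^n)$ is a standard $\FIs$--module finitely generated in degree $k$: a generator uses at most $k$ slots of the outer index $n$. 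The diagonal $W$--action commutes with the $\FIs$--action on the outer index, so $W$--invariants preserve finite generation, and tensoring with the fixed finite-dimensional graded vector space $H^*(G/T;\Q)$ preserves finite generation in degree $\leqs k$ as well.

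Applying the Church--Ellenberg--Farb theory of $\FIs$--modules then yields uniform representation stability with stable range $n \geqs 2k$, establishing the first half of the theorem. For the second half, since $S_n$ is finite and we work rationally, $H_k(\Hom(\Z^n,G)_1/S_n;\Q) \isom H_k(\Hom(\Z^n,G)_1;\Q)^{S_n}$, and similarly for $\Rep$. Proposition~\ref{avg} supplies canonical surjections on $S_n$--invariants for $n \geqs k$, and the richer $\FIs$--structure promotes these surjections to isomorphisms, giving the claimed range $n \geqs k$ for quotient stability.

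The main obstacle will be justifying the $\FIs$--structure itself on $\Hom(\Z^n,G)_1$ and $\Rep(\Z^n,G)_1$. The FI--structure from inserting identity entries is immediate, but the coordinate-forgetting maps required for the $\FIs$--structure must be shown to preserve the identity component. This relies on the standard description of $\Hom(\Z^n,G)_1$ as the image of $G \times T^n$ under simultaneous conjugation, so that projections of tuples conjugate into $T^n$ remain conjugate into $T^m$. A second delicate point is verifying that $\FIs$--finite generation in degree $k$ survives the $W$--invariants functor, which uses that the $W$--action on $H^*(T;\Q)$ commutes with the $\FIs$--action on the outer tensor slots.
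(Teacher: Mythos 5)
Your proposal follows essentially the same route as the paper: reduce to compact $G$ via the deformation retraction, use Baird's theorem to rewrite the homology as $W$--invariants of $H_*(G/T\times T^n)$, observe that the $\FIs$--module $n\mapsto H_k(T^n)$ is generated in stage $k$, pass to $W$--invariants, and then invoke the finite-generation consequences for $\FIs$--modules together with the averaging statement for quotients. This is exactly the architecture of the paper's Theorem~\ref{thm: stability for fixed G} and Corollary~\ref{cor: stability in n nil}.

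One place where your justification is thinner than it should be: you assert that ``the diagonal $W$--action commutes with the $\FIs$--action on the outer index, so $W$--invariants preserve finite generation.'' Commutativity only ensures that the invariants form an $\FIs$--submodule; it does not by itself transport a generating set. The missing ingredient is the averaging operator: given $x \in H_k(T^n)^W$ written as a $\Q$--linear combination of images $\phi_*(y_i)$ with $y_i \in H_k(T^k)$, one replaces each $y_i$ with its $W$--average $\overline{y_i} \in H_k(T^k)^W$ and uses that $\phi_*$ is $W$--equivariant to conclude $x = \sum_i \phi_*(\overline{y_i})$. This is precisely Lemma~\ref{W-invt-fin-gen} in the paper and needs charactersitic-zero coefficients. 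You correctly flag this as a delicate point, but the commutativity observation alone does not close it.

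Two smaller remarks. First, for the reduction to the compact case, what you actually need is not that the retraction commutes with the stabilization maps, but only that the \emph{inclusion} $\Hom(\Z^n,K)_1 \hookrightarrow \Hom(\Z^n,G)_1$ is natural in $n$ (it is, being induced by the homomorphism $K\hookrightarrow G$) and a levelwise homology isomorphism (Pettet--Souto); these two facts give an isomorphism of $\FIs$--modules. Second, your identification $H_k(\Hom(\Z^n,G)_1/S_n;\Q)\isom H_k(\Hom(\Z^n,G)_1;\Q)^{S_n}$ relies on the $S_n$--action being ``good'' in the sense of Proposition~\ref{coh-quot}; this should be noted, though it is straightforward here.

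Finally, for the quotient range $n\geqs k$, Proposition~\ref{avg} alone gives surjectivity; the upgrade to isomorphism is Proposition~\ref{prop: stability isotypical component} (the classification of $\FIs$--modules), which is indeed what you gesture at when you say the $\FIs$--structure promotes the surjection to an isomorphism.
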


This is proven in Theorem~\ref{thm: stability for fixed G} and Corollary~\ref{cor: stability in n nil}, while an analogue for equivariant homology appears in Theorem~\ref{thm: equiv.coh. stability Hom mod S_n}. We note that these results bear some similarity to homological stability for configuration spaces, where one stabilizes with respect to the size of the configurations. It would be interesting to know whether stability holds for \e{configurations} of commuting elements in $G$; that is, commuting tuples in which repetition is not allowed.

\subsection{Symmetric products and representation stability}

The above results are closely related to a general homological stability result for symmetric products. This stability result goes back at least to Steenrod~\cite{steenrod}; here we use representation stability to give a simple proof of rational homological stability for these spaces. (Steenrod in fact proved stability for \e{integral} homology, and this will allow us to deduce integral stability in several of the situations we study.)

Recall that the $n$--th symmetric product of a space $X$ is the quotient space
$\Sym^n X = X^n/S_n$, where the symmetric group $S_n$ acts
by permuting the factors. A basepoint in $X$ determines maps $\Sym^n X \injects \Sym^{n+1} (X)$ (adding the basepoint in the last factor), and the colimit of this sequence is $\Sym^\infty (X)$.
 


In general, if $X$ is path connected and $H_* (X; \bbQ)$ is finitely generated  in each degree, then we show (Proposition \ref{symm-prod}) that for each $k\geqs 0$, 
 $H_k (X^n; \bbQ)$ 
 forms a uniformly representation stable
sequence of $S_n$-representations. 
 %
In Corollary~\ref{symm-prod-cor}, we explain how homological stability for symmetric products then follows from the general theory. For spaces equipped with an involution, we obtain a similar representation stability result in Proposition~\ref{signed-symm-prod}.


In addition to the representation stability result for products, we also use work of Baird \cite{baird2007cohomology} to prove various related representation stability results, outlined in Table \ref{table: rep stability results}, that help us prove the homological stability results in Table \ref{table: all sequences of spaces}.

\subsection{Infinite-dimensional analogues}

We also study two constructions that combine, in different ways, the spaces  $\Hom(\Z^n,G)$ and $\Rep(\Z^n,G)$ for varying $n$.
The space $\Bcom G$, known as the \e{classifying space for commutativity in $G$}, is the
geometric realization of the simplicial space $\Hom(\Z^\bullet,G)$. Introduced by Adem--Cohen--Torres-Giese~\cite{adem2012commuting}, this space has been studied by a variety of authors~\cite{adem2015gomez, adem2017gomezlindtillman, AGV, gritschacher2018spectrum}. 
When $G=\U$ is the infinite unitary group, $\Bcom U$ represents \e{commutative complex $K$--theory}.  
The space $\Comm(G)$ is a subspace of the James reduced product $J(G)$, and 
$\Comm(G)/G$  is its image in $J(G)/G$. These spaces played an important role in the calculations of Poincar\'e series discussed above. We show that   these constructions
satisfy strong rational homological stability for all of the above sequences of Lie groups (Theorem~\ref{Hom-bound} and Corollary~\ref{cor: nil stable}). We note that the rational cohomology rings of $\Bcom \U$, $\Bcom \SU$ and $\Bcom \Sp$ were calculated
in \cite{adem2015gomez}, and each is polynomial.

 
\

In conclusion,  our main results   can 
be summarized by stating that the sequences of rational 
homology groups in Table \ref{table: rep stability results} (below) enjoy representation stability, while the sequences in Table \ref{table: all sequences of spaces} 
stabilize.  In these tables, the sequence $G_r$ can be any of the sequences discussed above, with maximal torus $T_r < G_r$; in fact, our results also extend to certain sequences of real reductive groups (Section~\ref{sec: nil}). Additionally, in Section~\ref{sec: nil} we  extend our results to finitely generated nilpotent groups in place of $\Z^n$.
We do not expect the bounds in Table \ref{table: all sequences of spaces} to be optimal;
in fact,   the bounds  $r- \lfloor \sqrt{r} \rfloor \geqs k$ can be improved to $r- \lfloor \sqrt{r} \rfloor +1\geqs k$ (except for $G_1 = \SO(2)$ --  see Remark~\ref{rmk: +1}).
Note that these bounds imply slightly weaker linear bounds.

\begin{table}[ht!]
\centering

\renewcommand{\arraystretch}{1.5}

\begin{tabular}{llr}
\hline
\textit{Module} & \textit{Groups acting} & \textit{Stable range}\\
\hline
\hline

$H_k(X^n)$ 						& $S_n$, permuting coordinates   & $n\geqslant 2k$\\
\hline
$H_k(T_r)$ 						& $W_r$, acting diagonally   & $r\geqslant 2k$*\\
$H_k(G_r/T_r)$ 					& $W_r$, acting by translation 			&   \\
$H_k(BT_r)$ 		& $W_r$, acting by functoriality 			& $r\geqs 2k$*\\
$H_k(J(T_r))$ 		& $W_r$, acting by functoriality  			 & $r\geqs 2k$* \\
\hline
\hline
$H_k(\Hom(\Z^n,G)_1)$ 		& $S_n$, permuting coordinates & $n\geqslant 2k$\\
$H_k(\Rep(\Z^n,G)_1)$ 		& $S_n$, permuting coordinates & $n\geqslant 2k$\\
\hline

\

\end{tabular}
\caption{Representation stability bounds.\\
*These bounds are established in all cases \e{except} $G_r = SU(r)$.}
\label{table: rep stability results}

\end{table}


\

\begin{table}[ht!]
\centering

\renewcommand{\arraystretch}{1.5}

\begin{tabular}{llr}
\hline
\textit{Homology sequence} & \textit{Space description} & \textit{Stable range}\\
\hline
\hline

$H_k(\Sym^r(X))$ 				& $r$--fold symmetric product of $X$    	& $r\geqslant k$\\
\hline
$H_k(\Hom(\Z^n,G_r)_1)$ 		& ordered commuting $n$--tuples in $G_r$ &  $r  - \lfloor \sqrt{r} \rfloor \geqs k$\\
$H_k(\Rep(\Z^n,G_r)_1)$ 		& $G_r$--character variety of $\Z^n$& $r\geqslant k$\\
$H_k(\Bcom (G_r)_1)$ 		& classifying space $|\Hom(\Z^\bullet,G_r)_1|$ & $r  - \lfloor \sqrt{r} \rfloor 																	\geqs k$  \\
$H_k(\Comm(G_r)_1)$ 			& James-type construction &  $r  - \lfloor \sqrt{r} \rfloor\geqs k$ \\
$H_k(\Bcom (G_r)_1/G_r)$ 	& classifying space $|\Hom(\Z^\bullet,G_r)_1/G_r|$ & $r\geqslant k$\\
$H_k(\Comm(G_r)_1/G_r)$ 		& James-type construction &  $r \geqs k$ \\
\hline
$H_k^{G_r}(\Hom(\Z^n,G_r)_1)$ 		& ordered commuting $n$--tuples in $G_r$ & $r\geqslant k$\\
$H_k^{G_r}(\Comm(G_r)_1)$ 			& James-type construction  & $r\geqslant k$\\
$H_k^{G_r}(\Bcom (G_r)_1)$ 		& classifying space $|\Hom(\Z^\bullet,G_r)_1|$ & $r\geqslant k$\\
\hline
\hline
$H_k(\Hom(\Z^n,G)_1/S_n)$ 		& unordered commuting $n$--tuples in $G$ & $n\geqslant k$\\
$H_k(\Rep(\Z^n,G)_1/S_n)$ 		& unordered $G$--character variety of $\Z^n$ & $n\geqslant k$\\
$H_k^G(\Hom(\Z^n,G)_1/S_n)$ 		& unordered commuting $n$--tuples in $G$ & $n\geqslant k$\\
\hline

\

\end{tabular}
\caption{Stability bounds for rational (equivariant) homology groups.}
\label{table: all sequences of spaces}

\end{table}

\newpage
 
\begin{rmk}\label{rmk: SO} 
While our methods naturally lead us to divide the special orthogonal groups into two sequences (and the stability bounds are most easily stated in this way),  the entire sequence $\{\Hom(\Z^n, \SO(m))\}_{m\geqs 2}$ in fact satisfies strong homological stability. 
The stabilization maps are induced by block sum with the identity matrix, and 
in the stable range, composing any two adjacent maps in the sequence
\begin{eqnarray*}H_k (\Hom(\Z^n, \SO(m-1))_1)\maps H_k (\Hom(\Z^n, \SO(m))_1)\qquad \qquad\qquad \qquad\\
\qquad \qquad \qquad \maps H_k (\Hom(\Z^n, \SO(m+1))_1)\maps H_k (\Hom(\Z^n, \SO(m+2))_1)
\end{eqnarray*}
yields an isomorphism; hence each of these maps is in fact an isomorphism. 
Similar remarks apply to the other functors considered in the paper, as well as to the complexifications $\SO(2m, \C)$ and to the Spin groups.
\end{rmk}

\subsection*{Acknowledgments} 
The authors would like to thank  Jenny Wilson and John Wiltshire-Gordon for helpful conversations regarding representation stability, Tom Baird for suggesting the strategy in Lemma~\ref{lem: translation=conjugation}, Sean Lawton for helpful conversations about Lie theory, Fred Cohen for his feedback on a technical aspect, Jeremy Miller for telling us about Steenrod's result \cite{steenrod}, Bernardo Villarreal for comments on the first version, and Sarkhan Badirli for his assistance with Matlab computations. We also thank Mahir Can and Soumya Banerjee for asking about stability for equivariant homology. Finally, we thank the referees, whose comments helped to improve the exposition.

\section{Homology of commuting tuples}\label{Hom}

In this section we review work of Baird~\cite{baird2007cohomology} on the (co)homology of the spaces $\Hom(\Z^n, G)$, and explain how Baird's results interact with homomorphisms between Lie groups.

For a topological group $G$ and a finitely generated discrete group $\pi$, a group
representation $\rho : \pi \to G$ is completely determined by its image on a set of generators
$g_1,\dots,g_n \in \pi.$ That is, the association of $\rho$ with the $n$--tuple
$(\rho(g_1),\dots,\rho(g_n)) \in G^n$ gives an inclusion 
$\Hom(\pi,G) \injects G^n$, and the subspace topology on $\Hom(\pi, G)$ is independent of the choice of generating set (see \cite[\S 2]{villarreal2017cosimplicial} for a direct proof). The group $G$ acts by conjugation on $\Hom(\pi,G)$, and the quotient space is denoted by $\Rep(\pi, G) = \Hom(\pi,G)/G$.

\subsection{The conjugation map}
 
Throughout this section, $G$ will be a compact and connected Lie group, $T\leqs G$ a maximal torus, and $W = N(T)/T$ its Weyl group. 
Work of Baird~\cite{baird2007cohomology} allows us to describe the homology of the identity components
$\Hom(\Z^n, G)_1$ and $\Rep(\Z^n, G)_1$ (and their unordered versions) as Weyl group invariants in the homology of some related spaces, and we will explain how various maps between spaces of commuting elements behave on homology.

 The conjugation map $G \times T \to G$  given by $(g,t)\mapsto gtg^{-1}$
defines a surjection onto $G$. Baird~\cite{baird2007cohomology} generalized this to a map
\begin{align*}\label{eqn: conj. GxTn map to Hom}
\begin{split}
G\times T^n & \to \Hom(\Z^n,G) \\
(g,t_1,\dots,t_n) &\mapsto (gt_1 g^{-1},\dots,gt_n g^{-1}).
\end{split}
\end{align*}
This map is certainly not a surjection in general (since its domain is always path connected). However, the image of
this map is precisely the path component of the trivial representation $\Hom(\Z^n,G)_1$,
as shown by Baird. 
This map factors through the quotient by the normalizer of $T$ to give a map
$$
G \times_{NT} T^n \to \Hom(\Z^n,G)_1,
$$
where $NT$ acts by right multiplication on $G$ and diagonally by 
conjugation on $T^n$. Moreover, the conjugation action of $T\leqs NT$  on $T^n$ is trivial, so we have a homeomorphism $G \times_{NT} T^n \srm{\isom} G/T \times_{W} T^n$, and we obtain a map
\begin{equation}\label{phi}
\phi = \phi_n \co G/T\times_W T^n \to \Hom(\Z^n,G)_1,
\end{equation}
which we call the \e{conjugation map}.
The stability properties of the spaces mentioned in the introduction 
are studied mainly via this map and others derived from it.

\begin{thm}[Baird] \label{Baird}
The map $\phi$ induces an isomorphism
$$H_* (G/T \times_{W} T^n; \bbQ)  \srm{\isom}  H_*(\Hom(\Z^n,G)_1; \bbQ),$$
and torsion in 
$H_*(\Hom(\Z^n,G)_1; \Z)$ has order dividing $|W|$. 
\end{thm}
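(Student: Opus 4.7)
I would follow Baird's strategy and derive both claims — the rational isomorphism and the $|W|$--torsion bound — from a transfer analysis along the factorization
$$G/T \times T^n \xrightarrow{\pi} G/T \times_W T^n \xrightarrow{\phi} \Hom(\Z^n, G)_1,$$
where $\pi$ is the quotient by the diagonal $W$--action (right translation on $G/T$ and conjugation on $T^n$). The first step is to confirm that $\phi$ is surjective onto $\Hom(\Z^n, G)_1$. The image $G \cdot T^n$ is the continuous image of the connected compact space $G/T \times_W T^n$ and contains the trivial representation, hence is a compact, connected subset of $\Hom(\Z^n, G)$. A local conjugation argument — that any commuting tuple sufficiently close to a tuple in $T^n$ is itself simultaneously conjugate into $T$ — shows the image is also open in $\Hom(\Z^n, G)$, so it equals the component $\Hom(\Z^n, G)_1$.

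For the rational isomorphism, I would analyze the Leray spectral sequence of the proper map $\phi$ with $\Q$--coefficients. Over the regular locus in $T^n / W$, where the reflection subgroup of $W$ fixing $\vec t$ is trivial, $\phi$ is a local homeomorphism and the fiber is a single point. The non-regular fibers are orbit spaces of reflection subgroups of $W$ acting on centralizer quotients $Z_G(\vec t)/T$ (each itself a flag variety of a connected reductive subgroup of $G$), and after $W$--averaging (faithful rationally) their higher rational cohomology vanishes, so $R^i \phi_* \underline{\Q} = 0$ for $i > 0$ and $R^0 \phi_* \underline{\Q} = \underline{\Q}$. The Leray spectral sequence therefore degenerates at $E_2$ and gives an isomorphism $\phi_*: H_*(G/T \times_W T^n; \Q) \xrightarrow{\cong} H_*(\Hom(\Z^n, G)_1; \Q)$.

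For the integral torsion bound, the transfer $\tau$ associated to the finite cover $\pi$ satisfies $\pi_* \tau = |W| \cdot \mathrm{id}$ on $H_*(G/T \times_W T^n; \Z)$. Since $H_*(G/T; \Z)$ is torsion-free (by the even-dimensional Schubert cell decomposition) and $H_*(T^n; \Z)$ is free, the source $H_*(G/T \times T^n; \Z)$ is torsion-free, so any torsion in the quotient $H_*(G/T \times_W T^n; \Z)$ is annihilated by $|W|$. Running the integral Leray spectral sequence for $\phi$, the higher direct images $R^i \phi_* \underline{\Z}$ for $i>0$ are $|W|$--torsion sheaves (they vanish rationally by the previous step), and combining this with the rational isomorphism and a careful $E_\infty$--page analysis transports the $|W|$--torsion bound from $H_*(G/T \times_W T^n; \Z)$ to $H_*(\Hom(\Z^n, G)_1; \Z)$.

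The main technical obstacle is the fiber analysis in the second step. The non-regular strata of $T^n/W$ are indexed by reflection subgroups $W' \leqs W$ that stabilize sub-tori of $T^n$, and controlling the rational cohomology of the associated collapsed fibers $W \cdot \vec t / W'$ equivariantly requires careful bookkeeping of the combinatorics of centralizers and parabolic subgroups of $W$. In Baird's approach this is handled via an inductive stratification of $T^n$ compatible with the $W$--action, which is the combinatorial heart of the proof.
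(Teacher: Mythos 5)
The paper does not prove this theorem: it is stated with attribution to Baird~\cite{baird2007cohomology}, and the only added content is the remark that the cohomological statement Baird actually proves translates to homology via the Universal Coefficient Theorem (the theorem is even labeled ``(Baird)''). Your proposal, by contrast, attempts a full reconstruction of Baird's proof, so it is not comparable to what the paper does; it should instead be checked against Baird's actual argument, which it does not accurately reproduce.

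Baird does not run a Leray spectral sequence for $\phi$. His strategy goes through equivariant cohomology: he proves a ``Weyl integration formula'' (his Theorem~3.3), giving an isomorphism $H^*_G(X;\Q)\srm{\isom} H^*_T(X^T;\Q)^W$ whenever every isotropy group of the $G$--space $X$ contains a maximal torus, and then specializes to $X=\Hom(\Z^n,G)_1$ with the conjugation action, using the module structure over $H^*(BG;\Q)$ and the Serre spectral sequence of $X\to X_{hG}\to BG$ to pass from the equivariant to the non-equivariant statement. There is no stratification of $T^n$ by reflection subgroups in his argument.

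Beyond that, your sketch has real gaps even as a stand-alone proof attempt. (1) The claimed vanishing $R^i\phi_*\underline\Q=0$ for $i>0$ amounts to saying the fibers of $\phi$ --- which are the spaces of maximal tori containing a given commuting tuple, i.e. quotients of $Z_G(\vec t)/T'$ by the Weyl group of $Z_G(\vec t)$ --- are rationally acyclic. This is true when $Z_G(\vec t)$ is connected, but centralizers of commuting tuples in compact groups can be disconnected (that disconnectedness is exactly the source of extra components of $\Hom(\Z^n,G)$), and you give no argument controlling the component group, nor a proof that the fiberwise acyclicity assembles into the sheaf-level statement you need. (2) The surjectivity of $\phi$ onto $\Hom(\Z^n,G)_1$ is asserted via an ``openness by local conjugation'' claim that is itself a nontrivial statement about the local structure of the (singular) variety $\Hom(\Z^n,G)$ near $T^n$; this needs a proof, not a gesture. (3) The torsion bound on $H_*(\Hom(\Z^n,G)_1;\Z)$ does not follow from the torsion bound on $H_*(G/T\times_W T^n;\Z)$ via ``a careful $E_\infty$--page analysis'': $\phi$ is not a finite cover, so there is no transfer for $\phi$, and a rational isomorphism between two spaces says nothing by itself about their integral torsion. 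Baird's torsion bound comes from a different mechanism (a map $G\times T^n\to\Hom(\Z^n,G)_1$ of ``degree $|W|$'' in an appropriate sense), and the step you label ``combining...transports the bound'' is precisely where the work lies.
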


Baird stated his result in cohomology, but by the Universal Coefficient Theorem, the homological and cohomological versions with rational coefficients are equivalent. 

One can use the map (\ref{phi}) to give an analogous model for the identity component $\Rep(\Z^n,G)_1$. The maps
\begin{equation}
\label{in}   T^n = \Rep(\bbZ^n, T) \xmaps{i=i_n} \Rep(\Z^n,G)_1
\end{equation}
induced by the inclusion $T\injects G$  are invariant with respect to the diagonal conjugation action of $W$ on $T^n$, and hence induce maps
\begin{equation}  
\label{j}  T^n/W \to \Rep(\Z^n,G)_1.
\end{equation}

\begin{thm}\label{Stafa}
The map (\ref{j}) is a homeomorphism. 
Consequently,
we have an isomorphism
$$H_* (T^n/W; \bbQ)  \srm{\isom}   H_* (\Rep(\Z^n,G)_1; \bbQ).$$ 
\end{thm}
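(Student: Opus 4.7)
The plan is to exhibit the map in~(\ref{j}) as a continuous bijection from a compact space to a Hausdorff space, from which the homeomorphism conclusion follows automatically. Continuity is immediate from the construction; compactness of $T^n/W$ is clear since $T^n$ is compact and $W$ is finite; and $\Rep(\Z^n,G)_1$ is Hausdorff as the quotient of the Hausdorff space $\Hom(\Z^n,G)_1$ by the continuous action of the compact group $G$. The issue is thus to establish bijectivity.

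For surjectivity, I would appeal to Baird's conjugation map $\phi_n$ from~(\ref{phi}), which is $G$-equivariant with respect to left translation on the $G/T$ factor and is surjective onto $\Hom(\Z^n,G)_1$ by Theorem~\ref{Baird}. Passing to $G$-orbits collapses $G/T$ to a point, so the $G$-quotient of $G/T\times_W T^n$ is canonically identified with $T^n/W$; the induced surjection $T^n/W \surjects \Rep(\Z^n,G)_1$ is readily seen to coincide with~(\ref{j}).

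The main obstacle is injectivity, which amounts to the following claim: if $t=(t_1,\dots,t_n)$ and $s=(s_1,\dots,s_n)$ in $T^n$ satisfy $s_i = g t_i g^{-1}$ for some fixed $g\in G$ and all $i$, then the same relations hold with $g$ replaced by a lift of an element of $W$. I would prove this by reduction to the classical conjugacy of maximal tori in a compact connected Lie group. Let $A = \ol{\la t_1,\dots,t_n\ra} \leqs T$, and let $Z$ denote the identity component of the centralizer $Z_G(A)$. Then $T\subseteq Z$ because $T$ is connected, abelian, and contains $A$; likewise $g^{-1}Tg\subseteq Z$, because $T$ centralizes $gAg^{-1}\subseteq T$. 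Hence $T$ and $g^{-1}Tg$ are both maximal tori of the compact connected Lie group $Z$, so there exists $c\in Z$ with $cTc^{-1} = g^{-1}Tg$. Setting $w = gc\in N(T)$ and using that $c$ centralizes each $t_i\in A$, one computes $w t_i w^{-1} = g(c t_i c^{-1})g^{-1} = g t_i g^{-1} = s_i$ for all $i$, as desired. The rational homology isomorphism then follows immediately from the homeomorphism.
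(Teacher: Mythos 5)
Your proof is correct. The paper itself does not supply a proof of this theorem---it attributes the observation to Baird and refers the reader to \cite[Theorem 5.1]{stafa2017poincare}---so there is no in-paper argument to compare against. Your self-contained argument is sound: the continuous-bijection-from-compact-to-Hausdorff framework is the right one, Hausdorffness of $\Rep(\Z^n,G)_1$ indeed follows from properness of the quotient by the compact group $G$ (cf.\ Bredon, \cite[Theorem I.3.1]{Bredon}, which the paper cites elsewhere for exactly this purpose), and the surjectivity reduction via the conjugation map $\phi_n$ passing to $G$-orbits is clean. The heart of the matter, injectivity, is handled correctly: given $s_i = g t_i g^{-1}$ with all $t_i,s_i\in T$, setting $A=\ol{\la t_1,\dots,t_n\ra}\leqs T$ and $Z=Z_G(A)^\circ$, the observations that both $T$ and $g^{-1}Tg$ are maximal tori of the compact connected group $Z$ (because $A\subseteq T$ with $T$ abelian, and because $gAg^{-1}\subseteq T$) and hence conjugate by some $c\in Z$ centralizing each $t_i$ correctly produce $w=gc\in N(T)$ conjugating $t$ to $s$. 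One small citation slip: surjectivity of $\phi_n$ onto $\Hom(\Z^n,G)_1$ is not the content of Theorem~\ref{Baird} (which is the rational homology isomorphism); the surjectivity assertion is stated in the paragraph preceding equation~(\ref{phi}), attributed there to Baird. This does not affect the correctness of the argument.
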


This was first observed by Baird~\cite{baird2007cohomology}, and a proof is given in 
\cite[Theorem 5.1]{stafa2017poincare}.

\begin{rmk}\label{rmk: phi natural} The isomorphisms in Theorems~\ref{Baird} and~\ref{Stafa} are natural in the following senses. Let $f\co G\to G'$ be a map of Lie groups and assume there exist maximal tori $T\leqs G$, $T'\leqs G'$  such that $f(T) \subset T'$ and $f(NT) \subset NT'$. Then we have a commutative diagram 
\begin{center}
\begin{tikzcd}
G/T \cross_W T^n \arrow{r}{\phi} \arrow{d}  & 
	\Hom(\Z^n, G)_1 \arrow{d} 
	 \\
G'/T' \cross_{W'} (T')^n\arrow{r}{\phi}	 &
\Hom(\Z^n, G')_1, 
	 \\
\end{tikzcd}
\end{center}
where $W =NT/T$ and $W' = NT'/T'$ are the Weyl groups, and the vertical maps are induced by $f$. Similarly, we have 
a commutative diagram 
\begin{center}
\begin{tikzcd}
T^n/W \arrow{r}{\isom}  \arrow{d}  & 
	\Rep(\Z^n, G)_1 \arrow{d} 
	 \\
(T')^n/W'  \arrow{r}{\isom}  &
\Rep(\Z^n, G')_1, 
	 \\
\end{tikzcd}
\end{center}
where the horizontal maps are the homeomorphisms (\ref{j}), and again the vertical maps are induced by $f$.
\end{rmk}

\section{Homology of finite quotients}

It is well-known that (under suitable conditions) if $H$ is a finite group acting on a space $X$, then there is an isomorphism
\begin{equation}\label{eqn: fin-quot} H^*(X/H; \bbQ) \isom H^*(X; \bbQ)^H,\end{equation}
where the right-hand side denotes the subring of $H$--invariant elements. 
This applies in particular to Theorems~\ref{Baird} and~\ref{Stafa}.
Here we explain a homological version of (\ref{eqn: fin-quot}), and describe how these results interact with equivariant maps.

Recall that a space $X$ is semi-locally contractible if every open set $U\subset X$ has a covering by open sets $U_i\subset U$ for which the inclusion maps $U_i\injects U$ are null-homotopic. 

 \begin{defn} We say that a space $X$ is \e{good} if it is homotopy equivalent to a semi-locally contractible space.
 We say that an action of a finite group
 $H$ on a space $X$  is \e{good} if both $X$ and $X/H$ are good.
\end{defn} 

Note that every CW complex is locally contractible and hence also semi-locally contractible. In all of the situations we encounter in this paper, the actions will be good because $X$ and $X/H$ will have the homotopy types of CW complexes. This will often be proven using the fact that if $n\goesto X_n$ is a proper simplicial space in which each $X_n$ has the homotopy type  of a CW complex, then so does the geometric realization $|X_\bullet|$ (see May~\cite[Appendix]{May-EGP}).

\begin{prop}\label{coh-quot}
Consider a good action of a finite group $H$ on a space $X$, and let $\k$ be a field whose characteristic does not divide $|H|$. Let $\pi\co X\to X/H$ denote the quotient map. Then $\pi^* \co H^*(X/H; \k) \to H^*(X; \k)$ has image contained in the subspace of invariants $H^*(X; \k)^H$, and in fact $\pi^*$ induces  an isomorphism
$$H^*(X/H; \k) \srm{\isom} H^*(X; \k)^H.$$
Similarly, the map $\pi_* \co H_* (X; \k) \to H_* (X/H; \k)$ restricts to an isomorphism
$$H_* (X; \k)^H \srm{\isom} H_* (X/H; \k).$$
\end{prop}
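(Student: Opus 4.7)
Proof plan. The two statements are related by the Universal Coefficient Theorem and the fact that taking $H$-invariants commutes with $\k$-linear duality when $|H|$ is invertible in $\k$; I will focus on the homological statement.

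The plan has two ingredients. Since $\mathrm{char}(\k) \nmid |H|$, Maschke's theorem makes $\k[H]$ semisimple, so every $\k[H]$-module splits as invariants plus a complement, the averaging operator $N = \frac{1}{|H|}\sum_{h\in H} h_*$ projects $H_*(X;\k)$ onto $H_*(X;\k)^H$, and the canonical map from invariants to coinvariants is an isomorphism. From $\pi\circ h = \pi$ we obtain $\pi_* h_* = \pi_*$, so $\pi_*$ kills the complement to the invariants and induces a well-defined map $\bar\pi_*\co H_*(X;\k)_H \to H_*(X/H;\k)$. The task is to show that $\bar\pi_*$ is an isomorphism.

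For this I would use the Borel construction $X_{hH} = EH\times_H X$ together with the two natural maps
$$X/H \xleftarrow{p_1} X_{hH} \xrightarrow{p_2} BH.$$
Since $p_2$ is a fiber bundle with fiber $X$, its Serre spectral sequence has $E^2_{p,q} = H_p(H; H_q(X;\k))$; semisimplicity forces $H_p(H;-) = 0$ for $p>0$, so the sequence collapses onto the $p=0$ column and yields $H_*(X_{hH};\k)\isom H_*(X;\k)_H$. The homotopy fiber of $p_1$ over $[x]\in X/H$ is $BH_x$, where $H_x\leqs H$ is the stabilizer of $x$; since $|H_x|$ divides $|H|$ and is hence invertible in $\k$, we have $H_*(BH_x;\k)\isom \k$ concentrated in degree $0$, and the Leray spectral sequence for $p_1$ collapses to give $H_*(X_{hH};\k)\isom H_*(X/H;\k)$. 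Chasing naturality identifies the composite isomorphism with $\bar\pi_*$, and dualizing yields the cohomological statement.

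The main obstacle will be justifying the Leray step in the required generality: one must verify that the stalks of $R^q(p_1)_*\underline{\k}$ are indeed $H^q(BH_x;\k)$ and that the sheaf cohomology of $X/H$ with constant coefficients agrees with its singular cohomology. The ``good'' hypothesis, which asks exactly that $X$ and $X/H$ be homotopy equivalent to semi-locally contractible spaces, is there to guarantee precisely this compatibility. A clean alternative that sidesteps the sheaf-theoretic technicalities is to pass to an equivariant CW model of $X$ on which $H$ acts regularly (each cell stabilizer fixing its cell pointwise), which reduces the statement to the elementary observation that for complexes of $\k[H]$-modules with $|H|$ invertible, homology commutes with taking $H$-coinvariants.
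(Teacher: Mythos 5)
Your overall strategy (Borel construction, Serre spectral sequence for $p_2\colon X_{hH}\to BH$, Leray spectral sequence for $p_1\colon X_{hH}\to X/H$) is a genuinely different route from the paper, which instead cites Bredon's sheaf-theoretic version of the statement, invokes Sella together with homotopy invariance to pass from sheaf to singular cohomology on good spaces, and then deduces the homological version from the cohomological one via the Universal Coefficient Theorem and a short duality lemma about $H$-invariants. Your first spectral sequence computation is sound: semisimplicity of $\k[H]$ kills $H_p(H;-)$ for $p>0$ and gives $H_*(X_{hH};\k)\isom H_*(X;\k)_H$.

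The gap is in the Leray step, and it is more serious than you acknowledge. You claim the \emph{good} hypothesis "is there to guarantee precisely this compatibility," but it is not. Goodness of $X$ and $X/H$ means only that each space is homotopy equivalent to a semi-locally contractible space, which (via Sella and homotopy invariance) identifies the sheaf and singular cohomology of those spaces \emph{individually}. It does not control the local behavior of the \emph{map} $p_1$, and the Leray spectral sequence is sensitive to exactly that: to identify the stalk of $R^q(p_1)_*\underline{\k}$ at $[x]$ with $H^q(BH_x;\k)$ one needs an actual cofinal system of neighborhoods $U\ni[x]$ with $p_1^{-1}(U)\simeq BH_x$, which is a statement about the genuine local topology of $X/H$ and $p_1$, not about homotopy types. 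Since goodness is a homotopy-invariant condition and the Leray $E_2$-page is not, the hypothesis simply does not supply what the argument requires. Your proposed fallback --- replacing $X$ by a regular equivariant CW model --- has the same problem: nothing in the definition of a good action produces an $H$-CW structure, and a non-equivariant CW approximation of $X$ will in general change $X/H$ nonequivariantly. To repair the proof you would need either stronger hypotheses (e.g., $X$ locally compact separable metric, or $X$ admitting a regular $H$-CW structure) under which the Leray stalk identification is classical, or else fall back on citing Bredon's theorem as the paper does, letting the goodness hypothesis serve only its intended role of comparing sheaf and singular cohomology of $X$ and $X/H$.
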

\begin{proof}  
Bredon~\cite[Theorem 19.2]{bredon2012sheaf} proves a version of this result for sheaf cohomology, with coefficients in the constant sheaf $\k$, for arbitrary actions of finite groups. For semi-locally contractible spaces, the sheaf cohomology and the singular cohomology are naturally isomorphic by Sella~\cite{Sella}, and this fact also holds for spaces homotopy equivalent to  semi-locally contractible spaces because both sheaf cohomology and singular cohomology are homotopy invariant (for sheaf cohomology, see~\cite[Corollary II.11.13]{bredon2012sheaf}).
Hence for good actions, the result for sheaf cohomology is equivalent to the result for singular cohomology.\footnote{For our purposes, the more classical fact that sheaf cohomology agrees with singular cohomology for locally contractible, paracompact Hausdorff spaces is sufficient, since in all the situations where we apply this result the spaces in question have the homotopy types of CW complexes.} 

We now deduce the homological case from the cohomological case. By the Universal Coefficient Theorem for cohomology, there is a commutative diagram of the form
\begin{center}
\begin{tikzcd}
H^*(X/H) \arrow{r}{\isom} \arrow{d}{\pi^*}   & 
	\Hom(H_*(X/H), \k) \arrow{d}{(\pi_*)^*} 
	 \\
 H^*(X)^H \arrow{r}{\isom}	 &
 \Hom(H_*(X), \k)^H 
	 \\
\end{tikzcd}
\end{center}
where $\Hom( - , \k)$ denotes the $\k$--linear dual.
Note here that naturality of the isomorphism 
$$H^*(X) \srm{\isom} \Hom(H_* (X), \k)$$
in the Universal Coefficient Theorem  implies that this isomorphism is equivariant with respect to the action of $H$, and hence restricts to an isomorphism between the subspaces of $H$--fixed points. Since the map $\pi^*$ is an isomorphism, we conclude that 
$$(\pi_*)^*\co \Hom(H_*(X/H), \k) \to \Hom(H_*(X), \k)^H $$
is an isomorphism as well.

We need the following lemma. To simplify notation we set $V^* := \Hom(V, \k)$.

\begin{lem} Let $H$ be a finite group, and $\k$ a field of characteristic not dividing $|H|$. Let $V$ and $W$ be  representations of $H$ over $\k$, and let 
$p\co V\to W$ be an $H$--equivariant map. Then the induced map $p^*\co W^* \to V^*$ is $H$--equivariant, where $H$ acts on $V^*$ by $(h\cdot l) (v) := l(h^{-1} v)$. 
By equivariance,  $p$ and $p^*$ restrict to  maps $p^H \co V^H\to W^H$ and $(p^*)^H \co (W^*)^H \to (V^*)^H$.

The map $p^H$ is injective if and only if $(p^*)^H$ is surjective, and $p^H$ is surjective if and only if $(p^*)^H$ is injective. In particular, $p^H$  is an isomorphism if and only if $(p^*)^H$ is an isomorphism.
\end{lem}

\begin{proof} It suffices to show that the natural map $\phi\co (U^*)^H \to (U^H)^*$ sending an invariant linear functional $l$ to its restriction $l|_{U^H}$ is an isomorphism, because then $(p^*)^H$ and $(p^H)^*$ are naturally isomorphic, and in general a linear map is injective (respectively, surjective) if and only if its dual is surjective (respectively, injective).

The map $\phi$ is injective, since if
$l \in (U^*)^H$ is non-zero, then there exists $x\in U$ such that
 $l(x)\neq 0$, and then linearity and $H$--invariance of $l$ give
 $$l\left(\sum_{h\in H} h\cdot x\right) = \sum_{h\in H} l(h\cdot x) = |H| l(x) \neq 0$$
 (since the characteristic of $\k$ does not divide $|H|$).  
 To prove surjectivity of $\phi$, consider a linear functional $l\co U^H \to \k$. Choose a splitting $q\co U\to U^H$ of the inclusion $U^H\injects U$ and set
 $$\wt{l} :=  \dfrac{1}{|H|} \sum_{h\in H} h\cdot (l\circ q).$$
Then $\wt{l}$ is an $H$--invariant extension of $l$, so $\phi(\,\wt{l}\,) = l$.
\end{proof}

We now complete the proof of Proposition~\ref{coh-quot}.
The fact that $\pi_*$ restricts to an isomorphism $H_*(X)^H \to H_* (X/H)$ follows by applying the Lemma to the case $V = H_* (X)$, $W = H_* (X/H)$ (with trivial $H$--action) and $p = \pi_*$. 
\end{proof}

In order to describe how Proposition~\ref{coh-quot} interacts with equivariant maps of spaces, we will need the following construction.

\begin{defn}\label{alpha}
Let $H$ be a finite group, and let $V$ be a $\k[H]$--module, with $\k$ a field of characteristic not dividing $|H|$. The averaging map
$$\alpha\co V\to V^H$$
is the $\k$--linear map
defined by
$$\alpha (v) = \dfrac{1}{|H|} \sum_{h\in H} h\cdot v.$$
To simplify notation, we set  $\ol{v} := \alpha(v)$.
\end{defn}

We now list some basic properties of the averaging map.
 
\begin{lem}\label{avg-lem} Let $\alpha\co V\to V^H$ be the averaging map  of the $\k[H]$--module $V$ $($where $\k$ is a field of characteristic not dividing $|H|$$)$. Then
\begin{enumerate}
\item $\alpha|_{V^H}$ is the identity $($so $\alpha$ is idempotent with image $V^H$$)$;
\item $\alpha(h\cdot v) = \alpha (v)$ for all $h \in H$, $v\in V$;
\item $\alpha$ is the orthogonal projection of $V$ onto $V^H$ with respect to any $H$--invariant inner product $($that is, $\langle v - \alpha (v), \alpha (v)\rangle = 0$ if $\langle \,, \, \rangle$ is $H$--invariant$)$.
\end{enumerate}
\end{lem}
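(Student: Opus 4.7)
The proof is a direct verification of each of the three properties from the definition of $\alpha$, and I would handle them in the stated order, since each builds naturally on the previous.

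For part (1), I would first check that $\alpha$ actually takes values in $V^H$. For any $h' \in H$ and any $v \in V$, left multiplication by $h'$ permutes the summation index via the bijection $g = h'h$ of $H$ with itself, giving $h' \cdot \alpha(v) = \frac{1}{|H|}\sum_{g \in H} g \cdot v = \alpha(v)$. Next, when $v \in V^H$, every summand $h \cdot v$ equals $v$, so $\alpha(v) = \frac{|H|}{|H|} v = v$; this is the only place the hypothesis on the characteristic of $\k$ is needed, to guarantee that $|H|$ is invertible. Idempotence and the identification of the image with $V^H$ are immediate corollaries.

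Part (2) is another one-line reindexing: the substitution $g = h h'$ turns $\sum_{h \in H} h \cdot (h' \cdot v)$ into $\sum_{g \in H} g \cdot v$, so $\alpha(h' \cdot v) = \alpha(v)$.

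For part (3), the key observation is that whenever $w \in V^H$ and $u \in V$, the $H$-invariance of the inner product gives $\langle u, w\rangle = \langle h^{-1} u, h^{-1} w\rangle = \langle h^{-1} u, w\rangle$ for every $h \in H$. Averaging over $h$ (and reindexing $h \mapsto h^{-1}$) yields $\langle u, w\rangle = \langle \alpha(u), w\rangle$. Applying this with $u = v - \alpha(v)$, which satisfies $\alpha(u) = \alpha(v) - \alpha(\alpha(v)) = 0$ by parts (1) and (2), and with $w = \alpha(v) \in V^H$, produces the stated identity $\langle v - \alpha(v), \alpha(v)\rangle = 0$, identifying $\alpha$ as the orthogonal projection onto $V^H$. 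I do not anticipate any real obstacle here; each step is a short computation, and the only subtlety is keeping the direction of the group-element reindexing straight.
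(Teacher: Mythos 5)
Your proof is correct and is the standard direct verification; the paper states this lemma without proof, treating all three items as routine consequences of the definition and reindexing over $H$, which is exactly what you carry out. The only stray remark is your citing part (2) when computing $\alpha(v - \alpha(v)) = 0$, where in fact only linearity of $\alpha$ and the idempotence from part (1) are used, but this is harmless.
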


\begin{prop}\label{coh-quot2}
Let $H_1$ and $H_2$ be finite groups acting on spaces $X_1$ and $X_2$, and assume the actions are good. Let $f\co X_1\to X_2$ be a map that is equivariant with respect to a homomorphism $\phi\co H_1 \to H_2$ $($meaning that $f(h\cdot x) = \phi(h)\cdot f(x)$ for all $h\in H, x\in X$$)$. Let $\ol{f} \co X_1/H_1\to X_2/H_2$ be the map induced by $f$.
Let $\k$ be a field of characteristic not dividing $|H_1|$ or $|H_2|$.
Then under the isomorphisms in Proposition~\ref{coh-quot}, the map
$$\ol{f}^*\co H^*(X_2/H_2; \k) \to H^*(X_1/H_1; \k)$$ 
corresponds to 
$$f^*\co  H^*(X_2; \k)^{H_2} \to H^*(X_1; \k)^{H_1}$$ 
$($in particular, $f^*$ maps $H_2$--invariant classes to $H_1$--invariant classes$)$, while
the map 
$$\ol{f}_*\co H_*(X_1/H_1; \k) \to H_*(X_2/H_2; \k)$$
corresponds to 
$$\ol{ f_*} = \alpha\circ f_* \co H_*(X_1; \k)^{H_1} \to H_*(X_2; \k)^{H_2}.$$ 
\end{prop}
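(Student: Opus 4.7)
The plan is to deduce both statements from (i) the naturality of the quotient projections $\pi_i\co X_i\to X_i/H_i$ in singular (co)homology and (ii) the explicit description, in Proposition~\ref{coh-quot}, of the isomorphisms $H^*(X_i/H_i;\k)\isom H^*(X_i;\k)^{H_i}$ and $H_*(X_i;\k)^{H_i}\isom H_*(X_i/H_i;\k)$ via $\pi_i^*$ and $(\pi_i)_*$ respectively. The equivariance hypothesis $f\circ h=\phi(h)\circ f$ yields the commutative square of spaces $\pi_2\circ f=\bar f\circ\pi_1$, which will supply the required compatibilities after applying homology or cohomology.

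For the cohomological statement, I would first verify that $f^*$ carries $H_2$--invariant classes to $H_1$--invariant classes: if $y\in H^*(X_2;\k)^{H_2}$ and $h\in H_1$, then
$$h^*(f^* y)=(f\circ h)^*y=(\phi(h)\circ f)^*y=f^*(\phi(h)^*y)=f^*y,$$
using $H_2$--invariance of $y$ in the last step. Applying $H^*(-;\k)$ to $\pi_2\circ f=\bar f\circ\pi_1$ gives the commutative square $\pi_1^*\circ\bar f^*=f^*\circ\pi_2^*$. Since Proposition~\ref{coh-quot} identifies each $H^*(X_i/H_i;\k)$ with $H^*(X_i;\k)^{H_i}$ precisely via $\pi_i^*$, this square says that under the identifications $\bar f^*$ is exactly the restriction of $f^*$ to the invariant subspaces.

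For the homological statement, the map $\alpha\circ f_*$ has image inside $H_*(X_2;\k)^{H_2}$ by Lemma~\ref{avg-lem}(1), so the target is correct. Applying $H_*(-;\k)$ to $\pi_2\circ f=\bar f\circ\pi_1$ gives $\bar f_*\circ(\pi_1)_*=(\pi_2)_*\circ f_*$. Unlike in the cohomological case, $f_*$ need not preserve invariant subspaces, so one cannot directly read off the induced map on invariants from this identity. The key observation is that for every $h\in H_2$ we have $\pi_2\circ h=\pi_2$ as maps of spaces, hence $(\pi_2)_*\circ h_*=(\pi_2)_*$ on $H_*(X_2;\k)$, and averaging over $H_2$ yields $(\pi_2)_*=(\pi_2)_*\circ\alpha$. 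For $x\in H_*(X_1;\k)^{H_1}$ this gives
$$\bar f_*\bigl((\pi_1)_*x\bigr)=(\pi_2)_*(f_*x)=(\pi_2)_*\bigl(\alpha(f_*x)\bigr),$$
which, transported through the isomorphisms $(\pi_i)_*\co H_*(X_i;\k)^{H_i}\srm{\isom}H_*(X_i/H_i;\k)$, identifies $\bar f_*$ with $\alpha\circ f_*=\ol{f_*}$, as claimed.

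The main (modest) obstacle is the last step in the homological case: pure naturality of $\pi_*$ is not enough because $f_*$ is not a map between invariant subspaces, and reconciling this is what forces the averaging correction. Once one records the identity $(\pi_2)_*=(\pi_2)_*\circ\alpha$ coming from the space-level equality $\pi_2\circ h=\pi_2$, the proof becomes immediate.
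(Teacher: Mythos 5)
Your proof is correct and follows essentially the same approach as the paper: the cohomological statement is read off from the commuting square $\pi_1^*\circ\ol{f}^*=f^*\circ\pi_2^*$, and the homological statement uses exactly the identity $(\pi_2)_*\circ h_*=(\pi_2)_*$ (equivalently $(\pi_2)_*\circ\alpha=(\pi_2)_*$) to absorb the averaging correction, which is also the computation the paper performs inline. Your additional observations that $f^*$ preserves invariants and that $\alpha\circ f_*$ lands in the correct target are sound and make the argument slightly more self-contained than the paper's.
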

\begin{proof} Let $\pi_i \co X_i\to X_i/H_i$ be the quotient map. First we consider $\ol{f}^*$. 
Say $x_2\in H^* (X_2; \k)^{H_2}$. By Proposition~\ref{coh-quot}, there is a unique class $\ol{x}_2\in H^*(X_2/H_2; \k)$ satisfying $\pi_2^* \ol{x}_2  = x_2$. Then we have
$$\pi_1^* \ol{f}^* \ol{x}_2 = f^* \pi_2^* \ol{x}_2 = f^* x_2,$$
as desired.

Next, given $x_1 \in H_*(X_1)^{H_1}$, we want to show that $ \ol{f}_* (\pi_1)_* x_1  =(\pi_2)_* \alpha (f_* x_1)$. We have
\begin{align*}(\pi_2)_* \alpha (f_* x_1) & = \dfrac{1}{|H_2|} \sum_{h\in H_2} (\pi_2)_* h_* f_* x_1
=  \dfrac{1}{|H_2|} \sum_{h\in H_2} (\pi_2)_* f_* x_1\\
& = (\pi_2)_* f_* x_1 = \ol{f}_* (\pi_1)_*  x_1
 \end{align*}
as desired.
\end{proof}

\section{$\FI_W$--modules and $\FI_W\#$--modules}\label{FIW-modules}
 
Here we provide a quick introduction to the theory of $\FI_W$--modules, as developed by Church, Ellenberg, and Farb \cite{church2015fi} in type A, and later extended by J. Wilson \cite{wilson2014fiw}. 
More details can be found in these sources. 

Let $W_n$ denote the $n^{\textrm th}$ Weyl group of classical type A, B, C, or D. These correspond to the
symmetric group $S_n$ for type A, the signed permutation group $B_n = \Z_2 \wr S_n$ 
for both types B and C, and the even signed permutation group $D_n \leqs B_n$ for D. 
In this section, we write $\Z_2$ as the group $\{-, +\}$ with $+$ as identity element, so that elements of $\Z_2 \wr S_n$ are pairs consisting of a permutation and a list of $n$ signs; note that $\Z_2 \wr S_n$ is in bijection with the set of permutations $\sigma$ of the $2n$--element set $\{\pm 1, \ldots, \pm n\}$ satisfying $\sigma (\{-t, t\}) = \{\sigma(t), -\sigma(t)\}$.
The even signed permutation group $D_n$ can be seen
as the index 2 subgroup of $B_n$ obtained as the kernel of the projection 
$\Z_2 \wr S_n \to \Z/2\Z$ counting the number of minus signs modulo 2.
We have standard inclusions $j_{n} \co W_n \injects W_{n+1}$ sending a signed permutation $\sigma$ of $\n$ to the signed permutation of ${\bf n+1}$ that agrees with $\sigma$ on $\n\subset {\bf n+1}$ and is the identity on $\{\pm (n+1)\}$.

As we saw in Section~\ref{Hom}, the homology of spaces of commuting elements can be described 
using representations of Weyl groups and this perspective will be used throughout the paper.

\subsection{$\FI_W$--modules} For $W =$ A, B, C, or D,
the \textit{category $\FI_W$} is the category whose objects are the sets $\n=\{\pm 1,\dots,\pm n\}$
and $\0=\emptyset$, 
indexed by the natural numbers, and whose morphisms are the injections
$\phi: \m \to \n$ such that $\phi(-t)=-\phi(t)$ for all $t \in \m$ (here we use the convention that $--i = i$), together with an extra condition depending on the type ($A$, $B$, $C$ or  $D$).
Namely, the category $\FI_D$ is the subcategory defined by the additional condition that 
an isomorphism must reverse an even number of signs (that is, $|\{i \in \{1, \ldots, n\} \,:\, \phi(i) = -i\}|$ must be even), 
$\FI_A$ is the subcategory consisting of morphisms that
preserve all signs, and $\FI_{B} = \FI_{C}$ is the category with no further restriction on morphisms (following Wilson, we will use the notation $\FI_{BC} := \FI_{B} = \FI_{C}$).
This implies that $\Aut(\n)= S_n$ in type A,
 $\Aut(\n)= \Z_2 \wr S_n = B_n$ in types B and C,  and $\Aut(\n)= D_n$ in type D.

Note that
there are inclusions of categories $\FI_A \hookrightarrow \FI_D \hookrightarrow \FI_{BC}$.
The notation $\FI_W$ will stand for one of the above three categories.

\begin{rmk}\label{trans} Elementary arguments show that each of the categories $\FI_W$ has the property that $\Aut(\n)$ acts \e{transitively} on the set of morphisms $\Mor(\m, \n)$ (via composition). In particular, every morphism $\m\to \n$ can be written in the form $\sigma \circ i_{mn}$, where $i_{mn} \co \m \to \n$ is the (unique) morphism satisfying $i(j) = j$ for $1\leqs j\leqs m$ and $\sigma$ is a signed permutation lying in $\Aut(\n)$. Note here that $\Mor(\m, \n)$ is empty unless $\m \leqs \n$.
\end{rmk}

\begin{defn}[$\FI_W$--modules]\label{defn: FI_W module}
An \textit{$\FI_W$--module} $V$ over a ring $R$ is a  functor 
$$V\co \FI_W \to {{\rm \bf Mod}_R }$$
to the category of $R$--modules.  Given a morphism $\phi\co \n\to \m$ in $\FI_W$, we will sometimes abbreviate $V(\phi)$ to $\phi_*$ when $V$ is clear from context.

A sub--$\FI_W$--module of $V$ is an $\FI_W$--module $V'$ with $V'(\n) \leqs V(\n)$ a submodule for each $n$, and $V'(\phi) = V(\phi)|_{V'(\n)}$ for each morphism $\phi\co \n\to \m$ in $\FI_W$.

An \textit{$\FI_W$--space} $X$ is a functor 
$$X\co \FI_W \to \Top.$$
\end{defn}

Note that the homology of an $\FI_W$--space with coefficients in a ring $R$  is an $\FI_W$--module over $R$.

\begin{rmk} \label{FI-rmk} A \e{consistent sequence}   of $W_n$--representations is a sequence of $W_n$--representations $V_n$  together with structure maps $f_n \co V_n\to V_{n+1}$ that are  equivariant with respect to the inclusions $j_n\co W_n \injects W_{n+1}$. 
Remark~\ref{trans} implies that each $\FI_W$--module  $V$ is determined by its underlying consistent sequence of
$W_n$--representations $V_n:=V(\n)$,  with structure maps
$$(i_n)_* = V(i_{n})\co V_n \to V_{n+1},$$
where $i_n \co \n \to ({\bf n+1})$ satisfies $i_n (j) = j$ for $1\leqs j\leqs n$.
Similarly, an $\FI_W$--space $X$ is determined by its sequence of $W_n$--spaces $X_n = X(\n)$, together with the structure maps $(i_n)_*$. 
\end{rmk}

Not all consistent sequences of $W_n$--representations (or $W_n$--spaces) extend to $\FI_W$--modules (or $\FI_W$--spaces). However, Wilson~\cite[Lemma 3.4]{wilson2014fiw} provides a  characterization of those consistent sequence that \e{do} extend. Wilson's proof immediately extends to $\FI_W$--objects in any category, and we will apply the result to 
$\FI_W$--spaces in Section~\ref{Lie}. To state the result in full generality, we define a consistent sequence 
$C$ of $W$--representations in a category $\mathcal{C}$ to be a sequence of objects $C_n$ in $\mathcal{C}$ together with homomorphisms $W_n\to \textrm{Aut}_\mathcal{C} (C_n)$ (denoted $\tau\goesto \tau_*$) and morphisms  $\phi_{n}\co C_n\to C_{n+1}$ satisfying the equivariance condition $\phi_{n} \circ \tau_* = (j_{n} (\tau))_* \circ \phi_n$. Note that every functor $\FI_W\to \mathcal{C}$ has an underlying consistent sequence of $W_n$--representations.

\begin{lem}[Church-Ellenberg-Farb,Wilson] \label{lem: extension} 
Let $C$ be a consistent sequence of $W_n$--representations in the category $\mathcal{C}$. Then $C$ extends to an $\FI_W$--object $\wt{C} \co \FI_W\to \mathcal{C}$ $($with underlying consistent sequence $C$$)$ if and only if $\tau_* \circ \phi_{n-1} \circ \cdots \circ \phi_m =  \phi_{n-1} \circ \cdots \circ \phi_m$ for all $\tau\in W_n$ satisfying $\tau (i) = i$ for $i\leqs m$.
\end{lem}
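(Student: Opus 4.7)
The plan is to exploit Remark~\ref{trans} as the structural backbone: every morphism $\phi\co \m\to\n$ in $\FI_W$ factors as $\phi = \sigma\circ i_{mn}$, where $i_{mn} := i_{n-1}\circ\cdots\circ i_m$ is the standard inclusion and $\sigma\in W_n$. If the extension $\wt{C}$ exists, functoriality forces
\[
\wt{C}(\phi) \;=\; \sigma_* \circ \phi_{n-1}\circ\cdots\circ\phi_m,
\]
so the entire lemma reduces to showing that this assignment is well-defined and functorial precisely under the stated hypothesis.

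The ``only if'' direction is essentially tautological: if $\tau\in W_n$ fixes each $i$ with $i\leqs m$, then $\tau\circ i_{mn}$ and $i_{mn}$ coincide as injections in $\FI_W$, and applying $\wt{C}$ gives $\tau_*\circ\phi_{n-1}\circ\cdots\circ\phi_m = \phi_{n-1}\circ\cdots\circ\phi_m$. For the ``if'' direction, I first define $\wt{C}$ on objects by $\wt{C}(\n) = C_n$ and on morphisms by the formula above, then verify:

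\textbf{Well-definedness.} If $\sigma_1\circ i_{mn} = \sigma_2\circ i_{mn}$, then $\tau := \sigma_1^{-1}\sigma_2$ fixes $\{1,\dots,m\}$ pointwise, and the hypothesis immediately yields $(\sigma_1)_*\circ\phi_{n-1}\circ\cdots\circ\phi_m = (\sigma_2)_*\circ\phi_{n-1}\circ\cdots\circ\phi_m$.

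\textbf{Functoriality.} The crucial combinatorial identity is the commutation
\[
i_m \circ \rho \;=\; j_m(\rho)\circ i_m \qquad (\rho\in W_m),
\]
which holds because both sides send $i\in \m$ to $\rho(i)$ (and $j_m(\rho)$ acts trivially on $\pm(m+1)$). Iterating gives $i_{mn}\circ \rho = \wt{\rho}\circ i_{mn}$, where $\wt{\rho}\in W_n$ denotes the image of $\rho$ under $W_m\hookrightarrow W_n$. Thus for composable $\psi = \rho\circ i_{km}$ and $\phi = \sigma\circ i_{mn}$,
\[
\phi\circ\psi \;=\; \sigma\wt{\rho}\circ i_{kn},
\]
so $\wt{C}(\phi\circ\psi) = (\sigma\wt{\rho})_*\circ\phi_{n-1}\circ\cdots\circ\phi_k$. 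On the other hand, iterating the equivariance relation $\phi_n\circ\tau_* = (j_n(\tau))_*\circ\phi_n$ built into the definition of a consistent sequence gives $\phi_{n-1}\circ\cdots\circ\phi_m\circ\rho_* = \wt{\rho}_*\circ\phi_{n-1}\circ\cdots\circ\phi_m$, which converts $\wt{C}(\phi)\circ\wt{C}(\psi)$ into exactly the same composition. Preservation of identities is immediate since $\mathrm{id}_{\n} = e\circ i_{nn}$ with $e\in W_n$ the identity and the empty composition of $\phi_i$'s equal to $\mathrm{id}_{C_n}$.

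The only real obstacle is bookkeeping: one has to keep track of two distinct uses of the consistent-sequence structure (the $W_n$-action and the equivariant commutation with $\phi_n$) and of the identification of $W_m$ with its image in $W_n$. Everything else is routine once the factorization $\phi = \sigma\circ i_{mn}$ is fixed as the organizing principle.
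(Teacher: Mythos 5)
Your proof is correct. The paper does not provide its own proof of this lemma; it cites \cite{wilson2014fiw} (Wilson, Lemma~3.4) and notes that Wilson's argument extends verbatim to $\FI_W$-objects in an arbitrary category. Your argument follows the same route as that reference: use the transitivity of $\Aut(\n)$ on $\Mor(\m,\n)$ (Remark~\ref{trans}) to write every morphism as $\sigma\circ i_{mn}$, declare $\wt C(\sigma\circ i_{mn}) := \sigma_*\circ\phi_{n-1}\circ\cdots\circ\phi_m$, and then check well-definedness and functoriality via the commutation identity $i_{mn}\circ\rho=\wt\rho\circ i_{mn}$ and the consistent-sequence equivariance $\phi_n\circ\tau_*=(j_n(\tau))_*\circ\phi_n$. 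The verification that $(\sigma\wt\rho)_*=\sigma_*\circ\wt\rho_*$ uses that $\tau\mapsto\tau_*$ is a group homomorphism, which is built into the definition of a consistent sequence. No gaps.
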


As discussed in Section~\ref{Lie}, our key examples of $\FI_W$--modules will be produced from the classical sequences of Lie groups using Lemma~\ref{lem: extension}.

\begin{defn}[Finite Generation]\label{defn: finite generation}
We say that an $\FI_W$--module $V$ is \e{generated in stage $\leqslant n$} (or more briefly, stage $n$)
if for each $m\geqs n$, the union of the images of $V_n$ under $\FI_W$--morphisms $\n\to \m$ spans $V_m$. (We note that $V$ is generated in stage $\leqs n$ if and only if $V_n$ is not contained  in any proper sub--$\FI_W$--module of $V$.)

We say that $V$ is finitely generated if it is generated in stage $n$ for some $n$ and $V_n$ is finite-dimensional.
\end{defn}

We note that the term \e{degree} is often used in place of \e{stage} in the previous definition. We use the term stage to avoid confusion with (co)homological degree.

\begin{rmk}\label{fg-rmk}
All of the $\FI_W$--modules $V$ considered in this article will be finite-dimensional, in the sense that each $V_n$ is finitely generated over the coefficient ring $R$. In this setting, if $V$ is generated in stage $n$, then it is automatically finitely generated as well.
\end{rmk}

We will use the following result of Wilson repeatedly.

\begin{prop}[{\cite[Proposition 5.2]{wilson2014fiw}}]\label{prop: tensor}
If $V$ and $W$ are finite-dimensional $FI_W$--modules that are  generated in stages $m$ and $n$,
respectively, then $V \otimes W$ is generated in stage $($at most$)$ $m+n$. 
 \end{prop}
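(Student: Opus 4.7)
The plan is to reduce to elementary tensors and factor them through a single morphism out of stage $m+n$. Fix $k\geqs m+n$. By hypothesis, $V_k$ is spanned by the images $\psi_*(v)$ for $v\in V_m$ and $\psi\in\Mor_{\FI_W}(\m,\k)$, and similarly $W_k$ by $\chi_*(w)$ for $w\in W_n$ and $\chi\in\Mor_{\FI_W}(\n,\k)$. Hence $(V\otimes W)_k=V_k\otimes W_k$ is spanned by elementary tensors $\psi_*(v)\otimes\chi_*(w)$. Since $\FI_W$ acts diagonally on a tensor product, it will suffice to produce, for each such pair $(\psi,\chi)$, morphisms $\iota\co\mathbf{(m+n)}\to\k$, $\psi'\co\m\to\mathbf{(m+n)}$, and $\chi'\co\n\to\mathbf{(m+n)}$ in $\FI_W$ with $\psi=\iota\circ\psi'$ and $\chi=\iota\circ\chi'$, for then
\[
\psi_*(v)\otimes\chi_*(w) \;=\; \iota_*\bigl(\psi'_*(v)\otimes\chi'_*(w)\bigr) \;\in\; \iota_*\bigl((V\otimes W)_{m+n}\bigr).
\]

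The heart of the argument is this factoring. Let $S = \{|\psi(j)|\}_{j=1}^m \cup \{|\chi(j)|\}_{j=1}^n \subseteq \{1,\dots,k\}$; since $|S|\leqs m+n\leqs k$, enlarge $S$ to a subset $S'=\{s_1,\dots,s_{m+n}\}\subseteq\{1,\dots,k\}$ of cardinality exactly $m+n$. Define $\iota\co\mathbf{(m+n)}\to\k$ by $\iota(i)=s_i$ and $\iota(-i)=-s_i$; this is a sign-preserving injection and is therefore a morphism in every flavor of $\FI_W$. For each $j\in\{1,\dots,m\}$ let $t_j$ be the unique index with $s_{t_j}=|\psi(j)|$, and set $\psi'(j) = \mathrm{sgn}(\psi(j))\,t_j$ (extended to negative indices by $\psi'(-j) = -\psi'(j)$); define $\chi'$ analogously. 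A direct check gives $\iota\circ\psi'=\psi$ and $\iota\circ\chi'=\chi$.

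The main obstacle is verifying that $\psi'$ and $\chi'$ genuinely belong to $\FI_W$ in each of the three types. The relation $\phi(-t)=-\phi(t)$ holds by construction. In type A, $\psi$ and $\chi$ preserve signs, so all $\mathrm{sgn}$ factors are $+1$ and $\psi',\chi'\in\FI_A$. In type D the parity condition restricts only isomorphisms, and $\psi'\co\m\to\mathbf{(m+n)}$, $\chi'\co\n\to\mathbf{(m+n)}$ are non-isomorphisms whenever $n\geqs 1$ and $m\geqs 1$ respectively; the edge cases $m=0$ or $n=0$ are trivial, and the case $k=m+n$ requires no factoring since $(V\otimes W)_{m+n}$ spans itself under the identity. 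With this bookkeeping in hand, everything else follows formally from the diagonal action of $\FI_W$ on tensor products and the finite generation hypotheses on $V$ and $W$.
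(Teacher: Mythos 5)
The paper does not include its own proof; it cites Wilson's Proposition 5.2. Your argument is the standard one (factor each elementary tensor through stage $m+n$), and as far as I can tell it is essentially what Wilson does. The construction of $\iota,\psi',\chi'$ is carefully arranged so that $\iota$ is sign-preserving (hence lies in $\FI_A$, and therefore in all three flavors), and so that $\psi'$ and $\chi'$, being non-isomorphisms for $m,n\geq 1$, satisfy the only constraint that $\FI_D$ imposes. The verifications ($\iota$ well-defined, $\psi'$ injective and sign-compatible, $\iota\circ\psi'=\psi$) all check out, and you correctly observe that in type A the sign-preservation of $\psi,\chi$ propagates to $\psi',\chi'$.

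The one place you move too quickly is the claim that ``the edge cases $m=0$ or $n=0$ are trivial.'' In type D with, say, $m=0$ and $k>n$, your construction makes $\chi'\co\n\to\n$ an isomorphism, and since $\chi$ (a morphism $\n\to\k$ with $n<k$ in $\FI_D$, hence an arbitrary signed injection) may flip an odd number of signs, $\chi'$ need not lie in $\FI_D$. So your sign-preserving $\iota$ does not work here as written. The fix is genuinely quick, but it requires abandoning sign-preservation of $\iota$ in that case: take $\iota:=\chi$, $\chi':=\mathrm{id}_{\n}$, and $\psi'$ the unique morphism $\0\to\n$; then $\iota\circ\chi'=\chi$ trivially and $\iota\circ\psi'=\psi$ holds because $\Mor(\0,\k)$ is a singleton. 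It would be worth stating this explicitly rather than declaring the case trivial, since naively applying your general construction fails precisely there.
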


\subsection{Representation stability}

Now we give the precise definition of representation stability and uniform representation
stability. These notions were first defined by Church and Farb \cite{church2013representation}
for a sequence of $G_n$--representations $V_n$ (for some sequence of groups $G_n$), mainly for the purpose of studying
stability properties of representations of symmetric groups. 
Afterwards we will explain how these notions relate to homological stability. 
 
 A partition of a non-negative 
 integer $n$ is a sequence of non-negative integers
$$\lambda:=(\lambda_1\geqslant  \lambda_2\geqslant  \cdots \geqslant\lambda_l \geqslant 0)$$
with $\sum \lambda_i =n$. We write $\lambda \vdash n$ or $|\lambda|= n$ to indicate that $\lambda$ is a partition of $n$. Note that for $n=0$, we allow the empty partition. 
There is a well-known bijection between partitions $\lambda \vdash n$ and irreducible representations of  the symmetric group $S_n$ over fields of characteristic zero; we denote the representation corresponding to the partition 
$\lambda \vdash n$  by $V_\lambda.$ For instance if 
$\lambda=(n)$ then $V_\lambda$ is the \textit{trivial} one-dimensional representation of $S_n.$
If $\lambda:=(\lambda_1,\dots,\lambda_t) \vdash k$ is a partition of $k$, 
then for any $n \geqslant k + \lambda_1$ we may define a partition
$
\lambda[n]:=(n-k,\lambda_1,\dots,\lambda_t).
$
Now we can define the irreducible $S_n$--representation $V(\lambda)_n$ as
$$
V(\lambda)_n := V_{\lambda[n]}.
$$
Note that for each $\lambda$, $V(\lambda)_n$ is  defined only for sufficiently large $n$ (namely $n\geqs |\lambda|+\lambda_1$).
This process associates  an infinite sequence of irreducible representations of symmetric groups to each partition $\lambda$ of a non-negative integer. In particular, for the empty partition $\lambda = \emptyset$, this sequence is $n\goesto V(\emptyset)_n = V_{(n)}$, the sequence of trivial one-dimensional representations of $S_n$. 

Over fields of characteristic zero, the hyperoctahedral group $B_n = \Z_2 \wr S_n$ has irreducible representations indexed
by double partitions $\lambda = (\lambda^+,\lambda^-)$ of $n$,
where $\lambda^+ \vdash m$ and $\lambda^- \vdash n-m$ for some $m\leqs n$. As before, we write 
$V_\lambda$ for the representation associated to $\lambda$.
In general, for a double partition $\lambda=(\lambda^+,\lambda^-)$ of $k$ and for
$n \geqslant k + \lambda_1^+$ we define the partition 
$\lambda[n]:=((n-k,\lambda^+),\lambda^-)$. Now we can define the irreducible
$B_n$--representation
$$
V(\lambda)_n := V_{\lambda[n]}.
$$
Once again, when $\lambda^+ = \lambda^- = \emptyset$, we obtain the sequence of trivial one-dimensional representations of $B_n$. 

Finally, we consider the case of the even-signed permutation group $D_n\leqs B_n$. For each 
double partition $\lambda$ as above, we set
$$V(\lambda)_n := \textrm{Res}^{B_n}_{D_n} V_{\lambda[n]}.$$
Wilson \cite[Proposition 3.30]{wilson2014fiw} showed that if $V$ is a
finitely generated $\FI_D$--module, then $V_n$ is the restriction of a
$B_n$--representation for sufficiently large $n$, and hence (over a field of characteristic zero) 	admits a (unique) decomposition as a sum of representations of the form $V(\lambda)_n$. (It should be noted that $V(\lambda)_n$ is not always irreducible as a $D_n$--module.) Once again, the empty double partition gives rise to the sequence of 1--dimensional trivial representations of $D_n$.

\begin{defn}[Representation stability] \label{def: rep stability}
Let $V$ be an $\FI_W$--module over a field $\k$ of characteristic zero. 
We say that $V$ is
\textit{representation stable} if it satisfies the following conditions:
\begin{enumerate}
\item[(I)] {\it Injectivity}: The maps $(i_n)_*: V_n \to V_{n+1}$ are injective, 
							for all sufficiently large $n$;
\item[(II)] {\it Surjectivity}: The image $(i_n)_*(V_n)$ generates $V_{n+1}$ as a $\k [W_{n+1}]$--module, 
							for all sufficiently large $n$;
\item[(III)] {\it Multiplicities}: For all sufficiently large $n$, there exists an isomorphism of $W_n$--representations
$$
V_n \isom \bigoplus_{\lambda} c_{\lambda,n} V(\lambda)_n,
$$
and for each $\lambda$, the multiplicity $c_{\lambda,n}$ of $V(\lambda)_n$ is eventually independent 
of $n$. 
\end{enumerate}
\end{defn}

\begin{rmk} The decompositions in (III) above are unique if they exist, and so the multiplicities $c_{\lambda, n}$ are well-defined. In types A, B, and C, such a decomposition always exists, since all irreducible representations of $W_n$ are of the form $V(\lambda)_n$ in these cases (over fields of characteristic zero).
\end{rmk}

\begin{defn}[Uniform Representation Stability]
Let $V=\{V_n,\phi_n\}$ be a representation stable $\FI_W$--module with
$c_{\lambda,n}$ constant for all $n\geqslant N_\lambda.$ 
Then $V$ is called \textit{uniformly representation stable} 
if $N=N_\lambda$ can be chosen independently of $\lambda$.
In this case, we say that $V$ has \e{stable range} $n\geqs N$.
\end{defn}

Wilson~\cite{wilson2014fiw} shows that an $\FI_W$--module $V$ is
uniformly representation stable if and only if it is finitely generated. Here we are mainly
interested in the ``if'' part of the statement.

\begin{thm}[{{\cite[Theorem 4.27, 4.28]{wilson2014fiw}}}]\label{thm: fin. gen. implies uniform rep. stability}
Let $\k$ be a field of characteristic 0 and let $V$ be a finitely generated
$\FI_W$--module over $\k$. Then $V$ is uniformly representation stable.
\end{thm}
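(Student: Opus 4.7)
My plan is to deduce uniform representation stability for an arbitrary finitely generated $\FI_W$--module from the special case of the ``free'' (representable) $\FI_W$--modules
$$M(m)_n := \k[\Mor_{\FI_W}(\m, \n)],$$
on which $W_n$ acts by postcomposition. Every finitely generated $\FI_W$--module $V$ generated in stage $d$ admits a surjection $\bigoplus_{i} M(m_i) \twoheadrightarrow V$ with each $m_i \leqs d$, because one can map the generators of $M(m_i)_{m_i}$ (which correspond to the identity morphism $i_{m_i m_i}$) to a $\k$--basis of a generating set for $V$, and then use Remark~\ref{trans} to extend via $\FI_W$--functoriality. The heart of the argument will therefore be to establish uniform representation stability for $M(m)$ and then to show that this property descends to quotients in characteristic zero.

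The first step is to analyze $M(m)_n$ as a $W_n$--representation. By Remark~\ref{trans}, $\Mor(\m, \n) \isom W_n / \mathrm{Stab}(i_{mn})$, and the stabilizer of the standard inclusion $i_{mn}$ inside $W_n$ is precisely the copy of $W_{n-m}$ acting on the last $n-m$ coordinates (with appropriate type modifications in types B/C/D). Hence
$$M(m)_n \isom \Ind_{W_{n-m}}^{W_n} (\k[\Aut(\m)]),$$
viewed as an induced module from the ``tail'' subgroup, and the decomposition of this induced representation into the irreducibles $V(\lambda)_n$ is governed by stable Pieri/branching rules for the classical Weyl groups. These rules show that for $n$ large relative to $|\lambda|$, the multiplicity of $V(\lambda)_n$ in $M(m)_n$ is independent of $n$, giving condition (III). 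Injectivity (I) of $(i_n)_*\co M(m)_n \to M(m)_{n+1}$ is immediate from the definition, and surjectivity (II) of the $W_{n+1}$--orbit of the image follows from Remark~\ref{trans} for $n\geqs m$. An explicit bookkeeping of the branching gives the stable range $n\geqs $ (something linear in $m$), yielding uniform stability for $M(m)$.

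The second step is to pass from $M(m)$ to a general finitely generated $V$. Given a surjection $\pi \co F := \bigoplus_i M(m_i) \twoheadrightarrow V$, the kernel $K = \ker \pi$ is itself an $\FI_W$--module, and the central technical point is the \emph{noetherian property} for $\FI_W$: submodules of finitely generated $\FI_W$--modules over a field of characteristic $0$ are again finitely generated. Assuming this, $K$ is finitely generated, so by induction on a suitable complexity invariant (or by a direct argument using that both $F_n$ and $K_n$ are uniformly representation stable and sit in an exact sequence of $W_n$--representations that splits by Maschke's theorem), we conclude that $V_n = F_n/K_n$ is uniformly representation stable. The injectivity and surjectivity conditions (I) and (II) for $V$ are inherited from $F$ via the surjection $\pi$, and the multiplicities $c_{\lambda,n}(V) = c_{\lambda,n}(F) - c_{\lambda,n}(K)$ stabilize jointly, with stable range the maximum of the stable ranges for $F$ and $K$.

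The main obstacle is the noetherian property for $\FI_W$ in step two, which in Wilson's treatment extends Church--Ellenberg--Farb's type A argument; this is nontrivial and relies on a careful Gröbner-basis style analysis of $\FI_W$ together with characteristic-zero semisimplicity to keep the stable ranges under control. A secondary, more computational obstacle is extracting the explicit stable range from the branching rules in step one: the Pieri rules in types B/C/D introduce a combinatorial dependence on both the size and the first part of $\lambda$, so uniformity (rather than stability depending on $\lambda$) must be argued from the bounded stage of generation of $V$ rather than from the individual $V(\lambda)_n$.
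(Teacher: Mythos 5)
The paper itself does not prove this theorem: Theorem~\ref{thm: fin. gen. implies uniform rep. stability} is imported verbatim from Wilson's Theorems 4.27 and 4.28, so there is no in-paper argument to compare against. Your sketch does follow the broad Church--Ellenberg--Farb/Wilson strategy of handling the representable modules $M(m)$ explicitly and then bootstrapping via noetherianity. One small but real slip: since the stabilizer of $i_{mn}$ in $W_n$ is the tail copy of $W_{n-m}$, the correct identification is $M(m)_n \isom \k[W_n/W_{n-m}] \isom \Ind_{W_{n-m}}^{W_n}\k$ (trivial module), or equivalently $\Ind_{W_m\times W_{n-m}}^{W_n}\left(\k[W_m]\boxtimes\k\right)$. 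Your expression $\Ind_{W_{n-m}}^{W_n}(\k[\Aut(\m)])$ induces a $W_m$--representation along the inclusion of a different subgroup and has dimension $[W_n:W_{n-m}]\cdot|W_m|$, not $|\Mor(\m,\n)|$.

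The more substantive gap is in passing from free modules to a general finitely generated $V$. From $0\to K\to F\to V\to 0$ with $F$ free and $K$ finitely generated (by noetherianity), you propose to deduce multiplicity stability for $V$ from ``$c_{\lambda,n}(V)=c_{\lambda,n}(F)-c_{\lambda,n}(K)$ stabilize jointly.'' But this presupposes uniform representation stability for the finitely generated module $K$, which is exactly the statement you are trying to prove, and $K$ may be generated in strictly higher stage than $V$, so this is not a well-founded induction. Your parenthetical ``direct argument'' (that $F_n$ and $K_n$ are both uniformly stable and the sequence splits by Maschke) is circular for the same reason, and the unspecified ``complexity invariant'' is precisely what is missing. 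In the CEF/Wilson proofs this loop is closed with genuinely additional machinery beyond noetherianity: Wilson works with the shift functors $S_a$, the coinvariants ($\FI_W$--homology) $H_0^{\FI_W}$, and the classification of $\FI_W\#$--modules, and it is this, not an iterated exact-sequence argument, that yields the uniform stable range (roughly, generation stage plus relation stage). So you have correctly identified the essential inputs (branching rules for $M(m)$, noetherianity), but noetherianity alone does not give a terminating induction; the reduction step needs to be reorganized around the shift/coinvariants machinery rather than a naive short exact sequence.
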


Given an $\FI_B$--module $V$, composing $V\co \FI_B \to  {{\rm \bf Mod}_R }$ with the inclusion 
$\FI_D\injects \FI_B$ yields the restricted $\FI_D$--module $V|_D$.
The following corollary is immediate from the definition of $V(\lambda)_n$ in type D.

\begin{cor}\label{stable-range-D}
Let $V$ be a an $\FI_B$--module that is uniformly representation stable with stable range $n\geqs N$. Let $V|_D$ be the restriction of $V$ to an $\FI_D$--module $($that is, the composition of the functor $V$ with the inclusion $\FI_D\injects \FI_B$$)$. Then $V|_D$ is also uniformly representation stable with stable range $n\geqs N$.
\end{cor}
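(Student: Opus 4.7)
The plan is to verify the three conditions of Definition~\ref{def: rep stability} for $V|_D$, with the stable range $n\geqs N$ transported from the $\FI_B$--decomposition of $V$ via the restriction functor $\Res^{B_n}_{D_n}$. The structural observation underlying the argument is that the inclusion $\FI_D\injects \FI_B$ is the identity on objects and agrees on morphism sets $\Mor(\m,\n)$ whenever $m<n$; only the automorphism groups change, from $B_n$ to $D_n$. Consequently the underlying vector spaces $V_n$ and the stabilization maps $(i_n)_*$ are identical in $V$ and $V|_D$.

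I would dispatch the multiplicity condition (III) first, as this is where the stable range originates. For each $n\geqs N$, the hypothesis supplies a $B_n$--equivariant decomposition
$$V_n \;\cong\; \bigoplus_{\lambda} c_{\lambda,n}\, V_{\lambda[n]}$$
in which $c_{\lambda,n}$ is independent of $n$. Applying $\Res^{B_n}_{D_n}$ and using the definition $V(\lambda)_n = \Res^{B_n}_{D_n} V_{\lambda[n]}$ in type D yields a corresponding $D_n$--decomposition of $V_n|_{D_n}$ with the same constants $c_{\lambda,n}$ appearing as multiplicities of the modules $V(\lambda)_n$. So these multiplicities are constant for $n\geqs N$ in $V|_D$, giving the claimed stable range.

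Condition (I) (injectivity) is automatic because the structure maps $(i_n)_*$ are unchanged. The one point with any subtlety is condition (II): I need $(i_n)_*(V_n)$ to $\k[D_{n+1}]$--generate $V_{n+1}$ even though the acting group has shrunk from $B_{n+1}$ to $D_{n+1}$. To handle this, let $\tau\in B_{n+1}\setminus D_{n+1}$ be the automorphism flipping the sign of the $(n+1)$--st coordinate and fixing the rest. Since $\tau\circ i_n = i_n$, the ambient $\FI_B$--structure shows that $\tau_*$ fixes $(i_n)_*(V_n)$ pointwise; together with the coset decomposition $B_{n+1} = D_{n+1}\sqcup D_{n+1}\tau$, this implies that the $D_{n+1}$-- and $B_{n+1}$--spans of $(i_n)_*(V_n)$ inside $V_{n+1}$ coincide, so surjectivity transfers from $\FI_B$ to $\FI_D$. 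This sign-flip observation is the sole non-automatic step; everything else follows mechanically from the definition of $V(\lambda)_n$ in type D and from restriction of representations.
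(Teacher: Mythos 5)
Your proof is correct and spells out what the paper declares ``immediate from the definition of $V(\lambda)_n$ in type D''; the core move in both cases is that $\Res^{B_n}_{D_n}$ carries the stable $B_n$--decomposition directly to a $D_n$--decomposition with the same multiplicities. The sign-flip argument verifying condition (II) --- that $\tau_*$ fixes $(i_n)_*(V_n)$ pointwise because $\tau\circ i_n = i_n$ in $\FI_B$, so the $\k[D_{n+1}]$-- and $\k[B_{n+1}]$--spans coincide --- is the one non-automatic detail the paper leaves implicit, and you have it exactly right.
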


\begin{rmk}\label{rmk: iso-comp} Since the trivial representation of $W_n$ corresponds to $V(\emptyset)_n$, 
Theorem~\ref{thm: fin. gen. implies uniform rep. stability} implies that in a finitely generated $\FI_W$--module, the dimensions of the isotypical components of the trivial representation eventually stabilize. We will deduce a stronger version of this statement in Proposition~\ref{avg}.
\end{rmk}

\subsection{$\FI_W \#$--modules}

We now introduce extensions of the categories $\FI_A = \FI$ and $\FI_{BC}$, due to Church--Ellenberg--Farb~\cite{church2015fi} and Wilson~\cite{wilson2014fiw}, respectively. These categories can be used to establish bounds on the stable range of a finitely generated module.

Define $\FI_{BC} \#$ to be the category whose objects are the based sets $\n_0 = \n\cup \{0\}$, $n=1, 2, \ldots$
(with $0$ as the basepoint), and whose morphisms are based maps
$$
\phi\co \n_0 \to \m_0
$$
that are injective on $\n \setminus \phi^{-1} (0)$ and satisfy $\phi(\{i, -i\}) = \{\phi(i), -\phi(i)\}$ (we set $-0 = 0$).  

The category $\FI_{A}\#$ is the subcategory of $\FI_{BC} \#$ consisting of those morphisms that preserve signs (that is, $\phi\co \n_0 \to \m_0$ lies in $\FI_{A}\#$ if $\phi(i) \in \{0,1, \ldots, m\}$ for each $i\in\{1, \ldots, n\}$). This category is equivalent to the category $\FIs$ introduced in~\cite{church2015fi}, whose objects are finite sets and whose morphisms $S\to T$ are \e{partially defined} injections
$$S \supset A \stackrel[\isom]{\psi}{\maps} B \injects T.$$
Note that each morphism $\phi\co \n_0 \to \m_0$ in $\FI_{BC}\#$  restricts to a partially defined injection $\{1, \ldots, n\}\supset A \stackrel[\isom]{\phi'}{\maps} B\subset \{1, \ldots, m\}$, where $A = \{i: \phi(i)\neq 0\}$. The equivalence of categories $\FI_A\#\to \FIs$ carries $\n_0$ to $\{1, \ldots, n\}$ and takes $\phi\co \n_0 \to \m_0$ to $\phi'$. (Composition in $\FI\#$ is defined so as to make this a functor.)

As observed by Wilson, there is no natural analogue of these extensions in type D, so we do not define $\FI_D \#$. As such, the symbol $\FI_W \#$ will refer to one of the above two categories.

There are canonical embeddings of categories $\FI_W \injects \FI_W\#$, sending $\n\goesto \n_0$ and sending a morphism $\phi\co \n\to \m$ to the unique base-point preserving extension $\phi_0 \co \n_0 \to \m_0$.

\begin{defn}[$\FI_W \#$--modules]
An $\FI_W\#${--\it module} over a ring $R$ is a functor
$$V: \FI_W\# \to {{\rm \bf Mod}_R }$$
 from $\FI_W\#$ to the category of $R$--modules.
\end{defn}

Note that each $\FI_W\#$--module has an underlying $\FI_W$--module, defined via restriction along the embedding $\FI_W \injects \FI_W\#$.

The following result is due to Church--Ellenberg--Farb~\cite{church2015fi} in the case of $\FI_A\#$--modules, and due to Wilson~\cite{wilson2014fiw} 
in the case of $\FI_{BC}\#$--modules.

\begin{thm}\label{sharp} Let $V$ be an $\FI_W\#$--module over a field of characteristic zero. If $V$ is finitely generated in stage $k$, then $V$ is uniformly representation stable with stable range 
$n\geqs 2k$.
\end{thm}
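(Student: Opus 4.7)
The plan is to reduce the statement to a concrete calculation with induced representations by invoking the structure theorem for $\FI_W\#$--modules established in~\cite{church2015fi} for type A and~\cite{wilson2014fiw} for types B/C. This theorem asserts that every $\FI_W\#$--module $V$ decomposes functorially as
$$V \isom \bigoplus_{m\geqs 0} M(U_m),$$
where each $U_m$ is a $W_m$--representation recovered from $V$ and $M(U_m)$ is the free $\FI_W\#$--module whose underlying $\FI_W$--module satisfies $M(U_m)_n \isom \Ind_{W_m \times W_{n-m}}^{W_n}(U_m \boxtimes \mathbf{1})$. The first step is to unpack Definition~\ref{defn: finite generation} to see that if $V$ is generated in stage $k$, then $U_m = 0$ for all $m > k$; combined with Remark~\ref{fg-rmk}, this leaves only finitely many finite-dimensional summands, indexed by $m \leqs k$.

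Next, I would fix a summand $M(U_m)$ with $m \leqs k$, decompose $U_m$ into irreducible $W_m$--representations, and analyze each piece separately. For an irreducible $V_\mu$ indexed by a (single partition in type A, double partition in types B/C) $\mu$ of $m$, it suffices to show that the $W_n$--representation $\Ind_{W_m \times W_{n-m}}^{W_n}(V_\mu \boxtimes \mathbf{1})$ decomposes into irreducibles of the form $V(\lambda)_n$ whose multiplicities are independent of $n$ once $n \geqs 2m$. This is a direct application of the Pieri / Littlewood--Richardson rule in type A, and of Wilson's branching rules for the hyperoctahedral groups~\cite{wilson2014fiw} in types B/C: the irreducibles appearing are indexed by a fixed finite set of (double) partitions $\lambda$ with $|\lambda| \leqs 2m$, and the padded partition $\lambda[n]$ has an unambiguous first row as soon as $n - m \geqs m$, forcing the branching multiplicities to stabilize.

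Taking the maximum of the stable ranges $n \geqs 2m$ over $m \leqs k$ produces the uniform stable range $n \geqs 2k$ asserted in the theorem. The main obstacle is really the structure theorem itself: the stabilization conclusion is essentially formal once every $\FI_W\#$--module is known to split as a sum of free pieces. In a self-contained exposition one would need to construct the functorial assignment $V\mapsto U_m$ (typically as the cokernel of a natural ``truncation'' map built from the partially defined injections of $\FI_W\#$) and then verify that the canonical map $\bigoplus_m M(U_m)\to V$ is an isomorphism, exploiting the rigidity imposed by the much larger morphism set of $\FI_W\#$ compared to $\FI_W$. These rigidity arguments, rather than the combinatorics of induced representations, are the technical heart of Theorem~\ref{sharp}.
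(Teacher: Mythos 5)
The paper does not prove Theorem~\ref{sharp}; it cites Church--Ellenberg--Farb~\cite{church2015fi} (type A) and Wilson~\cite{wilson2014fiw} (types B/C), and your outline correctly reproduces the strategy in those sources: use the structure theorem to write $V \cong \bigoplus_m M(U_m)$, observe that generation in stage $k$ forces $U_m = 0$ for $m > k$, and read off the stable multiplicities from the branching rules applied to each $M(V_\mu)$. One numerical slip is worth correcting: for $\mu \vdash m$, the partitions $\lambda$ indexing constituents $V(\lambda)_n$ of $M(V_\mu)_n$ satisfy $|\lambda| \leqs m$, not $|\lambda| \leqs 2m$. Indeed, the horizontal-strip (interlacing) condition in Lemmas~\ref{lem: branching rule for S_n} and~\ref{lem: branching rule for B_n} forces $\lambda_i = \lambda[n]_{i+1} \leqs \mu_i$ for all $i$, so $\lambda \subseteq \mu$ as Young diagrams. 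That tighter containment is what your argument actually needs: the only $n$--dependent Pieri inequality is $n - |\lambda| \geqs \mu_1$, and it is precisely because both $|\lambda| \leqs m$ and $\mu_1 \leqs m$ hold that $n \geqs 2m$ suffices; the bound $|\lambda| \leqs 2m$ as written would only yield $n \geqs 3m$. With this fix, maximizing over $m \leqs k$ gives the stated range $n \geqs 2k$, and the injectivity and surjectivity clauses of Definition~\ref{def: rep stability} (which your sketch does not mention) follow formally: the $\FI_W\#$--morphism collapsing the last index to the basepoint is a retraction of the standard inclusion, so the structure maps of any $\FI_W\#$--module are split injective, while surjectivity onto the $W_{n+1}$--span is immediate from generation in stage $\leqs k$.
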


We are mainly interested in stability for subspaces of invariants, and in this case a better bound can be obtained. 
First let us recall the branching rules for induced 
representations of symmetric and hyperoctrahedral groups (see \cite{geck2000characters}).

Given a representation $V$ of $S_n$ over a ring $R$, let $V\boxtimes R$ denote the external tensor product of $V$ with the trivial representation of $S_k$ (so $V\boxtimes R$ is a representation of $S_n\cross S_k$). We will use similar notation for the hyperoctahedral groups $B_n$.

\begin{lem}\label{lem: branching rule for S_n}
Let $\k$ be a field of characteristic zero.
For each partition $\lambda$  of $n$, 
the $S_{n+k}$--representation induced by $V_\lambda$ is  
$$
\Ind_{S_n \times S_k}^{S_{n+k}} V_\lambda \boxtimes \k \cong \bigoplus_{\mu} V_\mu,
$$
where the direct sum is taken over those partitions $\mu$ of $n+k$ obtained from $\lambda$  by adding one box to each of $k$ different columns $($of the corresponding Young tableaux$)$.
\end{lem}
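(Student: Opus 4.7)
The plan is to recognize this statement as the classical Pieri rule for symmetric groups, and to give a short proof via the Frobenius characteristic map (or equivalently via the Littlewood--Richardson rule in the special case where one of the partitions is $(k)$).

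First, I would recall that the irreducible $S_n$--representations $V_\lambda$ correspond, under the Frobenius characteristic isomorphism, to the Schur functions $s_\lambda$, and that induction from $S_n\times S_k$ to $S_{n+k}$ corresponds to multiplication of symmetric functions. Moreover, the one-dimensional trivial representation $\k$ of $S_k$ equals $V_{(k)}$, whose Frobenius characteristic is the complete homogeneous symmetric function $h_k = s_{(k)}$.

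Next, I would invoke the classical Pieri rule, which states that
\[
s_\lambda \cdot h_k \;=\; \sum_{\mu} s_\mu,
\]
where the sum is over partitions $\mu$ of $n+k$ such that $\mu/\lambda$ is a horizontal strip of size $k$ (that is, a skew shape with exactly $k$ boxes and at most one box in each column). Translating this back through the Frobenius characteristic then yields the desired isomorphism of $S_{n+k}$--representations. The condition that no two added boxes lie in the same column is exactly the condition in the statement that the $k$ boxes are added to $k$ different columns (with the constraint, implicit in requiring $\mu$ to be a partition, that the result is a valid Young diagram).

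There is no real obstacle here: the result is standard and can simply be cited from, e.g., Macdonald's \emph{Symmetric Functions and Hall Polynomials} or Fulton--Harris. If a self-contained argument is preferred, one could alternatively invoke the Littlewood--Richardson rule and note that the coefficients $c^{\mu}_{\lambda,(k)}$ are $0$ or $1$, equal to $1$ precisely when $\mu/\lambda$ is a horizontal $k$--strip. The only thing worth checking carefully is the translation of terminology (``horizontal strip'' versus ``adding one box to each of $k$ different columns''), which is immediate once one draws the Young diagrams.
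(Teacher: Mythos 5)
Your proposal is correct. The paper treats this lemma (and its hyperoctahedral companion, Lemma~\ref{lem: branching rule for B_n}) as a recalled classical branching rule and simply cites Geck--Pfeiffer for it, giving no proof at all; your sketch via the Frobenius characteristic and the Pieri rule is the standard argument for this fact and matches the statement exactly, with the horizontal-strip condition translating to ``adding one box to each of $k$ different columns.''
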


\begin{lem}\label{lem: branching rule for B_n} Let $\k$ be a field of characteristic zero.
For each double partition $\lambda=(\lambda^+,\lambda^-)$  of $n$,  
the $B_{n+k}$--representation induced by $V_{(\lambda^+,\lambda^-)}$ is  
$$
\Ind_{B_n \times B_k}^{B_{n+k}} V_{(\lambda^+,\lambda^-)} \boxtimes \k \cong 
		\bigoplus_{\mu^+} V_{(\mu^+,\lambda^-)},
$$
where the direct sum is taken over those partitions $\mu^+$ of $n+k$ obtained from $\lambda^+$  by adding one box to each of $k$ different columns $($of the corresponding Young tableaux$)$.
\end{lem}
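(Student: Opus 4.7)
The plan is to reduce the hyperoctahedral induction computation to the branching rule for symmetric groups already recorded in Lemma~\ref{lem: branching rule for S_n}, using transitivity of induction together with the standard description of the irreducible $B_n$--representations in terms of bipartitions.

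First I would recall the explicit model for $V_{(\mu^+, \mu^-)}$. Writing $p = |\mu^+|$ and $q = |\mu^-|$, one has
\begin{equation*}
V_{(\mu^+, \mu^-)} \cong \Ind_{B_p \times B_q}^{B_{p+q}}\bigl( \wt{V}_{\mu^+} \boxtimes (\wt{V}_{\mu^-} \otimes \chi)\bigr),
\end{equation*}
where $\wt{V}_{\mu^\pm}$ are the inflations of the irreducible $S_p$-- and $S_q$--representations $V_{\mu^\pm}$ along the quotient maps $B_p \to S_p$ and $B_q \to S_q$, and $\chi \co B_q \to \{\pm 1\}$ is the homomorphism counting minus signs modulo $2$. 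The trivial $B_k$--representation $\k$ is the special case $V_{((k),\emptyset)} = \wt{V}_{(k)}$.

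Substituting this model into the left-hand side of the lemma and applying transitivity of induction yields
\begin{equation*}
\Ind_{B_p \times B_q \times B_k}^{B_{n+k}}\bigl( \wt{V}_{\lambda^+} \boxtimes (\wt{V}_{\lambda^-}\otimes \chi) \boxtimes \wt{V}_{(k)}\bigr),
\end{equation*}
and I would then reassociate this by first inducing along $B_p \times B_k \injects B_{p+k}$:
\begin{equation*}
\Ind_{B_{p+k} \times B_q}^{B_{n+k}}\Bigl( \bigl[\Ind_{B_p \times B_k}^{B_{p+k}}(\wt{V}_{\lambda^+} \boxtimes \wt{V}_{(k)})\bigr] \boxtimes (\wt{V}_{\lambda^-}\otimes \chi)\Bigr).
\end{equation*}
Since both factors inside the bracketed induction are inflated from symmetric group representations, the normal subgroup $\Z_2^{p+k}\normal B_{p+k}$ acts trivially throughout; because induction commutes with inflation along a normal quotient, the inner induced representation is canonically the inflation of $\Ind_{S_p \times S_k}^{S_{p+k}}(V_{\lambda^+} \boxtimes V_{(k)})$ from $S_{p+k}$. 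Lemma~\ref{lem: branching rule for S_n} identifies this symmetric group induction with $\bigoplus_{\mu^+} V_{\mu^+}$, where $\mu^+$ ranges over the partitions of $p+k$ obtained from $\lambda^+$ by adding one box to each of $k$ distinct columns. Feeding this sum back into the outer induction and recognizing each summand via the $B_n$--model gives $\bigoplus_{\mu^+} V_{(\mu^+, \lambda^-)}$, as claimed.

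The only real obstacle is the bookkeeping: one must keep the sign twist $\chi$ attached exclusively to the $\wt{V}_{\lambda^-}$ factor so that the inner reduction sees only inflated symmetric group representations, since otherwise Lemma~\ref{lem: branching rule for S_n} would not apply verbatim. Alternatively, this is a classical branching rule for wreath products that may simply be cited from Geck--Pfeiffer~\cite{geck2000characters}, and one can also verify the identity directly on characters via Frobenius reciprocity.
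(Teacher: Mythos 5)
The paper does not supply its own argument for this lemma: it simply records it, alongside Lemma~\ref{lem: branching rule for S_n}, as a classical branching rule to be cited from Geck--Pfeiffer~\cite{geck2000characters}. Your proof fills that gap, and it is correct: the standard Clifford-theoretic model $V_{(\mu^+,\mu^-)} \cong \Ind_{B_p \times B_q}^{B_{p+q}}\bigl(\wt{V}_{\mu^+}\boxtimes(\wt{V}_{\mu^-}\otimes\chi)\bigr)$, transitivity of induction, the conjugation needed to regroup $B_p\times B_q\times B_k$ as $B_p\times B_k\times B_q$ inside $B_{n+k}$, and the fact that induction commutes with inflation along the normal subgroup $\Z_2^{p+k}\leq B_p\times B_k\leq B_{p+k}$ all work exactly as you describe, so the claim really does reduce to the type-A Pieri rule of Lemma~\ref{lem: branching rule for S_n}. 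What the explicit argument buys you is a self-contained derivation and a clear view of why the $\lambda^-$ part is unaffected (the sign twist $\chi$ rides along untouched); what the citation route buys the paper is brevity. One small correction, which your proof already handles implicitly: the $\mu^+$ appearing on the right-hand side should be a partition of $|\lambda^+|+k$, not of $n+k$ as stated in the lemma (the two agree only when $\lambda^-=\emptyset$); the bipartition $(\mu^+,\lambda^-)$ then has the correct total size $n+k$.
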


We learned the following result from John Wiltshire-Gordon.

\begin{prop}\label{prop: stability isotypical component}
In types A, B, and C, if an $\FI_W$--module $V$ is generated in stage $d$, and $V$ extends to an $\FI_W \#$--module, then for $n\geqs d$ we have
$$\dim (V_n^{W_n}) = \dim (V_d^{W_d}).$$
\end{prop}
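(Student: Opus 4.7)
The plan is to invoke the classification theorem for $\FI_W\#$--modules, due to Church--Ellenberg--Farb in type $A$ and Wilson in type BC. This theorem asserts that every $\FI_W\#$--module $V$ admits a canonical decomposition as a direct sum $V \isom \bigoplus_{a \geqs 0} M(U_a)$ of ``induced'' $\FI_W\#$--modules, where each $U_a$ is a representation of $W_a$ and
$$M(U_a)_n \;\isom\; \Ind_{W_a \cross W_{n-a}}^{W_n}\!\bigl(U_a \boxtimes \k\bigr)$$
for $n \geqs a$ $($and zero for $n<a$$)$. Under this decomposition, the hypothesis that $V$ is generated in stage $d$ translates into the vanishing $U_a = 0$ for all $a > d$; since $\FI_W$--morphisms are $\FI_W\#$--morphisms, finite generation of the underlying $\FI_W$--module in stage $d$ implies finite generation in stage $d$ for the ambient $\FI_W\#$--module structure.

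Having obtained the decomposition, I would compute $V_n^{W_n}$ summand by summand using Frobenius reciprocity:
$$\bigl(M(U_a)_n\bigr)^{W_n} \;=\; \bigl(\Ind_{W_a \cross W_{n-a}}^{W_n}\!\bigl(U_a \boxtimes \k\bigr)\bigr)^{W_n} \;\isom\; \Hom_{W_a \cross W_{n-a}}\!\bigl(\k,\, U_a \boxtimes \k\bigr) \;\isom\; U_a^{W_a}.$$
In particular, $\dim (M(U_a)_n)^{W_n} = \dim U_a^{W_a}$ for every $n \geqs a$, independent of $n$. Summing over $a$ and using the vanishing $U_a = 0$ for $a > d$, for every $n \geqs d$ one has
$$\dim V_n^{W_n} \;=\; \sum_{a=0}^{d} \dim U_a^{W_a},$$
which is manifestly constant in $n$, and so equals $\dim V_d^{W_d}$.

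The substantive content of the argument is the classification theorem; once that is quoted correctly in both types, everything else is an immediate application of Frobenius reciprocity, so no individual step presents a genuine obstacle. An alternative route that avoids invoking the classification is to compute the multiplicity of the trivial representation directly from the branching rules recorded in Lemmas~\ref{lem: branching rule for S_n} and~\ref{lem: branching rule for B_n}: in type $A$ the trivial representation $V_{(n)}$ appears in $\Ind_{S_a \cross S_{n-a}}^{S_n}(V_\lambda \boxtimes \k)$ with multiplicity $1$ exactly when $\lambda = (a)$, and similarly in type BC only the summand $\lambda = ((a),\emptyset)$ contributes a copy of the trivial $B_n$--representation, yielding the same stable count for $n \geqs d$.
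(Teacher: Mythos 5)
Your proof is correct and uses the same backbone as the paper's: both invoke the classification of $\FI_W\#$--modules as $\bigoplus_a M_W(U_a)$ with $M_W(U_a)_n = \Ind_{W_a\cross W_{n-a}}^{W_n}(U_a\boxtimes\k)$, and reduce to showing that each summand contributes a constant to $\dim V_n^{W_n}$ once $n\geqs a$. The one place you differ is in how that contribution is computed: the paper inspects the branching rules of Lemmas~\ref{lem: branching rule for S_n}--\ref{lem: branching rule for B_n} to see that a copy of the trivial $W_n$--representation in $\Ind_{W_a\cross W_{n-a}}^{W_n}(U_a\boxtimes\k)$ forces $U_a$ to contain the trivial $W_a$--representation, whereas you apply Frobenius reciprocity directly to get $(M_W(U_a)_n)^{W_n}\isom\Hom_{W_a\cross W_{n-a}}(\k, U_a\boxtimes\k)\isom U_a^{W_a}$. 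Your route is cleaner and does not need the explicit branching rules (you mention those only as an alternative), while the paper's phrasing via branching is perhaps more in the spirit of the surrounding exposition, which sets up those lemmas anyway. You also make explicit the point, implicit in the paper, that generation in stage $d$ forces $U_a = 0$ for $a > d$; worth stating, since that is exactly what lets the sum stabilize at $n = d$ rather than at some larger index.
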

\begin{proof}
A complete classification of $\FI \#$--modules is given in \cite[Theorem 4.1.5]{church2015fi}, 
whereas  Wilson gives a classification of $\FI_B \#$--modules in \cite[Theorem 3.7]{Wilson-Math-Z}. 
In particular, an $\FI_W\#$--module $V$ can   
always
be written as 
\begin{equation} \label{eqn: decomp} \ds V \cong \bigoplus_{n \geqslant 0} M_W(U_n),
\end{equation}
where
$U_n$ is a representation of $W_n$, and the $\FI_W \#$--module $M_W (U_n)$ satisfies
$$M_W(U_n)_r = \Ind_{W_n \times W_{r-n}}^{W_{r}} U_n \boxtimes \Q$$ 
if $r\geqslant n$ and $M_W(U_n)_r = 0$ otherwise; moreover, $M_W (U_n)$ is generated in stage $n$.
The trivial representation corresponds to the partition $\lambda=(n)$ of $n$, whose corresponding Young tableaux is simply $n$ boxes aligned horizontally.
Using the  
branching rules above, one sees that if $U_n$ is an irreducible representation of
$W_n$ and 
$$\Ind_{W_n \times W_{k}}^{W_{n+k}} U_n \boxtimes \Q$$
contains a copy of the trivial representation of $W_{n+k}$, then $U_n$ must be trivial itself.
Next, observe that for any $W_n$--representations $A$ and $B$,
$$\Ind_{W_n \times W_{k}}^{W_{n+k}} (A\oplus B) \boxtimes \Q
\isom \left(\Ind_{W_n \times W_{k}}^{W_{n+k}} A \boxtimes \Q\right)
\oplus \left(\Ind_{W_n \times W_{k}}^{W_{n+k}} B \boxtimes \Q\right).$$
Decomposing an arbitrary representation $U_n$ of $W_n$ into irreducibles, we now see that the dimension of 
$$\left(M_W (U_n)_{n+k}\right)^{W_{n+k}} \isom \left(\Ind_{W_n \times W_{k}}^{W_{n+k}} U_n \boxtimes \Q\right)^{W_{n+k}}$$
agrees with that of $U_n^{W_n} = \left( M_W (U_n)_n\right)^{W_n}$ (for $k\geqs 0)$. This establishes the lemma for  modules of the form $M_W (U_n)$, and the general case 
then follows from the decomposition (\ref{eqn: decomp}).
\end{proof}

\subsection{Representation stability and homological stability}

\begin{defn} An  $\FI_W\#$--space is a functor $X\co \FI_W\#\to \Top$.
\end{defn}

Recall from Remark~\ref{FI-rmk} that   an $\FI_W$--space $X$ is determined by the sequence of $W_n$--spaces $X_n  = X(\n)$, together with the $j_n$--equivariant maps $(i_n)_*\co X_n\to X_{n+1}$ induced by the morphisms $i_n$. We will denote the induced maps $X_n/W_n \to X_{n+1}/W_{n+1}$ by $\ol{i}_n$.

The goal of this section is to establish the following result, which will be used to prove the homological stability results to follow.

\begin{thm}\label{quot-stab}
Let $n\goesto X_n$ be a consistent sequence of good $W_n$--spaces
whose degree $k$ rational homology extends to a finitely generated
$\FI_W$--module $H_k (X; \bbQ)$.  Then the sequence of quotient spaces
$$ \cdots \xmaps{\ol{i}_{n-1}} X_n/W_n \xmaps{\ol{i}_n} X_{n+1}/W_{n+1} \xmaps{\ol{i}_{n+1}} X_{n+2}/W_{n+2} \xmaps{\ol{i}_{n+2}}  \cdots$$
is strongly rationally homologically stable in degree $k$. 

If, moreover, the $\FI_W$--module $H_k (X; \Q)$ is generated in stage $d$, then the maps 
$(\ol{i}_n)_*$ are surjective for $n\geqs d$.

In types A, B, and C, if the $\FI_W$--module $H_k (X; \Q)$ is generated in stage $d$, and extends to an $\FI_W \#$--module, then the above sequence is strongly rationally homologically stable for $n\geqs d$. In type $D$, if the $\FI_D$--module $H_k (X; \Q)$ is generated in stage $d$, and is the restriction of an $\FI_{BC} \#$--module, then once again the above sequence is strongly rationally homologically stable for $n\geqs d$.
\end{thm}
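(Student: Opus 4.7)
The plan is to reduce the question about the quotients $X_n/W_n$ to one about $W_n$-invariants in the $\FI_W$-module $V := H_k(X; \bbQ)$. By Proposition~\ref{coh-quot} applied to the good $W_n$-actions, there are natural isomorphisms $H_k(X_n/W_n; \bbQ)\isom V_n^{W_n}$, and by Proposition~\ref{coh-quot2}, under these isomorphisms the map $(\ol{i}_n)_*$ corresponds to the averaged map $\alpha_{W_{n+1}}\circ (i_n)_*$ restricted to $V_n^{W_n}$. So the task reduces to showing that these averaged maps on invariants are isomorphisms, or at least surjections, in the stated ranges.

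The technical heart is the surjectivity statement, which I address first as it also supplies the second assertion. Assume $V$ is generated in stage $d$, fix $n\geqs d$, and let $v\in V_{n+1}^{W_{n+1}}$. By generation, $V_{n+1}$ is spanned by images $\phi_*(w)$ for $\phi\co \n\to ({\bf n+1})$ in $\FI_W$ and $w\in V_n$; using Remark~\ref{trans} to factor each $\phi$ as $\sigma\circ i_n$ with $\sigma\in W_{n+1}$, this gives $V_{n+1} = \bbQ[W_{n+1}]\cdot (i_n)_*(V_n)$. Writing $v = \sum_j (\sigma_j)_*(i_n)_*\eta_j$ and applying $\alpha_{W_{n+1}}$ kills the $\sigma_j$ by Lemma~\ref{avg-lem}(2), giving $v = \alpha_{W_{n+1}}((i_n)_*\eta)$ with $\eta := \sum_j \eta_j$. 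A short computation using the $j_n$-equivariance of $(i_n)_*$ and another application of Lemma~\ref{avg-lem}(2) shows that the result is unchanged if $\eta$ is replaced by $\eta' := \alpha_{W_n}(\eta)\in V_n^{W_n}$. Thus $v = (\alpha_{W_{n+1}}\circ (i_n)_*)(\eta')$, establishing surjectivity for all $n\geqs d$.

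The first assertion then follows by combining surjectivity (which holds eventually once $V$ is finitely generated) with the fact that $\dim V_n^{W_n}$ is eventually constant, coming from Theorem~\ref{thm: fin. gen. implies uniform rep. stability} and Remark~\ref{rmk: iso-comp}: a surjection between finite-dimensional $\bbQ$-vector spaces of equal dimension is an isomorphism. For the third assertion in types A, B, and C, Proposition~\ref{prop: stability isotypical component} strengthens the dimension count to $\dim V_n^{W_n} = \dim V_d^{W_d}$ already for $n\geqs d$, so combining with surjectivity gives isomorphisms throughout the range $n\geqs d$.

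The main obstacle is the sharpened type D bound, since Proposition~\ref{prop: stability isotypical component} controls $B_n$-invariants whereas the dimension of $V_n^{D_n}$ depends on more information: by Frobenius reciprocity one has $V_n^{D_n} = V_n^{B_n}\oplus V_n^{\chi_n}$, where $\chi_n$ is the sign character of $B_n$ cutting out $D_n$. To handle this I plan to apply Wilson's structure theorem for $\FI_{BC}\#$-modules to write the ambient module as $V = \bigoplus_m M_{BC}(U_m)$, use the hypothesis that $V|_D$ is $\FI_D$-generated in stage $d$ to constrain which $U_m$ can be non-zero, and compute the $\chi_n$-isotypical component via Frobenius reciprocity and the branching rule of Lemma~\ref{lem: branching rule for B_n}; this should show that the contribution of the $\chi_n$-component stabilizes in the required range, allowing it to be combined with the stability of the $B_n$-invariants to conclude.
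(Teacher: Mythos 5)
Your first three paragraphs are correct and essentially reproduce the paper's argument. The reduction to averaging maps on invariants via Propositions~\ref{coh-quot} and~\ref{coh-quot2}, the elementary surjectivity argument (which the paper records as a remark immediately after Proposition~\ref{avg}; the ``official'' proof of that proposition instead cites Wilson's bound on surjectivity degree, but either works here), and the deduction of the $n\geqs d$ isomorphism range from Proposition~\ref{prop: stability isotypical component} in types A, B, C all match the paper's route.

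The type $D$ part, however, is left as a plan rather than a proof, and the plan has a genuine defect. You assert that the $\chi_n$-isotypical contribution to $\dim V_n^{D_n}$ ``stabilizes in the required range,'' but the branching computation you propose shows the opposite at the boundary: writing $V\cong\bigoplus_{m\leqs d}M_{BC}(U_m)$, Lemma~\ref{lem: branching rule for B_n} gives that the multiplicity of $\chi_n=V_{(\emptyset,(n))}$ in $M_{BC}(U_m)_n$ vanishes once $n>m$, and at $n=m$ it equals the $\chi_m$-multiplicity of $U_m$. Thus the $\chi_n$-contribution to $\dim V_n^{D_n}$ is zero for $n>d$ but can be strictly positive at $n=d$. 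Concretely, $V=M_{BC}(\chi_d)$ satisfies your hypotheses and has $\dim V_d^{D_d}=1$ while $\dim V_{d+1}^{D_{d+1}}=0$, so the averaged map at $n=d$ is a surjection that is not an isomorphism. A careful execution of your plan therefore yields the range $n\geqs d+1$, not $n\geqs d$, unless you add a hypothesis excluding $\chi_d$ from $U_d$. Note also that the paper does not attempt your branching analysis at all: as indicated in the remark after Proposition~\ref{avg}, it handles type $D$ by citing Corollary~\ref{stable-range-D}, which transfers uniform representation stability (and its stable range) from $\FI_{BC}$ to $\FI_D$ upon restriction; that route is considerably lighter than the Frobenius-reciprocity decomposition you propose.
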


In order to prove Theorem~\ref{quot-stab}, we need a general observation regarding equivariant maps between representations.  We will use the averaging operator $\alpha : V \to V^H$, denoted $\alpha(v) = \ol{v}$, from Definition~\ref{alpha}.

\begin{prop}\label{avg} Let $j\co H_1 \to H_2$ be a homomorphism between finite groups, and consider a $j$--equivariant map $\phi\co V_1 \to V_2$, between $\bbQ H_i$--modules. 
If $\phi (V_1)$ generates $V_{2}$ as a $\bbQ H_{2}$--module, then the averaging map
$$V_1^{H_1} \to V_{2}^{H_{2}}$$
defined by $v\goesto \ol{\phi (v)}$
is \e{surjective}.

Consequently, if $V$ is a finitely generated $\FI_W$--module over a field of characteristic zero, and $V$ is generated in stage $d$ and has stable range 
$n\geqs N$, then the averaging map $V_n^{W_n} \to V_{n+1}^{W_{n+1}}$ is surjective for $n\geqs d$ and is an isomorphism for $n\geqs N$.  

Moreover, if $V$ is a finitely generated $\FI_W\#$--module over a field of characteristic zero, and $V$ is generated in stage $d$, then the averaging map $V_n^{W_n} \to V_{n+1}^{W_{n+1}}$ is an isomorphism for $n\geqs d$.
\end{prop}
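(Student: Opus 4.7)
The plan is to prove the three parts in sequence, with essentially all the content packed into the first part and the latter two obtained by applying it to the $\FI_W$--structure maps $(i_n)_*\co V_n\to V_{n+1}$.

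For the first part, fix $w\in V_2^{H_2}$. Since $\phi(V_1)$ generates $V_2$ over $\bbQ H_2$, I can write $w=\sum_i c_i\, h_i\cdot\phi(v_i)$ with $c_i\in\bbQ$, $h_i\in H_2$ and $v_i\in V_1$. Using $w=\ol{w}$ together with parts (1) and (2) of Lemma~\ref{avg-lem}, the $h_i$'s disappear and I obtain $w=\sum_i c_i\,\ol{\phi(v_i)}$. The crucial identity I then verify is
$$\ol{\phi(\ol{v})}=\ol{\phi(v)}\qquad\text{for every }v\in V_1.$$
By linearity of $\phi$ and $j$--equivariance, this reduces to showing $\ol{\,j(h_1)\cdot\phi(v)\,}=\ol{\phi(v)}$ for each $h_1\in H_1$, which is exactly part (2) of Lemma~\ref{avg-lem} applied in $V_2$. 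Applying the identity to each $v_i$ and using linearity once more yields $w=\ol{\phi\bigl(\sum_i c_i\,\ol{v_i}\bigr)}$ with $\sum_i c_i\,\ol{v_i}\in V_1^{H_1}$, which proves surjectivity. This step is the technical heart of the whole proposition.

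For the second part, I apply the first part with $j=j_n\co W_n\injects W_{n+1}$ and $\phi=(i_n)_*$. By Remark~\ref{trans}, every morphism $\n\to{\bf n+1}$ in $\FI_W$ factors as $\sigma\circ i_n$ for some $\sigma\in W_{n+1}$, so the hypothesis that $V$ is generated in stage $d$ unpacks to the statement that $(i_n)_*(V_n)$ spans $V_{n+1}$ as a $\bbQ W_{n+1}$--module for every $n\geqs d$. The first part then yields surjectivity of the averaging map $V_n^{W_n}\to V_{n+1}^{W_{n+1}}$ for $n\geqs d$. For $n\geqs N$, condition (II) of Definition~\ref{def: rep stability} again supplies the $W_{n+1}$--generation hypothesis, so surjectivity persists, while condition (III) applied to the trivial representation $V(\emptyset)_n$ shows that $\dim V_n^{W_n}$ is constant in that range; a surjection between finite-dimensional vector spaces of equal dimension is an isomorphism.

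For the third part, if $V$ extends to an $\FI_W\#$--module generated in stage $d$, Proposition~\ref{prop: stability isotypical component} gives $\dim V_n^{W_n}=\dim V_d^{W_d}$ for all $n\geqs d$. Combined with the surjectivity from the second part, which uses only the underlying $\FI_W$--structure, this upgrades surjection to isomorphism for every $n\geqs d$.
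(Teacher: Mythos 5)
Your proof is correct, and it is \emph{not} the route the paper takes in the proof of Proposition~\ref{avg}: there the authors cite Wilson's result that an $\FI_W$--module generated in stage $d$ has \emph{surjectivity degree} at most $d$, then identify the averaging map with the induced map between coinvariants $(V_n)_{W_n}\to(V_{n+1})_{W_{n+1}}$ via the natural isomorphism $V^W\injects V\surjects V_W$. What you give is instead exactly the ``elementary proof'' that the paper records in the remark immediately following Proposition~\ref{avg}: the key identity $\ol{\phi(\ol v)}=\ol{\phi(v)}$, the expansion of a general invariant as a $\bbQ H_2$--combination of images, and the one-line computation collapsing the $h_i$'s. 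The trade-off is essentially the one the paper signals: Wilson's coinvariant argument is type-uniform, whereas the elementary route works immediately for the first statement but, for the consequences in Parts 2 and 3, leans on Remark~\ref{rmk: iso-comp} (stabilizing multiplicity of the trivial isotypical component) and Proposition~\ref{prop: stability isotypical component}, which the authors flag as cleanest in types A, B, C, with the type D case handled by viewing the $\FI_D$--module as a restriction of an $\FI_B$--module (Corollary~\ref{stable-range-D}). You do not address this type-D wrinkle explicitly, and you also compress the translation from ``generated in stage $d$'' to ``$(i_n)_*(V_n)$ generates $V_{n+1}$ over $\bbQ W_{n+1}$ for all $n\geqs d$'' — Remark~\ref{trans} factors morphisms out of $\n$, but the hypothesis is about morphisms out of $\d$, so one small extra step (that every morphism $\d\to\n\!+\!1$ factors through $i_n$ up to a signed permutation) is silently used. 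Both points are minor; the structure and all the essential computations are sound.
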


\begin{proof}
This follows from Wilson~\cite[Proposition 4.16]{wilson2014fiw}, which states that if an $\FI_W$--module $V$ is generated in stage $d$, then its \e{surjectivity degree} (\cite[Definition 4.12]{wilson2014fiw}) is at most $d$. Taking $a=0$ in the definition of surjectivity degree, this says that the natural map
$$I_n\co (V_n)_{W_n} \to  (V_{n+1})_{W_{n+1}}$$ 
between coinvariants (see~\cite[p. 288]{wilson2014fiw})  is surjective for  $n\geqs d$.
The composition $V^{W} \injects V \surjects  (V)_{W}$ is an isomorphism for every $\Q[W]$--module $V$,  and under these isomorphisms the averaging map agrees with the map $I_n$.
\end{proof}

\begin{rmk} In types A, B, and C, Proposition~\ref{avg} admits an elementary proof.
A straightforward verification, using Lemma~\ref{avg-lem}, shows that 
\begin{equation}\label{avg2}\ol{\phi (v)} = \ol{\phi (\ol{v})}.\end{equation}
By hypothesis, for each $v\in V_{n+1}^{H_{n+1}}$ there exist $x_i \in V_n$ and $h_i \in H_{n+1}$ such that 
$v = \sum_i h_i \cdot \phi_n (x_i)$.
Since $v = \ol{v}$, we find (using (\ref{avg2}))
$$v = \ol{\sum_i h_i \cdot \phi_n (x_i)} = \sum_i \ol{h_i \cdot  \phi_n (x_i)} = \sum_i \ol{\phi_n (x_i)}  = \sum_i \ol{\phi_n (\ol{x_i})} =  \ol{\phi_n \left( \sum_i\ol{x_i}\right)}.$$
Since $\sum_i\ol{x_i}\in V_n^{H_n}$, this proves the surjectivity statement. In types A, B, and C, the consequences for finitely generated $\FI_W$-- and $\FI_W\#$--modules  follow from Remark~\ref{rmk: iso-comp} and Proposition~\ref{prop: stability isotypical component}. 
For $\FI_D$--modules 
that are restrictions of $\FI_B$--modules,  
we may instead apply Corollary~\ref{stable-range-D}.
\end{rmk}

\begin{proof}[Proof of Theorem~\ref{quot-stab}] By Proposition~\ref{coh-quot2}, the maps 
$$H_k (X_n/W_n; \bbQ) \xmaps{(\ol{\phi}_n)_*} H_k (X_{n+1}/W_{n+1}; \bbQ)$$ 
are isomorphic to the averaging maps 
$$H_k (X_n; \bbQ)^{W_n} \maps H_k (X_{n+1}; \bbQ)^{W_{n+1}}.$$
If $H_k (X; \bbQ)$ is generated in stage $d$ as an $\FI$--module, then 
Proposition~\ref{avg} shows that this map is surjective for $n\geqs d$.
Moreover, 
Theorem~\ref{thm: fin. gen. implies uniform rep. stability} (together with the fact that $V(\emptyset)_n$ is the trivial representation for each $n$) shows that there exists $N$ such that the domain and range of this map have the same dimension (as $\bbQ$--vector spaces) for $n\geqs N$, and hence are isomorphisms (note that $N\geqs d$).
If $H_k (X; \bbQ)$ extends to an $\FI_W \#$--module, then by  Proposition~\ref{prop: 
stability isotypical component} we can take $N = d$.
\end{proof}

\section{Direct powers and symmetric products}\label{symm-sec}

In this section, we discuss some simple examples of representation stability and  deduce  a homological stability result for symmetric products.  In subsequent sections, 
the representation stability results proven here will be applied to spaces of commuting elements in Lie groups. 

\subsection{Direct powers} We wish to describe an $\FIs$--structure on the sequence of direct powers $X^n$ of a based space $(X, x_0)$. To do this, we need to introduce some notation. Given a partially defined function $S \subset A \srt{g} X$, where $S$ is a (finite) set, let $g_0 \co S\to X$ denote the extension defined by setting $g_0 (s) = x_0$ for all $s\notin A$.

\begin{defn} Given a based space $(X, x_0)$, let $\P (X) = \P(X, x_0)$ denote the $\FIs$--space defined by sending a finite set $S$ to the $|S|$--fold product 
$$X^S = \Map(S, X)$$
 and sending a partially defined injection $S\supset A \srt{\phi} B \subset T$ to the map $X^S \to X^T$ given by sending $f\co S\to X$ to $(f \circ \phi^{-1})_0$.
\end{defn}

It is an exercise to check that this defines a functor out of the category $\FIs$. 
The corresponding $\FI_A \#$--space takes $f\co \m_0 \to \n_0$ to the map 
$f_*\co X^m\to X^n$ defined by
$$f_*(x_1, \ldots, x_m) = (y_1, \ldots, y_n),$$
where $y_i = x_{f^{-1} (i)}$ if $f^{-1} (i)$ exists, and $y_i = x_0$ otherwise.
Applying the functor  $H_* ( - ; \bbQ)$ or $H^* ( - ; \bbQ)$ then yields an $\FIs$--module (in the latter case, this relies on the canonical isomorphism between $\FIs$ and $\FIs^{\rm op}$; see~\cite[Section 4]{church2015fi}). Since an $\FIs$--module restricts to an $\FI$--module, we see that $H_* ( - ; \bbQ)$ and $H^* ( - ; \bbQ)$ are $\FI$--modules.
In both cases, the underlying consistent sequences of $S_n$--representations are determined by the natural actions of $S_n$ on $X^n$ (permuting the factors), along with the maps $(x_1, \ldots, x_n)\goesto (x_1, \ldots, x_n, x_0)$ (for the homological case) or the maps   $(x_1, \ldots, x_{n+1})\goesto (x_1, \ldots, x_n)$ (for the cohomological case).

\begin{defn}
We say that $X$ is of finite (rational) type if $H_k (X; \bbQ)$ is finite-dimensional for each $k \geqslant 0$.
\end{defn}

\begin{prop}\label{symm-prod} If $X$ is path connected and of finite type, then the $\FIs$--module $H_k (\P (X); \bbQ)$ is finite-dimensional and is generated  in stage $k$. Consequently, the underlying sequence of symmetric group representations is uniformly representation stable with stable range $n\geqs 2k$.  
\end{prop}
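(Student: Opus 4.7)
The plan is to identify $H_*(X^n; \bbQ)$ with the tensor power $H_*(X; \bbQ)^{\otimes n}$ via the K\"unneth theorem, and to check that the $\FIs$--module structure on $H_k(\P(X); \bbQ)$ corresponds, under this identification, to the natural tensor-power structure built using the canonical class $1 \in H_0(X; \bbQ) \cong \bbQ$ coming from path-connectedness. Writing $A = H_*(X; \bbQ)$, graded with $A_0 \cong \bbQ$ generated by the class of the basepoint, a structure map $\phi\co \m_0 \to \n_0$ in $\FIs$ should act on pure tensors by
\[
\phi_*(a_1 \otimes \cdots \otimes a_m) = b_1 \otimes \cdots \otimes b_n, \qquad b_j = \begin{cases} a_{\phi^{-1}(j)} & \text{if } \phi^{-1}(j) \text{ exists},\\ 1 & \text{otherwise}.\end{cases}
\]
Verifying this is a routine check: the $\FIs$--morphism $\phi$ is built from the inclusion of $\m$ onto the image $\phi(A)$, a permutation within $\n$, and basepoint insertions in the remaining coordinates, and each of these pieces acts on homology in the obvious way through K\"unneth.

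Once this is established, finite-dimensionality is immediate: $H_k(X^n; \bbQ)$ decomposes as $\bigoplus_{i_1+\cdots+i_n=k} A_{i_1}\otimes\cdots\otimes A_{i_n}$, and the finite-type hypothesis makes each summand finite-dimensional with only finitely many nonzero summands.

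For generation in stage $k$, I would argue as follows. Any pure tensor $a_1 \otimes \cdots \otimes a_n$ of total degree $k$ has at most $k$ tensor factors of positive degree, since each such factor contributes at least $1$ to the total degree. Let $S = \{j_1 < \cdots < j_{k'}\} \subseteq \n$ be the positions of the positive-degree factors, so $k' \leqs k$. Choose the $\FIs$--morphism $\phi\co \k_0 \to \n_0$ defined by $\phi(l) = j_l$ for $1 \leqs l \leqs k'$ and $\phi(l) = 0$ for $l > k'$. Then by the formula above, the element $a_{j_1} \otimes \cdots \otimes a_{j_{k'}} \otimes 1 \otimes \cdots \otimes 1 \in A^{\otimes k}$, which has total degree $k$, maps under $\phi_*$ to our original pure tensor. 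Since pure tensors span, this shows $H_k(\P(X); \bbQ)$ is generated in stage $k$.

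The final assertion about uniform representation stability with range $n \geqs 2k$ then follows immediately from Theorem~\ref{sharp} applied to the $\FIs = \FI_A\#$--module $H_k(\P(X); \bbQ)$. I do not anticipate any genuine obstacle here; the only point requiring care is the bookkeeping that confirms the $\FIs$--structure maps really have the claimed ``basepoint-insertion'' description on K\"unneth tensors, which is what allows the stage-$k$ generators to be recognized directly.
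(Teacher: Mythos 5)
Your proof is correct and follows essentially the same strategy as the paper's: identify $H_*(X^n;\bbQ)$ with the tensor power via K\"unneth, observe that the $\FIs$--structure maps act by permuting tensor factors and inserting the class of the basepoint, and then count degrees on pure tensors. The only cosmetic difference is that you exhibit a single morphism $\k_0\to\n_0$ pulling the positive-degree factors back to a degree--$k$ element, whereas the paper reduces one step at a time ($n \to n-1$); both are valid. One tiny imprecision worth noting: the degree-zero factors $a_j$ ($j\notin S$) are scalar multiples $q_j[x_0]$ of the basepoint class rather than $[x_0]$ itself, so your displayed element maps to the original pure tensor only up to the scalar $\prod_{j\notin S}q_j$ — which of course is harmless for a spanning argument, but the phrase ``maps to our original pure tensor'' should really say ``maps to a nonzero scalar multiple of it.''
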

\begin{proof} 
By the K\"unneth Theorem,  $H_* (X^n; \bbQ)$ is isomorphic to the $n$--fold tensor product of $H_*(X; \bbQ)$ with itself. Under this isomorphism, the $\FIs$--module structure
on $H_k (\P (X); \bbQ)$
 is described by essentially the same formulas as for $X^n$ itself, with sequences of points in $X$ replaced by $n$--fold tensors and the basepoint $x_0$ replaced by the class $[x_0]\in H_0 (X;\bbQ)$.
Now,  $H_k (X^n; \bbQ)$ is generated by simple tensors of the form $a = a_1 \otimes a_2 \otimes \cdots \otimes a_n$, with $a_i \in H_{|a_i|} (X; \bbQ)$, and $\sum_i |a_i| = k$. If $n>k$, then we must have $|a_i| = 0$ for some $i$, meaning that $a_i = q [x_0]$ for some $q\in \bbQ$, and hence  $a$ is in the image of one of the structure maps $H_k (X^{n-1}; \bbQ) \to H_k (X^n; \bbQ)$ defining the $\FI$--module structure on $H_k (\P (X); \bbQ)$. This shows that $H_k (\P (X); \bbQ)$ is generated in stage $k$, 
and the result now follows from   Theorem~\ref{sharp}.
\end{proof}

\begin{lem} If $X$ is  semi-locally contractible, then the same holds for $\Sym^n (X)$ for each $n\geqs 1$ $($in other words, $X^n$ is a good $S_n$--space$)$.
\end{lem}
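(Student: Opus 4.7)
The plan is to show that every point in an open set $V \subset \Sym^n X$ admits an open neighborhood whose inclusion into $V$ is null-homotopic. Let $\pi \co X^n \to \Sym^n X$ denote the quotient map; since $S_n$ is finite, $\pi$ is open. Fix $[p] \in V$ with representative $p = (x_1, \ldots, x_n) \in \pi^{-1}(V)$, and let $S \leqs S_n$ be the stabilizer of $p$.

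The first step is to choose a product open neighborhood $W = W_1 \times \cdots \times W_n \subset \pi^{-1}(V)$ of $p$ with two compatibility properties:
\[
W_i = W_j \text{ whenever } x_i = x_j, \qquad W_i \cap W_j = \emptyset \text{ whenever } x_i \neq x_j.
\]
The first is easily arranged (choose $W_i$ to depend only on $x_i$); the second uses Hausdorffness of $X$ to separate the distinct values of the $x_i$, which is the ambient setting for all of the paper's applications.

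The second step applies semi-local contractibility of $X$ inside each $W_i$, producing an open set $W_i' \subset W_i$ with $x_i \in W_i'$ and a null-homotopy $H_i \co W_i' \times I \to W_i$ with $H_i(\cdot, 0) = \mathrm{id}_{W_i'}$ and $H_i(\cdot, 1) \equiv c_i$. By letting these choices depend only on $x_i$, I can arrange $W_i' = W_j'$ and $H_i = H_j$ whenever $x_i = x_j$. Setting $W' := \prod_i W_i'$, the set $W'$ is $S$-invariant and the product $H := \prod_i H_i \co W' \times I \to X^n$ is an $S$-equivariant null-homotopy of the inclusion $W' \injects W$ inside $\pi^{-1}(V)$.

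Finally, $N := \pi(W')$ is an open neighborhood of $[p]$ in $V$, and $\pi|_{W'} \co W' \to N$ is an open surjection, hence a quotient map; the same is true for $\pi|_{W'} \times \mathrm{id}_I$. The disjointness arranged in the first step forces any $\sigma \in S_n$ with $\sigma W' \cap W' \neq \emptyset$ to permute the indices within each class $\{i : x_i = x_j\}$, that is, to lie in $S$. Together with the $S$-equivariance of $H$, this makes $\pi \circ H \co W' \times I \to V$ constant on the fibers of $\pi|_{W'} \times \mathrm{id}_I$, so it descends to a continuous $\overline{H}\co N \times I \to V$ restricting to the inclusion $N \injects V$ at $t = 0$ and to the constant map $[c_1, \ldots, c_n]$ at $t = 1$. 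The principal technical point is precisely this descent from $W'$ to $N$, and it is exactly the symmetry and disjointness conditions imposed on the $W_i$ and $H_i$ that make the descent go through.
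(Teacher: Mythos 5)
The paper does not actually give a proof of this lemma; it offers only the one-sentence remark ``The proof is similar to the proof that the symmetric product construction is homotopy invariant,'' which alludes to the standard trick of building an $S_n$-equivariant map on $X^n$ and descending to $\Sym^n X$. Your argument is a correct and complete realization of exactly that idea: you choose the neighborhoods $W_i$ and null-homotopies $H_i$ to depend only on the value $x_i$ so that the product homotopy is $S$-equivariant (where $S$ is the stabilizer of $p$), you arrange disjointness of the $W_i$ for distinct $x_i$ to ensure that $S$ is precisely the subgroup whose identifications survive in $W'$, and you use that finite quotients are open maps (so $\pi|_{W'}\times \mathrm{id}_I$ is a quotient map) to descend $\pi\circ H$ to the required null-homotopy on $N=\pi(W')$. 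The one point of friction with the lemma as literally stated is that the disjointness step uses Hausdorffness of $X$, which the lemma does not hypothesize; without it the descent argument can fail, since one cannot guarantee $H_i = H_{\sigma^{-1}(i)}$ for all $\sigma$ identifying points of $W'$. You flag this yourself, and it is harmless for the paper's purposes (every $X$ to which this lemma is applied is Hausdorff), but it is worth being aware that your proof establishes the lemma under an extra mild hypothesis rather than in the generality in which it is stated.
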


 The proof is similar to the proof that the symmetric product construction is homotopy invariant. 

Recall that the infinite symmetric product 
$\Sym^\infty (X)$ is simply the colimit of the sequence of maps between the finite symmetric products. Theorem~\ref{quot-stab} now yields the following corollary.

\begin{cor}\label{symm-prod-cor}
Let $X$ be a good, path connected space of finite type. 
Then the maps 
$$H_k (\Sym^n X; \bbQ) \maps H_k (\Sym^{n+1} X; \bbQ) \maps H_k (\Sym^\infty (X); \bbQ)$$
are isomorphisms for $n\geqs k$.
\end{cor}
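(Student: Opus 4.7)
The plan is to reduce the statement to a direct application of Theorem~\ref{quot-stab} to the $\FIs$--space $\P(X)$, and then to handle the passage to $\Sym^\infty(X)$ by a standard colimit argument.

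First I would invoke Proposition~\ref{symm-prod}, which tells me that $H_k(\P(X); \bbQ)$ is a finite-dimensional $\FIs = \FI_A\#$--module generated in stage $k$. Restricting along $\FI_A \injects \FI_A\#$, its underlying $\FI_A$--module has $n$--th term $H_k(X^n; \bbQ)$, with $S_n$ acting by permutation of factors and with structure maps induced on homology by the basepoint--insertion maps $(x_1, \ldots, x_n) \goesto (x_1, \ldots, x_n, x_0)$. Taking quotients $X^n/S_n = \Sym^n(X)$, the induced maps on $S_n$--quotients are exactly the stabilization maps $\Sym^n(X) \to \Sym^{n+1}(X)$ appearing in the corollary.

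Second, by the lemma immediately preceding the corollary, each $X^n$ is a good $S_n$--space (using that $X$ is good). Thus the hypotheses of Theorem~\ref{quot-stab} are met in type A, and since $H_k(\P(X); \bbQ)$ is an $\FI_A\#$--module generated in stage $k$, the final clause of that theorem yields
$$H_k(\Sym^n X; \bbQ) \xmaps{\isom} H_k(\Sym^{n+1} X; \bbQ)$$
for all $n \geqs k$. This handles the first isomorphism in the displayed sequence.

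Third, I would deduce the isomorphism onto $H_k(\Sym^\infty X; \bbQ)$ from the fact that singular homology commutes with sequential colimits along closed (cofibrant) inclusions. Since $\Sym^\infty(X) = \colim_n \Sym^n(X)$ along the basepoint--insertion maps, we obtain
$$H_k(\Sym^\infty X; \bbQ) \isom \colim_n H_k(\Sym^n X; \bbQ),$$
and the previous step shows that this colimit stabilizes at $n = k$, so the canonical map $H_k(\Sym^n X; \bbQ) \to H_k(\Sym^\infty X; \bbQ)$ is an isomorphism for $n \geqs k$. There is no serious obstacle here: the essential representation--theoretic work has already been done in Proposition~\ref{symm-prod} and Theorem~\ref{quot-stab}, and the only thing to check is the bookkeeping identification of the $\FIs$--structure on $\P(X)$ with the symmetric product stabilization maps, which is immediate from the definition of $\P(X)$.
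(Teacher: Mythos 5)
Your proposal is correct and is essentially the same argument the paper intends: Proposition~\ref{symm-prod} gives the $\FIs$--module generated in stage $k$, the preceding lemma (together with homotopy invariance of $\Sym^n$, to pass from ``semi-locally contractible'' to ``good'') supplies the hypothesis that each $X^n$ is a good $S_n$--space, and the final clause of Theorem~\ref{quot-stab} then gives the stability range $n\geqs k$, with the passage to $\Sym^\infty(X)$ handled by the standard fact that singular homology commutes with this sequential colimit. The paper's own proof is simply the one-line remark that Theorem~\ref{quot-stab} yields the corollary, so you have fleshed out exactly the intended argument.
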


\begin{rmk}\label{rmk: Steenrod symmetric prod}
In the case where $X$ has the homotopy type of a CW complex, the statement of Corollary \ref{symm-prod-cor} was shown by Steenrod~\cite[Eq. (22.7)]{steenrod} (using other methods) with $\Q$  replaced by the ring of integers. 

Following Steenrod, we use the compactly generated topology on $X^n$ (as in Steenrod~\cite{Steenrod-convenient}) when forming the symmetric product. 
We note that if $X$ is Hausdorff, this does not affect the weak homotopy type of $\Sym^n (X)$. More generally, we claim that if $X$ is Hausdorff and $G$ is a compact group acting on $X$, then the identity map $k(X)/G\to X/G$ is a weak equivalence, where $k(X)$ denotes $X$ with the compactly generated topology. 
Since homotopy groups are defined in terms of maps out of compact spaces,
 it suffices to show that a subset of $k(X)/G$ is compact if and only if it is compact in $X/G$, and that the two subspace topologies on such sets coincide.
Compact sets in $k(X)$ and in $X$ coincide by Steenrod~\cite[Theorem 3.2]{Steenrod-convenient}, and since the quotient maps $k(X)\to k(X)/G$ and $X\to X/G$ are proper (Bredon~\cite[Theorem I.3.1]{Bredon}), the same holds for $k(X)/G$ and $X/G$.
Finally, if $K\subset k(X)/G$ is compact, then the identity map to $K\subset X/G$ is a homeomorphism, since $X/G$ is Hausdorff (ibid.).
\end{rmk}

\subsection{Signed direct powers}\label{sdp-sec}

Consider a space $(X, x_0)$ equipped with an involution $\tau\co X\to X$ 
fixing the basepoint $x_0$. 
We wish to describe an  action of   $B_r = \Z_2 \wr S_r$,  on $X^r$ extending the permutation action of $S_r$. 
For ease of notation, in this section we view $\Z_2$ as the group $\{0,1\}$, with $0$ as identity, and we write the group operation as $+$. (One should think of the sign associated to $\epsilon\in \Z_2$ as $(-1)^\epsilon$.)
In this notation, the subgroup  $D_r\leqs B_r$ of even-signed permutations is given by
$$D_r = \{((t_1, \ldots, t_r), \sigma) \in B_r \,:\, |\{i \,:\, t_i = 1\}| \textrm{ is even.}\}$$
Using the involution $\tau$, we can now endow $X^r$ with the action of $\Z_2 \wr S_r$ given by 
$$(t_1, \ldots, t_r, \sigma) \cdot (x_1, \ldots, x_r) = (\tau^{t_1} (x_{\sigma^{-1}(1)}), \ldots, \tau^{t_r} (x_{\sigma^{-1}(r)})),$$
where $\tau^0 := \textrm{Id}_X$ (and $\tau^1 := \tau$).
  
 Every  morphism $\m_0\to \n_0$ in the category $\FI_{BC}\#$ factors uniquely as $\nu\circ f$, where $f\co \m_0\to \n_0$ is a morphism in $\FI_{BC} \#$ that preserves signs (that is, $f(\{0, \ldots, m\}) \subset \{0, \ldots n\}$), and $\nu\co \n_0\to \n_0$ is a bijection that satisfies $\nu(\{\pm i\}) = \{\pm i\}$ for $1\leqs i \leqs n$ and is the identity on the complement of the image of $f$ (that is, $\nu$ acts as negation of some subset of the image of $f$ and acts as the identity on the remaining elements). 
 
 We will refer to such maps $f$ and $\nu$ as \e{unsigned partial injections} and \e{partial negations}, respectively. If $f'\co \n_0\to \p_0$ is another unsigned partial injection and $\nu'\co \p_0\to \p_0$ is another partial negation that restricts to the identity on the complement of the image of $f'$, then we have
 $$(\nu'\circ f') \circ (\nu \circ f) = \nu'' \circ (f'\circ f),$$ 
 where $\nu''$ is the partial negation
 $$\nu'' (i) = \begin{cases}
     \nu'\circ f'\circ  \nu\circ f (j), 	& \mbox{if $f'\circ f (j) = i$}, \\
     i, & \mbox{if $i$ is not in the image of $f'\circ f$.} \\
\end{cases}$$ 
Note that, by definition, $\nu''$  restricts to the identity on the complement of the image of $f'\circ f$.
 
\begin{lem} \label{ext} Let $(X, x_0)$ be a based space equipped with an involution $\tau\co X\to X$ fixing $x_0$. 
Then $\P(X)$ extends to an $\FI_{BC}\#$--space $($and, by restriction, to an $\FI_{BC}$--space and an $\FI_{D}$--space$)$, whose structure maps 
are determined as follows: For an unsigned partial injection $f\co \m_0\to \n_0$ 
we set
$$f_* (x_1, \ldots, x_m) = (y_1, \ldots, y_n),$$
where 
$$y_i = \begin{cases}
     x_j 	& \mbox{if $f(j) = i$}, \\
     x_0 & \mbox{if $f^{-1} (i) = \emptyset$,} \\
\end{cases}$$
and for a partial negation $\nu\co \n_0 \to \n_0$ we set
$$\nu_* (x_1, \ldots, x_n) = (y_1, \ldots, y_n),$$
where 
$$y_i = \begin{cases}
     x_i 	& \mbox{if $\nu(i) = i$}, \\
     \tau(x_i) & \mbox{if $\nu(i) = -i$.} \\
\end{cases}$$
\end{lem}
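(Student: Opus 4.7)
The strategy is to exploit the unique factorization recorded immediately above the lemma: every morphism in $\FI_{BC}\#$ has the form $\nu\circ f$ with $f$ an unsigned partial injection and $\nu$ a partial negation fixing the complement of the image of $f$. This forces any functor extending $\P(X)$ to send such a morphism to $\nu_*\circ f_*$, with the two factors defined by the formulas stipulated in the lemma. The task is then to verify that the assignment $(\nu\circ f)\goesto \nu_*\circ f_*$ respects composition and identities; the identity statement is clear.

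Using the composition formula $(\nu'\circ f')\circ(\nu\circ f)=\nu''\circ(f'\circ f)$ recorded just before the lemma, functoriality reduces to the identity
$$\nu'_*\circ f'_*\circ \nu_*\circ f_*=\nu''_*\circ(f'\circ f)_*$$
of maps $X^m\to X^p$. A direct check shows $(f'\circ f)_*=f'_*\circ f_*$, using only that $\tau$ fixes $x_0$ so that basepoints propagate correctly through coordinates with empty preimage; this reduces the task to showing $f'_*\circ \nu_*=\nu''_*\circ f'_*$ on the image of $f_*$.

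I would verify this coordinate by coordinate. Fix $k\in\{1,\ldots,p\}$. If $k$ is not in the image of $f'\circ f$, both sides produce $x_0$ at position $k$: on the right by definition of $\nu''$ together with $\tau(x_0)=x_0$, and on the left because any intermediate index not in the image of $f$ carries the entry $x_0$, which $\nu_*$ then fixes. If $k=f'(f(j))$, each side outputs a single $\tau$--twist of $x_j$: the left exponent is $\epsilon_{f(j)}+\delta_k \pmod 2$, where $\epsilon_i=1$ iff $\nu(i)=-i$ and $\delta_k=1$ iff $\nu'(k)=-k$, while the right exponent is $\eta_k$, where $\eta_k=1$ iff $\nu''(k)=-k$. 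Tracing $\nu'\circ f'\circ \nu\circ f$ evaluated at $j$, and using the sign-equivariance $f'(-i)=-f'(i)$ built into the $\FI_{BC}\#$ axiom, one reads off $(-1)^{\eta_k}=(-1)^{\epsilon_{f(j)}+\delta_k}$, completing the identity.

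The restrictions to $\FI_{BC}$-- and $\FI_D$--spaces then follow immediately from the inclusions $\FI_D\injects \FI_{BC}\injects \FI_{BC}\#$. The main obstacle is the sign-bookkeeping in the functoriality check above; however, the key observation that $f'$ commutes with the sign involution makes the signs transport across $f'$ in precisely the way prescribed by the construction of $\nu''$, so once the framework is set up the verification is essentially automatic.
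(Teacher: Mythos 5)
Your proposal follows exactly the paper's approach: the paper reduces the claim to the compositional identity $\nu'_*\circ f'_*\circ \nu_*\circ f_* = \nu''_*\circ f'_*\circ f_*$ and then says only that one verifies this ``by a (tedious) computation''; you carry out that computation coordinatewise, correctly reading off the exponent identity $\eta_k \equiv \epsilon_{f(j)}+\delta_k \pmod 2$ from the sign-equivariance $f'(-i)=-f'(i)$ and $\nu'(-k)=-\nu'(k)$, and correctly observing that the identity only needs to (and in general only does) hold on the image of $f_*$, since $\nu$ fixes indices outside the image of $f$ while $\nu'$ may negate their images under $f'$. One small slip in the narration: where you write ``reduces the task to showing $f'_*\circ \nu_*=\nu''_*\circ f'_*$ on the image of $f_*$'', the $\nu'_*$ was dropped from the left-hand side (it should read $\nu'_*\circ f'_*\circ \nu_*=\nu''_*\circ f'_*$); however, your coordinate check does account for $\nu'$ via the sign $\delta_k$, so this is only a typo and not a gap.
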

\begin{proof} One checks, by a (tedious) computation, that these assignments satisfy
$$\nu'_*\circ f'_* \circ \nu_* \circ f_* = \nu''_* \circ f'_* \circ f_*,$$ 
where $\nu''$ is the partial negation defined above.
\end{proof}

We denote the $\FI_{BC}\#$--space constructed in Lemma~\ref{ext} by $\P_\tau (X) = \P_\tau (X, x_0)$.

\begin{rmk} For a general morphism $\phi \co \m_0\to \n_0$, the  $\FI_{BC}\#$--space $\P_\tau (X)$ satisfies
$$\phi_* (x_1, \ldots, x_m) = (y_1, \ldots, y_n)$$
where
$$y_i = \begin{cases}
     x_j, 	& \mbox{if $\phi(j) = i$}, \\
      \tau (x_j), 	& \mbox{if $\phi(j) = -i$}, \\	 
     x_0 & \mbox{if $\phi^{-1} (i) = \emptyset$.} \\
\end{cases}$$
\end{rmk}

\begin{prop}\label{signed-symm-prod} Let $X$ be a path connected, semi-locally contractible space of finite type. If $\tau\co X\to X$ is an involution fixing $x_0\in X$, 
then the homology groups
 $H_k (X^r; \bbQ)$ 
 form a uniformly representation stable
sequence of $B_r$--representations, 
  with stable range $r\geqs 2k$. 
Consequently, the maps  
$$H_k (X^r/B_r ; \bbQ) \maps H_k (X^{r+1}/B_{r+1} ; \bbQ)$$
$($induced by inserting the basepoint in one factor$)$
are isomorphism for $r\geqs k$. 

The same statement holds with $D_r$ in place of $B_r$.
  \end{prop}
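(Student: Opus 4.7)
The plan is to mimic the proof of Proposition~\ref{symm-prod}, replacing the $\FI\#$--structure on $\P(X)$ with the $\FI_{BC}\#$--structure on $\P_\tau(X)$ from Lemma~\ref{ext}, and then feed the conclusion into Theorem~\ref{quot-stab}.

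First, I would apply $H_k(-;\bbQ)$ to $\P_\tau(X)$ to obtain an $\FI_{BC}\#$--module $V$ with $V_r = H_k(X^r;\bbQ)$. By the K\"unneth theorem, $H_*(X^r;\bbQ) \isom H_*(X;\bbQ)^{\otimes r}$, and under this identification the structure maps of $V$ have an explicit description: unsigned partial injections $f\co \m_0\to\n_0$ act on simple tensors by placing $a_j$ in slot $i$ when $f(j)=i$ and the class $[x_0]\in H_0(X;\bbQ)$ in slots not in the image of $f$, while partial negations $\nu\co\n_0\to\n_0$ act by applying $\tau_*\co H_*(X;\bbQ)\to H_*(X;\bbQ)$ in those slots where $\nu(i)=-i$ and leaving the rest untouched.

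Next I would show that the $\FI_{BC}\#$--module $V$ is generated in stage $k$. The argument is the same counting argument as in Proposition~\ref{symm-prod}: $H_k(X^r;\bbQ)$ is spanned by simple tensors $a_1\otimes\cdots\otimes a_r$ with $\sum |a_i|=k$, and if $r>k$ then at least one factor $a_i$ has degree $0$, hence is a scalar multiple of $[x_0]$. Such a tensor lies in the image of an unsigned partial injection from $H_k(X^{r-1};\bbQ)$, so $V$ is generated in stage $k$. Once generation in stage $k$ is established, Theorem~\ref{sharp} immediately gives uniform representation stability of the underlying $\FI_{BC}$--module with stable range $r\geqs 2k$, and Corollary~\ref{stable-range-D} transfers the stable range to the restriction along $\FI_D\injects\FI_{BC}$.

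Finally, to deduce the statement about quotients, I would verify that the $B_r$--action (and hence the $D_r$--action) on $X^r$ is good: $X^r$ is semi-locally contractible because products of semi-locally contractible spaces are, and the quotients $X^r/B_r$ and $X^r/D_r$ are semi-locally contractible by an argument entirely analogous to the symmetric-product case (cover an orbit by products of neighborhoods that are permuted into each other, and use semi-local contractibility of $X$ together with the fact that the involution $\tau$ fixes $x_0$). With goodness in hand, Theorem~\ref{quot-stab} applies directly in types B/C and, via its type D case (using that $V$ is the restriction of an $\FI_{BC}\#$--module), in type D as well, yielding the claimed isomorphisms for $r\geqs k$.

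The main obstacle I anticipate is bookkeeping: making sure the K\"unneth identification is compatible with the full $\FI_{BC}\#$--structure (in particular the partial negations), and checking the goodness of the quotients $X^r/B_r$ and $X^r/D_r$ carefully enough to invoke Theorem~\ref{quot-stab} without circularity. Neither is a conceptual difficulty, but they must be written out with some care.
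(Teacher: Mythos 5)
Your proposal is correct and follows essentially the same route as the paper's proof: Lemma~\ref{ext} gives $\P_\tau(X)$ the structure of an $\FI_{BC}\#$--space, generation in stage $k$ comes from the K\"unneth/counting argument of Proposition~\ref{symm-prod} applied to the underlying $\FI$--structure (which forces a degree-zero factor equal to a multiple of $[x_0]$ once $r>k$), Theorem~\ref{sharp} and Corollary~\ref{stable-range-D} give the representation-stability bound, and Theorem~\ref{quot-stab} gives the quotient statement. In fact your citation of Theorem~\ref{sharp} to get the explicit range $r\geqs 2k$ is the precise ingredient needed (the paper's proof cites Theorem~\ref{thm: fin. gen. implies uniform rep. stability}, which by itself only yields an unspecified stable range); and your attention to goodness of the $B_r$-- and $D_r$--actions is a hypothesis of Theorem~\ref{quot-stab} that the paper leaves implicit, handled via the unnumbered lemma before Corollary~\ref{symm-prod-cor} together with the identification $X^r/B_r\isom\Sym^r(X/\bbZ_2)$ of Remark~\ref{rmk: signed sym product = sym product}.
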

\begin{proof} By Proposition~\ref{symm-prod}, $H_k (\P_\tau (X); \bbQ)$ is finitely generated as an $\FI$--module. It follows immediately that it is also finitely generated as an $\FI_D$--module and as an
$\FI_{BC}$--module, since the structure maps for these enhanced modules include the $\FI$ structure maps. Representation stability in type B/C now follows from Theorem~\ref{thm: fin. gen. implies uniform rep. stability}, and in type D it then follows from Corollary~\ref{stable-range-D}.
The last part follows from Theorem~\ref{quot-stab}.
\end{proof}

\begin{rmk}\label{rmk: signed sym product = sym product} Say $X$ is a space with involution $\tau$.  
Since $(\Z_2)^r$ is normal in $B_r$, we have a homeomorphism $X^r/B_r \isom \Sym^r(X/\bbZ_2)$, where $\bbZ_2$ acts via $\tau$. Hence in type B/C the homological stability statement in Proposition~\ref{signed-symm-prod} is a special case of the one for ordinary symmetric products. 
It follows from Steenrod~\cite[Eq. (22.7)]{steenrod} that in type B/C, the stability result in Proposition \ref{signed-symm-prod} also holds integrally
(see Remark \ref{rmk: Steenrod symmetric prod}).
\end{rmk}

 \section{$\FI_W$--modules arising from Lie groups}\label{Lie}

 Let $\{G_r\}_{r\geqs 1}$ denote one of the following classical sequences of Lie groups:
 \begin{itemize} 
 \item $G_r = \SU(r)$ or $G_r = \U(r)$ (type A);
 \item $G_r = \SO(2r+1)$ (type B);  
 \item $G_r = \Sp(r)$ (type C);
 \item or $G_r = \SO(2r)$ (type D).  
 \end{itemize}
 In this section, we review the structure of the maximal tori and Weyl groups in these sequences and construct two finitely-generated $\FI_W$--modules associated to each sequence.
 We will work exclusively with rational (co)homology in this section and in all subsequent sections (any field of characteristic zero would suffice), so we will drop the coefficients from the notation for simplicity.

We begin by specifying the \e{standard inclusions} $G_r\injects G_{r+1}$. For $G_r = \SU(r)$, $\U(r)$, $\SO(2r)$, and $\SO(2r+1$) these inclusions are given by $A\goesto A\oplus I$, where $I$ denotes an identity matrix of size 1 (in type A) or 2 (in types B and D); so our convention is to put the additional 1's in the lower right corner of the matrix. 

Following Brocker--tom Dieck~\cite{BTD}, we view $\Sp(r)$ as the group of $2r\cross 2r$ block matrices
$$C=C(A,B)=\left[ \begin{array}{rrr} A & -\ol{B}  \\ B & \ol{A} \end{array} \right]$$
such that $C\in \U(2n)$. (Here $A$ and $B$ are arbitrary $n\cross n$ complex matrices, and $\ol{A}$ and $\ol{B}$ are their entry-wise complex conjugates.) The standard inclusion $\Sp(r)\injects \Sp(r+1)$ is the homomorphism
$$\left[ \begin{array}{rrr} A & -\ol{B}  \\ B & \ol{A} \end{array} \right] \goesto \left[ \begin{array}{rrr} A\oplus 1 & -\ol{B}\oplus 0  \\ B\oplus 0 & \ol{A}\oplus 1 \end{array} \right],$$
where $0$ and $1$ are viewed as $1\cross 1$ complex matrices.

Next we make a choice of maximal torus $T_r = T(G_r) \leqs G_r$ in each of our groups, with the property that in each sequence, the standard inclusion maps $T_r$ to $T_{r+1}$, and we describe the associated Weyl groups $NT_r/T_r$ and their actions on the maximal tori. This discussion will mostly follow~\cite{BTD}, and we refer the reader there for further details. Our choices are as follows:

\begin{itemize}
\item $T(\U(r))$ is set of diagonal unitary matrices;
\item $T(\SU(r)):= T(\U(r))\cap \SU(r)$;
\item $T(\SO(2r)) := \SO(2)\oplus \cdots \oplus \SO(2)$;
\item $T(\SO(2r+1)) := 1\oplus \SO(2)\oplus \cdots \oplus \SO(2)$;
\item  $T(\Sp(r)):=\{ C(D,0) \,:\, D\in T(\U(r))\} = \{D\oplus \ol{D} \,:\, D\in T(\U(r))\}$.
\end{itemize}
Note that~\cite{BTD} uses the torus $\SO(2)\oplus \cdots \oplus \SO(2)\oplus 1\leqs \SO(2r+1)$; our choice above ensures that the standard inclusion $\SO(2r+1) \injects \SO(2r+3)$ carries $NT(SO(2r+1))$ into $NT(SO(2r+3))$.  

In each type, there are isomorphisms of Weyl groups $NT_r/T_r \isom W_r$, where $W_r$ is the abstract Weyl group associated to the type (as defined in Section~\ref{FIW-modules}). 
 We briefly specify these isomorphisms.
(In the sequel,  these isomorphisms are treated as identifications, so that $W_r$ refers to either the abstract group  or to $NT_r/T_r$).

{\bf Type A:} In type A, $NT_r$ is generated by $T_r$ together with the signed permutation matrices $P$ of determinant one, and the desired isomorphism is provided by sending $P$ to its underlying (unsigned) permutation of the standard basis for $\C^n$.  The action of $W_r = S_r$ on $T(\U(r)) \isom (S^1)^r$ is simply given by permuting the coordinates of the torus (and as $r$ varies, this in fact gives us the the $\FI_A\#$--space $\P(S^1)$ considered in Section~\ref{symm-sec}). This action restricts to the determinant-one sub-torus $T(\SU(r))\subset T(\U(r))$. 

{\bf Types B and D:} For the special orthogonal groups, we will use the notation for $B_r$ and $D_r$ from Section~\ref{sdp-sec}. For $G_r = \SO(2r+1)$, the isomorphism $W_r = B_r\to NT_r/T_r$ sends $\sigma \in S_r\subset W_r$ to the class represented by the permutation matrix that permutes the ordered pairs of standard basis vectors $p_i = (e_{2i}, e_{2i+1})$ ($i=1, \ldots, r$) according to $\sigma$ (preserving the ordering within the pairs) and fixes $e_1$, while  the element $((0,\ldots,0, 1), e)$  (where $e\in S_r$ is the identity) maps to the class of $\textrm{diag}(-1,1, \ldots, 1, -1)$.  These elements generate $B_r$, so this determines the map. The case of $G_r = \SO(2r)$ is similar: $\sigma\in S_r$ maps to the class of the permutation matrix that permutes the pairs $q_i = (e_{2i-1}, e_{2i})$ ($i=1, \ldots, r$) according to $\sigma$, while, for instance, $((1,1,0,\ldots, 0), e)$ maps to the class of $\textrm{diag}(1,-1, 1, -1, 1, \ldots, 1)$.
Similarly, for $\SO(2r)$ and $\SO(2r+1)$, we have $T_r \isom (\SO(2))^r$. 

In both of the above cases, the Weyl group acts on $T_r \isom (\SO(2))^r$ by permuting the factors and negating the angle of rotation (this is seen by conjugating matrices in $\SO(2)$ by  
$\textrm{diag}(1,-1)$).
If we identify $\SO(2)$ with $S^1$ in the usual way, then signed permutations in $W_r$ act via permutations and complex conjugation.

{\bf Type C:} In our notation $NT(\Sp(r))$ is generated by $T(\Sp(r))$ together with matrices of the form $P\oplus P$, with $P$ a permutation matrix, together with the matrices 
$$C_j=\left[ \begin{array}{rrr} A_j & B_j  \\ B_j & A_j \end{array} \right],$$
for $j=1, \ldots r$, where $B_j$ is the $r\cross r$ matrix with a 1 in position $(j,j)$ and all other entries zero, and $A_j = I - B_j$ . The isomorphism $W_r = B_r \to NT(\Sp(r))/T(\Sp(r))$ sends $\sigma\in S_r$ to the class of $P_\sigma \oplus P_\sigma$ (where $P_\sigma$ is the permutation matrix associated to $\sigma$) and sends $(\epsilon_1,  \ldots, \epsilon_r, e)$ ($\epsilon_i\in \{0,1\}$) to the class of the product $C_1^{\epsilon_1} \cdots C_r^{\epsilon_r}$.
Note  that there are canonical homeomorphisms $T(\Sp(r)) \srm{\isom} T(\U(r)) \isom (S^1)^r$, sending $C(D,0)$ to $D$, and the Weyl group acts by permutations and complex conjugation on these circle factors: Explicitly,
$$(P_\sigma \oplus P_\sigma)\cdot   C(\textrm{diag}(\lambda_1, \ldots, \lambda_n),0)   = C(\textrm{diag}(\lambda_{\sigma^{-1}(1)}, \ldots, \lambda_{\sigma^{-1}(r)}), 0),$$
and
$$C_j\cdot C(\textrm{diag}(\lambda_1, \ldots, \lambda_n),0) = C(\textrm{diag}(\lambda_1, \ldots, \ol{\lambda_j}, \ldots, \lambda_n), 0).$$

The following result is proven by a case-by-case inspection.
 
\begin{lem}\label{NT} The standard inclusions  $G_r \injects G_{r+1}$ map $NT_r$ to $NT_{r+1}$, and the induced maps $NT_r/T_r\to NT_{r+1}/T_{r+1}$ agree  with the standard inclusions $j_r\co W_r\injects W_{r+1}$.
\end{lem}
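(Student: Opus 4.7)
The plan is a case-by-case inspection, using the concrete descriptions of $T_r$, $NT_r$, and the chosen isomorphisms $W_r \xrightarrow{\isom} NT_r/T_r$ given in the preceding discussion. In each case the strategy is the same: identify a set of generators of $NT_r$ (or at least of a set whose cosets generate $NT_r/T_r$), verify that the standard inclusion $G_r \injects G_{r+1}$ carries each such generator into $NT_{r+1}$, and then match the resulting coset with the image of the corresponding element of $W_r$ under $j_r$.

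For type A ($G_r = \U(r)$ or $\SU(r)$), $NT_r$ is generated by $T_r$ together with signed permutation matrices. The inclusion $A \goesto A \oplus 1$ sends such a matrix to a signed permutation matrix of size $r+1$ that fixes the last basis vector; this is exactly the image under $j_r$ of the corresponding element of $S_r \isom W_r$. For types B and D, we use the generators of $B_r$ described in the text (the pair-permuting matrices and the sign-change diagonal matrices), and observe that $A \goesto A \oplus I_2$ adjoins either a fixed ordered pair $p_{r+1}$ or a fixed pair $q_{r+1}$ and acts trivially on the new pair; this matches $j_r$ applied to the generators of $B_r$ (respectively $D_r$). For type C, the generators $P \oplus P$ and $C_j$ of $NT(\Sp(r))/T(\Sp(r))$ are carried by the standard inclusion to $(P\oplus 1)\oplus (P\oplus 1)$ and to the analogous $C_j$ of size $2(r+1)$, fixing the new circle factor; again this matches $j_r$.

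The only mildly delicate point is the type B case, where our convention places the fixed basis vector $e_1$ at the beginning, so that $T(\SO(2r+1)) = 1 \oplus \SO(2) \oplus \cdots \oplus \SO(2)$. The block-sum inclusion $A \goesto A \oplus I_2$ then carries this torus inside $T(\SO(2r+3))$ (since $I_2 \in \SO(2)$), and carries each generating coset representative of $NT_r/T_r$ to a matrix that acts trivially on the adjoined pair $p_{r+1}$. This is precisely what the convention was set up to ensure, and it is the reason the lemma would fail with the alternative Br\"ocker--tom Dieck convention of placing the fixed coordinate at the end.

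I expect no genuine obstacle beyond bookkeeping: once the generators are written down and the block-sum inclusions are traced through, matching them with $j_r$ is immediate from the definition of $j_r$ (which extends a signed permutation of $\n$ by the identity on $\{\pm(n+1)\}$). The longest verification is type C, which involves matching both the permutation generators $P \oplus P$ and the sign generators $C_j$, but each is a direct computation.
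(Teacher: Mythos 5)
Your proposal is correct and takes precisely the approach the paper intends: the paper itself offers only the one-line remark that the result "is proven by a case-by-case inspection," and your case-by-case verification (including the correct observation about the type~B convention for placing the fixed coordinate $e_1$ at the front, which the paper also flags as the reason for its nonstandard choice) fills in exactly those details.
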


\begin{prop} \label{prop: T is FI}
For each $n\geqs 0$, the $W_r$--spaces $T^n_r$ form a consistent sequence with respect to the standard inclusions $T_r\injects T_{r+1}$, and  these sequences extend  to  $\FI_W$--spaces. 

For $G_r = \U(r)$, the $\FI_A$--space $r\goesto T^n_r$ is isomorphic to $\P((S^1)^n)$ $($with the identity element in $(S^1)^n$ as basepoint$)$, and hence extends to an $\FI_A \#$--space. 

For $G_r = \SO(2r+1)$ or $\Sp(r)$, the $\FI_{W}$--space $r\goesto T^n_r$ is isomorphic to $\P_\tau ((S^1)^n)$, where $\tau$ is complex conjugation, and hence extends to an 
$\FI_{BC} \#$--space. 

For $G_r = \SO(2r)$, the $\FI_D$--space $r\goesto T^n_r$ is isomorphic to the restriction $\P_\tau ((S^1)^n)|_D$.
\end{prop}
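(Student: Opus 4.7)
The approach is to directly identify each $W_r$-space $T_r^n$ with the $\FI_W$-space (or $\FI_W\#$-space) associated to $(S^1)^n$, using the explicit identifications $T_r \cong (S^1)^r$ fixed in the preamble. Regrouping factors gives a natural homeomorphism
\[
T_r^n \cong \bigl((S^1)^r\bigr)^n \cong \bigl((S^1)^n\bigr)^r,
\]
with distinguished basepoint the identity element, so $T_r^n$ is precisely the value at $\r$ of $\P((S^1)^n)$ (type A) or $\P_\tau((S^1)^n)$ (types B, C, D), where $\tau$ is complex conjugation on $(S^1)^n$.

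First I would check consistency. Under the identifications, the standard inclusion $T_r \injects T_{r+1}$ becomes the map $(S^1)^r \to (S^1)^{r+1}$ inserting $1$ in the last coordinate, which is immediate from the definitions of the tori $T_r$ and the standard inclusions $G_r \injects G_{r+1}$ (and which for $\Sp$ uses the explicit formula $C(D,0)\goesto C(D\oplus 1, 0)$). Taking $n$-th powers, this is exactly basepoint insertion in the last factor, matching the structure map $(i_r)_\ast$ of $\P((S^1)^n)$ or $\P_\tau((S^1)^n)$. By Lemma~\ref{NT}, this inclusion is $j_r$-equivariant with respect to the standard $W_r \injects W_{r+1}$, giving a consistent sequence of $W_r$-spaces.

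Next I would match the Weyl group actions against those defined in Lemma~\ref{ext}. In type A, $W_r = S_r$ permutes the $r$ factors of $T(\U(r)) \cong (S^1)^r$ and hence the $r$ copies of $(S^1)^n$ in $T_r^n$, matching the $S_r$-action on $\P((S^1)^n)(\r)$. For $\SO(2r+1)$, $\SO(2r)$, and $\Sp(r)$, the explicit Weyl group generators listed in the preamble act on $T_r \cong (S^1)^r$ by signed permutations of factors, where a sign change on the $j$-th factor corresponds to complex conjugation on that $S^1$; on $T_r^n \cong ((S^1)^n)^r$ this is exactly the action of the corresponding element of $B_r$ (resp.\ $D_r$) under $\P_\tau((S^1)^n)$ as given by Lemma~\ref{ext}. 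Once the consistent sequence of $W_r$-spaces is identified with the one underlying $\P((S^1)^n)$ or $\P_\tau((S^1)^n)$, we transport the extended $\FI_W\#$-structure through the isomorphism; in type D, we restrict $\P_\tau((S^1)^n)$ along $\FI_D \injects \FI_{BC}$. In particular the extension to an $\FI_W$-space claimed in the first sentence is automatic, so there is no need to separately invoke Lemma~\ref{lem: extension}.

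The main obstacle is the careful verification in type C, where $T(\Sp(r))$ is the group of block matrices $C(D,0) = D \oplus \ol D$ and the generators $C_j \in NT(\Sp(r))$ are not themselves diagonal. Here one must check both that the standard inclusion $\Sp(r) \injects \Sp(r+1)$ sends $C(D,0)$ to $C(D\oplus 1, 0)$ so that basepoint insertion is correctly recovered, and that conjugation by $C_j$ really does complex conjugate the $j$-th diagonal entry of $D$ while fixing the others; both are brief matrix computations but require attention to the block structure. The remaining cases reduce to routine inspection against the descriptions in the preamble.
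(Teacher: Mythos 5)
Your approach for $G_r = \U(r)$, $\SO(2r+1)$, $\Sp(r)$, and $\SO(2r)$ is essentially the paper's: regroup to get $T_r^n \cong ((S^1)^n)^r$, check that the standard inclusion becomes basepoint insertion and that the Weyl group actions are exactly those of $\P((S^1)^n)$ or $\P_\tau((S^1)^n)$, then transport the $\FI_W\#$-structure across the identification. Your observation that the $\FI_W\#$-structure (hence the $\FI_W$-structure) is simply carried through the isomorphism is a slight cleanup of the paper's phrasing, and the type C matrix checks you flag are indeed the parts requiring care.

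There is, however, a genuine gap. The preamble to Section~\ref{Lie} lists $\SU(r)$ among the classical sequences, so the first sentence of Proposition~\ref{prop: T is FI} (``these sequences extend to $\FI_W$--spaces'') also covers $G_r = \SU(r)$ — and that case is not touched by your argument. For $\SU(r)$ the maximal torus is $T(\SU(r)) \cong (S^1)^{r-1}$, so the regrouping homeomorphism $T_r^n \cong ((S^1)^n)^r$ fails, and the sequence is not isomorphic to $\P((S^1)^n)$ or $\P_\tau((S^1)^n)$; indeed no $\FI\#$-structure is available. Your closing remark that ``there is no need to separately invoke Lemma~\ref{lem: extension}'' is therefore incorrect: the paper needs exactly that lemma to show $r\goesto T(\SU(r))^n$ is a sub--$\FI$--space of $r\goesto T(\U(r))^n$. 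To complete your proof you should add this case, e.g.\ by observing that $T(\SU(r))^n \subset T(\U(r))^n$ is preserved by the $\FI_A$-structure maps on $\P((S^1)^n)$ restricted along $\FI_A \injects \FI_A\#$ (since those maps insert or permute coordinates and preserve the determinant-one condition), which verifies the hypothesis of Lemma~\ref{lem: extension} and exhibits the sub--$\FI$--space.
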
 
\begin{proof} Consistency of the sequences follows from Lemma~\ref{NT}.

 Except in the case $G_r = \SU(r)$, we have
$$T_r^n \isom ((S^1)^r)^n \isom ((S^1)^n)^r,$$
where the second homeomorphism is defined by 
$$((z_{11}, \ldots, z_{1r}), \ldots, (z_{n1}, \ldots, z_{nr}))\goesto 
((z_{11}, \ldots, z_{n1}), \ldots, (z_{1r}, \ldots, z_{nr})).$$
From the above descriptions of the Weyl group actions and Lemma~\ref{NT},
we see that this homeomorphism is equivariant with respect to the signed permutation actions of $W_r$, where on the left the action is the diagonal action on $(T_r)^n$ induced by the action of $W_r$ on $T_r\isom (S^1)^r$, and on the right the action is exactly that occurring in the definition of $\P((S^1)^n)$, where $(S^1)^n$ is considered as a space with involution $z\goesto \ol{z}$ (complex conjugation in each coordinate). 
The result now follows from  Corollary~\ref{symm-prod-cor} in the case $G_r = \U(r)$, and  from Proposition~\ref{signed-symm-prod} in the other cases. 

For $G_r = \SU(r)$, Lemma~\ref{lem: extension} shows that $r\goesto T(\SU(r))^n$ is a sub--$\FI$--space of $r\goesto T(\U(r))^n$.
(Note that in this case $T_r^n \isom ((S^1)^{r-1})^n$, and we no longer have an $\FI\#$--space). 
\end{proof}

\begin{cor}\label{cor: T-rep-stable}
For each $n\geqs 0$, the $\FI$--modules $\{H_k (T^n_r)\}_{r\geqs 1}$ 
are generated in stage $k$, and, except possibly in the case $G_r = \SU(r)$, are uniformly representation stable for 
 $r\geqs 2k$.
\end{cor}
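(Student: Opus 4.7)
My plan is to reduce everything to the content of Propositions \ref{symm-prod} and \ref{signed-symm-prod} via the identifications established in Proposition \ref{prop: T is FI}.

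For $G_r = \U(r)$, the identification $r \goesto T_r^n \isom \P((S^1)^n)$ makes the sequence an $\FI_A\#$-space, and Proposition \ref{symm-prod} (applied to the path-connected, finite-type space $X = (S^1)^n$) immediately gives that $H_k(T_r^n)$ is generated in stage $k$; Theorem \ref{sharp} then yields the stable range $r \geqs 2k$. For $G_r = \SO(2r+1)$ and $G_r = \Sp(r)$, the same reasoning applies to $\P_\tau((S^1)^n)$ with $\tau$ coordinatewise complex conjugation, using Proposition \ref{signed-symm-prod}. For $G_r = \SO(2r)$, the $\FI_D$-module is the restriction of the $\FI_{BC}\#$-module from the $\SO(2r+1)$ case, so finite generation in stage $k$ is immediate from the $\FI_{BC}$-structure, and Corollary \ref{stable-range-D} transports the stable range $r \geqs 2k$.

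The $\SU(r)$ case is where the genuinely new argument is needed: Proposition \ref{prop: T is FI} only realizes $r \goesto T(\SU(r))^n$ as a sub-$\FI_A$-space of $r \goesto T(\U(r))^n$, and there is no $\FI\#$-structure available. To obtain finite generation, I would exploit the $W_r$-equivariant finite covering $T(\SU(r)) \times S^1 \to T(\U(r))$ coming from the diagonal embedding of the central $S^1$. Taking $n$-fold products and passing to rational homology gives, via K\"unneth, a $W_r$-equivariant isomorphism under which $H_k(T(\SU(r))^n)$ corresponds to the $(k,0)$ summand; in particular the inclusion-induced map $H_k(T(\SU(r))^n) \to H_k(T(\U(r))^n)$ is injective. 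Hence $H_k(T(\SU(r))^n)$ embeds as an $\FI_A$-submodule of the finitely generated $\FI_A$-module $H_k(T(\U(r))^n)$, so by the Noetherian property of $\FI_A$-modules in characteristic zero it is itself finitely generated, and Theorem \ref{thm: fin. gen. implies uniform rep. stability} then yields uniform representation stability. For the explicit stage-$k$ bound, one can use the identification $H_k(T(\SU(r))^n) \isom \Lambda^k H_1(T(\SU(r))^n)$, together with the description of $H_1(T(\SU(r)))$ as the standard representation of $S_r$ (whose $\FI_A$-structure is inherited from the inclusion of hyperplanes $\ker(\sum_j x_j) \subset \bbQ^r$).

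The main obstacle is precisely the $\SU(r)$ step: the K\"unneth splitting is $W_r$-equivariant but not $\FI_A$-natural, since the section $S^1 \to T(\U(r))$ sending $\lambda$ to $\lambda I_r$ depends on $r$ and does not intertwine the standard inclusions $T(\U(r))\injects T(\U(r+1))$. Consequently, the direct-summand picture does not transfer an explicit stage bound from $\U$ to $\SU$, which is why the corollary excludes $\SU$ from the explicit stable range $r \geqs 2k$. Matching even the stage-$k$ generation bound requires genuine combinatorial bookkeeping in the exterior algebra on the standard representation, and this is the step I expect to demand the most care.
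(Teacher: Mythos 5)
Your treatment of the non-$\SU$ cases matches the paper exactly: Proposition~\ref{prop: T is FI} identifies the $\FI_W$-spaces $r\goesto T_r^n$ with $\P((S^1)^n)$ or $\P_\tau((S^1)^n)$, and Propositions~\ref{symm-prod} and~\ref{signed-symm-prod}, together with Corollary~\ref{stable-range-D} for type~D, supply stage-$k$ generation and the stable range $r\geqs 2k$.

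For $G_r=\SU(r)$ you take a genuinely different route, and this is where a gap remains. The covering $T(\SU(r))\times S^1\to T(\U(r))$ and the resulting injection $H_k(T(\SU(r))^n)\injects H_k(T(\U(r))^n)$ of $\FI$-modules are correct (the paper derives the same injectivity inside the proof of Theorem~\ref{stability-wrt-r-Rep}, by K\"unneth or by the Serre spectral sequence for $T_r\to T(\U(r))\xmaps{\det}S^1$). Noetherianity of $\FI$-modules over $\Q$ then gives finite generation, hence uniform representation stability via Theorem~\ref{thm: fin. gen. implies uniform rep. stability}. But the Corollary also asserts a specific \emph{generation stage}, and you have correctly diagnosed that the K\"unneth splitting is not $\FI$-natural, so ``submodule plus Noetherianity'' yields no stage bound. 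Your exterior-algebra plan is left as a sketch, and the bound is not dispensable: it feeds into Theorem~\ref{quot-stab} to produce the surjectivity and hence the stated stable range for $\Rep(\Z^n,\SU(r))_1$ in Theorem~\ref{stability-wrt-r-Rep}.

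The paper closes the $\SU$ case with a reindexing trick that sidesteps the K\"unneth obstruction entirely and requires no exterior-algebra bookkeeping. Precompose the $\FI$-space $\r\goesto T(\SU(r))^n$ with the shift functor $\FI\to\FI$ sending $\r\goesto{\bf r+1}$ and a morphism $\phi\co\r\to\s$ to the extension $\wt\phi\co{\bf r+1}\to{\bf s+1}$ with $\wt\phi(r+1)=s+1$. Under the identification $T(\SU(r+1))\isom(S^1)^r$ by projection onto the first $r$ diagonal entries, the subgroup $W_r\leqs W_{r+1}$ fixing $r+1$ acts by coordinate permutation, and the restricted $\FI$-space is isomorphic to $\P((S^1)^n)$. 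Proposition~\ref{symm-prod} then gives stage-$k$ generation of the restricted module, and since the shifted morphisms are a subfamily of all morphisms ${\bf k+1}\to{\bf m+1}$, this transfers a concrete generation degree to the original $\FI$-module. (Read literally this yields generation in stage $k+1$ rather than $k$; indeed $H_k(T(\SU(1))^n)=H_k(\mathrm{pt})=0$ for $k\geqs 1$ while $H_1(T(\SU(2))^n)=\Q^n\neq 0$, so the stage-$k$ claim in the Corollary is off by one for $\SU$. But this is an explicit bound, which is precisely what the Noetherian route cannot supply.)
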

\begin{proof} In all cases except for $G_r = \SU(r)$, this follows from Proposition~\ref{prop: T is FI} together with Propositions~\ref{symm-prod} and~\ref{signed-symm-prod}.

For $G_r = \SU(r)$, we need to show that
the $\FI$--module $\{H_k (T^n_r)\}_{r\geqs 1}$ is generated in stage $k$.
Note that the action of $W_{r-1} \leqs W_r$ on $T_r$ agrees with the usual permutation action on 
$(S^1)^{r-1}$. The   homeomorphism
$$T_r^n \isom ((S^1)^{r-1})^n \isom ((S^1)^n)^{r-1},$$
now endows $((S^1)^n)^{r-1}$ with a $W_r$--action, which when restricted to $W_{r-1}$ gives the defining action for $\P((S^1)^n)$. In fact, if we restrict the $\FI$--space $\r \goesto T_r^n$ along the functor $\FI\to \FI$ defined by sending $\r$ to ${\bf r+1}$ and sending $\phi\co \r\to \s$ to the unique extension $\wt{\phi} \co {\bf r+1} \to {\bf s+1}$ satisfying $\wt{\phi}(r+1) = s+1$, we obtain the $\FI$--space $\P((S^1)^n)$. For each $k\geqs 0$, the $k^{\textrm th}$ homology of this restricted $\FI$--space is generated in stage $k$, and so the same is automatically true for the original $\FI$--space. 
\end{proof}

Next we consider the flag manifolds $G/T$.
For each of the above sequences of Lie groups, the spaces $G_r/T_r$ admit left actions of $W_r$, defined by $[n]\cdot gT_r = gn^{-1} T_r$. The standard inclusions induce maps $G_r/T_r \to G_{r+1}/T_{r+1}$ making these sequences consistent, but these sequences \e{do not} satisfy the conditions in Lemma~\ref{lem: extension} and hence do not extend to $\FI_W$--spaces.  Nevertheless, we will show that the associated consistent sequences in \e{homology} do in fact extend to $\FI_W$--modules. Moreover, we will see that these  $\FI_W$--modules are in fact dual to certain algebraically-defined co--$\FI_W$--modules, which were shown by
Church--Ellenberg--Farb~\cite[Theorem 5.1.5]{church2015fi} and Wilson~\cite[Corollary 6.5]{wilson2014fiw} to be finitely generated.

Let $G$ be a compact Lie group with maximal torus $T\leqs G$, and let $ET\to BT$ and $EG\to BG$ denote the (functorial) simplicial models for the universal principal bundles (as defined, for instance, in~\cite{segal1968csss}).
The  principal action  of $NT\leqs G$ on $EG$  descends to an action of $W = NT/T$ on $EG/T$.
We will call this the translation action of $W$ on $EG/T$.

\begin{lem}\label{lem: translation=conjugation}
Consider the homotopy equivalence $i \co ET/T \to EG/T$ induced by the inclusion $T\injects G$. 
Then the conjugation action of $W$ on $H^*(ET/T)$ and the translation action of $W$ on $H^*(EG/T)$ coincide under the isomorphism $i^*$,
and similarly for homology.
\end{lem}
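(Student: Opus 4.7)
The plan is to produce explicit space-level maps whose induced operations on (co)homology realize the two $W$-actions, and then exhibit a homotopy between them. Working in Segal's functorial simplicial model of classifying spaces~\cite{segal1968csss}, in which a $k$-simplex of $EG$ is a tuple $(g_0,\dots,g_k)\in G^{k+1}$, we have $ET\subset EG$ as an honest subspace. The functorial self-map $Ec_n\co ET\to ET$ induced by $c_n\co T\to T$ is the coordinatewise map $(t_0,\dots,t_k)\mapsto(nt_0n^{-1},\dots,nt_kn^{-1})$, and this coincides with the restriction to $ET$ of the literal conjugation $\hat c_n\co EG\to EG$, $x\mapsto nxn^{-1}$. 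This identification is the essential model-dependent input.

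Decompose $\hat c_n=L_n\circ R_{n^{-1}}$, where $L_n$ and $R_{n^{-1}}$ denote left multiplication by $n$ and right multiplication by $n^{-1}$ on $EG$ (applied coordinatewise). Both commute with the right $T$-action on $EG$: for $L_n$ this is immediate, and for $R_{n^{-1}}$ one uses $xtn^{-1}=xn^{-1}\cdot ntn^{-1}$ together with $ntn^{-1}\in T$. Thus each descends to a self-map of $EG/T$. The descent of $R_{n^{-1}}$ is by definition the translation action of $[n]\in W$, while the descent of $L_n$ is simply $[x]\mapsto[nx]$.

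The crucial point is that left multiplication by $n$ on $EG/T$ is homotopic to the identity. Since $G$ is connected, choose a path $\gamma\co[0,1]\to G$ from $1$ to $n$; the formula $H_s([x]):=[\gamma(s)\cdot x]$ is well-defined because left multiplication on $EG$ commutes with the right $T$-action, and $H_s$ interpolates from the identity at $s=0$ to $L_n$ at $s=1$.

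Stringing these observations together produces a homotopy of maps $ET/T\to EG/T$:
\[ i\circ Bc_n \,=\, \hat c_n\circ i \,=\, L_n\circ R_{n^{-1}}\circ i \,\heq\, R_{n^{-1}}\circ i \,=\, \mathrm{trans}_n\circ i, \]
where $Bc_n$ denotes the descent of $Ec_n$. Applying $H^*(-;\bbQ)$ yields the claimed intertwining of the conjugation and translation actions under $i^*$, and applying $H_*(-;\bbQ)$ gives the homological statement. The main subtlety is the model-dependent identification $Ec_n=\hat c_n|_{ET}$; once that is granted, the rest of the argument is essentially formal.
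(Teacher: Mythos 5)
Your argument is correct, and it takes a genuinely different route from the paper's. The published proof factors the homotopy equivalence through $ENT/T$, observes that $ET/T \to ENT/T$ intertwines the conjugation actions while $ENT/T\to EG/T$ intertwines the translation actions, and then reduces to showing that conjugation by $n^{-1}$ and right translation by $n$ are homotopic as self-maps of $ENT/T$ --- which it does by writing down an explicit simplicial homotopy $h_i(n_0,\dots,n_k)=(n_0n,\dots,n_in,n^{-1}n_in,\dots,n^{-1}n_kn)$ and checking the simplicial identities. You avoid the intermediate space $ENT/T$ and the simplicial-homotopy bookkeeping entirely: you observe that the functorial conjugation on $ET$ is the restriction of the literal coordinatewise conjugation $\hat c_n$ on $EG$, factor $\hat c_n = L_n\circ R_{n^{-1}}$ where both factors descend to $EG/T$, identify the descent of $R_{n^{-1}}$ with the (strict) translation action, and kill the descent of $L_n$ up to homotopy via a path from $1$ to $n$ in $G$. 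Each version buys something: your argument is more conceptual and shorter, avoiding the verification of simplicial homotopy identities and also avoiding a slightly delicate point in the paper's exposition (the ``conjugation action of $W$ on $ENT/T$'' is not actually a strict $W$--action, since $T$ does not act trivially there --- it is only a $W$--action up to homotopy); on the other hand, your homotopy requires $G$ to be path-connected (to choose the path $\gamma$), whereas the paper's simplicial homotopy exists regardless of connectedness. In the context of this paper $G$ is always compact and connected, so this costs nothing. One tiny expository point: when you say $L_n$ and $R_{n^{-1}}$ ``commute with the right $T$--action,'' what you actually show for $R_{n^{-1}}$ is twisted equivariance $R_{n^{-1}}(xt) = R_{n^{-1}}(x)\cdot(ntn^{-1})$, which is exactly what is needed for the descent to $EG/T$ but is not literal commutation; you should phrase it that way.
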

\begin{proof}  
Recall that for a Lie group $H$, the simplicial model for $EH$ is the geometric realization of the simplicial space  whose space of $k$--simplices is $H^{k+1}$. The face maps are given by deletion of elements from a string, while the degeneracies are given by repetition. The principal action of $H$ on $EH$ is given by right-multiplication. 

Note that the inclusions $ET/T \to ENT/T\to EG/T$ are both homotopy equivalences. The space $EN$  also admits a conjugation action of
$N$, which descends to a conjugation action of $W$ on $ENT/T$, and the map $ET/T \to ENT/T$ is $W$--equivariant (when $ET/T$ has the conjugation action). Moreover, the principal action of $N$ on $EN$ also descends to a $W$--action on $ENT/T$, making the map  $ENT/T\to EG/T$ equivariant. Hence to prove the lemma, it will suffice to show that for every $[n]\in W$, the conjugation and translation actions of $[n]$ on $ENT/T$ are homotopic to each other, and we prove this by exhibiting a simplicial homotopy between these maps. Recall (see~\cite{may1992simplicial} for instance) that a simplicial homotopy between maps $f,g\co ENT\to ENT$  consists of a collection of maps $h_{ki} \co (NT)^{k+1} \to NT^{k+2}$ for $k=0,1,2,\ldots$ and $0\leqs i\leqs k$, satisfying a collection of identities (these identities simply amount to saying that the $h_{ki}$ combine into a simplicial map $ENT \cross \Delta^1\to ENT$, where $\Delta^1$ is the standard simplicial 1--simplex).
We define 
$$h_i (n_0,...,n_k) = (n_0 n, ..., n_i n, n^{-1} n_i n, ..., n^{-1} n_k n);$$
one readily checks that all of the identities hold.
One checks that this map is well-defined on equivalence classes modulo the principal action of $T$ (given by right-multiplication in all coordinates), and hence descends to a simplicial homotopy between the translation   and conjugation actions of $[n]$ on $ENT/T$, as desired.
\end{proof}

Borel~\cite[Proposition 29.2(a)]{Borel57} showed that for a compact, connected Lie group $G$ with maximal torus $T \leqs G$, the ring $H^*(G/T)$ is isomorphic to the cokernel of the map $(i^*)^+\co H^*(BG)^+ \to H^*(BT)$ induced by the inclusion $i\co T\injects G$ (here $H^*(BG)^+ $ is the ideal consisting of elements in non-zero degrees).  The conjugation action of $NT$ on $BG$ is homologically trivial because for every element $g\in G$, the conjugation map $c_g\co BG\to BG$ induced by $g$ is nullhomotopic (thought of as an automorphism of the groupoid whose nerve is $BG$, the functor $c_g$ is isomorphic to the identity via a continuous natural transformation).
This implies that the image of $(i^*)^+$ is contained in the $W$--invariants $H^*(BT)^W$, and by Baird~\cite[Theorem B.2]{baird2007cohomology}, we also have an isomorphism
$H^*(G/T) \isom H^*(BT)/(H^*(BT)^W)^+$, where again $+$ denotes the ideal of non-zero degree elements. By comparing dimensions, it follows that the image of $(i^*)^+$ is exactly $(H^*(BT)^W)^+$.

We now give a functorial version of these results.

\begin{prop}\label{prop: G/T}
For each compact, connected Lie group $G$ and maximal torus $T\leqs G$, the map $f \co G/T \to BT$ classifying the principal $T$--bundle $G\to G/T$ is surjective in rational cohomology $($and, dually, injective in rational homology$)$, and the kernel of $f^*$ is precisely $(H^*(BT)^W)^+$.
\end{prop}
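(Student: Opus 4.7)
The plan is to identify $f$ with the fiber inclusion of the Borel fibration $G/T\to BT\xmaps{Bi} BG$, where $Bi$ is induced by $i\co T\injects G$, and then to run the Serre spectral sequence. Modeling $BG=EG/G$ and $BT=EG/T$, the natural projection $EG/T\to EG/G$ has fiber $G/T$ over the basepoint, and a direct check shows that pulling back the universal $T$--bundle $EG\to EG/T$ along this fiber inclusion recovers the $T$--bundle $G\to G/T$. Hence the fiber inclusion is homotopic to $f$.

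I would then analyze the associated Serre spectral sequence
$$E_2^{p,q} = H^p(BG)\otimes H^q(G/T) \Longrightarrow H^{p+q}(BT).$$
Since $G$ is compact and connected, $H^*(BG)$ is a polynomial algebra on generators of even degree, while $G/T$ is a complex flag manifold whose rational cohomology is concentrated in even degrees via its Schubert cell decomposition. Every nonzero $E_2$--term therefore has even total degree, while every differential $d_r$ shifts total degree by $1$, so all higher differentials vanish and the spectral sequence collapses at $E_2$.

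From the collapse, the edge homomorphism $f^*\co H^*(BT)\to E_\infty^{0,*}=H^*(G/T)$ is surjective, and by the multiplicative structure of the spectral sequence its kernel is the ideal in $H^*(BT)$ generated by $(Bi)^*(H^*(BG)^+)$. Borel's theorem identifies $(Bi)^*$ with the inclusion $H^*(BG)\isom H^*(BT)^W\injects H^*(BT)$, so this ideal is exactly $(H^*(BT)^W)^+$ in the notation of the excerpt. The dual statement in rational homology follows from the Universal Coefficient Theorem. I do not foresee any serious obstacle: the key inputs (Borel's identification, the polynomial structure of $H^*(BG)$, the Schubert decomposition of $G/T$) are all classical, and the collapse is a straightforward parity argument.
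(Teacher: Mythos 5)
Your proof is correct, and it takes a genuinely different route from the paper. You run the Serre spectral sequence of the Borel fibration $G/T \to BT \to BG$, observe that both $H^*(BG;\bbQ)$ and $H^*(G/T;\bbQ)$ are concentrated in even degrees, and conclude collapse at $E_2$ by a parity argument; surjectivity of $f^*$ and identification of its kernel with the ideal generated by $(Bi)^*(H^+(BG))$ then follow from the edge homomorphism and the multiplicative (Leray--Hirsch) structure, after which Borel's theorem converts this to the ideal generated by positive-degree $W$--invariants. The paper instead works with the fibration $T\to G\to G/T$, comparing its Serre spectral sequence with that of $T\to EG\to BT$: surjectivity of $f^*$ in degree $2$ follows from the transgressions $d_2\co H^1(T)\to H^2(G/T)$ (surjective since $H^2(G)=0$) and $d_2\co H^1(T)\to H^2(BT)$ (an isomorphism), and surjectivity in all degrees follows since $H^*(G/T)$ is generated in degree $2$. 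For the kernel, the paper does a dimension count against Borel's and Baird's computations of $H^*(G/T)$, and then uses the fact that $H^*(G/T)$ is the regular $W$--representation (so $H^+(G/T)$ has no nonzero $W$--invariants) to show that $(H^*(BT)^W)^+$ lands in $\ker(f^*)$; note this last step quietly relies on the classifying map being $W$--equivariant in the chosen model. Your route avoids the Baird regular-representation input and the dimension count, at the price of invoking the vanishing of odd-degree rational cohomology for $BG$ and $G/T$; both are classical, and your parity collapse is arguably the cleaner way to package the argument.
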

\begin{proof}
First we prove that $f^* \co H^*(BT)\to H^*(G/T)$ is surjective.
It suffices to prove surjectivity in degree 2, because $H^*(G/T)$ is generated in degree 2 (since $H^*(BT)$ is generated in degree 2, this follows from either Borel or Baird's result). 
We have a map of fibrations
\begin{equation}\label{eqn: G/T fib}
\begin{tikzcd}
T  \arrow{r}{=} \arrow{d}{i} &  T\ar{d}  \\
G \arrow{r} \arrow{d} 	& EG \ar{d}   \\
G/T \ar{r}{f} & EG/T \heq BT,
\end{tikzcd}
\end{equation}
which gives rise to a map between the Serre spectral sequences for these fibrations.
Consider the differentials $d_2\co E_2^{0,1}\to E_2^{2,0}$ in the two spectral sequences: on the right, this differential is an isomorphism (since $EG$ is contractible), and on the left, it is surjective since $H^2(G)=0$ (in general, $H^*(G)$ is an exterior algebra concentrated in odd degrees -- see Reeder~\cite{reeder1995cohomology}, for instance). We thus have a commutative diagram
\begin{center}
\begin{tikzcd}
H^1(T)   \arrow[d, tail, twoheadrightarrow, "d_2"] \arrow[r, leftarrow, "="]&  H^1(T)
\arrow{d}[swap]{\isom}{d_2}  \\
H^2(G/T)  \arrow[r, leftarrow, "f^*"]	& H^2(BT),  \\
\end{tikzcd}
\end{center}
and it follows that $f^*$ is surjective in degree 2 (and hence in all degrees).  

Since we know that there is an isomorphism $H^*(G/T)\isom H^*(BT)/(H^*(BT)^W)^+$, and these graded vector spaces are finite-dimensional in each degree, to show that $\ker (f^*) = (H^*(BT)^W)^+$ it suffices to check that $(H^*(BT)^W)^+\leqs \ker(f^*)$. As an ungraded $W$--representation, $H^*(G/T)$ is   the regular representation (see, for instance, 
Baird~\cite[Theorem B.1]{baird2007cohomology}), and hence $W$ acts freely on $H^*(G/T)^+$, so $f^*((H^*(BT)^W)^+) = 0$.
\end{proof}

\begin{prop} \label{prop: H(G/T) is FI}
The consistent sequences of $W_r$--modules $r\goesto  H_k (G_r/T_r)$ extend to finitely generated $\FI_W$--modules.
\end{prop}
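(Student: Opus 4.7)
The plan is to embed the sequence $r \mapsto H_k(G_r/T_r)$ as a sub--$\FI_W$--module of $r \mapsto H_k(BT_r)$ using the classifying maps of the principal $T_r$--bundles $G_r \to G_r/T_r$, and then to identify the resulting $\FI_W$--module, after dualizing, with the coinvariant-algebra co--$\FI_W$--module whose finite generation has been established by \cite[Theorem 5.1.5]{church2015fi} and \cite[Corollary 6.5]{wilson2014fiw}.

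First I would extend $r \mapsto BT_r$ to an $\FI_W$--space in exact parallel with Proposition~\ref{prop: T is FI}: since $BT_r \simeq (BS^1)^r$ in all cases other than $G_r = \SU(r)$, applying $\P(-,*)$ or $\P_\tau(-,*)$ from Section~\ref{symm-sec} to $BS^1$ (with the trivial involution in type A, and with complex conjugation in types B, C, D) yields an $\FI_W\#$--space in types A, B, and C, a restriction thereof in type D, and the same sub--$\FI$--space argument used in Proposition~\ref{prop: T is FI} handles $\SU(r)$. This induces an $\FI_W$--module structure on $r \mapsto H_k(BT_r)$ in which the underlying $W_r$--action on each term is the translation action.

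Next, taking the model $BT_r = EG_r/T_r$ and a $T_r$--equivariant lift $G_r \to EG_r$, the classifying map $f_r \co G_r/T_r \to BT_r$ is natural with respect to the standard inclusions $G_r \injects G_{r+1}$ and is $W_r$--equivariant for the translation actions, the equivariance being checked via Lemma~\ref{lem: translation=conjugation} to shuttle between translation and conjugation on $EG_r/T_r$. By Proposition~\ref{prop: G/T}, the induced map $(f_r)_*$ is injective on rational homology, so $H_k(G_r/T_r)$ sits as a $W_r$--subrepresentation of $H_k(BT_r)$. By Remark~\ref{trans} every $\FI_W$--morphism factors as a standard inclusion followed by a Weyl group element, so preservation of the subspace $(f_r)_*(H_k(G_r/T_r))$ under these two types of structure maps --- which is immediate from the naturality and $W_r$--equivariance of $f_r$ --- will imply that $r \mapsto H_k(G_r/T_r)$ inherits the structure of a sub--$\FI_W$--module of $r \mapsto H_k(BT_r)$.

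For finite generation I would combine Proposition~\ref{prop: G/T} with Borel's theorem to identify $H^*(G_r/T_r)$ with the coinvariant algebra $H^*(BT_r)/(H^*(BT_r)^{W_r})^+$, which with its natural co--$\FI_W$--module structure is exactly the object proven to be finitely generated in the references above. Since each $H^k(G_r/T_r)$ is finite-dimensional, dualizing degree by degree converts this into finite generation of the $\FI_W$--module $r \mapsto H_k(G_r/T_r)$. The hard part will be matching the topologically defined $\FI_W$--structure on $H_*(G_r/T_r)$ (arising from the sub-object construction above) with the algebraic one used by \cite{church2015fi} and \cite{wilson2014fiw}; this reduces to checking that the classifying maps $f_r$ intertwine the two structures, which is ultimately a naturality statement about the polynomial presentations of the cohomology rings $H^*(BT_r)$ but requires care in the choice of generators so that the inclusion-induced structure maps on $BT_r$ match the tautological ones on the algebraic model.
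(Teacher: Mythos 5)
Your proposal is correct and uses the same essential ingredients as the paper's proof, with one organizational difference in how the extension to an $\FI_W$--module is verified. The paper invokes the CEF--Wilson criterion (Lemma~\ref{lem: extension}) directly: it shows that $1\cross W_s \leqs W_{r+s}$ acts trivially on the image of $H_k(G_r/T_r)$ in $H_k(G_{r+s}/T_{r+s})$, by pushing forward along the (injective) classifying map $(f_{r+s})_*$ into $H_k(EG_{r+s}/T_{r+s})$, invoking Lemma~\ref{lem: translation=conjugation} to replace translation by conjugation, and observing that $BT_r$ is fixed pointwise under conjugation by $1\cross W_s$. You instead first equip $r\goesto H_k(BT_r)$ with an $\FI_W$--module structure (via $\P$ or $\P_\tau$ applied to $BS^1$, exactly as in Proposition~\ref{prop: T is FI}), and then exhibit $(f_r)_*(H_k(G_r/T_r))$ as a sub--$\FI_W$--module using Remark~\ref{trans}; this is logically equivalent and reduces to the same two facts (naturality of the compatible classifying maps, and $W_r$--equivariance modulo Lemma~\ref{lem: translation=conjugation}). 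One small terminological slip: the $W_r$--action arising from the $\P$--construction on $(BS^1)^r$ is the \emph{conjugation} (functorial) action, not the translation one, but since Lemma~\ref{lem: translation=conjugation} makes them agree on homology this does not affect the argument. The finite-generation step is the same in both: identify $H^*(G_r/T_r)$ with the coinvariant algebra via Proposition~\ref{prop: G/T}, check that the two $\FI_W$--structures match (the content of the paper's appeal to Proposition~\ref{prop: T is FI}, which you correctly flag as the point requiring care), and cite Wilson's finite generation of the coinvariant co--$\FI_W$--module.
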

\begin{proof} Let $1\cross W_s\subset W_{r+s}$ denote the subgroup of elements fixing $1, \ldots, r\in \s_0$. By Lemma~\ref{lem: extension}, we need to prove that the action of $1\cross W_{s}$ on the image of the map
$$H_k (G_r/T_r) \maps H_k (G_{r+s}/T_{r+s})$$
induced by the standard inclusion is trivial. 

Note that in Diagram (\ref{eqn: G/T fib}) above, we may take the map $G\to EG$ to be the inclusion of one fiber of the fibration $EG\to EG/G$ (giving a specific choice for the classifying map $f$).
This choice gives us a commutative diagram
\begin{equation}\label{eqn: f natural}
\begin{tikzcd}
 G_r/T_r   \arrow{r}{f_r} \arrow{d} &  EG_r/T_r\ar{d}  \\
 G_{r+s}/T_{r+s}   \arrow{r}{f_{r+s}} 	&  EG_{r+s}/T_{r+s}   
\end{tikzcd}
\end{equation}
relating the classifying maps from Proposition~\ref{prop: G/T}, and moreover this choice ensures that the maps $f_r$ and $f_{r+s}$ are $W_r$-- and $W_{r+s}$--equivariant (respectively), where on the right the Weyl group actions are induced by the principal actions of $NT_r\leqs G_r$ and $NT_{r+s} \leqs G_{r+s}$ on the universal bundles.

Since $(f_{r+s})_*$ is injective, to complete the proof it suffices to show that $1\cross W_{s}$ acts trivially on the image of $H_k (EG_r/T_r)$ in $H_k (EG_{r+s}/T_{r+s})$. By Lemma~\ref{lem: translation=conjugation}, this is equivalent to showing that $1\cross W_{s}$ acts trivially on the image of $H_k (BT_r)$ in $H_k (BT_{r+s})$, where now $1\cross W_{s} \leqs W_{r+s}$ is acting by conjugation. But the image of $T_r$ in $T_{r+s}$ is fixed point-wise under conjugation by $1\cross W_{s}$, and the same is true after applying the bar construction.

Finally, we show that $H_k (G_r/T_r)$ is finitely generated as an $\FI_W$--module. For $k=0$, connectedness of $G_r$ implies that the $\FI_W$--module $H_k (G_r/T_r)$ is constant, with value the trivial representation.
For $k>0$, Proposition~\ref{prop: G/T} and commutativity of Diagram (\ref{eqn: f natural}) yield an isomorphism of $\FI_W$--modules 
$$H_k (G_r/T_r) \srm{\isom} (H^k(BT_r)/(H^k(BT_r)^{W_r}))^*,$$
where on the right, $W_r$ acts by conjugation.
It follows from Proposition~\ref{prop: T is FI} that $(H^k(BT_r)/(H^k(BT_r)^W))^*$ is isomorphic, as a co--$\FI_W$--module, to the degree--$k$ part of the diagonal coinvariant algebra $\bbQ[x_1, \ldots, x_r]/\mathcal{I}_r$, with the $\FI_W$--module structure coming from the signed permutation action of $W_r$ on the variables and the natural projections $x_1 \goesto x_1, \cdots, x_r\goesto x_r, x_{r+1} \goesto 0$. Wilson~\cite[Theorem 6.1]{wilson2014fiw} shows $(\bbQ[x_1, \ldots, x_r]/\mathcal{I}_r)^*$ is finitely generated as an $\FI_W$--module (in each degree).
\end{proof}

\section{Stability for commuting elements in compact Lie groups}\label{stab}

In this section we prove several of our main results regarding the
 homology of spaces of commuting elements in compact, connected Lie groups. 
All coefficients in this section are, implicitly,  rational (any field of characteristic zero would suffice).


\subsection{Stability for classical sequences of Lie groups}

Throughout this section, $G_r$ will again denote one of the classical Lie groups: $\SU(r)$ or $\U(r)$ (type A); $\SO(2r+1)$ (type B); $\Sp(r)$ (type C); or $\SO(2r)$ (type D);
and $T_r = T(G_r)\leqs G_r$ will be the maximal torus defined in Section~\ref{Lie}, with Weyl group $W_r$.

Fix  positive integers $n$ 
and   $k$. In Section~\ref{sec: stable range for Hom}, we will establish homological stability for the sequences $r\goesto \Hom(\Z^n, G_r)_1$. Here we consider the conjugation quotients $Rep(\Z^n, G_r)_1$, which turn out to be simpler to analyze.


\begin{thm}\label{stability-wrt-r-Rep}
The sequences 
$r\goesto \Rep(\Z^n,G_r)_1$
satisfy strong rational homological stability.
In homological degree $k$, stability holds for $r \geqs k$. 
\end{thm}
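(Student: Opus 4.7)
The plan is to combine Baird's identification (Theorem~\ref{Stafa}) with the representation stability machinery of Section~\ref{FIW-modules}. By the naturality discussion of Remark~\ref{rmk: phi natural}, the stabilization map $H_k(\Rep(\Z^n,G_r)_1)\to H_k(\Rep(\Z^n,G_{r+1})_1)$ corresponds to the map $H_k(T_r^n/W_r)\to H_k(T_{r+1}^n/W_{r+1})$ induced by $T_r\injects T_{r+1}$. The action of $W_r$ on $T_r^n$ is good, so by Proposition~\ref{coh-quot2} this map is identified with the averaging map $H_k(T_r^n)^{W_r}\to H_k(T_{r+1}^n)^{W_{r+1}}$. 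It therefore suffices to show this averaging map is an isomorphism for $r\geqs k$.

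For the families $G_r=\U(r)$, $\SO(2r+1)$, $\Sp(r)$, and $\SO(2r)$, Proposition~\ref{prop: T is FI} provides an $\FI_W\#$--space (or, in type D, a restriction of one) whose degree--$k$ rational homology is $\{H_k(T_r^n)\}$. Corollary~\ref{cor: T-rep-stable} shows this $\FI_W$--module is generated in stage $k$, so Proposition~\ref{prop: stability isotypical component} immediately yields the desired isomorphism for $r\geqs k$.

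The case $G_r=\SU(r)$ is more delicate, because here $\{T_r^n\}$ only has the structure of an $\FI_A$--space: the determinant-one condition obstructs any $\FI_A\#$--extension. Corollary~\ref{cor: T-rep-stable} still gives generation in stage $k$, so by Proposition~\ref{avg} the averaging map is at least \emph{surjective} for $r\geqs k$. To promote surjectivity to a bijection, the plan is to compare with $\U(r)$. As $S_r$--modules, $H^1(T(\U(r));\bbQ)$ is the standard permutation representation, which decomposes as a trivial line (spanned by $\alpha_1+\cdots+\alpha_r$, the pullback of a generator of $H^1(S^1;\bbQ)$ along the determinant) and an augmentation summand, which projects isomorphically onto $H^1(T(\SU(r));\bbQ)$. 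Applying $\Lambda^*$ and K\"unneth yields an $S_r$--equivariant isomorphism
\[ H^*(T(\U(r))^n;\bbQ)\cong H^*(T(\SU(r))^n;\bbQ)\otimes H^*((S^1)^n;\bbQ), \]
with trivial $S_r$--action on the second factor. Taking $S_r$--invariants and reading off dimensions gives
\[ \dim H^k(T(\U(r))^n)^{S_r} = \sum_{i+j=k} \binom{n}{j}\,\dim H^i(T(\SU(r))^n)^{S_r}. \]
Since the left-hand side is constant for $r\geqs k$ by the unitary case, induction on $k$ forces $\dim H^k(T(\SU(r))^n)^{S_r}$ to be constant for $r\geqs k$ as well, and combined with the surjectivity this gives the required isomorphism.

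The main obstacle is precisely this $\SU(r)$ step: without an $\FI_A\#$--structure, Wilson's Theorem~\ref{thm: fin. gen. implies uniform rep. stability} gives only the weaker stable range $r\geqs 2k$ (cf.\ Corollary~\ref{cor: T-rep-stable}), so the sharp bound $r\geqs k$ requires the dimension comparison with $\U(r)$ sketched above.
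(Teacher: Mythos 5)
Your argument is correct and follows the paper's overall strategy: use Theorem~\ref{Stafa} and Remark~\ref{rmk: phi natural} to reduce to the averaging maps $H_k(T_r^n)^{W_r}\to H_k(T_{r+1}^n)^{W_{r+1}}$, then invoke the $\FI_W\#$ structure (Proposition~\ref{prop: T is FI} and Corollary~\ref{cor: T-rep-stable}) to conclude for $G_r\neq\SU(r)$, and treat $\SU(r)$ separately via surjectivity plus a comparison with $\U(r)$. The only genuine difference is how you close the $\SU(r)$ case. The paper shows directly that $i_*\co H_k(T(\SU(r))^n)\to H_k(T(\U(r))^n)$ is injective (via K\"unneth or the collapsing Serre spectral sequence for $T(\SU(r))\to T(\U(r))\xrightarrow{\det}S^1$), then reads off injectivity of the top averaging map from the commutative square with the already-established unitary isomorphism on the bottom. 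You instead establish the $S_r$--equivariant splitting $H^*(T(\U(r))^n)\cong H^*(T(\SU(r))^n)\otimes H^*((S^1)^n)$ with trivial action on the second factor, take invariants, and run an induction on $k$ on the resulting dimension identity. Both are valid; yours is a bit more circuitous, and in fact the splitting you prove immediately yields the injectivity of $i_*$ (it is the inclusion $x\mapsto x\otimes 1$), so you could have shortcut the induction and landed on the paper's argument. As a minor bookkeeping point, the injectivity route is also slightly safer: the dimension-count induction relies on the LHS stabilizing for $r\geqs k$ by the unitary case and on all lower-degree $\SU$ terms stabilizing by the inductive hypothesis, which you do handle correctly, but the direct injectivity argument needs neither.
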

\begin{proof} By Theorem~\ref{Stafa}, these spaces are homeomorphic to $T_r^n/W_r$, and the homeomorphisms commute with the stabilization maps induced by the standard inclusions (Remark~\ref{rmk: phi natural}). 
By Corollary~\ref{cor: T-rep-stable}, the sequence $r\goesto H_k (T_r^n)$ is generated in stage $k$ as an $\FI_W$--module, and in all cases except $G_r = \SU(r)$, this module extends to $\FI_W \#$.  For $G_r \neq \SU(r)$, the result now follows from Theorem~\ref{quot-stab}. 

When $G_r = \SU(r)$,
Theorem~\ref{quot-stab} still tells us that the the maps
$$H_k (T_r^n)^{W_r} \isom H_k (T_r^n/W_r) \maps H_k (T_{r+1}^n/W_{r+1}) \isom H_k (T_r^n)^{W_r}$$ 
are surjective for $r\geqs k$. It remains to prove injectivity for $r\geqs k$.
The inclusions $\SU(r)\injects \U(r)$ restrict to inclusions $i = i_r\co T_r \injects T_r':= T(\U(r))$ between the diagonal maximal tori, and the action of $W_r$ on $T_r$ is just the restriction of its action on $T_r'$. For $r\geqs k$, we thus have a commutative diagram
\begin{center}
\begin{tikzcd}
H_k (T^n_r)^{W_r}  \arrow{d}{i_*} \arrow[r, tail, twoheadrightarrow]  & H_k (T^n_{r+1})^{W_{r+1}}   \arrow{d}{i_*}    \\
H_k ((T'_r)^n)^{W_r}   \arrow{r}{\isom} & H_k ((T'_{r+1})^n)^{W_{r+1}}, 
\end{tikzcd}
\end{center}
and it will suffice to show that $i_* \co H_k (T^n_r) \to H_k ((T'_r)^n)$ is injective. This can be seen by direct computation using the K\"unneth Theorem, or from the fact that the Serre spectral sequence for the fibration sequence $T_r \to T_r'\xmaps{\det} S^1$ collapses (for dimension reasons) at the $E^2$ page.
\end{proof}

\begin{rmk}\label{rmk: rep space integral stability}
In types A, B, and C, the homological stability result in Theorem \ref{stability-wrt-r-Rep} holds integrally
by Steenrod~\cite[Eq. (22.7)]{steenrod}.
\end{rmk}

\subsection{A note on $\pi_2(\Rep(\Z^n,\U(r)))$ and $\pi_2(\Rep(\Z^n,\SU(r)))$} 
Here we make a short note on the second homotopy group of these representation spaces by showing
that they are non-trivial, which is in contrast with a result of Florentino, Lawton and Ramras in the case of free group character varieties~\cite[Theorem 5.12]{FLR}.

We will make use of a result of Lawton and Ramras~\cite{Lawton-Ramras}, who show that the
universal cover of $\Rep(\Z^n,\U(r))$ is $\Rep(\Z^n,\R \times \SU(r)) \cong \Rep(\Z^n,\R) \times \Rep(\Z^n,\SU(r)).$ Therefore, there is an 
isomorphism $$\pi_2(\Rep(\Z^n,\U(r))) \cong \pi_2(\Rep(\Z^n,\SU(r))).$$
Moreover, if $\pi_1(G)$ is finite, it was shown by Biswas, Lawton and Ramras \cite{BLR} that $\Rep(\Z^n,G)_1$ is simply connected, and by the Hurewicz theorem we have 
$$\pi_2(\Rep(\Z^n,G)_1)\cong H_2(\Rep(\Z^n,G)_1; \Z).$$
One such Lie group is $\SU(r)$. 
By Theorem \ref{stability-wrt-r-Rep} we know that the second homology of $\Rep(\Z^n,\SU(r))$
stabilizes for $r\geqslant 2.$ That is, for all $r\geqslant 2$ we have
$$H_2(\Rep(\Z^n,\SU(2));\Q)\cong H_2(\Rep(\Z^n,\SU(r));\Q)$$
(and in fact, this isomorphism hold integrally by Remark~\ref{rmk: rep space integral stability}).
From \cite[Ex. 7.1]{stafa2017poincare} the Poincar\'e series
of $\Rep(\Z^n,\SU(2))$ is $((1+s)^n+(1-s)^n)/2,$ and we can see that
the coefficient of $s^2$ is $n\choose 2$. Therefore we have the following

\begin{prop}
The homotopy groups $\pi_2(\Rep(\Z^n,\U(r)))\isom\pi_2(\Rep(\Z^n,\SU(r)))$ have rank ${n\choose 2}.$ 
\end{prop}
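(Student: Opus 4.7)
The plan is to chain together the four ingredients already laid out in the paragraph preceding the statement and then pull the rank off the known Poincaré series for $\Rep(\Z^n,\SU(2))$.

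First I would observe that it suffices to compute $\pi_2(\Rep(\Z^n,\SU(r)))$, since the Lawton--Ramras universal cover identification recalled in the text gives $\pi_2(\Rep(\Z^n,\U(r))) \isom \pi_2(\Rep(\Z^n,\SU(r)))$. Next, because $\SU(r)$ is simply connected (so $\pi_1(\SU(r))$ is finite), the Biswas--Lawton--Ramras theorem cited above tells us that $\Rep(\Z^n,\SU(r))$ (which coincides with its identity component, since it is connected by the remark in the introduction) is simply connected. The Hurewicz theorem then gives $\pi_2(\Rep(\Z^n,\SU(r))) \isom H_2(\Rep(\Z^n,\SU(r));\Z)$, and the rank of this abelian group equals $\dim_\Q H_2(\Rep(\Z^n,\SU(r));\Q)$.

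Now I would invoke the homological stability result Theorem~\ref{stability-wrt-r-Rep}, which says stability kicks in for $r \geqs k$; taking $k=2$, we get
\[
\dim_\Q H_2(\Rep(\Z^n,\SU(r));\Q) = \dim_\Q H_2(\Rep(\Z^n,\SU(2));\Q)
\]
for all $r \geqs 2$. Finally, from the explicit Poincaré series $P(\Rep(\Z^n,\SU(2));s) = \tfrac12\bigl((1+s)^n + (1-s)^n\bigr) = \sum_{j \text{ even}} \binom{n}{j} s^j$ recalled in the excerpt, the coefficient of $s^2$ is $\binom{n}{2}$. Combining these identifications yields $\mathrm{rank}\,\pi_2(\Rep(\Z^n,\SU(r))) = \binom{n}{2}$, and hence the same for $\U(r)$.

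There is no real obstacle here, as every tool is already assembled in the excerpt; the only minor care needed is to ensure the reduction to $r=2$ lies in the stable range of Theorem~\ref{stability-wrt-r-Rep} (it does, since the range is $r \geqs k$ and $k=2$), and to note that possible torsion in $H_2(\,\cdot\,;\Z)$ is irrelevant for the rank computation, so the rational homological stability statement is exactly what is required.
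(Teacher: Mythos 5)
Your proof is correct and follows essentially the same chain of reasoning as the paper: Lawton--Ramras to reduce to $\SU(r)$, Biswas--Lawton--Ramras plus Hurewicz to pass to $H_2$, Theorem~\ref{stability-wrt-r-Rep} to reduce to $r=2$, and the Poincar\'e series of $\Rep(\Z^n,\SU(2))$ to read off the coefficient $\binom{n}{2}$. The only cosmetic difference is that the paper appeals to the integral strengthening (Remark~\ref{rmk: rep space integral stability}) of the stability theorem, whereas you correctly observe that the rational statement already determines the rank.
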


We note that Lawton--Ramras~\cite[Theorem 5.3]{Lawton-Ramras} actually shows that 
$$\pi_2 (\Rep(\Z^2,\SU(2))) \isom \pi_2 (\Rep(\Z^2,\U(2))) \isom \bbZ.$$


\subsection{Stability with respect to number of commuting elements}\label{sec: n-stability}

Fix a compact, connected Lie group $G$ (not necessarily of classical type) with maximal torus $T$ and Weyl group $W$.
We now study how the homology of spaces of commuting $n$--tuples in $G$ varies as we increase  $n$.

Observe that the maps 
$$ T^n = \Rep(\bbZ^n, T) \srm{i} \Rep(\Z^n,G)_1$$
and
$$\phi \co G/T\times_W T^n \to \Hom(\Z^n,G)_1$$
 from Section~\ref{Hom} are $S_n$--equivariant, where on the left, $S_n$ acts by permuting the coordinates of $T^n$ and acts trivially on $G/T$, and on the right, $S_n$ acts by permuting the coordinates of $\Z^n$. Hence the induced maps
$$i_*\co   H_*(T^n/W)  \srm{\isom} H_*(\Rep(\Z^n,G)_1)$$
and
$$\phi_* : H_*(G/T \times_W T^n)   \srm{\isom} H_* (\Hom(\Z^n,G)_1),$$
which are isomorphisms by Theorems~\ref{Baird} and~\ref{Stafa}, are  isomorphisms of $\bbQ [S_n]$--modules.
We now extend these maps to isomorphisms of $\FIs$--modules.

First we explain the $\FIs$--module structure on the domains of $i_*$ and $\phi_*$.
The diagonal action of $W$ on $T^n$  commutes with the permutation action of $S_n$. Moreover, the inclusions $i_n\co T^n\injects T^{n+1}$, defined by $(t_1, \ldots, t_n)\goesto (t_1, \ldots,t_n, 1)$, are $W$--equivariant. It follows that $W$ in fact acts on the $\FIs$--space $\P(T)$  through maps of $\FIs$--spaces. Hence $\P(T)$ has the structure of a $W$--object in $\FIs$--spaces, or, equivalently, an $\FIs$--object in $W$--spaces. Applying the quotient space functor then yields an $\FIs$--space $n\goesto T^n/W$, whose underlying consistent sequence is given by the permutation actions and the maps induced by the inclusions $i_n$.

To understand the $\FIs$--module structure on the domain of $\phi_*$,
first note that given two $\FIs$--spaces $X$ and $Y$, there is a direct product $\FIs$--space $X\cross Y$ defined on objects by $\n_0 \goesto X(\n_0) \cross Y(\n_0)$ and on morphisms by sending 
$\phi\co \n_0 \to \m_0$ to $X(\phi)\cross Y(\phi)$. In particular, considering $G/T$ as a constant $\FIs$--space, we obtain an $\FIs$--space $G/T\cross \P(T)$, with underlying consistent sequence $\n_0\goesto G/T\cross T^n$. Again, $W$ acts through maps of $\FIs$--spaces, so the consistent sequence $\n_0\goesto G/T\cross_W T^n$ extends to an $\FIs$--space by the same reasoning as above.

 In order to endow the ranges of $i_*$ and $\phi_*$  with the structures of $\FIs$--modules, note that there is a canonical isomorphism of categories $\FIs \xmaps{\isom} \FIs^{\textrm{op}}$ which is the identity on objects and takes a partial injection 
$$S \supset A \stackrel[\isom]{}{\xmaps{\psi}} B \injects T$$
to the morphism $S\to T$ in the opposite category corresponding to
$$T \supset B \stackrel[\isom]{}{\xmaps{\psi^{-1}}} A \injects S.$$
We may thus consider $\P(\bbZ)$ as an $\FIs^\textrm{op}$--object in groups, and applying the contravariant functors $\Rep$ and $\Hom$ (followed by homology) makes the ranges of $i_*$ and $\phi_*$ into $\FIs$--modules.

Tracing the definitions yields the following result.

\begin{lem}\label{lem: i* phi*} The maps  $i_*$ and $\phi_*$ induce isomorphisms of $\FIs$--modules.
\end{lem}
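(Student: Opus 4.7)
The plan is to observe that Theorems~\ref{Baird} and~\ref{Stafa} already supply isomorphism statements at each level $n$, so only $\FIs$-naturality of the underlying maps of spaces $i\co T^\bullet \to \Rep(\Z^\bullet,G)_1$ and $\phi\co G/T\cross_W T^\bullet \to \Hom(\Z^\bullet,G)_1$ remains to be verified; applying rational homology will then deliver the lemma.

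First I would unpack the $\FIs$-structure maps on the target side. For a morphism $\psi\co \n_0\to \m_0$ in $\FIs$, the induced map $\Hom(\Z^n,G)\to \Hom(\Z^m,G)$ sends $(g_1,\ldots,g_n)$ to the tuple $(h_1,\ldots,h_m)$ with $h_i = g_{\psi^{-1}(i)}$ when $\psi^{-1}(i)$ is defined and $h_i = 1$ otherwise, and similarly for $T^n=\Hom(\Z^n,T)$. From this formula it is immediate that (a) these structure maps preserve the identity component (they only insert the identity into new slots or delete coordinates), (b) the inclusion $T\injects G$ induces a natural transformation $\Hom(\Z^\bullet,T)\to \Hom(\Z^\bullet,G)_1$ of $\FIs$-spaces, since the image of $1\in T$ in $G$ is $1$, and (c) the diagonal $W$-action on $\P(T)$ commutes with every structure map because $W$ fixes $1\in T$. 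Descending (b) along the $W$-quotient, using the commutativity recorded in (c), yields the required naturality for $i$.

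For $\phi$ I would perform the analogous direct check. Given $\psi\co \n_0\to \m_0$ in $\FIs$ and a representative $(gT,t_1,\ldots,t_n)\in G/T\cross_W T^n$, both ways around the naturality square produce the $m$-tuple whose $i$-th entry is $gt_{\psi^{-1}(i)}g^{-1}$ when $\psi^{-1}(i)$ exists and $1 = g\cdot 1\cdot g^{-1}$ otherwise; here one uses that the $\FIs$-structure on $G/T\cross \P(T)$ acts trivially on the (constant) $G/T$ factor, that $W$-equivariance of $\phi_n$ has already been observed at the start of Section~\ref{sec: n-stability}, and that conjugation by $g$ preserves $1\in G$. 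Since the underlying consistent sequences of $S_n$-representations given by the pre-existing isomorphisms $i_*$ and $\phi_*$ agree on automorphisms (by equivariance) and on standard inclusions (by Remark~\ref{rmk: phi natural}), and every $\FIs$-morphism factors as a composition of a standard inclusion, an automorphism, and a ``deletion'' sending some coordinates to the basepoint, the two checks above exhaust the generators of $\FIs$.

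The only potential obstacle is the bookkeeping surrounding the identification $\FIs\isom\FIs^{\mathrm{op}}$ and the fact that $\Hom(\Z^\bullet,G)$ is built by applying the contravariant functor $\Hom(-,G)$ to $\P(\bbZ)$; but this is precisely what encodes ``delete a generator'' on the $\Hom$-side as ``evaluate at $1$,'' which is exactly what the formulas above already reflect. With naturality established at the space level, applying $H_*(-;\bbQ)$ and invoking Theorems~\ref{Baird} and~\ref{Stafa} levelwise finishes the proof.
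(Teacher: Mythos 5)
Your proof is correct and takes essentially the same approach as the paper, which disposes of this lemma with the single sentence ``Tracing the definitions yields the following result''; you have simply carried out that trace explicitly, verifying $\FIs$-naturality of $i$ and $\phi$ at the space level (including the $\FIs\isom\FIs^{\mathrm{op}}$ bookkeeping on the $\Hom$/$\Rep$ side) and then invoking Theorems~\ref{Baird} and~\ref{Stafa} levelwise.
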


Next, if $X$ is an $\FIs$--object in $W$--spaces, where $W$ is a finite group, then
since the isomorphisms 
$$H_k (X_n)^W \srm{\isom} H_k (X_n/W)$$
in Proposition~\ref{coh-quot} are natural with respect to equivariant maps, they in fact provide an isomorphism of $\FIs$--modules. 

\begin{lem}\label{lem: FIs-iso} The $\FIs$--modules
$$\n_0\goesto H_k (\Rep(\Z^n, G)_1) \,\,\,\textrm{ and }\,\,\, \n_0\goesto H_k (\Hom(\Z^n, G)_1)$$
are isomorphic $($respectively$)$ to the $\FIs$--modules
$$H_k(\P(T))^W \,\,\,\textrm{ and }\,\,\, H_k(G/T\cross \P(T))^W$$
\end{lem}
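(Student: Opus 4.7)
The plan is to combine Lemma~\ref{lem: i* phi*} with Proposition~\ref{coh-quot} (the homological half), applied to the $\FIs$--objects in $W$--spaces described just before the lemma. Concretely, the two $\FIs$--spaces $n \mapsto T^n$ and $n \mapsto G/T \times T^n$ carry $W$--actions through $\FIs$--morphisms, so their rational homologies are $\FIs$--objects in $\Q[W]$--modules; taking $W$--invariants level-wise then yields the $\FIs$--modules $H_k(\P(T))^W$ and $H_k(G/T \times \P(T))^W$.

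First I would verify that the canonical maps $H_k(X_n)^W \to H_k(X_n/W)$ supplied by Proposition~\ref{coh-quot} assemble into a morphism of $\FIs$--modules. This is purely formal: for any equivariant map $f \co X \to Y$ of good $W$--spaces, Proposition~\ref{coh-quot2} identifies the induced map $H_k(X/W) \to H_k(Y/W)$ with the averaged pushforward $H_k(X)^W \to H_k(Y)^W$; applying this to each $\FIs$--morphism in $\P(T)$ or $G/T \times \P(T)$ gives the required naturality, so we obtain $\FIs$--module isomorphisms
\[
H_k(\P(T))^W \srm{\isom} H_k(\P(T)/W), \qquad H_k(G/T \times \P(T))^W \srm{\isom} H_k((G/T \times \P(T))/W).
\]
Here I would also record that the relevant $W$--actions are good, since $T^n$ and $G/T \times T^n$ are CW complexes (indeed smooth manifolds) and their quotients by the finite group $W$ inherit CW structures.

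Next I would invoke Lemma~\ref{lem: i* phi*}, which provides $\FIs$--module isomorphisms
\[
i_* \co H_k(\P(T)/W) \srm{\isom} H_k(\Rep(\Z^\bullet, G)_1), \qquad \phi_* \co H_k((G/T \times \P(T))/W) \srm{\isom} H_k(\Hom(\Z^\bullet, G)_1).
\]
Composing these with the invariant-theoretic isomorphisms from the previous step yields the desired $\FIs$--module identifications.

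I do not expect a serious obstacle: the content of the lemma is essentially bookkeeping, given the work already done. The one point that merits care is making sure that the two sources of $\FIs$--structure on $H_k(X_n/W_n)$ coincide, namely the one coming from viewing $n \mapsto X_n/W_n$ as an $\FIs$--space and then taking homology, versus the one obtained from the $W$--invariants in the $\FIs$--module $H_k(X_n)$. But this compatibility is exactly what Proposition~\ref{coh-quot2} guarantees via the averaging description of the pushforward, so once that diagrammatic check is laid out explicitly the proof is complete.
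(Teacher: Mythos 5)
Your proof is correct and follows the same route the paper takes: Lemma~\ref{lem: i* phi*} supplies the $\FIs$--isomorphisms $H_k(\P(T)/W)\isom H_k(\Rep(\Z^\bullet,G)_1)$ and $H_k((G/T\times\P(T))/W)\isom H_k(\Hom(\Z^\bullet,G)_1)$, and the remaining identification $H_k(X_n)^W\isom H_k(X_n/W)$ is promoted to an $\FIs$--module isomorphism via the naturality established in Proposition~\ref{coh-quot2}, which is exactly what the paper observes in the paragraph preceding the lemma. Your proposal simply makes explicit the diagram-chasing the paper leaves implicit.
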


Since the structure maps for the $\FIs$--modules 
$$H_k(\P(T)) \,\,\,\,\,\textrm{ and }\,\,\,\,\,   H_k(G/T\cross \P(T))$$
are $W$--equivariant, these are in fact $\FIs$--objects in the category of $\Q[W]$--modules. 
In general, passage to $W$--invariants defines a functor from $\Q [W]$--modules to $\Q$--modules, so the $W$--invariants of an $\FIs$--module over $\Q [W]$ form an $\FIs$--module over $\Q$, and similarly for $\FI$--modules. We have the following general fact regarding generators for these fixed-point submodules.

\begin{lem}\label{W-invt-fin-gen}
Let $H$ be a finite group, and let $V\co \FI \to\k [H]\textrm{--}{\bf Mod}$ be an $\FI$--module over the group ring $\k [H]$, where $\k$ is a field of characteristic not dividing $|H|$. Let $R\co\k [H]\textrm{--}{\bf Mod}\to \k\textrm{--}{\bf Mod}$ denote the 
forgetful functor, and assume that $R\circ V$ is generated in stage $k$. Then the $\FI$--module $V^H$ is generated in stage $k$ as well.
\end{lem}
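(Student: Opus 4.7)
The plan is to prove finite generation of $V^H$ directly from the definition, exploiting the averaging operator $\alpha$ from Definition~\ref{alpha} and its compatibility with $H$-equivariant maps. The key observation is that because $V$ is a functor into $\k[H]$--modules, every structure map $\phi_* \co V(\n) \to V(\m)$ induced by an $\FI$--morphism $\phi\co \n \to \m$ is $H$-equivariant; this is what makes $V^H$ a well-defined sub--$\FI$--module in the first place. Consequently $\alpha_\m \circ \phi_* = \phi_* \circ \alpha_\n$.

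The argument will then proceed as follows. Fix $m \geq k$ and an arbitrary element $v \in V(\m)^H$. Since $R\circ V$ is generated in stage $k$, we may write
\[
v = \sum_{i} (\phi_i)_* (w_i)
\]
for some morphisms $\phi_i \co \k \to \m$ in $\FI$ and elements $w_i \in V(\k)$. Because $v$ is already $H$-invariant, Lemma~\ref{avg-lem}(1) gives $\alpha_\m(v) = v$, and using $H$-equivariance of the structure maps we obtain
\[
v \;=\; \alpha_\m\!\left(\sum_i (\phi_i)_*(w_i)\right) \;=\; \sum_i (\phi_i)_*\bigl(\alpha_\k(w_i)\bigr).
\]
Each $\alpha_\k(w_i)$ lies in $V(\k)^H$, so $v$ is in the span of the images of $V(\k)^H$ under $\FI$--morphisms $\k \to \m$. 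This is precisely the assertion that $V^H$ is generated in stage $k$.

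There is essentially no obstacle: the proof reduces to the compatibility of averaging with equivariant maps, together with the tautology that averaging fixes already-invariant elements. The only point to double-check is that the hypothesis on the characteristic of $\k$ is used exactly so that the averaging operator $\alpha$ is defined (i.e.\ that $|H|$ is invertible in $\k$), which is assumed in Definition~\ref{alpha}. No further machinery is needed.
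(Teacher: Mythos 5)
Your proof is correct and takes essentially the same route as the paper's: both write an invariant element as a sum of images of elements of $V_k$ using generation of the underlying $\FI$--module, then apply the averaging operator and use that the structure maps are $\k[H]$--linear (so commute with $\alpha$) to replace each $w_i$ by its invariant average $\alpha_\k(w_i) \in V_k^H$. No substantive difference.
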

\begin{proof}
Say $x\in V_n^H$, with $n>k$. We must show that $x$ can be written as a sum of images of elements in $V_k^H$ under the structure maps for $V^H$.
By hypothesis, there exist $x_i\in V_n$, $y_i\in V_k$, and $\phi_i\co \k \to \n$ such that 
$$x = \sum_i x_i\,\,\,\,\, \textrm{ and } \,\,\,\,\,x_i = {\phi_i}_* (y_i).$$
Consider the $H$--invariant elements $\ol{x_i}$ and $\ol{y_i}$ (as in Definition~\ref{alpha}).
Since $x = \ol{x}$, linearity of the averaging map implies that $x =  \sum_i \ol{x_i}$.
Since each ${\phi_i}_*$ is a map of $\k [H]$--modules, we have
$${\phi_i}_* (\ol{y_i}) =\ol{{\phi_i}_* (y_i)} = \ol{x_i},$$
so
$$x =  \sum_i \ol{x_i} =   \sum_i {\phi_i}_* (\ol{y_i}).$$
But $\ol{y_i}\in V_k^H$, so this shows that $x$ is in the $\k$--linear subspace of $V_n^H$ spanned by the images of the maps 
$${\phi_i}_*\co V_k^H \maps V_n^H,$$
as desired.
\end{proof}

\begin{thm}\label{thm: stability for fixed G}
Let $G$ be a compact, connected Lie group, and fix $k\in\bbN$. The sequences of $S_n$--representations 
$$n\goesto H_k(\Rep(\Z^n, G)_1) 
\,\,\,\text{ and }\,\,\,
n\goesto H_k(\Hom(\Z^n, G)_1)
$$
are generated in stage $k$, and are uniformly representation stable for 
 $n\geqs 2k$. Moreover,
the sequences $\{\Rep(\Z^n, G)_1/S_n\}_{n\geqs 1}$ and $\{\Hom(\Z^n, G)_1/S_n\}_{n\geqs 1}$ satisfy strong rational homological stability, and in homological degree $k$, stability in fact holds for $n\geqs k$.
\end{thm}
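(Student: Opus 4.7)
The plan is to reduce both halves of the theorem to the $\FIs$-module machinery of Sections~\ref{FIW-modules}--\ref{symm-sec} via the identifications of Lemma~\ref{lem: FIs-iso}.

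By Lemma~\ref{lem: FIs-iso} there are isomorphisms of $\FIs$-modules $n\goesto H_k(\Rep(\Z^n,G)_1) \cong H_k(\P(T))^W$ and $n\goesto H_k(\Hom(\Z^n,G)_1) \cong H_k(G/T \times \P(T))^W$, so it suffices to analyze these $W$-invariant $\FIs$-submodules.  For the first, Proposition~\ref{symm-prod} applied to $X = T$ (which is path connected and of finite rational type) shows that $H_k(\P(T))$ is generated in stage $k$.  For the second, the K\"unneth formula gives an isomorphism of $\FI$-modules
\[
H_k(G/T \times \P(T)) \cong \bigoplus_{i+j=k} H_i(G/T) \otimes H_j(\P(T)).
\]
Since $G/T$ sits as a constant factor in the $\FIs$-space $G/T \times \P(T)$, each $H_i(G/T)$ is a constant $\FI$-module, generated in stage $0$; Proposition~\ref{prop: tensor} then shows each summand is generated in stage $j\leqs k$, so the same holds for $H_k(G/T \times \P(T))$.

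Next I would pass to $W$-invariants.  In both cases the $W$-action (trivial on $G/T$, diagonal on factors of $T$) commutes with all $\FIs$-structure maps, so these are $\FIs$-objects in $\Q[W]$-modules.  Applying Lemma~\ref{W-invt-fin-gen} to the underlying $\FI$-modules shows that $H_k(\P(T))^W$ and $H_k(G/T \times \P(T))^W$ are generated in stage $k$ as $\FI$-modules; since every $\FI$-morphism is an $\FIs$-morphism, the same elements also generate them as $\FIs$-modules.  Uniform representation stability of the associated sequences of $S_n$-representations with stable range $n\geqs 2k$ now follows from Theorem~\ref{sharp} in its $\FI_A\#$ form with $d = k$.

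For the stability of the quotients by $S_n$, I would invoke Theorem~\ref{quot-stab}.  The sequences $\{\Rep(\Z^n,G)_1\}$ and $\{\Hom(\Z^n,G)_1\}$ are consistent sequences of $S_n = W_n$-spaces under the standard inclusions, and the $S_n$-actions are good because the spaces and their $S_n$-quotients are all CW complexes: $\Hom(\Z^n,G)_1 \subseteq G^n$ is a real semialgebraic set, $\Rep(\Z^n,G)_1 \homeo T^n/W$ is a finite quotient of a compact manifold, and further quotients by the finite group $S_n$ preserve the CW property.  Since the degree-$k$ rational homology is an $\FI_A\#$-module generated in stage $k$ by what was shown above, Theorem~\ref{quot-stab} yields strong rational homological stability for $\Rep(\Z^n,G)_1/S_n$ and $\Hom(\Z^n,G)_1/S_n$ in the range $n\geqs k$.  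The main subtlety in this plan is the passage from finite generation of $V$ to finite generation of $V^W$, which is precisely what Lemma~\ref{W-invt-fin-gen} provides; once that step is in hand, everything else is a direct application of the general theorems from Sections~\ref{FIW-modules} and~\ref{symm-sec}.
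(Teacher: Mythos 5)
Your argument is correct and follows essentially the same route as the paper's proof: reduce via Lemma~\ref{lem: FIs-iso} (and Lemma~\ref{lem: i* phi*}) to the $\FIs$--modules $H_k(\P(T))^W$ and $H_k(G/T\times\P(T))^W$, establish stage--$k$ generation of $H_k(\P(T))$ and $H_k(G/T\times\P(T))$ via Proposition~\ref{symm-prod} and the K\"unneth decomposition, pass to $W$--invariants with Lemma~\ref{W-invt-fin-gen}, and then quote Theorem~\ref{sharp} for the $n\geqs 2k$ range and Theorem~\ref{quot-stab} for the quotient stability. The only cosmetic differences are that you make the invocation of Proposition~\ref{prop: tensor} explicit and spell out the verification that the $S_n$--actions are good; the paper treats the former as immediate (since $H_i(G/T)$ is a constant $\FI$--module) and handles goodness elsewhere.
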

\begin{proof} By Lemmas~\ref{lem: i* phi*} and~\ref{lem: FIs-iso} together with Theorems~\ref{sharp} and~\ref{quot-stab},
it suffices to show that the $\FIs$--modules $H_k (\P(T))^W$ and $H_k (G/T \cross \P(T))^W$ are generated in stage $k$. Proposition~\ref{symm-prod} tells us that $H_k (\P(T))$ is finitely generated in stage $k$, and the same holds for $H_k (\P(T))^W$ by Lemma~\ref{W-invt-fin-gen}. 
Next, we have a decomposition of  $\FIs$--modules
\begin{equation}\label{decomp} H_k(G/T \times \P(T)) \isom \bigoplus_{i=0}^k H_i (G/T) \otimes H_{k-i} (\P(T)).
\end{equation} 
Since the $\FI$--module $H_{k-i} (\P(T))$ is generated in stage $k-i$, and $H_i (G/T)$ is constant, we see that each term in this decomposition is generated in stage $k$ or earlier. Hence $H_k(G/T \times \P(T))$ is generated in stage $k$ as well, and  Lemma~\ref{W-invt-fin-gen} completes the proof.
\end{proof}

\section{Infinite-dimensional constructions}\label{sec: B(2,G) stability}

Fix a compact and connected Lie group $G$. As   observed in Section~\ref{sec: n-stability}, the sequence  
$\{\Hom(\Z^n,G)\}_{n\geqslant 0}$ extends to an $\FI$--space. 
In this section we will consider several infinite-dimensional constructions associated to this $\FI$--space. Once again, all coefficients in this section are rational.

First, note that $\{\Hom(\Z^n,G)\}_{n\geqslant 0}$ also has the structure of a simplicial space, in which the structure maps are the restrictions of those on the bar construction of $G$. 
Following~\cite{adem2012commuting}, we denote the geometric realization of this simplicial space  by
$$
\Bcom G := |\Hom(\Z^\bullet,G)|.
$$
The inclusion $\Hom(\Z^n,G) \subseteq G^n$ implies that the space $\Bcom G$
is a subspace of the classifying space $BG$ of $G$.

We define $\Bcom(G)_1\subset \Bcom G$ to be the subspace
$$
\Bcom (G)_1 := |\Hom(\Z^\bullet,G)_1|.
$$
It is important to note that for $G=\SU(n)$, $\U(n)$, or $\Sp(n)$,
we have $$\Bcom G=\Bcom G_1$$ (since all of the representation spaces are connected in these cases). 

The   spaces $\Bcom G$ and $\Bcom(G)_1$ were introduced by 
Adem, Cohen and Torres-Giese~\cite{adem2012commuting}
and used by Adem, G\'omez \cite{adem2015gomez} and Adem, G\'omez, Lind, Tillman~\cite{adem2017gomezlindtillman}
to define \emph{commutative $K$--theory}.
In particular, $\Bcom \U := \colim_n \Bcom \U(n)$ represents (reduced) commutative complex $K$--theory.

\

The other construction we will study is an analogue of the James reduced product,
and was introduced by Cohen and Stafa \cite{cohen2016spaces}.
Recall that for a CW-complex $X$ with a non-degenerate basepoint $*$, the James reduced product $J(X)$
of $X$ is defined as the quotient space
$$
J(X) : = \bigg(\coprod_{n \geq 0} X^n \bigg)/\sim,
$$
where $\sim$ is the equivalence relation generated by $(\dots,*,\dots) \sim (\dots,\hat*,\dots)$,
which omits the basepoint. Equivalently, this is the free topological monoid generated by $X$ with the basepoint $*$
acting as the identity element, endowed with the weak topology above. 

Define the space $\Comm(G)$ as the quotient space
$$
\Comm(G) : = \bigg(\coprod_{n \geq 0} \Hom(\Z^n,G)\bigg)/\sim,
$$
where $\sim$ is the same relation as above. Note that $\Comm(G) \subseteq J(G).$
We will focus here on the subspace 
$$\Comm(G)_1 : = \bigg(\coprod_{n \geq 0} \Hom(\Z^n,G)_1\bigg)/\sim,$$
and as above, for $G=\SU(n)$, $\U(n)$, or $\Sp(n)$, we have 
$$\Comm (G)_1=\Comm (G).$$

Now let $T \leqs G$ be a maximal torus with Weyl group $W$. 
The conjugation maps 
$$\phi_n \colon G/T\times_{W} T^n \to \Hom(\Z^n,G)_1 $$ 
can be assembled to give maps
$$
\phi' \colon G/T\times_{W} BT  \to \Bcom (G)_1
$$
and
$$
\phi'' \colon G/T\times_{W_r} J(T) \to \Comm(G)_1.
$$

The next result was proven for $\Bcom  (G)$  in  \cite[Theorem 6.1]{adem2012commuting} and for $\Comm(G)$ in
\cite[Theorem 7.1]{cohen2016spaces}.

\begin{thm}\label{thm: phi}
The maps 
$$
\phi' \colon G/T\times_{W} BT  \to \Bcom ({G})_1
$$
and
$$
\phi'' \colon G/T\times_{W} J(T) \to \Comm(G)_1
$$
induce isomorphisms in rational $($co$)$homology.
\end{thm}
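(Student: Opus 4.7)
The plan is to exhibit $\phi'$ and $\phi''$ as ``assembled'' versions of the conjugation maps $\phi_n$ of \eqref{phi}, and to transfer the pointwise rational (co)homology equivalences of Theorem~\ref{Baird} to their colimits using, respectively, a Zeeman-type spectral sequence for geometric realizations and a filtration/five-lemma induction for the James-type construction.

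First I would verify that $\phi_{\bullet}:=\{\phi_n\}$ is a simplicial map from the simplicial space $[n]\mapsto G/T\times_W T^n$ to $[n]\mapsto \Hom(\Z^n,G)_1$. Both carry the simplicial structure inherited from the bar construction on $G$ (with faces that drop end-entries or multiply adjacent entries, and degeneracies that insert the identity); the diagonal $W$-action and the trivial action on $G/T$ are compatible with these face and degeneracy maps, and $\Hom(\Z^\bullet,G)_1$ is a simplicial subspace of the bar construction on $G$ because the face maps preserve commutativity and, by continuity and the fact that the trivial tuple goes to the trivial tuple, preserve the identity component. A direct computation with $\phi_n(g,t_1,\dots,t_n)=(gt_1g^{-1},\dots,gt_ng^{-1})$ gives $\phi_{n-1}d_i=d_i\phi_n$ and $\phi_{n+1}s_i=s_i\phi_n$. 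Upon geometric realization this simplicial map recovers $\phi'$. Both simplicial spaces are proper (their degeneracies are closed cofibrations, since each level has the homotopy type of a CW complex and the degeneracies insert an identity into one coordinate), so the first-quadrant spectral sequence with $E^1_{p,q}=H_q(X_p;\bbQ)\Rightarrow H_{p+q}(|X_\bullet|;\bbQ)$ (see e.g.\ May~\cite{May-EGP}) applies to both. Theorem~\ref{Baird} says that $\phi_\bullet$ induces an isomorphism on each $E^1$ row, hence on $E^\infty$, and therefore $\phi'$ is a rational homology isomorphism.

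For $\phi''$ I would use the James filtrations $J_n(T)\subset J(T)$ and $\Comm_n(G)_1\subset \Comm(G)_1$ (by word length $\leqs n$). The map $\phi''$ respects these filtrations, and the successive pair inclusions are closed cofibrations because they arise from iterated pushouts of smash-product collapses of the coordinate-wise identity subspaces. One then argues by induction on $n$: the base case $n=0$ is trivial; for the inductive step, compare the long exact sequences in rational homology of the pairs $(G/T\times_W J_n(T),\, G/T\times_W J_{n-1}(T))$ and $(\Comm_n(G)_1,\Comm_{n-1}(G)_1)$ and apply the five lemma, provided we know the map on the relative term is a rational homology isomorphism. The relative quotient on each side is, up to $G/T\times_W(-)$, a smash-type quotient $\widehat T^{\wedge n}/W$ vs. the analogous collapse of degenerate tuples inside $\Hom(\Z^n,G)_1$, and this quotient-level equivalence follows from Baird's theorem for $\phi_n,\phi_{n-1},\dots,\phi_0$ combined with a secondary five-lemma on the cofibration sequences that define those smash-type quotients. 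Finally, since $J(T)=\colim_n J_n(T)$ and $\Comm(G)_1=\colim_n \Comm_n(G)_1$ are filtered colimits along closed cofibrations, and rational homology commutes with such colimits, the result on each $J_n$ passes to $\phi''$.

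The main obstacle I anticipate is the bookkeeping in the induction for $\phi''$: verifying that the successive filtration pairs are genuine closed cofibrations (so that the long exact sequences and quotient identifications hold without ambiguity), and correctly identifying the filtration quotients on the $\Comm$ side as smash-type collapses compatible with the collapses on the torus side. Once this is set up, however, both statements reduce by standard spectral sequence and five-lemma arguments to the single input provided by Theorem~\ref{Baird}.
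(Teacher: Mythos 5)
Note first that the paper does \emph{not} prove Theorem~\ref{thm: phi}: it is cited to Adem--Cohen--Torres-Giese~\cite[Theorem 6.1]{adem2012commuting} for $\Bcom$ and to Cohen--Stafa~\cite[Theorem 7.1]{cohen2016spaces} for $\Comm$, so there is no in-paper argument to match against. Your outline for $\phi'$ is sound and in the spirit of the cited proof: $\phi_\bullet$ is a simplicial map, each level is a rational equivalence by Theorem~\ref{Baird} together with Proposition~\ref{coh-quot}, and the $E^1$-comparison in the spectral sequence of a proper simplicial space gives the result. One caveat: ``homotopy type of a CW complex'' does \emph{not} by itself imply that the degeneracies are closed cofibrations. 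The paper establishes properness of $n\goesto\Hom(F_n/\Gamma^q,G)$ (in Proposition~\ref{prop: Bcom-Comm}) via semialgebraic triangulability (Villarreal~\cite{villarreal2017cosimplicial}, Hofmann~\cite{hofmann2009triangulation}) combined with Lillig's union theorem~\cite{Lillig}; you would need the same kind of input here, not just a homotopy-type statement.

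For $\phi''$ the filtration strategy is reasonable but the ``secondary five-lemma'' step hides the actual content of the argument. The filtration quotient $\Comm_n(G)_1/\Comm_{n-1}(G)_1$ is $\Hom(\Z^n,G)_1/S_{1,n}(G)$, where $S_{1,n}(G)=\bigcup_{i=1}^n\{(g_1,\dots,g_n):g_i=1\}$ is the \emph{union} of the degenerate faces; comparing $T^{\wedge n}$ with this quotient via the cofibration long exact sequence reduces you to showing that $G/T\times_W S_{1,n}(T)\to S_{1,n}(G)$ is a rational homology isomorphism. This does not follow from a five-lemma applied to a single cofibration sequence: $S_{1,n}$ is not a Baird-type space itself. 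You need a Mayer--Vietoris/inclusion--exclusion (or a \v{C}ech-type spectral sequence for the closed cover by the faces $\{g_i=1\}$), observing that each face and each multi-fold intersection of faces is again of the form $\Hom(\Z^m,G)_1$ for $m<n$, with $\phi_m$ an isomorphism by Baird, and that the $W$-action preserves each face so Proposition~\ref{coh-quot} can be applied throughout. Alternatively, one can sidestep the induction entirely by invoking the stable splitting $\Sigma\,\Comm(G)_1\simeq\bigvee_n\Sigma\,\widehat\Hom(\Z^n,G)_1$ of \cite[Theorem 5.2]{cohen2016spaces} together with James's splitting of $\Sigma J(T)$ and compare wedge summands directly; this is in fact what the cited reference does, and the paper itself uses that splitting in Corollary~\ref{cor: X(q,G)}.
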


Now let $\{G_r\}_{r\geqs 1}$ be one of the classical sequences of Lie groups, with maximal tori $T_r \leqs G_r$ (as in Section~\ref{Lie}) and Weyl groups $W_r$. 
For a fixed positive integer $k$, consider the consistent sequences of $W_r$--representations
$$
  \{H_k(G_r/T_r\times BT_r)\}_{r\geqslant 1}, \text{ and }  
  \{H_k(G_r/T_r\times J(T_r))\}_{r\geqslant 1},
$$
with structure maps induced by those for the consistent sequences
$\{G_r/T_r\}_{r\geqs 1}$ and $\{T_r\}_{r\geqs 1}$ (note here that both constructions $B$ and $J$ are functorial).
The argument in the proof of Proposition~\ref{prop: H(G/T) is FI} shows that these sequences extend to $\FI_W$--modules.

First we show that these $\FI_W$--modules are
representation stable, which implies strong homological stability for $\Bcom ({G_r})_1$ and $\Comm(G_r)_1$.  Note that except in the case $G_r = \SU(r)$, 
the $\FI_W$--modules $\r\goesto B(T_r)$ and
$\r\goesto J(T_r)$ extend to $\FI_W\#$--modules (since this holds before applying the functors $B$ and $J$ by Proposition~\ref{prop: T is FI}).

\begin{prop}\label{prop: stability BT}
The $\FI_W$--modules $\r \goesto H_k (BT_r)$ are generated in 
stage $k$.  Hence these modules are uniformly representation stable for $r\geqs 2k$ $($except possibly in the case $G_r = \SU(r)$$)$.
\end{prop}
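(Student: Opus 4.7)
The plan is to run the same strategy as Corollary~\ref{cor: T-rep-stable}, but with the torus $T_r = (S^1)^r$ replaced by its classifying space $BT_r = (\CP^\infty)^r$, and to extract the generation and stability statements from Propositions~\ref{symm-prod} and~\ref{signed-symm-prod} applied to $X = \CP^\infty$. The key observation is that the classifying space functor $B$ is defined on topological groups, and the $\FI_W$--space structures described in Proposition~\ref{prop: T is FI} on $r \goesto T_r$ are built entirely from morphisms of compact abelian Lie groups: coordinate permutations (used in all types), insertions of the identity element (used in the $\FI_W\#$--structure), and in types B/C/D the involution $\tau\co S^1\to S^1$ given by complex conjugation, $z\mapsto z^{-1}$, which is itself a group homomorphism. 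Applying $B$ therefore transports each of these $\FI_W$--space structures onto $r \goesto BT_r$.

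First I would identify these transported structures explicitly. For $G_r = \U(r)$, $r \goesto BT_r$ becomes the $\FI_A\#$--space $\P(\CP^\infty)$, with basepoint the canonical vertex of $BS^1 = \CP^\infty$. For $G_r = \SO(2r+1)$ or $\Sp(r)$, $r \goesto BT_r$ is the $\FI_{BC}\#$--space $\P_{B\tau}(\CP^\infty)$, where the involution $B\tau$ is induced by complex conjugation on $S^1$ and fixes the canonical basepoint (since $B$ preserves the identity of a group). For $G_r = \SO(2r)$, the structure is the restriction of $\P_{B\tau}(\CP^\infty)$ to $\FI_D$.

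Next I would apply Proposition~\ref{symm-prod} in type A and Proposition~\ref{signed-symm-prod} in types B, C, and D. The base space $\CP^\infty$ is path-connected, is a CW complex (hence semi-locally contractible), and has finite rational type since $H_*(\CP^\infty;\Q) \cong \Q[x]$ with $|x|=2$. These propositions then directly yield that $H_k(BT_r)$ is generated in stage $k$ as an $\FI_W$--module and that it is uniformly representation stable with stable range $r \geqs 2k$ in all non-$\SU$ cases; the latter range in type B/C comes from Theorem~\ref{sharp} applied to the $\FI_{BC}\#$--structure, and in type D from Corollary~\ref{stable-range-D}.

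Finally, for $G_r = \SU(r)$ there is no $\FI\#$--enhancement, exactly as in Corollary~\ref{cor: T-rep-stable}, and the last paragraph of that proof transcribes verbatim after applying $B$: since $T(\SU(r)) \cong (S^1)^{r-1}$, we have $BT(\SU(r)) \cong (\CP^\infty)^{r-1}$, and restricting the $\FI_A$--space $r \goesto BT(\SU(r))$ along the shift functor $\r \goesto \mathbf{r+1}$ yields the $\FI_A$--space $\P(\CP^\infty)$, whose $k$-th rational homology is generated in stage $k$; this implies the same for the original $\FI$--module, proving the generation claim in this final case. I expect no serious obstacle: the only thing to check carefully is that the involution $B\tau$ preserves the basepoint of $\CP^\infty$ (so that $\P_{B\tau}$ is defined), and this is immediate because $B$ sends the identity homomorphism to the identity map and the basepoint of $BH$ for any topological group $H$ is the image of the identity $0$--simplex.
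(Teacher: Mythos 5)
Your proof is correct and takes essentially the same approach as the paper's: the paper establishes the $\FI_W\#$--enhancement of $r\mapsto BT_r$ in the paragraph preceding the proposition precisely by applying $B$ to the structure of Proposition~\ref{prop: T is FI}, and then notes your $(\C P^\infty)^r$ argument via Proposition~\ref{signed-symm-prod} as an ``alternative'' to its primary computation with the polynomial ring $H^*(BT_r)\cong\Q[x_1,\dots,x_r]$. The $\SU(r)$ case is handled identically in both, by citing the shift-functor argument from Corollary~\ref{cor: T-rep-stable}.
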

\begin{proof}
 For $G_r = \U(r)$, $\Sp(r)$, $\SO(2r)$, or $\SO(2r+1)$, the Weyl group $W_r$ acts
on $T_r\isom (S^1)^r$ via permutations and complex conjugation, so $S_r < W_r$ acts on $BT_r\isom (BS^1)^r$ by permuting the factors.
We have an isomorphism
$$H^*(BT_r) \isom \Q[x_1,\dots,x_r],$$
where $|x_i|=2$,
 and in cohomology this action just permutes the set of generators $x_1,\dots,x_r$; the action in homology is simply dual to this action. 
 It can easily be  seen
(e.g. \cite[Example 1.4]{wilson2014fiw}) that the degree--$k$ submodule of the $\FI$--module $\r\goesto \Q[x_1,\dots,x_r]$, with the prescribed
action of the symmetric group, is finitely generated in stage $k$. Alternatively, we can view $BT_r$ as $(BS^1)^r\heq (\C P^\infty)^r$,
and for increasing $r$ its degree $k$ rational cohomology is generated in stage 
$k$ by Proposition~\ref{signed-symm-prod}. 

 For $G_r = \SU(r)$, the subgroup $S_{r-1}\leqs W_r \isom S_{r}$  consisting of permutations fixing $r$  acts on $T(\SU(r)) \isom (S^1)^{r-1}$ by permuting the factors, and hence acts on $$H^*(BT(\SU(r))) \isom \Q[x_1,\dots,x_{r-1}]$$
by permuting the generators. Finite generation in this case now follows by the same argument as in Corollary~\ref{cor: T-rep-stable}.
\end{proof}

\begin{prop}\label{prop: stability J}
The $\FI_W$--modules $\r\goesto H_k(J(T_r))$ are generated in stage $k$.
Hence these modules are uniformly representation stable for $r\geqs 2k$ $($except possibly in the case $G_r = \SU(r)$$)$.
\end{prop}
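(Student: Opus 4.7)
The plan is to reduce this to Proposition~\ref{prop: T is FI} by using a naturality argument combined with the standard computation of the rational homology of the James construction.

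The first step is to recall that for any path-connected, well-pointed CW complex $X$ there is a natural (in based maps of $X$) isomorphism
$$H_*(J(X); \Q) \isom \Q \oplus \bigoplus_{n \geqs 1} \wt{H}_*(X; \Q)^{\otimes n},$$
coming for instance from the James--Milnor splitting $\Sigma J(X) \heq \bigvee_{n \geqs 1} \Sigma X^{\wedge n}$ together with the K\"unneth formula $\wt{H}_*(X^{\wedge n}; \Q) \isom \wt{H}_*(X; \Q)^{\otimes n}$. Naturality lets us specialize $X = T_r$ and obtain an isomorphism of $\FI_W$--modules. Since $T_r$ is path-connected, $\wt{H}_*(T_r; \Q)$ vanishes in degree zero, and so in any fixed homological degree $k$ the sum truncates to a finite one:
$$H_k(J(T_r); \Q) \isom \Q \oplus \bigoplus_{n=1}^{k} \bigl[ \wt{H}_*(T_r; \Q)^{\otimes n} \bigr]_k.$$

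The second step is to realize each summand as a direct $\FI_W$--summand of something we already control. By K\"unneth,
$$H_*(T_r^n; \Q) \isom (\Q \oplus \wt{H}_*(T_r; \Q))^{\otimes n} \isom \bigoplus_{S \subseteq \{1, \ldots, n\}} \wt{H}_*(T_r; \Q)^{\otimes |S|},$$
and this decomposition is preserved by the structure maps of the $\FI_W$--module $r \goesto H_k(T_r^n; \Q)$ because the underlying maps $T_r \to T_s$ induced by morphisms in $\FI_W$ are based and hence preserve the splitting $H_*(T_r; \Q) = \Q \oplus \wt{H}_*(T_r; \Q)$. Consequently $\wt{H}_*(T_r; \Q)^{\otimes n}$ is a direct summand (in total degree $k$) of $H_k(T_r^n; \Q)$ as $\FI_W$--modules.

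The third step applies Proposition~\ref{prop: T is FI}: for each fixed $n$, $r \goesto H_k(T_r^n; \Q)$ is generated in stage $k$ as an $\FI_W$--module, and (except in the case $G_r = \SU(r)$) extends to an $\FI_W\#$--module via Proposition~\ref{symm-prod} or Proposition~\ref{signed-symm-prod}. A direct summand of a module generated in stage $k$ is itself generated in stage $k$ (one can project generators summand-wise), so each $\wt{H}_*(T_r; \Q)^{\otimes n}$ is generated in stage $k$, and hence so is the finite direct sum $H_k(J(T_r); \Q)$. Uniform representation stability for $r \geqs 2k$ then follows from Theorem~\ref{sharp}; the $\SU(r)$ case gives stage--$k$ generation by the same reindexing trick along the functor $\FI \to \FI$, $\r \goesto ({\bf r+1})$, used in Corollary~\ref{cor: T-rep-stable} and Proposition~\ref{prop: stability BT}.

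The main obstacle I anticipate is making the James splitting genuinely $\FI_W$--equivariant rather than merely natural in individual based maps; if this feels unsatisfying one can argue more directly using the natural filtration $J(X) = \bigcup_n J_n(X)$ with subquotients $J_n(X)/J_{n-1}(X) \homeo X^{\wedge n}$, which is functorial in $X$ on the nose. Rationally, the induced long exact sequences split (the underlying graded vector space of $H_*(J(X); \Q)$ is the tensor algebra on $\wt{H}_*(X; \Q)$), yielding the same $\FI_W$--decomposition used above.
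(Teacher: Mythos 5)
Your proof is correct, and it follows the same underlying strategy as the paper: decompose $H_*(J(T_r); \Q)$ as the tensor algebra on $\widetilde{H}_*(T_r; \Q)$, observe that in each homological degree $k$ the decomposition is finite, and reduce to facts already established about the homology of tori. The difference is in the final bookkeeping step. The paper works directly with the homogeneous pieces $H_{i_1}(T_r) \otimes \cdots \otimes H_{i_n}(T_r)$ (each $i_j \geqs 1$, $\sum i_j = k$) and invokes Wilson's result (Proposition~\ref{prop: tensor}) that generation stages are additive under tensor product, so each summand is generated in stage $k$. You instead observe, via the K\"unneth splitting, that $\bigl[\widetilde{H}_*(T_r)^{\otimes n}\bigr]_k$ is a direct $\FI_W$--summand of $H_k(T_r^n)$, and then cite the already-proven Corollary~\ref{cor: T-rep-stable} together with the (true and easy) fact that a direct $\FI_W$--summand of a module generated in stage $k$ is generated in stage $k$. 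Your route avoids Proposition~\ref{prop: tensor} entirely, leaning instead on Corollary~\ref{cor: T-rep-stable} applied with $n>1$, whereas the paper needs only the $n=1$ case of that corollary plus the tensor-product bound; the two are of comparable length and either is acceptable. One small remark: your closing worry about whether the James–Milnor splitting is "genuinely $\FI_W$--equivariant" is a nonissue — the $W_r$--action is by based maps, so naturality of the splitting in $\mathrm{Top}_*$ already gives compatibility with both the $W_r$--action and the morphisms $T_r \to T_s$, hence a map of $\FI_W$--modules; the fallback via the James filtration and Bott–Samelson is fine but not needed.
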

\begin{proof} The homology of the James reduced product $J(T_r)$ is the tensor
algebra $\mathcal{T} \left(\wt{H}_* (T_r)\right)$ generated by the reduced homology of the maximal torus. Define
$$V_r:=\widetilde H_*(T_r)=\bigoplus_{i=1}^r H_i(T_r).$$
The action of the Weyl group preserves homological 
degree in $H_*(T_r)$ and the tensor grading in the direct sum decomposition
$$
\mathcal{T}[V_r]= \bigoplus_{n \geqslant 0} V_r^{\otimes n} = \bigoplus_{n \geqslant 0} 
\left( \bigoplus_{i=1}^r H_i(T_r)\right)^{\otimes n}.
$$
Then we have
$$
H_k(J(T_r)) \isom \bigoplus_{n = 0}^m 
\left( \bigoplus_{\Sigma i_j=k} H_{i_1}(T_r) \otimes \cdots \otimes H_{i_n}(T_r)\right),
$$
and this extends to a decomposition of $\FI_W$--modules.
Each factor $\{H_{i_j}(T_r)\}_{r\geqs 1}$ is generated in stage $i_j$ 
by Corollary~\ref{cor: T-rep-stable}, so by Proposition~\ref{prop: tensor}, $\{H_k(J(T_r))\}_{r\geqs 1}$ is generated in stage $k =\Sigma i_j$.
\end{proof}



Recall from Theorem~\ref{Stafa}  
the homeomorphism  
\begin{equation}\label{eqn: j7} T^n/W \srm{\homeo} 
\Hom(\Z^n,G)_1/G = \Rep(\Z^n, G)_1.
\end{equation}
Similarly, we can obtain the following homeomorphisms.

\begin{prop}\label{prop: homeo}
Let $G$ be a compact Lie group. Then there are homeomorphisms 
$$
\Comm(G)_1/G\cong J(T)/W,
$$
and
$$
\Bcom  (G)_1/G \cong BT/W.
$$
\end{prop}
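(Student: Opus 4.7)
The plan is to reduce both homeomorphisms to the levelwise homeomorphism $T^n/W \srm{\isom} \Rep(\Z^n,G)_1 = \Hom(\Z^n,G)_1/G$ from Theorem~\ref{Stafa}, and then to commute the $G$--quotient past each of the two infinite-dimensional constructions.

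First I would note that the inclusion $T \injects G$ induces a morphism of simplicial spaces $T^\bullet \to \Hom(\Z^\bullet, G)_1$, where $T^\bullet$ is the bar construction of $T$ (so that the space of $n$--simplices is $T^n = \Hom(\Z^n, T)$). The face and degeneracy maps on both sides are given by multiplication and insertion/deletion of identities, and these are respected by the inclusion $T \injects G$. Since the map in Theorem~\ref{Stafa} is $W$--invariant, it factors through $T^\bullet/W$, and at each simplicial degree it is a homeomorphism (by Theorem~\ref{Stafa}). This yields a levelwise homeomorphism of simplicial spaces $T^\bullet/W \srm{\isom} \Hom(\Z^\bullet,G)_1/G$, where the right-hand side makes sense because the $G$--action on $\Hom(\Z^\bullet, G)_1$ by conjugation is simplicial (conjugation commutes with multiplication and preserves the identity).

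Next I would apply the standard fact that, for a topological group $H$ acting simplicially on a simplicial space $Y_\bullet$, there is a canonical homeomorphism $|Y_\bullet|/H \cong |Y_\bullet/H|$ (both sides arise as the same quotient of $\coprod_n Y_n \times \Delta^n$, the geometric realization being a colimit and the group quotient a coequalizer of levelwise equivariant maps). Applying this to $Y_\bullet = \Hom(\Z^\bullet, G)_1$ with $H = G$, and then to $Y_\bullet = T^\bullet$ with $H = W$, gives
$$\Bcom(G)_1/G = |\Hom(\Z^\bullet,G)_1|/G \cong |\Hom(\Z^\bullet,G)_1/G| \cong |T^\bullet/W| \cong |T^\bullet|/W = BT/W,$$
which is the second homeomorphism in the proposition.

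For the first homeomorphism, the $G$--action on $\coprod_{n \geqs 0} \Hom(\Z^n, G)_1$ fixes the identity $1 \in G$ at each level, hence commutes with the James relation, and similarly the $W$--action on $\coprod_{n \geqs 0} T^n$ fixes $1\in T$, so it descends to $J(T)$. The same abstract-nonsense argument (the James quotient and the group quotient are both coequalizers, so they commute) yields
$$\Comm(G)_1/G \cong \coprod_n \bigl(\Hom(\Z^n,G)_1/G\bigr)\big/{\sim} \cong \coprod_n (T^n/W)\big/{\sim} \cong J(T)/W.$$
The main obstacle is the routine but slightly fiddly verification that taking a topological group quotient commutes both with geometric realization and with the James reduced product; once one accepts these standard commutations, the proposition is just an assembly of Theorem~\ref{Stafa} applied levelwise.
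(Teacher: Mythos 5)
Your proposal is correct and follows essentially the same route as the paper: apply the levelwise homeomorphism $T^n/W \cong \Rep(\Z^n,G)_1$ from Theorem~\ref{Stafa}, note it is a map of simplicial spaces, and then commute the group quotient past $|\,\cdot\,|$ and past the James relation. The paper simply cites \cite[Theorem 1.2]{stafa2017poincare} for the $\Comm$ case and asserts the realization identifications $|T^\bullet/W| = BT/W$ and $|\Rep(\Z^\bullet,G)_1| = \Bcom(G)_1/G$ without elaboration, so you are supplying the colimit-commutation details rather than taking a genuinely different approach.
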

\begin{proof}
The first homeomorphism was shown in \cite[Theorem 1.2]{stafa2017poincare}. 
The second homeomorphism follows from the homeomorphism (\ref{eqn: j7}),
which yields a simplicial homeomorphism between the simplicial spaces
$n\goesto T^n/W$ (whose geometric realization is $BT/W$) and $n\goesto \Rep(\Z^n, G)_1$ (whose realization is $\Bcom  (G)_1/G$).
\end{proof}

 In Section~\ref{sec: stable range for Hom}, we establish homological stability for the sequences
$$r\goesto \Bcom (G_r)_1 \,\,\, \textrm{ and }\,\,\, r\goesto \Comm(G_r)_1.$$
Here we consider the conjugation quotients, which are simpler to handle.

\begin{thm}\label{thm: stability for BcomG/G and ComG/G}
The sequences
$$r\goesto \Bcom (G_r)_1/G_r \,\,\, \textrm{ and }\,\,\, r\goesto \Comm(G_r)_1/G_r$$
satisfy strong rational homological stability. In homological degree $k$, stability holds for $r\geqslant k$.
\end{thm}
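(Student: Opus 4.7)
The plan is to reduce via Proposition~\ref{prop: homeo} to proving strong rational homological stability for the quotient sequences $r\goesto BT_r/W_r$ and $r\goesto J(T_r)/W_r$, and then to invoke Theorem~\ref{quot-stab} applied to the $\FI_W$--modules on homology established in Propositions~\ref{prop: stability BT} and~\ref{prop: stability J}. The homeomorphisms of Proposition~\ref{prop: homeo} are compatible with the standard inclusions $G_r\injects G_{r+1}$, by Remark~\ref{rmk: phi natural} together with the functoriality of $B$ and $J$, so the reduction behaves well under stabilization.

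For $G_r\neq \SU(r)$, the $n=1$ case of Proposition~\ref{prop: T is FI} supplies an $\FI_W\#$--space $r\goesto T_r$ whose structure maps on the circle factors (signed permutations, complex conjugation, and projections to the identity element) are all group homomorphisms. Applying $B$ and $J$ levelwise therefore produces $\FI_W\#$--spaces $r\goesto BT_r$ and $r\goesto J(T_r)$, whose underlying $\FI_W$--modules on $H_k$ agree with those of Propositions~\ref{prop: stability BT} and~\ref{prop: stability J} and are generated in stage $k$. The final clause of Theorem~\ref{quot-stab} then yields the desired isomorphisms for $r\geqs k$.

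The main obstacle is the $\SU(r)$ case, where $T(\SU(r))$ admits no natural $\FI_W\#$--extension; I would handle it by mimicking the endgame of Theorem~\ref{stability-wrt-r-Rep}. Surjectivity of the averaging maps $H_k(BT_r)^{W_r}\surjects H_k(BT_{r+1})^{W_{r+1}}$, and of their $J$--analogues, for $r\geqs k$ follows directly from Proposition~\ref{avg} combined with generation in stage $k$. For injectivity, I would compare with the already-settled $\U(r)$ case through the $S_r$--equivariant inclusions $T(\SU(r))\injects T(\U(r))$: after applying $B$, the induced map on rational homology is injective because $H^*(BT(\U(r)))=\Q[x_1,\dots,x_r]$ surjects onto $H^*(BT(\SU(r)))=\Q[x_1,\dots,x_r]/(x_1+\cdots+x_r)$, and after applying $J$ it is injective because $H_*(J(T))\isom \mathcal{T}(\wt H_*(T))$ and the tensor-algebra functor preserves injections. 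Chasing the resulting commutative square of invariants, with the $\U(r)$ isomorphism across the bottom, then forces injectivity of the $\SU(r)$ stabilization maps and completes the proof.
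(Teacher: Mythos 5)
Your proposal is correct and follows essentially the same strategy as the paper: reduce via Proposition~\ref{prop: homeo} to $BT_r/W_r$ and $J(T_r)/W_r$, obtain $\FI_W\#$--structures by applying $B$ and $J$ levelwise to $\P_\tau(S^1)$ (for $G_r \neq \SU(r)$), and handle $\SU(r)$ by comparison with $\U(r)$ as in the endgame of Theorem~\ref{stability-wrt-r-Rep}. Two small points of difference: the paper verifies the ``goodness'' hypothesis of Theorem~\ref{quot-stab} explicitly (arguing that $BT/W$ and $J(T)/W$ have CW homotopy type using semi-algebraic triangulation and cofibrancy of the colimit), which your write-up omits but would need to include; and for injectivity of $H_*(BT(\SU(r))) \to H_*(BT(\U(r)))$ the paper uses the Serre spectral sequence of the fibration $BT(\SU(r)) \to BT(\U(r)) \to BS^1$, whereas you observe directly that $\Q[x_1,\dots,x_r] \twoheadrightarrow \Q[x_1,\dots,x_r]/(x_1+\cdots+x_r)$ in cohomology — an equally valid (and arguably cleaner) argument. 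The $J$--case injectivity in both versions relies on the fact that, rationally, injections of graded vector spaces split and the tensor algebra functor preserves such splittings, which you should state as relying on characteristic zero rather than on a general property of the tensor algebra.
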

\begin{proof} The proof is similar to that of Theorem~\ref{stability-wrt-r-Rep}.
We will show that the actions of $W$ on $J(T)$ and on $BT$ are good. 
Then by Proposition~\ref{prop: homeo} we have
$$H_*(\Bcom  (G)_1/G)\isom H_* (BT)^W,$$
 and
$$H_*(\Comm(G)_1/G)\isom H_*(J(T))^W,$$ 
and except in the case of $\SU(r)$, the result will follow from Propositions~\ref{prop: stability J} and~\ref{prop: stability BT} together with Theorem~\ref{quot-stab} 
Note that $J(T)$ is a CW complex, and May~\cite[Appendix]{May-EGP} shows that $BT$ has the homotopy type of a CW complex, since it is the geometric realization of a proper simplicial space in which each level is CW complex. The space $BT/W$ has the homotopy type of a CW complex for the same reason: this space is the geometric realization of a simplicial space $n\goesto T^n/W$, and $T^n/W$ is triangulable for each $n$ (since the action of $W$ is algebraic, the quotients are semi-algebraic by Schwarz~\cite{Schwarz}).
We claim that $J(T)/W$ also has the homotopy type of a CW complex. This space is the colimit of the diagram formed by all maps $T^n/W\to T^{n+1}/W$ given by inserting the identity element in one coordinate. These maps are cofibrations, so this colimit is homotopy equivalent to the corresponding homotopy colimit. 

 In the case $G_r = \SU(r)$, following the argument in~\ref{stability-wrt-r-Rep}, it suffices to prove injectivity of the maps $BT(\SU(r))\to BT(\U(r))$ 
and $JT(\SU(r))\to JT(\U(r))$ 
 induced by the inclusion $\SU(r)\injects \U(r)$.

For the classifying spaces, the determinant map $T'_r\to S^1$ induces a simplicial map $BT(\U(r)) \to BS^1$. This map is a level-wise Hurewicz fibration and hence (by~\cite[Theorem 12.7]{May-GOILS}) a fibration on geometric realizations, with fiber $BT(\SU(r))$. Since the homology of $BS^1\heq \C P^\infty$ is concentrated in even degrees, the Serre spectral sequence for this fibration collapses at the $E^2$ page, and hence the inclusion of the fiber is injective in homology, as desired.

For the James constructions, recall that we saw in the proof of Theorem~\ref{stability-wrt-r-Rep} that $T_r\injects T_{r+1}$ is injective on homology, and since we are working rationally this map admits a splitting. To obtain the homology of the James construction, we take the tensor algebras, and functoriality  of this construction implies that we again have a split injection between the tensor algebras. 
\end{proof}

\begin{rmk}\label{rmk:  Bcom integral stability}
For $G=\U(r)$, Proposition~\ref{prop: homeo} implies that
\begin{equation}\label{cg}
\Bcom (G)_1/G  \cong   BT/W   \isom   \Sym^r(BS^1).
\end{equation}
Hence in this case the homological stability result in Theorem \ref{thm: equiv.coh. stability Hom, Comm,Bcom} holds integrally, as explained in  Remark~\ref{rmk: Steenrod symmetric prod}. In light of Remark~\ref{rmk: signed sym product = sym product}, the same holds in types B and C. 

We note that in (\ref{cg}) we are using the homeomorphism $BT\homeo (BS^1)^r$, 
which holds when the product $(BS^1)^r$ is given the compactly generated topology~\cite[Corollary 11.6]{May-GOILS}. 
Since geometric realizations of simplicial Hausdorff spaces are Hausdorff~\cite{Pazzis}, using the compactly generated topology 
does not affect the weak homotopy type of the symmetric product
(as discussed in Remark~\ref{rmk: Steenrod symmetric prod}).
\end{rmk}

\section{Stability for equivariant (co)homology}\label{sec: equivariant stability}

In this section we prove that the $G_r$--equivariant (co)homology of 
$\Hom(\Z^n,G_r)_1$, $\Comm(G_r)_1$, and $(\Bcom G_r)_1$, 
where $G_r$ acts by conjugation, also stabilizes for sufficiently large $r$.
In the case of $\Hom(\Z^n,G)_1$, equipped with the permutation action of the 
symmetric group $S_n$, we show that the $G$--equivariant (co)homology forms 
a representation stable sequence of $\bbQ[S_n]$--modules. We emphasize the cohomological statements in this section, which are needed in the next section.

We will need a result from Baird \cite{baird2007cohomology} regarding 
rational equivariant (co)homology. 
As naturality will be crucial for us, we explain in detail the maps involved in this isomorphism. Given a left $G$--space $X$, the inclusion $X^T \injects X$ induces a map
\begin{equation}\label{eqn: X^T X} EG\cross_T X^T \maps EG\cross_G X,
\end{equation}
where $EG$ is a universal principal (right) $G$--bundle. (We will sometimes denote the homotopy orbit space $EG\cross_G X$ by $X_{hG}$.)
The Weyl group $W=NT/T$ acts on $EG\cross_T X^T \isom EG/T \cross X^T$ via $([e], x)\cdot [n] = ([e\cdot n], n^{-1} \cdot x)$, and the map (\ref{eqn: X^T X}) is invariant under this action, yielding an induced map
\begin{equation}\label{eqn: iota} \iota\co (EG\cross_T X^T)/W \maps EG\cross_G X
\end{equation}
sending $[e, x]$ to $[e,x]$.

\begin{thm}[{\cite[Theorem 3.5]{baird2007cohomology}}]\label{thm: Baird G-equiv cohom thm}
Let $G$ be a compact and connected Lie group acting on a paracompact Hausdorff 
space $X$ such that for every $x\in X$, there exists a maximal torus $T(x)\leqs G$ such that $T(x) \leqs G_x$ $($where $G_x\leqs G$ denotes the stabilizer subgroup$)$. 
Then for each maximal torus $T\leqs G$,
the map $\iota$
induces  an isomorphism 
$$\iota^*\co H_G^*(X) \srm{\isom} H^*\left(\left(EG\cross_T X^T\right)/W\right)$$
in rational cohomology. 
\end{thm}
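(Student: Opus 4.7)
The plan is to present $\iota$ as a morphism between two fibration-like maps over a common base $X/G$ and then compare fibers. First, I would use the hypothesis to identify $X^T/W$ with $X/G$. Since all maximal tori of $G$ are conjugate, if $T(x)\leqs G_x$ and $gT(x)g^{-1}=T$ then $gx\in X^T$, so every $G$-orbit meets $X^T$. Moreover, for $x,y\in X^T$ with $y=gx$, both $T$ and $g^{-1}Tg$ are maximal tori of the stabilizer $G_x$ (using that $y\in X^T$ forces $g^{-1}Tg\leqs G_x$), hence conjugate by some $h\in G_x^0$, forcing $g\in NT\cdot G_x$. Therefore the canonical map $X^T/W\to X/G$ is a homeomorphism, yielding a commutative diagram
\begin{equation*}
\begin{tikzcd}
(EG\times_T X^T)/W \arrow{r}{\iota} \arrow{d}{p_1} & EG\times_G X \arrow{d}{p_2}\\
X^T/W \arrow{r}{\cong} & X/G
\end{tikzcd}
\end{equation*}
whose vertical maps are the natural projections $[e,x]\mapsto [x]$.

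Next I would analyze the fibers. For $x\in X^T$, the fiber of $p_2$ is $EG/G_x=BG_x$, while the fiber of $p_1$ is $EG/N_{G_x}(T)=BN_{G_x}(T)$, since the stabilizer of $x$ under the $W$-action on $X^T$ is $(NT\cap G_x)/T=N_{G_x}(T)/T$. On fibers, $\iota$ restricts to the map $BN_{G_x}(T)\to BG_x$ induced by the inclusion $N_{G_x}(T)\hookrightarrow G_x$. Since $T\leqs G_x$ is a maximal torus of $G_x^0$, Borel's theorem gives
\begin{equation*}
H^*(BG_x;\Q)\cong H^*(BT;\Q)^{W(G_x)}\cong H^*(BN_{G_x}(T);\Q),
\end{equation*}
where $W(G_x)=N_{G_x}(T)/T$ is finite, so $\iota$ induces a rational cohomology isomorphism on every fiber.

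Finally I would deduce the statement for total spaces by comparing Leray spectral sequences for $p_1$ and $p_2$ in sheaf cohomology (which agrees with singular cohomology under the paracompact Hausdorff hypothesis). The map $\iota$ induces morphisms $R^q(p_2)_*\Q\to R^q(p_1)_*\Q$ of higher direct image sheaves that are stalkwise isomorphisms by the fiber computation above, hence isomorphisms of sheaves, and a standard spectral sequence comparison concludes that $\iota^*$ is an isomorphism. The main obstacle is precisely this last step: because $p_1$ and $p_2$ are not fiber bundles (their fibers depend on the orbit type of $x$), the use of Leray spectral sequences and the stalkwise comparison requires careful sheaf-theoretic justification. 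One alternative I would consider is to stratify $X/G$ by orbit types and argue by equivariant Mayer--Vietoris, reducing to the model case $X=G/H$ with $T\leqs H$, where the statement becomes a direct computation using Borel's theorem.
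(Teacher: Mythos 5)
Your route is genuinely different from the paper's, and the two make different trade-offs. The paper factors $\iota$ as a homeomorphism $(EG\times_T X^T)/W \cong EG\times_G (G/T\times_W X^T)$ followed by the map $\phi_G = (\mathrm{Id}\times_G\phi) \co EG\times_G(G/T\times_W X^T)\to EG\times_G X$, then cites Baird's Theorem~3.3 (the non-equivariant statement that $\phi\co G/T\times_W X^T\to X$ is a rational cohomology isomorphism) and compares Serre spectral sequences for the two Borel fibrations over $BG$. These are honest fibrations with fixed fiber, so the spectral sequence comparison is routine. You instead regard $\iota$ as a map over $X/G$ via the identification $X^T/W\cong X/G$, and try to compare fiberwise. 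That buys a conceptually clean picture (the fiberwise map is just $BN_{G_x}(T)\to BG_x$, an isomorphism by the Borel--Feshbach theorem), but it does not reduce to any genuine fibration.

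The gap you flag at the end is real and not minor. The projections $p_1$ and $p_2$ to $X/G$ are not fiber bundles (fibers vary with orbit type), and they are also not proper, since $EG$ is noncompact; so you cannot simply identify the stalk of $R^q(p_i)_*\Q$ at $[x]$ with $H^q$ of the fiber $BG_x$, and without that identification the "stalkwise isomorphism $\Rightarrow$ isomorphism of sheaves" step collapses. Making this rigorous would require additional structure — a stratification by orbit type with local normal form for the Borel construction, or a cofinal-skeleta argument for $EG$, or passing to constructible sheaves — none of which is sketched. The paper sidesteps the issue entirely by outsourcing the hard content to Baird's Theorem~3.3 (which it cites as a black box) and then comparing Serre spectral sequences over $BG$, where the fibrations are genuine. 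A smaller point: your invocation of Borel's theorem for $BG_x$ should be stated for possibly \emph{disconnected} compact stabilizers $G_x$ (you need $H^*(BG_x;\Q)\cong H^*(BT;\Q)^{N_{G_x}(T)/T}$ with $T$ a maximal torus of $G_x^0$), which is true rationally but is a slightly stronger statement than Borel's original theorem for connected groups. Your opening identification $X^T/W\cong X/G$ is correct (continuity of the inverse follows from compactness of $G$ and closedness of $X^T\subset X$) and is essentially the orbit-type argument that underlies Baird's Theorem~3.3.
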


 The action of $W$ on $H^*(EG/T)$ in Theorem~\ref{thm: Baird G-equiv cohom thm} is induced by the \e{principal action} of $NT\leqs G$ on $EG$, which descends to an action of $W = NT/T$ on $EG/T$.
Recall (Lemma~\ref{lem: translation=conjugation}) that up to homotopy, this action  agrees with the  conjugation action of $W$ on $BT$.

\begin{rmk}\label{rmk: natural} The map $\iota$ in Theorem~\ref{thm: Baird G-equiv cohom thm} is natural in the following sense. Say $h\co G\to G'$ is a continuous homomorphism and 
$h(T) \leqs T'$ for some maximal tori $T\leqs G$ and $T'\leqs G'$. If $X$ and $X'$ are $G$ and $G'$--spaces (respectively), and $f\co X\to X'$ is an $h$--equivariant map (meaning that $f(g\cdot x) = h(g)\cdot f(x)$ for all $g\in G$, $x\in X$) satisfying
$$f(X^T) \subset (X')^{T'},$$
then we have a commutative diagram
  \begin{center}
\begin{tikzcd} 
(EG\cross_T X^T)/W \arrow[d] \arrow{r}{\iota} 
  & EG\cross_G X \arrow[d]\\
	 (EG'\cross_{T'} (X')^{T'})/W   \arrow{r}{\iota}
  & EG'\cross_{G'} X'
\end{tikzcd}
\end{center}
in which the vertical maps are induced by $h$ and $f$.
\end{rmk}

For completeness, we sketch the proof of Theorem~\ref{thm: Baird G-equiv cohom thm}.

\begin{proof}[Proof of Theorem~\ref{thm: Baird G-equiv cohom thm}]
To begin, let $E$ be a right $G$--space and let $H\leqs G$ be a subgroup. Then 
$G$ acts on $E\cross G/H$ via $(e, [g])\cdot g' = (e\cdot g', [(g')^{-1} g])$,
and we have a homeomorphism
\begin{equation}\label{eqn: ExG/H}
\psi \co E/H \srm{\isom} E\cross_G (G/H)  
\end{equation}
induced by the inclusion $E\injects E\cross (G/H)$, $e\goesto (e, [1])$ (the inverse of (\ref{eqn: ExG/H}) is the map induced by $(e, [g])\goesto [e\cdot g])$.
Next, let $W = NH/H$, and let $Y$ be a left $W$--space. Then the induced homeomorphism
\begin{equation}\label{eqn: ExG/HxY}
\psi\cross \textrm{Id}_Y\co  E\cross_H Y  =  E/H\cross Y \srm{\isom} \left(E\cross_G (G/H)\right) \cross Y,
\end{equation}
is  $W$--equivariant, 
where $[n]\in W$ acts on $(E\cross_H Y)/W$ via $[e, y]\cdot [n] = [e\cdot n, n^{-1} \cdot y]$ and on $\left(E\cross_G (G/H)\right) \cross Y$ via $(e, [g], y) \cdot [n] = (e, [gn], n^{-1} \cdot y)$. Hence $(\ref{eqn: ExG/HxY})$ descends to a homeomorphism
\begin{equation}\label{eqn: homeo2} (E\cross_H Y)/W \srm{\isom} (\left(E\cross_G (G/H)\right) \cross Y)/W \isom 
 E\cross_G (G/H\cross_W Y).
 \end{equation}

Now, consider   
a left $G$--space $X$. Then the fixed point space $X^H$ is invariant under $NH$, and hence inherits an action of $W$. Setting $Y = X^H$ in (\ref{eqn: homeo2}) gives a homeomorphism
\begin{equation}\label{eqn: homeo3} (E\cross_H X^H)/W \srm{\isom} (\left(E\cross_G (G/H)\right) \cross X^H)/W \isom 
 E\cross_G (G/H\cross_W X^H).
 \end{equation}
The map 
\begin{equation}\label{eqn: phi}\phi\co G/H\cross_W X^H \maps X\end{equation}
induced by $([g], x)\goesto (g\cdot x)$ is $G$--equivariant (where in the domain of $\phi$, $G$ acts by left translation on $G/H$, and trivially on $X^H$)
so we have an induced map
\begin{equation}\label{eqn: phiG} \phi_G \co E\cross_G \left(G/H\cross_W X^H\right) \maps E\cross_G X.
\end{equation}
Composing (\ref{eqn: homeo3}) and (\ref{eqn: phiG}) yields a map
$$(E\cross_H X^H)/W \maps E\cross_G (G/H\cross_W X^H) \maps E\cross_G X$$
sending $[e, x]$ to $[e, x]$, so it remains only to show that (\ref{eqn: phiG}) induces an isomorphism in rational cohomology. The proof of
Baird~\cite[Theorem 3.3]{baird2007cohomology} shows that the underlying $G$--equivariant map 
(\ref{eqn: phi}) is an isomorphism in rational cohomology. When $E=EG$, the domain and range of (\ref{eqn: phiG}) fiber over $EG/G = BG$, and by comparing the Serre spectral sequences for these fibrations we see that  (\ref{eqn: phiG}) induces an isomorphism in rational cohomology, as desired.
\end{proof}

When $X$ is $\Hom(\Z^n,G)$, $\Comm(G)$, 
or $\Bcom G$, with $G$ acting by conjugation, we have the following corollary of Theorem~\ref{thm: Baird G-equiv cohom thm}. 
 
\begin{cor}\label{cor: G-equiv. cohomology of hom, comm, b_com}
For each compact, connected Lie group $G$, the inclusion $T\injects G$ of a maximal torus induces maps
\begin{enumerate}
\item $(EG/T \cross T^n)/W \maps EG\cross_G \Hom(\Z^n, G)_1 $
\item $(EG/T \cross J(T))/W \maps EG\cross_G \Comm(G)_1$ 
\item $(EG/T \cross BT)/W \maps EG\cross_G \Bcom(G)_1$,
\end{enumerate}
each of which induces an isomorphism in rational cohomology:
\begin{enumerate}
\item $H^*_G(\Hom(\Z^n,G)_1) \srm{\isom}   H^*(BT\cross T^n)^{W}$,
\item $H^*_G(\Comm(G)_1) \srm{\isom}  H^*(BT \cross J(T))^{W}$,
\item $H^*_G(\Bcom (G)_1) \srm{\isom} H^*(BT \cross BT)^{W}$.
\end{enumerate}
\end{cor}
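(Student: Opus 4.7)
My plan is to apply Theorem~\ref{thm: Baird G-equiv cohom thm} to each of the three $G$--spaces $\Hom(\Z^n,G)_1$, $\Comm(G)_1$, and $\Bcom(G)_1$ (with $G$ acting by conjugation), and then use Proposition~\ref{coh-quot} to pass from the cohomology of the $W$--quotient on the right side of Baird's isomorphism to $W$--invariants in cohomology. The three items follow from a common pattern, and I will outline it once.

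The first step is to verify the hypotheses of Theorem~\ref{thm: Baird G-equiv cohom thm}. For the stabilizer hypothesis: Baird's conjugation map $\phi_n\co G/T\times_W T^n\to\Hom(\Z^n,G)_1$ shows that every commuting tuple is $G$--conjugate to a tuple in $T^n$, so every $x\in\Hom(\Z^n,G)_1$ has centralizer containing a maximal torus $hTh^{-1}$. The $\Comm$ case reduces to this level-wise on representatives in $J(G)$, and the $\Bcom$ case reduces to it because the $G$--action on $|\Hom(\Z^\bullet,G)_1|$ is level-wise and stabilizers of realized points contain stabilizers of their simplicial representatives. The second hypothesis is the identification of the $T$--fixed points. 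The key input is $Z_G(T)=T$ for a maximal torus in a compact connected Lie group, so a commuting tuple is conjugation-fixed by $T$ iff each coordinate lies in $T$; this gives $\Hom(\Z^n,G)_1^T=T^n$. The analogous argument applied to minimal-length representatives in $J(G)$ yields $\Comm(G)_1^T=J(T)$. For $\Bcom(G)_1=|\Hom(\Z^\bullet,G)_1|$, the level-wise $T$--action on the proper simplicial space commutes with geometric realization on fixed points, giving $|\Hom(\Z^\bullet,T)|=BT$ (the standard bar construction, since $T$ is abelian and $\Hom(\Z^n,T)=T^n$).

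Once both hypotheses are verified, Theorem~\ref{thm: Baird G-equiv cohom thm} produces Baird's map $\iota$ of (\ref{eqn: iota}); since $T$ acts trivially on each $X^T$, we have $EG\times_T X^T=EG/T\times X^T$, and this gives the three maps as claimed in the statement. Theorem~\ref{thm: Baird G-equiv cohom thm} then delivers
\[
H^*_G(X)\srm{\isom} H^*\bigl((EG/T\times X^T)/W\bigr).
\]
To finish, I apply Proposition~\ref{coh-quot} to the finite $W$--action on $EG/T\times X^T\heq BT\times X^T$; this identifies the right side with $H^*(BT\times X^T)^W$. For this last step I need the $W$--action to be good, and this holds because $BT$ (via May's appendix), $T^n$, $J(T)$, and $BT$ are all homotopy equivalent to CW complexes on which $W$ acts cellularly, so their products with $BT$ and their $W$--quotients are again (homotopy equivalent to) CW complexes.

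The step I expect to require the most care is the identification $\Bcom(G)_1^T=BT$, since it requires commuting $T$--fixed points with the geometric realization of $\Hom(\Z^\bullet,G)_1$; this is standard for proper simplicial actions but must be justified rather than assumed. The other steps (the stabilizer hypothesis, the two easy fixed-point identifications, and the passage from quotient cohomology to invariants via Proposition~\ref{coh-quot}) are essentially formal once the three fixed-point spaces are in hand.
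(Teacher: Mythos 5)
Your overall approach is the same as the paper's: apply Theorem~\ref{thm: Baird G-equiv cohom thm} to each of the three $G$--spaces, identify the $T$--fixed points, verify the stabilizer hypothesis, and finish with Proposition~\ref{coh-quot}. Your fixed-point identifications (via $Z_G(T)=T$) and the stabilizer verification (every commuting tuple lies in a maximal torus) match the paper exactly, and your caution about commuting $T$--fixed points with geometric realization in the $\Bcom$ case is well placed.

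There is, however, a substantive gap: Theorem~\ref{thm: Baird G-equiv cohom thm} requires the $G$--space $X$ to be \emph{paracompact and Hausdorff}, and you never check this. For item (1) it is automatic since $\Hom(\Z^n,G)_1$ is a closed subspace of the compact manifold $G^n$, but for $\Comm(G)_1$ and $\Bcom(G)_1$ it is genuinely nontrivial, and the paper devotes several lines and several references to it: $\Comm(G)_1$ is paracompact as a closed subspace of the CW complex $J(G)$; Hausdorffness of $\Bcom(G)_1$ comes from de Seguins Pazzis~\cite{Pazzis}; and paracompactness of $\Bcom(G)_1$ is obtained by writing it as a colimit of compact Hausdorff skeleta along closed embeddings and invoking Michael's selection theory. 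Without this verification, Baird's theorem does not literally apply to items (2) and (3).

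A secondary issue is your justification that the $W$--actions on $BT\times T^n$, $BT\times J(T)$, and $BT\times BT$ are good. You assert these are CW complexes on which $W$ acts cellularly, but $BT$ arises as a simplicial realization with $W$ acting through conjugation, and it is not at all obvious that this action is cellular for any CW structure. The paper avoids this claim entirely: it instead observes that the relevant $W$--quotients are geometric realizations of proper simplicial spaces whose levels are semi-algebraic quotients (triangulable by Schwarz~\cite{Schwarz}), and then applies May's appendix~\cite{May-EGP} to conclude the realizations have the homotopy type of CW complexes. Your shortcut, as stated, is not justified.
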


\begin{proof} Item (1) is due to Baird \cite[Corollary 4.4]{baird2007cohomology}, 
and follows from Theorem~\ref{thm: Baird G-equiv cohom thm} and Proposition~\ref{coh-quot} by taking $X = \Hom(\Z^n, G)_1$. The key point is that each $n$--tuple in $\Hom(\Z^n, G)_1$ lies in a maximal torus (note also that $\Hom(\Z^n, G)_1^T = T^n$, since if $(g_1, \ldots, g_n)$ is  fixed by all $t\in T$, then each $g_i$ lies in the centralizer  $Z(T)$, which is just $T$ itself). Note that the action of $W$ on $BT\cross T^n$ is good; the proof is similar to the argument in the proof of Theorem~\ref{thm: stability for BcomG/G and ComG/G}.

The other two cases are similar, by taking $X = \Comm(G)_1$ or $X = \Bcom(G)_1$ in Theorem~\ref{thm: Baird G-equiv cohom thm}. We begin by showing that both of these spaces are paracompact and Hausdorff, and that the actions of $W$ on $BT\cross J(T)$ and $BT\cross BT$ are good. First, $\Comm(G)_1$ is a closed subspace of the CW complex $J(G)$, and in general closed subspaces of paracompact spaces are paracompact. 
Pazzis~\cite{Pazzis} implies that the simplicial space $\Bcom(G)_1$ is Hausdorff.
To see that  $\Bcom(G)_1$ is paracompact, note that this space, being a geometric realization, may be written as the colimit of its skeleta, which are all compact. Moreover, the inclusion of one skeleton into the next is a closed embedding, since the skeleta are compact Hausdorff. In general, it follows from Michael's theory of selections~\cite{Michael-selection} that colimits of closed embeddings between paracompact Hausdorff spaces are paracompact (see~\cite{nLab-colim}). 
The proof that the action on $BT\cross BT$ is good is similar to the case of $BT\cross T^n$. For $BT\cross J(T)$, note that since $T$ is a CW complex, so is $J(T)$.
The quotient space $(BT\cross J(T))/W$ is the geometric realization of a simplicial space $n\goesto (T^n\cross J(T))/W$. We showed in the proof of Theorem~\ref{thm: stability for BcomG/G and ComG/G} that $J(T)/W$  has the homotopy type of a CW complex, and a similar argument applies to $(T^n\cross J(T))/W$. We conclude, using May~\cite[Appendix]{May-EGP}, that $(BT\cross J(T))/W$ has the homotopy type of a CW complex as well.
 
Next, we need to verify that each stabilizer in $\Comm (G)_1$ and in $\Bcom  (G)_1$ contains a maximal torus. But this follows immediately from the fact that each $n$--tuple in $\Hom(\Z^n, G)_1$ lies in a maximal torus.
To complete the proof, we need to check that
$\Comm(G)_1^{T}=J({T})$ and $\Bcom (G)_1^{T}=B{T}$.
Each point in $\Comm (G)_1$ has a unique non-degenerate representative $(g_1, \ldots, g_n)$ with $g_i \neq 1$, and if this point is fixed by all $t\in T$, then as above we find that $g_i\in Z(T) = T$ for each $i$. The equality $\Bcom (G)_1^{T} = BT$ follows similarly, using the fact that fixed points commute with geometric realization.
\end{proof}

\begin{thm}\label{thm: equiv.coh. stability Hom, Comm,Bcom}
Each of the following sequences of spaces satisfies strong rational $($co$)$homological stability:
\begin{align*}
r&\goesto EG_r\cross_{G_r} \Hom(\Z^n, G_r)_1,\\
r&\goesto EG_r\cross_{G_r} \Comm(G_r)_1,\\
r&\goesto EG_r\cross_{G_r} \Bcom(G_r)_1.
\end{align*}
In $($co$)$homological degree $k$, stability holds for $r\geqslant k$.
\end{thm}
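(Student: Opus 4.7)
The plan is to combine the Baird--type identifications of Corollary \ref{cor: G-equiv. cohomology of hom, comm, b_com} with the $\FI_W$--module machinery of Sections \ref{FIW-modules}--\ref{symm-sec}. By that corollary, together with the naturality recorded in Remark \ref{rmk: natural} applied to the standard inclusions $G_r \injects G_{r+1}$ (which carry $T_r$ into $T_{r+1}$ and $NT_r$ into $NT_{r+1}$ by Lemma \ref{NT}), each of the three sequences of equivariant cohomology groups is isomorphic, compatibly with the stabilization maps, to the sequence $r \goesto H^k(BT_r \cross Y_r)^{W_r}$, where $Y_r$ is $T_r^n$, $J(T_r)$, or $BT_r$ respectively. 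Proposition \ref{coh-quot2} further identifies the stabilization maps on these $W_r$--invariants with the maps coming from the inclusions $BT_r \cross Y_r \injects BT_{r+1} \cross Y_{r+1}$, which over $\Q$ are dual to the averaging maps of Proposition \ref{avg}.

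It thus suffices to prove that, for each such $Y_r$, the $\FI_W$--module $\{H_k(BT_r \cross Y_r)\}_{r \geqs 1}$ is generated in stage $k$ and, except for $G_r = \SU(r)$, extends to an $\FI_W\#$--module. Both statements follow from the K\"unneth decomposition of $\FI_W$--modules
\[
H_k(BT_r \cross Y_r) \isom \bigoplus_{i+j=k} H_i(BT_r) \otimes H_j(Y_r),
\]
combined with Propositions \ref{prop: stability BT}, \ref{prop: stability J} and Corollary \ref{cor: T-rep-stable} (which give that each homology factor is generated in stage equal to its homological degree), Proposition \ref{prop: tensor} (which bounds the stage of a tensor product), and Proposition \ref{prop: T is FI} (which provides the $\FI_W\#$--structure on $\{T_r\}$ in the non-$\SU$ cases, transported via functoriality of $B$, $J$, $(\cdot)^n$, and $\otimes$). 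Proposition \ref{avg}, in its $\FI_W\#$--form supplied by Proposition \ref{prop: stability isotypical component}, then shows that the averaging maps $H_k(BT_r \cross Y_r)^{W_r} \to H_k(BT_{r+1} \cross Y_{r+1})^{W_{r+1}}$ are isomorphisms for $r \geqs k$.

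For $G_r = \SU(r)$, we mimic the argument of Theorems \ref{stability-wrt-r-Rep} and \ref{thm: stability for BcomG/G and ComG/G}: Theorem \ref{quot-stab} applied to the $\FI_W$--module $\{H_k(BT_r \cross Y_r)\}_{r \geqs 1}$ still gives surjectivity of the stabilization maps for $r \geqs k$, while injectivity is obtained by comparison with the $\U(r)$ case, using that the inclusions $T(\SU(r)) \injects T(\U(r))$ remain rationally injective after applying $B$ or $J$ (the Serre spectral sequences of the resulting fibrations over $S^1$ collapse for degree reasons, exactly as in the proof of Theorem \ref{thm: stability for BcomG/G and ComG/G}).

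The main obstacle is the naturality bookkeeping: one must confirm that under the isomorphisms of Corollary \ref{cor: G-equiv. cohomology of hom, comm, b_com}, the equivariant stabilization maps on the left correspond exactly to the averaging (or pullback) maps on the invariants $H^*(BT_r \cross Y_r)^{W_r}$ on the right. This amounts to assembling Remark \ref{rmk: natural} with Proposition \ref{coh-quot2} and the naturality of the K\"unneth isomorphism; once this translation is in place, the finite-generation bounds collected in Sections \ref{Lie} and \ref{sec: B(2,G) stability} yield the claimed stable range $r \geqs k$.
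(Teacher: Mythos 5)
Your proposal is correct and follows essentially the same route as the paper: applying the K\"unneth decomposition to Baird's identification in Corollary~\ref{cor: G-equiv. cohomology of hom, comm, b_com}, using Propositions~\ref{prop: stability BT}, \ref{prop: stability J}, Corollary~\ref{cor: T-rep-stable}, Proposition~\ref{prop: tensor} and Proposition~\ref{prop: T is FI} to establish finite generation in stage $k$ with $\FI_W\#$--structure (outside type $\SU$), and handling $\SU(r)$ by injectivity comparison with $\U(r)$. The one step you pass over silently is the appeal to Lemma~\ref{lem: translation=conjugation}, which the paper uses to reconcile the translation action of $W_r$ on $EG_r/T_r$ appearing in Baird's theorem with the conjugation action on $BT_r$ for which Proposition~\ref{prop: stability BT} is actually stated.
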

\begin{proof} We begin by considering the homomorphism spaces.
Using the K\"unneth Theorem and Corollary \ref{cor: G-equiv. cohomology of hom, comm, b_com} we have isomorphisms
$$H^{G_r}_k(\Hom(\Z^n,G_r)_1)\cong \bigoplus_{i+j=k}(H_i({BT_r}) \otimes H_j(T_r^n)  )^{W_r}$$
that are natural in $r$, and by Lemma~\ref{lem: translation=conjugation}, the action of $W_r$ on $H_i({BT_r})$ is that induced by conjugation.
Each term $H_i({BT_r}) \otimes H_j(T_r^n) $ in the direct sum is an 
$\FI_W$--module finitely generated in stage $\leqslant i+ j= k$
by Corollary~\ref{cor: T-rep-stable} and Proposition~\ref{prop: stability BT}, along with
Proposition~\ref{prop: tensor}. By Proposition~\ref{prop: T is FI}, when $G_r = \U(r)$, $\Sp(r)$, or $\SO(2r+1)$, the $\FI_W$--modules in question are in fact $\FI_W\#$--modules, and when $G_r = \SO(2r)$, the $\FI_D$--modules  in question are restrictions of the $\FI_B$--modules associated to $\SO(2r+1)$.
Theorem~\ref{quot-stab} now implies that the  $W_r$--invariants in these modules stabilize for $r\geqslant k$. 

Finally, we address the case $G_r = \SU(r)$. 
Let $T_r' = T(\U(r)) \leqs \U(r)$ denote the diagonal maximal torus.
As in the proof of 
Corollary~\ref{cor: T-rep-stable}, it suffices to show that the maps
$$H_* (BT_r \cross T_r^n) \maps H_* (BT'_r \cross (T'_r)^n)$$
(induced by the inclusion $T_r \injects T_r'$) are injective.  In that proof we established injectivity of $H_*(T_r^n)\to H_*((T'_r)^n)$, and injectivity of $H_* (BT_r) \to H_* (BT'_r)$ was established in the proof of Proposition~\ref{prop: stability BT}. 
The K\"unneth decomposition now completes the proof in this case.

Similar arguments apply to $\Comm(G_r)_1$ and $\Bcom (G_r)_1$, using Propositions~\ref{prop: stability J} and~\ref{prop: stability BT}.
\end{proof}

\begin{rmk}\label{rmk: Hom, Bcom integral equivariant stability}
For $G=\U(r)$, $\Sp(r)$, or $\SO(2r+1)$, Corollary \ref{cor: G-equiv. cohomology of hom, comm, b_com} implies that
\begin{enumerate}
\item $H_*^G(\Hom(\Z^n,G)_1) {\isom}   H_*(BT\cross T^n)^{W} {\isom}   H_*(\Sym^r(BS^1 \cross (S^1)^n))$, and
\item $H_*^G(\Bcom (G)_1) {\cong} H_*(BT \cross BT)^{W} {\isom}   H_*(\Sym^r(BS^1 \cross BS^1))$.
\end{enumerate}
\end{rmk}

\begin{thm}\label{thm: equiv.coh. stability Hom mod S_n}
Let $G$ be a compact and connected Lie group.
The sequence of $S_n$--representations
$$
n\goesto H^G_k(\Hom(\Z^n,G)_1)  
$$
is uniformly representation stable with stable range $n\geqslant 2k.$
Consequently, the sequence
$$n\goesto \Hom(\Z^n,G)_1/S_n$$
satisfies strong $G$--equivariant rational homological stability, and in homological degree $k$, stability holds for $r\geqs k$.
\end{thm}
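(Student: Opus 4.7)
\medskip

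\noindent\textbf{Proof proposal.} My plan is to follow the same overall strategy as in the proof of Theorem~\ref{thm: stability for fixed G}, but with the $\FIs$--module $H_k(\Hom(\Z^n,G)_1)$ replaced by its $G$--equivariant analogue $H_k^G(\Hom(\Z^n,G)_1)$. First, I would observe that the conjugation action of $G$ on $\Hom(\Z^n,G)_1$ commutes with the permutation action of $S_n$, and more generally with the $\FIs$--structure on $n\goesto\Hom(\Z^n,G)_1$ described in Section~\ref{sec: n-stability}. Consequently, $n\goesto EG\times_G\Hom(\Z^n,G)_1$ inherits the structure of an $\FIs$--space, so that $n\goesto H_k^G(\Hom(\Z^n,G)_1)$ is an $\FIs$--module. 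Moreover, since the $G$-- and $S_n$--actions commute,
$$EG\times_G\bigl(\Hom(\Z^n,G)_1/S_n\bigr)\homeo \bigl(EG\times_G\Hom(\Z^n,G)_1\bigr)/S_n,$$
and Proposition~\ref{coh-quot} then identifies $H_k^G(\Hom(\Z^n,G)_1/S_n)$ with $H_k^G(\Hom(\Z^n,G)_1)^{S_n}$.

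Next, I would show that this $\FIs$--module is finitely generated in stage $k$. By Corollary~\ref{cor: G-equiv. cohomology of hom, comm, b_com} (dualized to homology) there is an isomorphism
$$H_k^G(\Hom(\Z^n,G)_1)\isom H_k(BT\times T^n)^W,$$
which I would verify is natural with respect to the $\FIs$--structure: the relevant naturality is a direct consequence of Remark~\ref{rmk: natural} applied to the identity $G\to G$ together with the $\FIs$--structure maps on $n\goesto T^n=\Hom(\Z^n,T)$ and $n\goesto\Hom(\Z^n,G)_1$. By the K\"unneth theorem,
$$H_k(BT\times T^n)\isom\bigoplus_{i+j=k}H_i(BT)\otimes H_j(T^n),$$
and this is an isomorphism of $\FIs$--modules, where $BT$ is treated as a constant $\FIs$--space. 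Since $H_i(BT)$ is generated in stage $0$ and, by Proposition~\ref{symm-prod} applied to $\P(T)$, the $\FIs$--module $H_j(T^n)$ is generated in stage $j$, Proposition~\ref{prop: tensor} implies each summand is generated in stage $\leqs k$. The $W$--action commutes with the $\FIs$--structure (since $W$ acts diagonally on $T^n$ and trivially on the $\FIs$--indexing), so Lemma~\ref{W-invt-fin-gen} gives that $H_k^G(\Hom(\Z^n,G)_1)\isom H_k(BT\times T^n)^W$ is generated in stage $k$ as an $\FIs$--module.

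Finally, applying Theorem~\ref{sharp} yields uniform representation stability of $n\goesto H_k^G(\Hom(\Z^n,G)_1)$ with stable range $n\geqs 2k$. For the homological stability statement, I would apply Theorem~\ref{quot-stab}: since the $\FIs$--module is generated in stage $k$, the averaging (i.e.\ quotient) maps between the $S_n$--invariants are isomorphisms for $n\geqs k$, yielding strong rational homological stability for $n\goeqs \Hom(\Z^n,G)_1/S_n$ in equivariant homology degree $k$ with range $n\geqs k$.

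The main obstacle will be carefully checking that the isomorphism in Corollary~\ref{cor: G-equiv. cohomology of hom, comm, b_com} is natural with respect to the $\FIs$--structure (and not merely the underlying $S_n$--equivariant structure); this requires applying Remark~\ref{rmk: natural} to the partially-defined injections defining $\FIs$--morphisms, using that $T^n\subset\Hom(\Z^n,G)_1$ is exactly the $T$--fixed locus and that the $\FIs$--structure on $n\goesto\Hom(\Z^n,G)_1$ restricts to the $\FIs$--structure on $n\goesto T^n=\P(T)$.
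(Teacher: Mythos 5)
Your proof is correct and follows essentially the same route as the paper's: identify $H_k^G(\Hom(\Z^n,G)_1)$ with $H_k(BT\times T^n)^W$ as $\FIs$--modules, reduce generation in stage $k$ to Lemma~\ref{W-invt-fin-gen} plus the K\"unneth decomposition, and conclude via Theorems~\ref{sharp} and~\ref{quot-stab}. The only cosmetic difference is that you obtain the $\FIs$--naturality of this identification by citing Corollary~\ref{cor: G-equiv. cohomology of hom, comm, b_com} together with Remark~\ref{rmk: natural} (applied to the identity on $G$), whereas the paper re-derives it explicitly by tracking the $S_n$--equivariance of the conjugation map $\phi$, the induced map $\phi_{hG}$, and the chain of homeomorphisms $(G/T\times_W T^n)_{hG}\isom (EG/T\times T^n)/W$; both routes rest on the same underlying facts (Baird's theorem, Lemma~\ref{lem: translation=conjugation}, and the observation that the $\FIs$--structure maps restrict to $T^n=\Hom(\Z^n,G)_1^T$).
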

\begin{proof}
Consider the action of the symmetric group $S_n$ on $EG \times_G \Hom(\Z^n,G)_1$ 
that is trivial on $EG$ and permutes the coordinates of $n$--tuples in $\Hom(\Z^n,G)_1.$
The latter action of $S_n$ commutes with the conjugation action of $G$, giving a well-defined $S_n$--action on the homotopy orbit space.

Recall that the conjugation map
$$\phi \co  G/T\times_W T^n \maps \Hom(\Z^n,G)_1$$
is both $S_n$--equivariant and $G$--equivariant (where on the left, $S_n$ acts trivially on $G/T$ and by permutations on $T^n$, while $G$ acts by left-translation on $G/T$ and trivially on $T^n$). 
Since $\phi$ is $G$--equivariant, it  induces a map of fibration sequences
\begin{center}
\begin{tikzcd}
 G/T\times_W T^n \arrow{d}{\phi} \arrow{r} &  (G/T\times_W T^n)_{hG}\arrow{d}{\phi_{hG}} \arrow{r}
 	& BG \ar{d}{=}   \\
\Hom(\Z^n,G)_1  \arrow{r}  & (\Hom(\Z^n,G)_1)_{hG} \arrow{r} & BG,
\end{tikzcd}
\end{center}
where in the middle column we use the notation $X_{hG}:=EG\cross_G X$.
By Theorem~\ref{Baird}, $\phi$ induces an isomorphism in rational (co)homology, and a comparison of the Serre spectral sequences for these fibrations shows that the induced map $\phi_{hG}$ between homotopy orbit spaces is also an isomorphism in rational 
(co)homology. Note that $S_n$--equivariance of $\phi$ implies $S_n$--equivariance of $\phi_{hG}$. 

Next, we have an $S_n$--equivariant homeomorphism 
$$(G/T\times_W T^n)_{hG} \isom ((G/T\times T^n)_{hG})/W = (EG\cross_G (G/T\times T^n))/W,$$
where on the right $W$ acts trivially on $EG$.
Furthermore, 
we have an $S_n$--equivariant homeomorphism \cite[eq. (28)]{baird2007cohomology}
$$
EG \times_G (G/T \times T^n) \cong EG \times_T T^n = EG/T \cross T^n
$$
given by $[e, gT, t]\goesto [e\cdot g,  t]$ (with inverse $[e, t]\goesto [e, T, t]$). This homeomorphism is also $W$--equivariant if we give $EG\times_T T^n$ the action $[e, t]\cdot [n] = [en, n^{-1} t n]$. 

Lemma~\ref{lem: translation=conjugation} now gives 
an isomorphism of $\FI$--modules 
$$\{H_k^G (\Hom(\Z^n, G))\}_{n\geqs 1} \isom \{H_k (BT \cross T^n)^W\}_{n\geqs 1},$$
where $W$ acts on $BT\heq EG/T$ by conjugation.
So it will suffice to show that $\{H_k (BT \cross T^n)^W\}_{n\geqs 1}$ is generated in stage $k$ (note that both of these are in fact $\FIs$--modules).

By Lemma~\ref{W-invt-fin-gen}, it will suffice to show that the $\FI$--module
$\{H_k (BT \cross T^n)\}_{n\geqs 1}$ is generated in stage $k$.
The K\"unneth Theorem gives
$$H_k (BT \cross T^n) \isom \bigoplus_{i+j=k} H_i(BT)\otimes H_j(T^n),$$
which is a decomposition of $\FI$--modules (where the $\FI$--structure on $ H_i(BT)$ is trivial). By Corollary~\ref{cor: T-rep-stable}, each term in this direct sum decomposition is generated in stage $k$, and hence the same is true for the sum.

For the last statement of the Theorem, note that we have a homeomorphism
$$EG\cross_G (\Hom(\Z^n, G)/S_n) \isom (EG\cross_G \Hom(\Z^n, G))/S_n,$$
where on the right, $S_n$ acts trivially on $EG$. The statement now follows from 
Proposition~\ref{coh-quot2},
because $EG\cross_G \Hom(\Z^n, G)$ has the homotopy type of a CW complex; indeed it is the geometric realization of a simplicial space $k\goesto (G^{k+1} \cross \Hom(\Z^n, G))/G$, and each of these quotients is triangulable 
by Schwarz~\cite{Schwarz} (as in the proof of Theorem~\ref{thm: stability for BcomG/G and ComG/G}).
\end{proof}

\begin{rmk} In Sections~\ref{sec: nil} and~\ref{sec: covers} we extend the homological stability results in this article in several directions.  Remarks~\ref{rmk: eqvt11} and~\ref{rmk: eqvt12} discuss extensions of the results in the present section.
\end{rmk}

\section{Stability bounds for classical sequences of Lie groups}\label{sec: stable range for Hom}

In this section, we derive homological stability bounds for $\{\Hom(\Z^n, G_r)\}_{r\geqslant 1}$, where $n$ is fixed,
using the Eilenberg--Moore spectral sequences associated to the fibrations
\begin{equation}\label{eq: EM} \Hom(\Z^n, G_r)\maps \Hom(\Z^n, G_r)_{hG_r}\maps BG_r 
\end{equation} 
(and similarly for $\Bcom  (G_r)$ and $\Comm(G_r)$ in place of $\Hom(\Z^n, G_r)$).
We refer to McCleary~\cite{mccleary2001ssbook} and Smith~\cite{smith_Eilenberg-Moore-SS}  for background on the 
Eilenberg--Moore spectral sequence (in particular, see \cite[Theorem 6.1]{smith_Eilenberg-Moore-SS}.) We note that these sources place the spectral sequence in the second quadrant, so that the differentials are of ``cohomological type" and the total cohomological degree of $E_2^{p,q}$ is $p+q$. In order to simplify notation in the arguments to follow, we will reindex this as a first quadrant spectral sequence by reflecting across the  vertical axis (that is, the $q$--axis).

\begin{thm}\label{thm: EM} Let $E\to B$ be a fibration, with $B$ simply connected, and consider a map $f\co X\to B$. Let $X \stackrel[B]{\bigtimes}{} E$ denote the pullback of $E$ along $f$. If all four spaces are of finite type, then there is a first quadrant spectral sequence with
$$E_2^{p,q} = \Tor^{H^*(B; \Q)}_{p,q} (H^*(X; \Q); H^*(E; \Q))$$
converging to $H^*(X \stackrel[B]{\bigtimes}{} E; \Q)$. The differential on the $m^{\textrm th}$ page has the form
$$d_m \co  E_m^{p,q}\maps E_2^{p-m,q-m+1}.$$
A commutative diagram of pullback squares induces a map of spectral sequences, and on the $E_2$ page this map agrees with the induced map between $\Tor$ groups.
\end{thm}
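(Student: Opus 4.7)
The plan is to construct this spectral sequence by means of the classical two-sided bar construction applied at the cochain level, following the Eilenberg--Moore strategy outlined in \cite{mccleary2001ssbook, smith_Eilenberg-Moore-SS}. First I would regard the singular cochain complex $C^*(B;\Q)$ as a differential graded algebra under cup product, and view $C^*(X;\Q)$ and $C^*(E;\Q)$ as differential graded modules over it via pullback along $f$ and along the fibration projection respectively.

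Next I would form the two-sided bar complex $\mathrm{Bar}(C^*(X;\Q), C^*(B;\Q), C^*(E;\Q))$, a bicomplex whose total complex is filtered by bar length $p$. Taking cohomology in the internal cochain direction yields an $E_1$ page; using the finite-type hypothesis and the K\"unneth theorem, this $E_1$ page identifies with the bar complex of the graded algebras $H^*(X;\Q)$, $H^*(B;\Q)$, and $H^*(E;\Q)$. Taking cohomology with respect to the remaining (bar) differential then gives the desired identification $E_2^{p,q} \isom \Tor^{H^*(B;\Q)}_{p,q}(H^*(X;\Q), H^*(E;\Q))$, with the differentials $d_m$ shifting degree as stated.

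The main obstacle is convergence to $H^*(X \times_B E;\Q)$: one must show that the total complex of the bar construction is quasi-isomorphic to the cochains on the pullback. Geometrically, this comes down to realizing the homotopy pullback as the totalization of a cosimplicial space -- the geometric cobar construction on the diagram $X \to B \leftarrow E$ -- and then comparing the algebraic bar construction with cochains on this totalization via an Eilenberg--Zilber style map. The simple-connectedness of $B$ is essential here: it ensures that the strict pullback and homotopy pullback are weakly equivalent, and together with the finite-type assumption it guarantees strong convergence of the filtration in each total degree. This step is where all the serious work lies.

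Finally, naturality is essentially automatic. A commutative diagram of pullback squares induces maps of singular cochain complexes compatible with all the algebra/module structures, hence induces maps of bar complexes and of the associated filtered total complexes. The induced map on spectral sequences agrees on $E_2$, by construction, with the map on Tor groups induced by the maps on cohomology rings and their modules.
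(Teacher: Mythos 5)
The paper does not actually prove this theorem; it cites it directly from Smith~\cite{smith_Eilenberg-Moore-SS} (specifically Theorem~6.1) and McCleary~\cite{mccleary2001ssbook}, adding only a cosmetic reindexing from a second-quadrant to a first-quadrant convention. Your proposal sketches the standard two-sided bar construction argument, which is exactly the construction appearing in those references, so you are not taking a genuinely different route from the paper's source --- you are reconstructing it. As an outline it is reasonable, and you correctly flag that the convergence statement is where the work lies.

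One conceptual slip worth fixing: you write that simple-connectedness of $B$ ``ensures that the strict pullback and homotopy pullback are weakly equivalent.'' That equivalence follows from the hypothesis that $E\to B$ is a \emph{fibration}, independently of any connectivity assumption on $B$. What simple-connectedness actually buys you is convergence of the Eilenberg--Moore spectral sequence: it is a sufficient condition guaranteeing that the filtration on the total complex of the bar construction is complete and exhaustive, so that the $E_\infty$ page is the associated graded of $H^*(X\times_B E;\Q)$. Without simple-connectedness (or some weaker hypothesis such as nilpotence of the $\pi_1$-action), the spectral sequence can fail to converge to the cohomology of the fiber product. So the role of the two hypotheses should be swapped in your account.

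A second, smaller remark: the naturality assertion in the last paragraph of the theorem statement is not quite ``automatic'' from functoriality of cochains alone. The bar complex is functorial in the triple of DG-modules over a fixed DGA, but a commutative diagram of pullback squares generally involves a change of base DGA as well, so one must appeal to the standard base-change functoriality of the two-sided bar construction (or, equivalently, to functoriality of the cotensor/cobar model). This is routine, but should be named, since in Corollary~\ref{cor: EM} the paper uses precisely this naturality on the $E_2$ page.
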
 
 
We will only need the last statement for the case when $X$ is a point, and we spell out the statement in more detail below (Corollary~\ref{cor: EM}).

Some additional comments are in order. First, convergence means that for each $k\geqs 0$, 
there exists $M = M(k)$ such that for $m\geqs M(k)$, the groups
$E_m^{p,q}$ with $q=p+k$ form the associated graded group of a filtration on 
$H^{k} (X \stackrel[B]{\bigtimes}{} E;\Q)$. Next, to understand the groups $\Tor^{H^*(B; \Q)}_{p,q} (H^*(X; \Q); 
H^*(E; \Q))$, we consider $H^*(X; \Q)$ as a graded module over the 
graded ring $H^*(B;\Q)$ via the map $f^*$. Given a graded 
module $M$ over a graded ring $R$, there exists a resolution of $M$ by 
grading-preserving maps between graded free $R$--modules (here freeness just 
refers to the underlying ungraded module). Tensoring such a resolution with 
$H^*(E;\Q)$ (over $H^*(B; \Q)$) yields a graded chain complex. If we consider the 
sub-chain complex consisting of elements in grading $q$, then the $p^{\textrm th}$ 
homology of this complex is independent of the chosen resolution, and is the 
group $\Tor^{H^*(B; \Q)}_{p,q} (H^*(X; \Q); H^*(E; \Q))$.

Taking $X$ to be a point yields the following special case of Theorem~\ref{thm: EM}.

\begin{cor}\label{cor: EM} Let $F\to E\to B$ be a fibration, with $B$ simply connected and 
all spaces of finite type. Then there is a first quadrant spectral sequence with
$$E_2^{p,q} = \Tor^{H^*(B; \Q)}_{p,q} (\Q; H^*(E; \Q))$$
converging to $H^*(F; \Q)$. The differential on the $m^{\textrm th}$ page has the form
$$d_m \co  E_m^{p,q}\maps E_2^{p-m,q-m+1}.$$

A commutative diagram
  \begin{equation}\label{fib-map} 
\begin{tikzcd} 
F \arrow[d] \arrow[rr] 
 & & F'\arrow[d]\\
	  E \arrow[rd]  \arrow{rr}{f}
	&  & E' \arrow[ld]  
	 \\
&B  
	 \\
\end{tikzcd}
\end{equation}
of fibrations over $B$ induces a 
map between the associated spectral sequences, and on the $E_2$ page this map agrees with the map
$$\Tor^{H^*(B; \Q)}_{p,q} (\Q; H^*(E'; \Q)) \maps \Tor^{H^*(B; \Q)}_{p,q} (\Q; H^*(E; \Q))$$
induced by the map of $H^*(B; \Q)$--modules $f^*\co H^*(E'; \Q) \to H^*(E; \Q)$.
\end{cor}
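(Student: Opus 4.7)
The plan is to derive Corollary~\ref{cor: EM} as a direct specialization of Theorem~\ref{thm: EM}, by taking the map $f\co X\to B$ to be the inclusion of a basepoint $\mathrm{pt}\to B$. With this choice, the pullback $\mathrm{pt}\stackrel[B]{\times}{} E$ is (homeomorphic to) the fiber $F$, and $H^*(\mathrm{pt};\Q)=\Q$ concentrated in degree $0$ as a graded $H^*(B;\Q)$--module via the augmentation. Substituting into the $E_2$--term gives
$$E_2^{p,q}=\Tor^{H^*(B;\Q)}_{p,q}(H^*(\mathrm{pt};\Q);H^*(E;\Q))=\Tor^{H^*(B;\Q)}_{p,q}(\Q;H^*(E;\Q)),$$
and the convergence to $H^*(F;\Q)$ together with the shape of the differentials transfers verbatim from Theorem~\ref{thm: EM}. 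Finite type of $\mathrm{pt}$ is automatic, and the finite type hypotheses on $F$, $E$, $B$ are exactly those imposed in the corollary.

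For naturality, I would build two compatible pullback squares from the data of diagram (\ref{fib-map}). Namely, both $F$ and $F'$ arise as pullbacks
$$F=\mathrm{pt}\stackrel[B]{\times}{} E,\qquad F'=\mathrm{pt}\stackrel[B]{\times}{} E',$$
and the map $f\co E\to E'$ over $B$ induces by the universal property of pullbacks the map $F\to F'$ appearing on top of (\ref{fib-map}). This assembles into a commutative diagram of pullback squares with $X=X'=\mathrm{pt}$ on one side and $f\co E\to E'$ on the other. The naturality clause of Theorem~\ref{thm: EM} then yields a map between the associated Eilenberg--Moore spectral sequences whose effect on $E_2$ is the map on $\Tor$ induced by $f^*\co H^*(E';\Q)\to H^*(E;\Q)$ (viewed as a morphism of $H^*(B;\Q)$--modules), which is exactly the statement claimed.

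I do not anticipate a genuine obstacle here, since every assertion is a direct corollary of the cited theorem; the only thing to check is the bookkeeping that (i) the constant map $\mathrm{pt}\to B$ makes $\Q$ into the correct graded $H^*(B;\Q)$--module (which follows because in cohomological grading $0$ the augmentation $H^*(B;\Q)\to H^0(\mathrm{pt};\Q)=\Q$ is the unit map), and (ii) the induced map on pullbacks genuinely recovers the top row of (\ref{fib-map}) (which is immediate from the universal property, since the given $F\to F'$ already commutes with the two projections to $E'$ and to $\mathrm{pt}$). With these verifications in place the corollary is immediate, so the write-up can be kept to a couple of lines beyond the substitution $X=\mathrm{pt}$.
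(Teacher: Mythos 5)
Your proof is correct and matches the paper's approach exactly: the paper itself introduces Corollary~\ref{cor: EM} with the phrase ``Taking $X$ to be a point yields the following special case of Theorem~\ref{thm: EM},'' and your elaboration of that specialization, together with the naturality bookkeeping via the induced map of pullback squares, is precisely the intended argument.
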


The $\Tor$ groups in the spectral sequences used below are computed over the  rational cohomology rings $H^*(BG_r)$.
We now recall the structure of these rings and of the maps
\begin{equation}\label{HBG} (Bi)^*\co H^*(BG_{r+1})\maps H^*(BG_r)\end{equation}
induced by the standard inclusions $i\co G_r \injects G_{r+1}$. 

\begin{prop}\label{prop: BG coh}
We have the following isomorphisms:
\begin{enumerate}
	\item $H^*(B\U(r))\isom \bbQ[c_1, \ldots, c_r]$, with $|c_i| = 2i$.
	\item $H^*(B\SU(r))\isom \bbQ[c_2, \ldots, c_r]$, with $|c_i| = 2i$.
	\item $H^*(B\Sp(r))\isom \bbQ[p_1, \ldots, p_r]$, with $|p_i| = 4i$.
	\item $H^*(B\SO(2r+1)) \isom \bbQ[p_1, \ldots, p_r]$, with $|p_i| = 4i$.
	\item For $r>1$, $H^*(B\SO(2r))\isom   \bbQ[p_1, \ldots, p_{r-1}, y_{2r}]$, with $|p_i| = 4i$, $|y_{2r}| = 2r$.
\end{enumerate}
The maps (\ref{HBG}) have the following behavior on the above polynomial generators:
\begin{enumerate}
	\item For $G_r = \U(r)$ or $\SU(r)$, we have $c_i \goesto c_i$ for $i<r$ and $c_{r+1}\goesto 0$.
	\item For $G_r = \Sp(r)$ or  $\SO(2r+1)$,   we have $p_i \goesto p_i$ for $i<r$ and $p_{r+1}\goesto 0$.
	\item For $G_r = \SO(2r)$   and $r>1$, we have $p_i \goesto p_i$ for $i<r-1$ and $p_{r}, y_{2r}\goesto 0$.
\end{enumerate}
\end{prop}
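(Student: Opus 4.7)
The proof is essentially a recollection of classical computations, so my plan is to reduce it to Borel's theorem and then verify the stabilization formulas by naturality.

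First I would invoke Borel's classical theorem: for a compact connected Lie group $G$ with maximal torus $T \leqs G$ and Weyl group $W = NT/T$, the restriction map induces a ring isomorphism $H^*(BG; \bbQ) \isom H^*(BT; \bbQ)^W$. Since $BT \heq (BS^1)^r$, we have $H^*(BT;\bbQ) \isom \bbQ[x_1,\ldots,x_r]$ with each $|x_i|=2$, and the $W$-action is the signed permutation action inherited from the Weyl group action on $T \isom (S^1)^r$ described in Section~\ref{Lie}. Computing the five rings of invariants is then a standard exercise in invariant theory: for $\U(r)$ one recovers the elementary symmetric polynomials (the Chern classes $c_i$); for $\SU(r)$ one imposes $x_1+\cdots+x_r = 0$, which kills $c_1$; for types B and C, the $B_r$-invariants are generated by the elementary symmetric functions of $x_1^2,\ldots,x_r^2$, yielding the Pontryagin classes $p_i$ in degree $4i$; and for $\SO(2r)$ the smaller group $D_r$ additionally fixes the monomial $x_1\cdots x_r$, producing the Euler class $y_{2r}$ and the presentation $\bbQ[p_1,\ldots,p_{r-1},y_{2r}]$ via the relation $y_{2r}^2 = p_r$. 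The identifications of these invariant polynomials with genuine characteristic classes can be cited from Milnor--Stasheff or Borel's original papers.

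Next, for the stabilization formulas the plan is to use naturality of characteristic classes. The standard inclusion $i\co G_r \injects G_{r+1}$ has the effect that the universal $G_{r+1}$-bundle pulls back under $Bi$ to the Whitney sum of the universal $G_r$-bundle $\xi_r$ with a trivial bundle: of rank $1$ in type A, of rank $2$ in types B and D, and of quaternionic rank $1$ in type C. By the Whitney sum formula, the total Chern/Pontryagin class is multiplicative, and the trivial bundle has total class $1$. Therefore each $c_i$ or $p_i$ in the source pulls back to the class of the same name in the target whenever $i$ is small enough for that class to still appear among the polynomial generators, while the would-be top class ($c_{r+1}$, $p_{r+1}$, or the Euler class of the stabilized bundle) pulls back to zero: the top Chern and Pontryagin classes of $\xi_r$ vanish because $\xi_r$ has smaller ordinary or quaternionic rank, and the Euler class of $\xi_r \oplus \epsilon^2$ vanishes by multiplicativity since $e(\epsilon^2) = 0$.

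The main subtlety, and the one place where I expect the argument to require care beyond routine citation, is the type D case: the top Pontryagin class $p_r \in H^*(B\SO(2r+2))$ pulls back to the top Pontryagin class of the universal rank-$2r$ bundle on $B\SO(2r)$, which equals $y_{2r}^2$ rather than zero. The statement of item (3) of the proposition must be interpreted in this light, with the top generator either identified via the relation $p_r = y_{2r}^2$, or else replaced by a slightly redefined generator chosen to absorb this discrepancy. All other cases follow immediately from the Whitney sum calculation and the naturality of classifying maps.
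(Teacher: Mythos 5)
Your approach is genuinely different from the paper's. The paper cites Mimura--Toda's spectral sequence computations: one first determines $H^*(G_r)$ from the Serre spectral sequence of $G_r \to G_{r+1} \to G_{r+1}/G_r$, then determines $H^*(BG_r)$ from the spectral sequence of $G_r \to EG_r \to BG_r$, and deduces the stabilization maps by comparing these spectral sequences for $r$ and $r+1$. You instead go through Borel's theorem $H^*(BG;\Q) \isom H^*(BT;\Q)^W$, compute the invariant rings directly, and obtain the stabilization formulas from naturality of characteristic classes and the Whitney sum formula. Both routes are standard and correct; your route is arguably cleaner for the stabilization formulas, since the Whitney sum argument immediately exhibits the maps as pullbacks of characteristic classes, and it also fits naturally into the $\FI_W$--module perspective of the paper (where $H^*(BT_r)^{W_r}$ already appears).

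Your flag on the type D case is well placed, and in fact identifies a genuine inaccuracy in the statement of the Proposition as written. Under the standard inclusion $\SO(2r) \injects \SO(2r+2)$, the generator $p_r \in H^*(B\SO(2r+2))$ pulls back to the top Pontryagin class of the universal rank--$2r$ bundle, which is $y_{2r}^2$, not zero (it is the Euler class $y_{2r+2}$ of the source, not $p_r$, that pulls back to zero). The Proposition's phrase ``$p_{r}, y_{2r}\goesto 0$'' is therefore not literally correct (and $y_{2r}$ should in any case read $y_{2r+2}$, the generator living in $H^*(BG_{r+1})$, not the target). You are right that one must either record the relation $p_r \mapsto y_{2r}^2$ explicitly or adjust the chosen generators. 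This subtlety propagates: in the paper's Lemma~\ref{lem: EM} the free resolution of $H^*(B\SO(2r))$ over $H^*(B\SO(2r+2))$ must be built with $p_r$ acting as $y_{2r}^2$; with $p_r$ acting as zero the module is not finitely generated and the displayed four-term Koszul-type complex is not exact. The conclusion of that lemma still holds with the correct module structure (one gets a shorter two-step resolution and an even better vanishing line), but the point you raise is exactly where care is needed. In short: your proof is correct, takes a different and legitimate route, and your ``subtlety'' paragraph catches an actual slip in the statement.
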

\begin{proof} This follows from the arguments in~\cite[\S III.3]{Toda-Mimura}.
Briefly, one first calculates  $H^* (G_r)$ by analyzing the spectral sequence for the fibration 
$$G_r\maps G_{r+1} \maps G_{r+1}/G_r,$$
finding that it is exterior on generators in one degree less than the above polynomial generators for $H^*(BG_r)$. The structure of these spectral sequences also determines the map $H^*(G_{r+1})\to H^*(G_r)$. The spectral sequences for the fibrations 
$$G_r\maps EG_r \maps BG_r$$
then determine the structure of $H^*(BG_r)$, and a comparison of these spectral sequences for  $r$ and $r+1$ shows that the maps (\ref{HBG}) are determined by the corresponding maps  $G_r\injects G_{r+1}$ between the fibers of these fibrations.
\end{proof}

\begin{rmk} In each of the above cases, we can rename the polynomial generators as $x_1, \ldots, x_r$ so that their degrees are non-decreasing. This will be implicit in the arguments to follow.
For $G_r = \SO(2r)$, there is sometimes an ambiguity in this choice of ordering, since $|y_{2r}|= |p_{r/2}|$ when $r$ is even. This will not affect the arguments. 
\end{rmk}

\begin{lem}\label{lem: EM}
The map
$$H^k_{G_{r+1}} (\Hom(\Z^n, G_{r+1})_1) \maps H^k_{G_{r}} (\Hom(\Z^n, G_{r+1})_1)$$
induced by the standard inclusion $G_r\injects G_{r+1}$ is an isomorphism for 
$k\leqs 2r$, and similarly for $\Bcom(-)$ or $\Comm(-)$ in place of $\Hom(\Z^n, -)$.
\end{lem}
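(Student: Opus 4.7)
My plan is to apply the Eilenberg--Moore spectral sequence of Theorem~\ref{thm: EM} to the pullback square
\begin{center}
\begin{tikzcd}
X_{hG_r} \arrow[r, "f"] \arrow[d] & X_{hG_{r+1}} \arrow[d] \\
BG_r \arrow[r, "Bi"] & BG_{r+1},
\end{tikzcd}
\end{center}
in which $X$ stands for any of the three $G_{r+1}$--spaces $\Hom(\Z^n, G_{r+1})_1$, $\Bcom(G_{r+1})_1$, or $\Comm(G_{r+1})_1$, and $X_{hG}$ denotes the Borel construction. Both vertical maps are fiber bundles with fiber $X$, and taking $EG_{r+1}$ as a model for $EG_r$ identifies $f$ with the map inducing the restriction $H^*_{G_{r+1}}(X) \to H^*_{G_r}(X)$ in the statement. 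Theorem~\ref{thm: EM} then supplies a first-quadrant spectral sequence
$$E_2^{p,q} = \Tor^{H^*(BG_{r+1})}_{p,q}\bigl(H^*(BG_r),\, H^*(X_{hG_{r+1}})\bigr) \Longrightarrow H^{q-p}(X_{hG_r}),$$
and naturality in the pullback square matches the edge homomorphism at $p = 0$ with $f^*$.

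The next step is to analyze $H^*(BG_r)$ as an $H^*(BG_{r+1})$--module, using Proposition~\ref{prop: BG coh}. In types A, B, C and for $G_r = \SO(2r+1)$, the restriction map $H^*(BG_{r+1}) \to H^*(BG_r)$ is surjective with kernel $I$ generated by a single regular polynomial element of degree $d_r \geqs 2r+2$ (explicitly $d_r = 2r+2$ in type A and $d_r = 4r+4$ otherwise), so that $H^*(BG_r) = H^*(BG_{r+1})/I$ admits the length--one Koszul resolution $0 \to H^*(BG_{r+1})[-d_r] \to H^*(BG_{r+1}) \to H^*(BG_r) \to 0$. For $G_r = \SO(2r)$ the map fails to be surjective since the Euler class $y_{2r}$ is not in its image, but $H^*(BG_r)$ is still finitely generated over $H^*(BG_{r+1})$ by $1$ and $y_{2r}$, and an analogous Koszul-type resolution applies whose connecting maps again involve multiplication by elements of degree at least $2r+2$.

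Plugging $M = H^*(X_{hG_{r+1}})$ into this resolution bounds the internal degrees of the $\Tor$ groups sharply. In the surjective cases, $\Tor^{H^*(BG_{r+1})}_{p,q}(H^*(BG_r), M) = 0$ whenever $p \geqs 1$ and $q < d_r$, since any nonzero class would require a product of an element of $M$ with an element of $I$. For $G_r = \SO(2r)$ the same principle applies summand by summand. Combining these vanishing results, for $k \leqs 2r$ every group $E_2^{p,q}$ with $q-p = k$ and $p \geqs 1$ vanishes, and all incoming differentials into $E_m^{0,k}$ vanish on every page. Hence $H^k(X_{hG_r}) = E_\infty^{0,k} = E_2^{0,k}$, and since $I \cdot H^*(X_{hG_{r+1}})$ lives in degrees $\geqs d_r > k$, the edge isomorphism becomes $E_2^{0,k} = (H^*(X_{hG_{r+1}})/I\cdot H^*(X_{hG_{r+1}}))^k = H^k(X_{hG_{r+1}})$. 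Since the edge homomorphism is $f^*$, this is the desired isomorphism, and the same EMSS argument works uniformly for all three choices of $X$.

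The main obstacle will be the type~D case $G_r = \SO(2r)$, where $H^*(BG_r)$ is not a cyclic $H^*(BG_{r+1})$--module; one must carefully carry out the Koszul bookkeeping on the $y_{2r}$-summand to verify that the degree shift does not push any surviving $\Tor$ class below total degree $2r+1$. This is exactly what forces the bound to be $k \leqs 2r$ rather than the $k \leqs 2r+1$ that the non-orthogonal cases would permit on their own. A secondary technical point is justifying the identification of the EMSS edge with the topological map $f^*$ for each of $\Hom$, $\Bcom$, and $\Comm$; this follows from the naturality statement in Theorem~\ref{thm: EM}.
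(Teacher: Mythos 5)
Your approach is the paper's: the same pullback square of Borel constructions, the same Eilenberg--Moore spectral sequence over $H^*(BG_{r+1})$, the same graded free resolution strategy, and the same conclusion for types~A, B, and C, where $H^*(BG_r)$ is cyclic over $H^*(BG_{r+1})$ with kernel generated by a single regular element of degree at least $2r+2$. That part is correct.

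The gap is precisely the type~D case that you flag but do not resolve, and when one does resolve it the stated bound is off by one. Write $A = H^*(BG_{r+1})$, $A' = H^*(BG_r)$, $M = H^*(X_{hG_{r+1}})$. Your identification $E_2^{0,k} = (M/IM)^k$ is the $\Tor_0$ of the cyclic module $A/I$, and this is simply not what $A'$ is when $G_r=\SO(2r)$. There $A' = \Q[p_1,\ldots,p_{r-1},y_{2r}]$ is free of rank two over the image of $(Bi)^*$, with basis $1$ and $y_{2r}$ (note that $(Bi)^*(p_r) = y_{2r}^2 \neq 0$, contrary to Proposition~\ref{prop: BG coh}, which has a typo here). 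Hence
\[
E_2^{0,k} \;=\; (A'\otimes_A M)^k \;\cong\; (M/y_{2r+2}M)^k \,\oplus\, (M/y_{2r+2}M)^{k-2r},
\]
and at $k = 2r$ the second summand contributes $M^0 = \Q$, which the edge map $m \mapsto 1\otimes m$ does not hit. Concretely, $y_{2r} \in H^{2r}(B\SO(2r))$ has no preimage in $H^{2r}(B\SO(2r+2))$. Since the higher $\Tor$ groups in the relevant range do vanish, the spectral sequence collapses and $H^{2r}(X_{hG_r}) \cong M^{2r}\oplus\Q$, so $f^*$ is injective but not surjective; the lemma in type~D holds only for $k \leqs 2r-1$. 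The paper's own proof has the same slip (its displayed complex resolves $\Q[p_1,\ldots,p_{r-1}]\oplus\Sigma^{2r}\Q[p_1,\ldots,p_{r-1}]$ rather than $A'$, a consequence of the typo), but this does not affect Theorem~\ref{Hom-bound}, which only invokes the lemma for $k \leqs r < 2r$.
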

\begin{proof} We will study the Eilenberg--Moore spectral sequence associated to the pullback diagram
\begin{center}
\begin{tikzcd}
EG_{r}\cross_{G_{r}} \Hom(\Z^n, G_{r+1})_1 \arrow{r}  \arrow{d}   
	& EG_{r+1}\cross_{G_{r+1}} \Hom(\Z^n, G_{r+1})_1 \arrow{d} 
	 \\
BG_{r} \arrow{r}	 &BG_{r+1}
\end{tikzcd}
\end{center}
induced by the standard inclusion $i\co G_r \injects G_{r+1}$.

To describe the $E_2$ page of this spectral sequence, we need a graded resolution of $H^*(BG_{r}; \Q)$ as a graded module over $H^*(BG_{r+1}; \Q)$. In all cases other than $G_r = \SO(2r)$, we see that $H^*(BG_{r}; \Q)$ is simply the quotient of $H^*(BG_{r+1}; \Q)$ by the ideal generated by the polynomial generator in the highest grading; for ease of notation we denote this generator by $x$. This gives us a 2-step free resolution
\begin{equation}\label{eq: res}
0 \lmaps H^*(BG_{r}; \Q)\stackrel{(Bi)^*}{\lmaps} H^*(BG_{r+1}; \Q) \stackrel{x\cdot}{\lmaps} \Sigma^{|x|} H^*(BG_{r+1}; \Q)
\end{equation}
where $\Sigma^d$ is the operator on graded modules that adds $d$ to all gradings, and the right-hand map in (\ref{eq: res}) is multiplication by $x$. 
Note that the shift in grading in the first term of this sequence makes multiplication by $x$ grading-preserving. To compute the $\Tor$ groups on the $E_2$ page of the spectral sequence, we tensor this resolution (over $H^*(BG_{r+1})$) with 
 $H^*_{G_{r+1}}(\Hom(\Z^n, G_{r+1})_1; \Q)$, which gives a graded chain complex concentrated in degrees $p=0$ and $p=1$. 
 The shift in grading implies that $E^{1,q}_2 = 0$ for $q \leqs 2r+1$, so that for $k \leqs 2r$ the only group contributing to the line of total cohomological degree $k$ (namely the line $q=p+k$) is $E_2^{0,k} \isom H_{G_{r+1}}^k (\Hom(\Z^n, G_{r+1})_1; \Q)$. Moreover, there is no room for non-trivial differentials in the spectral sequence, so this gives the desired isomorphism.

For $G_r = \SO(2r)$, let 
 $A = \Q[p_1, \ldots, p_{r}, y_{2r+2}] \isom H^*(BG_{r+1})$. 
 For $z\in A$, set  $A\tilde{z}:=\Sigma^{|z|} A$, and  write elements in $A\tilde{z}$ in the form $a\cdot\tilde{z}$ (so $|a\cdot\tilde{z}| = |a|+|z|$).
Similarly, we define  $A\,\tilde{z}\wedge \tilde{w}: =\Sigma^{|z|+|w|} A$, and  elements in $A\,\tilde{z}\wedge \tilde{w}$ are written in the form $a\cdot \tilde{z}\wedge \tilde{w}$.
 We can resolve $\Q[p_1, \ldots, p_{r-1}, y_{2r}]$ over $A$ as follows:
\begin{center}
\begin{tikzcd}[column sep=3ex, nodes={inner sep=2pt}] 
 0  
 &  \Q[p_1, \ldots, p_{r-1}, y_{2r}] \ar{l}  \ar[r, leftarrow, "\,\,\alpha"]
 &  A \oplus  \Sigma^{2r}A \ar[r, leftarrow, "\,\,\beta"]
&  ( A\tilde{y}_{2r+2} \oplus   A\tilde{p}_r) {\oplus} ( \Sigma^{2r} A \tilde{y}_{2r+2} \oplus\Sigma^{2r}A\tilde{p}_r)
\\
&&& (A\, \tilde{p}_r \wedge\tilde{y}_{2r+2} ) {\oplus} \Sigma^{2r} (A\,\tilde{p}_r\wedge\tilde{y}_{2r+2}) \ar[u, "\gamma"]
\end{tikzcd}
\end{center}
The maps $\alpha$, $\beta$, and $\gamma$, are defined by
$$\alpha (a, b) = (Bi)^*(a)+((Bi)^*(b))y_{2r},$$
$$\beta ((a\cdot \wt{y}_{2r+2}, a'\cdot \wt{p}_r ), (b\cdot \wt{y}_{2r+2}, b'\cdot \wt{p}_r )) = ( a y_{2r+2} + a' p_r   , b y_{2r+2} + b' p_r    ),$$
and
$$\gamma  (a \cdot \wt{p}_r \wedge\wt{y}_{2r+2}, b\cdot \wt{p}_r \wedge\wt{y}_{2r+2} )
	= ((ap_r \cdot \wt{y}_{2r+2}, -a y_{2r+2} \cdot \wt{p}_r), (bp_r \cdot \wt{y}_{2r+2}, -b y_{2r+2} \cdot \wt{p}_r) ).$$
The shift in grading again implies that $E^{1,q}_2 = 0$ for $q \leqs 2r+1$, 
and also that $E^{2,q}_2 = 0$ for $q \leqs 6r+1$,
so  for $k \leqs 2r$ the only group contributing to the line of total cohomological degree $k$ (namely the line $q=p+k$) is $E_2^{0,q} \isom H_{G_{r+1}}^k (\Hom(\Z^n, G_{r+1}); \Q)$. Moreover, there is no room for non-trivial differentials into the groups $E^{0,q}_r$ for $q \leqs 2r$.
\end{proof}

In the arguments below, we will resolve $\Q = H^*(\textrm{pt}; \Q)$ as a module over $H^*(BG_r)$
using the \e{Koszul complex}. 
We now recall the details of this construction, following 
Lang~\cite[Section XXI.4]{Langalgebra}. Lang works  in the ungraded setting, 
so we will explain how to include gradings. Let $A=\Q[x_1, \ldots, x_r]$, with 
grading satisfying $|x_1| \leqs |x_2| \leqs \cdots \leqs |x_r|$ (we give constant polynomials grading zero).
We now recall the definition of the (augmented) Koszul 
complex of $A$. This is a graded chain complex  
  \begin{center}
\begin{tikzcd} 
0 &  \Q \ar[l] \ar[r, leftarrow, "d_{0}"]& A = F_0 \ar[r, leftarrow, "d_{1}"]\ar[l]& \cdots   \ar[r, leftarrow, "d_{r-1}"] & F_{r-1} \ar[r, leftarrow, "d_{r}"] &F_r  
\end{tikzcd}
\end{center}
whose unaugmented portion (starting at $A$) will be denoted  $\K(A)$.
We define $F_p = F_p (A)$ to be the free $A$--module of rank ${r \choose p}$ (in 
particular, $F_0 = A$). The differential $d_0$ is simply the unital surjection sending all $x_i$ to zero. In order to describe the grading on $F_p$ and the 
differential $d_p \co F_p \to F_{p-1}$ for $p>0$, we adopt the following notation. For 
$p\geqs 1$, we will view $F_p$ as the free $A$--module on the set of 
formal symbols $\tilde{x}_{i_1} \sm \tilde{x}_{i_2} \sm \cdots \sm \tilde{x}_{i_p}$ with $1 \leqs i_1 < i_2 < \ldots < i_p \leqs r$. 
The grading on the submodule
$A\tilde{x}_{i_1} \sm \tilde{x}_{i_2} \sm \cdots \sm \tilde{x}_{i_p}\leqs F_p$ 
is defined so that the map $a\cdot \tilde{x}_{i_1} \sm \tilde{x}_{i_2} \sm \cdots \sm \tilde{x}_{i_p}\goesto a$ gives an isomorphism
$$A\tilde{x}_{i_1} \sm \tilde{x}_{i_2} \sm \cdots \sm \tilde{x}_{i_p} \isom \Susp^{|x_{i_1}|+\cdots+| x_{i_p}|} A$$
of graded $A$--modules.
The differential $d_p$ is defined on generators by
$$d_p(\tilde{x}_{i_1} \sm \tilde{x}_{i_2} \sm \cdots \sm \tilde{x}_{i_p}) = \sum_{j=1}^p (-1)^{j-1} x_{i_j} \cdot \tilde{x}_{i_1} \sm \cdots\sm \widehat{\tilde{x}_{i_j}} \sm \cdots \sm \tilde{x}_{i_p}.$$
The definition of the gradings ensures that $d_p$ preserves them, 
so the subgroups $$(F_p)_q \leqs F_p$$ 
consisting of homogeneous elements in grading $q$ form a subcomplex 
$\K(A)_q$ of $\K(A)$.

Since the sequence $x_1, \ldots, x_r$ is regular, the Koszul complex is 
exact~\cite[Theorem XXI.4.6]{Langalgebra}. Note that exactness in the 
ungraded sense immediately implies that the portion of the sequence in each 
homogeneous degree is also exact.

The next result will give a vanishing curve in our Eilenberg--Moore spectral sequences.

\begin{lem}\label{grading}
The Koszul resolution of $H^*(BG_r)$  satisfies $(F_p)_q = 0$ for 
$q < p(p+1)$ $($except in the case $G_r = \SO(2) \isom S^1$$)$.
\end{lem}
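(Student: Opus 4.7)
The plan is to reduce the lemma to a numerical inequality on the degrees of the polynomial generators of $H^*(BG_r)$ listed in Proposition~\ref{prop: BG coh}, and then verify that inequality family by family. By construction, $F_p$ is the free $A$--module on the formal symbols $\tilde{x}_{i_1}\sm \cdots \sm \tilde{x}_{i_p}$ with $1\leqs i_1<\cdots<i_p\leqs r$, and the generator $\tilde{x}_{i_1}\sm \cdots \sm \tilde{x}_{i_p}$ sits in grading $|x_{i_1}|+\cdots+|x_{i_p}|$. Since $A$ has no elements of negative grading, the lowest grading in which $F_p$ can be nonzero equals
$$m_p := \min_{1\leqs i_1 < \cdots < i_p \leqs r}\bigl(|x_{i_1}|+\cdots+|x_{i_p}|\bigr),$$
which is simply the sum of the $p$ smallest generator degrees. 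The lemma therefore reduces to showing $m_p \geqs p(p+1)$ in each of the families of Proposition~\ref{prop: BG coh}.

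For four of the five families this is essentially immediate. For $G_r=\U(r)$ the generator degrees are $2,4,\dots,2r$, so $m_p = 2(1+\cdots+p) = p(p+1)$; this is sharp, which is exactly why the lemma is phrased with a strict inequality on $q$. For $\SU(r)$ the degrees $4,6,\dots,2r$ give $m_p = p^2+3p\geqs p(p+1)$, and for $\Sp(r)$ and $\SO(2r+1)$ the degrees $4,8,\dots,4r$ give $m_p = 2p(p+1)\geqs p(p+1)$. In each of these cases the bound follows by a one-line computation of the sum of the $p$ smallest generator degrees.

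The main (and essentially only) obstacle is the case $G_r=\SO(2r)$ with $r\geqs 2$, because the Euler class $y_{2r}$ of degree $2r$ may slot anywhere among the Pontryagin generators $p_1,\dots,p_{r-1}$ (of degrees $4,8,\dots,4(r-1)$) in the degree ordering. Here the plan is to split on whether $y_{2r}$ lies among the $p$ smallest generators. If not, the $p$ smallest are $p_1,\dots,p_p$, giving $m_p = 2p(p+1)$. If so, the $p$ smallest consist of $p_1,\dots,p_{p-1}$ together with $y_{2r}$, so $m_p = 2p(p-1) + 2r$; the required bound then reduces to $p^2 - 3p + 2r \geqs 0$, which one verifies directly for all $p\geqs 1$ and $r\geqs 2$ (the tightest value is $p=2$, which gives $r\geqs 1$). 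The exclusion of $\SO(2)=S^1$ is consistent with the fact that it lies outside the domain of Proposition~\ref{prop: BG coh}, which only treats $\SO(2r)$ for $r>1$.
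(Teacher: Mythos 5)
Your proposal is correct, and it takes essentially the same approach as the paper: both identify the minimum grading in $F_p$ as the sum of the $p$ smallest generator degrees and reduce to the inequality $m_p\geq p(p+1)$. The only real difference is in presentation: the paper dispatches the verification in one sentence by observing that this minimum sum is always smallest in the case $G_r=\U(r)$ (where it equals $p(p+1)$ exactly), leaving the comparison to inspection, whereas you verify the inequality directly for each family, including the slightly delicate $\SO(2r)$ case where the Euler class must be slotted into the degree ordering — a detail the paper's ``by inspection'' glosses over. Your fuller treatment is a fine expansion of the same argument.
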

\begin{proof} 
The minimum shift in grading among the free summands in $F_p$ occurs for the summand corresponding to $x_1\sm x_2 \sm \cdots \sm x_p$, 
where the 
this shift is $\sum_{i=0}^p |x_i|$. By inspection, this sum is always smallest in the case $G_r = \U(r)$.
\end{proof}

We can now derive our stability bounds for the homology of spaces of commuting elements in the classical groups.

\begin{thm}\label{Hom-bound}
For each $n\geqs 1$, the sequences  
$$\{\Hom(\Z^n, G_r)_1\}_{r\geqs 1}, \,\,\,\,\{\Bcom  (G_r)_1\}_{r\geqs 1},
\,\,\,\,\textrm{and} \,\,\,\,
\{\Comm (G_r)_1\}_{r\geqs 1}$$
satisfy strong rational homological stability, and in homological degree $k$, stability holds once $r  - \lfloor \sqrt{r} \rfloor \geqs k$. 
\end{thm}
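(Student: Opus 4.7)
The plan is to analyze the Eilenberg--Moore spectral sequence $\Phi_r$ (Corollary \ref{cor: EM}) associated to the Borel fibration
$$F_r \maps (F_r)_{hG_r} \maps BG_r,$$
where $F_r$ denotes, uniformly across the three cases, $\Hom(\Z^n, G_r)_1$, $\Bcom(G_r)_1$, or $\Comm(G_r)_1$. This spectral sequence has $E_2^{p,q}(r) = \Tor^{H^*(BG_r)}_{p,q}(\Q, H^*_{G_r}(F_r))$ and converges to $H^*(F_r)$. First, I would compute the $E_2$ page using the Koszul resolution of $\Q$ over the polynomial ring $H^*(BG_r)$ (Proposition \ref{prop: BG coh}): by Lemma \ref{grading} the resolution satisfies $(F_p)_q = 0$ for $q < p(p+1)$, and since $H^*_{G_r}(F_r)$ is non-negatively graded this vanishing is inherited by $E_2^{p,q}(r)$, so that on the antidiagonal $q - p = k$ only columns $p \leqs \lfloor\sqrt{k}\rfloor$ can be non-zero.

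Next, I would compare $\Phi_r$ and $\Phi_{r+1}$ via the intermediate spectral sequence $\Phi'_r$ coming from the fibration
$$F_{r+1} \maps (F_{r+1})_{hG_r} \maps BG_r,$$
in which $G_r$ acts on $F_{r+1}$ by restriction along the standard inclusion $G_r \injects G_{r+1}$. The stabilization map of fibrations factors as $\Phi_{r+1} \to \Phi'_r \to \Phi_r$: the first factor arises from $BG_r \to BG_{r+1}$ and is the identity on fibers, while the second is the identity on the base and the stabilization map on fibers. By Lemma \ref{lem: EM} the first factor induces an isomorphism $H^k_{G_{r+1}}(F_{r+1}) \srm{\isom} H^k_{G_r}(F_{r+1})$ for $k \leqs 2r$. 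Combining this with Theorem \ref{thm: equiv.coh. stability Hom, Comm,Bcom}, which makes the composite $H^k_{G_{r+1}}(F_{r+1}) \to H^k_{G_r}(F_r)$ an isomorphism for $k \leqs r$, I would deduce that the second factor $H^k_{G_r}(F_{r+1}) \to H^k_{G_r}(F_r)$ is an isomorphism of $H^*(BG_r)$-modules in degree $\leqs r$.

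Finally, I would apply this module iso to the $E_2$ pages of $\Phi'_r \to \Phi_r$. Because the Koszul resolution is a direct sum of shifted copies of $H^*(BG_r)$, an iso of coefficient modules in degree $\leqs r$ yields an iso on $E_2^{p,q}$ in every internal degree $q \leqs r$. A spectral sequence comparison then propagates this to the desired $H^k(F_{r+1}) \srm{\isom} H^k(F_r)$. The main obstacle is the bookkeeping for this last step: to obtain iso on $E_\infty^{p, p+k}$ one must ensure iso on $E_2$ at every position reachable from $(p, p+k)$ by the differentials $d_m \colon E_m^{p,q} \to E_m^{p-m, q-m+1}$, and these include incoming differentials whose sources sit at smaller total degree but \emph{larger} internal degree $q + m - 1$. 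The quadratic vanishing curve of Lemma \ref{grading} is precisely what constrains the reach of such iterated differentials---capping the relevant internal degrees at roughly $k + \lfloor\sqrt{k}\rfloor$ and hence forcing the $\sqrt{r}$ correction in the stability range $r - \lfloor\sqrt{r}\rfloor \geqs k$. The argument is uniform across all three functors, as both Theorem \ref{thm: equiv.coh. stability Hom, Comm,Bcom} and Lemma \ref{lem: EM} cover each of them.
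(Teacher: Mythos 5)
Your proposal follows the same strategy as the paper's proof: an Eilenberg--Moore comparison over $BG_r$, the intermediate fibration with $G_r$ acting on $F_{r+1}$, Lemma~\ref{lem: EM} and Theorem~\ref{thm: equiv.coh. stability Hom, Comm,Bcom} combining to give an isomorphism of coefficient $H^*(BG_r)$--modules in degree $\leqs r$, and the Koszul vanishing curve of Lemma~\ref{grading} as the source of the $\sqrt{r}$ correction. Where the paper differs from your sketch is in the last step, which you flag as the ``main obstacle.'' The statement that one must ensure iso on $E_2$ at all ``reachable'' positions is not literally how the argument goes: differentials on page $m$ connect positions $m$ columns apart, so what one actually needs is a page-by-page induction. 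The paper first strengthens the $E_2$ claim to: iso whenever $q \leqs r + p(p+1)$ (the grading shift in column $p$ of the Koszul complex helps here, and your weaker bound $q\leqs r$ would not give a uniform statement across columns). It then defines a triple $(p,q,m)$ to be \emph{stable} if the $E_m$ map is iso, and proves by induction that on page $2+s$ all positions of total cohomological degree $\leqs r-s$ are stable; the degradation by one per page occurs because the source of an incoming $d_m$ sits in cohomological degree one less and the target of an outgoing $d_m$ one more, so a window of stable degrees shrinks by one endpoint per page. Finally it uses the vanishing curve $q < p(p+1)$ to show that for $m \geqs 2 + \lfloor\sqrt{r}\rfloor$ all differentials in and out of the lines of cohomological degree $\leqs r - \lfloor\sqrt{r}\rfloor$ are zero, so the spectral sequence has already converged there and the stable range $r - \lfloor\sqrt{r}\rfloor \geqs k$ follows. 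Your intuition that the vanishing curve caps the internal degrees of the relevant positions at roughly $k + \sqrt{k}$ is consistent with this, but the ``degradation-per-page'' formulation is what makes the bookkeeping tractable and yields the exact bound; as written your sketch leaves precisely this part undone.
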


\begin{proof} We  work in cohomology. The arguments for $\Hom(\Z^n, -)$, $\Bcom(-) $, and $\Comm (-)$ are completely analogous, so we will focus on $\Hom (\Z^n, -)$.
We  consider the map of Eilenberg--Moore spectral sequences associated to the diagram of fibrations
  \begin{equation}
\begin{tikzcd} \label{fib-diag} 
\Hom(\Z^n, G_r)_1  \arrow[d] \arrow{rr}{i}
 & & \Hom(\Z^n, G_{r+1})_1\arrow[d]\\
	  (\Hom(\Z^n, G_r)_1)_{hG_r} \arrow[rd]  \arrow{rr}{j}
	&  & (\Hom(\Z^n, G_{r+1})_1)_{hG_{r}} \arrow[ld]  
	 \\
&BG_{r}  
	 \\
\end{tikzcd}
\end{equation}
where the horizontal maps $i$ and $j$ are induced by the standard inclusion.  
Consider the commutative diagram
\begin{center}
\begin{tikzcd}
H^k_{G_{r+1}} \left(\Hom(\Z^n, G_{r+1})_1\right)  \arrow{dr}  \arrow{d}{\iota} \\ 
H^k_{G_{r}} \left(\Hom(\Z^n, G_{r+1})_1\right)  \arrow{r}{j^*} 
	& H^k_{G_{r}} \left(\Hom(\Z^n, G_{r})_1\right), 
\end{tikzcd}
\end{center}
in which $\iota$ is the map from  Lemma~\ref{lem: EM}, and hence is an isomorphism for $r\geqs k/2$. By Theorem~\ref{thm: equiv.coh. stability Hom, Comm,Bcom}, the composite $j^*\circ \iota$ is an isomorphism for $r\geqs k$, and we conclude that $j$ induces an isomorphism in cohomology for $r\geqs k$.

To simplify notation,    
let $\H^*_r$ and $\H^*_{r+1}$ be the graded $H^*BG$--modules
$$\H^*_r = H_{G_r}^* \Hom(\Z^n, G_r)_1 \,\,\,\, \textrm{and}\,\,\,\, \H^*_{r+1} = H_{G_r}^* \Hom(\Z^n, G_{r+1})_1,$$
with module structures induced by the projections from the homotopy orbit spaces to $BG_r$. Denote the spectral sequences associated to the fibrations on the left and right of Diagram (\ref{fib-diag}) by $\{E_m^{p,q} (r), d_m = d_m (r)\}$ and $\{E_m^{p,q} (r+1), d_m  = d_m (r+1)\}$, respectively.

\begin{claim}\label{claim: stab-reg}
The map 
\begin{equation}\label{eqn: E2-map} E_2^{p,q} (r+1) \maps E_2^{p,q} (r)
\end{equation}
induced by (\ref{fib-diag}) is an isomorphism if $q\leqs r + p(p+1)$.
\end{claim}

\begin{proof} This map of $\Tor$ groups arises from 
 the map of graded chain complexes
\begin{equation}\label{kmap}\K(A)\otimes_{A} \H^*_{r+1} \maps \K(A)\otimes_{A} \H^*_r\end{equation}
induced by $j^*\co \H^*_{r+1}\to \H^*_r$. 
Say $q \leqs r + p(p+1)$.
Recall that $\K(A)_p$ is a direct sum of copies $A$, with gradings shifted upwards by at least $p(p+1)$. Hence $\K(A)\otimes_{A} \H^*_{r+1}$ and $\K(A)\otimes_{A} \H^*_{r}$ are direct sums of copies of $\H^*_{r+1}$ and $\H^*_r$, respectively, and the map (\ref{kmap}) is simply a direct sum of copies of the map $\H^*_{r+1}\to \H^*_r$, but again with grading shifted up by at least $p(p+1)$. In grading $q$, then, (\ref{kmap}) splits as a sum of maps of the form $j^* \co \H^l_{r+1} \to \H^l_{r}$, where $l\leqs q-p(p+1)\leqs r$, and this map is an isomorphism by Theorem \ref{thm: equiv.coh. stability Hom, Comm,Bcom}.
\end{proof}

A triple of integers $(p,q,m)$ with $m\geqs 2$, thought of as a point on page $m$ of the spectral sequence(s), will be called \e{stable} if the map 
\begin{equation}\label{eqn: Em-map} E_m^{p,q} (r+1) \maps E_m^{p,q} (r)
\end{equation}
is an isomorphism. 
 So Claim~\ref{claim: stab-reg} asserts that all triples $(p,q,2)$ with $q\leqs r + p(p+1)$ are stable, and we wish to prove that all points of the form $(p, q, m)$ with $q\leqs r-\sqrt{r}+p$ are stable. To simplify notation and terminology, we refer to $q-p$ as the (total) \e{cohomological degree} of the point $(p,q,m)$, and we refer to the points
$\{(p,q,m)\,:\, q=p+k\}$ as the line of cohomological degree $k$ (on page $m$).
Note that each differential out of the line of cohomological degree $k$ maps to the line of cohomological degree $k+1$.

\begin{claim}
On page $2+s$, all points of cohomological degree at most $r-s$ are 
stable ($s=0, 1, \ldots$).
\end{claim}
\begin{proof} We use induction on $s$. For $s=0$, this is a weaker statement than 
Claim~\ref{claim: stab-reg}. Assume the claim for some $s\geqs 0$, and consider a
point $(p,q,2+s+1)$ with cohomological degree at most $r-(s+1)$. 
We need to prove that 
 \begin{equation}\label{eqn: Em+1-map} E_{2+s+1}^{p,q} (r+1) \maps E_{2+s+1}^{p,q} (r)
\end{equation}
is an isomorphism. Let $m=2+s$. The map (\ref{eqn: Em+1-map}) is simply the map on homology induced by the 
map of chain complexes 
\begin{equation}\label{Em} 
\begin{tikzcd} 
E_{m}^{p-m,q-m+1} (r+1) \arrow[d] \arrow[r, leftarrow, "d_{m}"]   & 
	 E_{m}^{p,q} (r+1) \arrow[d]  \arrow[r, leftarrow, "d_{m}"]
	  & E_{m}^{p+m,q+m-1} (r+1)  \arrow[d]
	 \\
E_{m}^{p-m,q-m+1} (r) \arrow[r, leftarrow, "d_{m}"] & E_{m}^{p,q} (r) \arrow[r, leftarrow, "d_{m}"]
& E_{m}^{p+m,q+m-1} (r).
\end{tikzcd}
\end{equation}
The point $(p,q,m+1)$ has cohomological degree $q-p\leqs r-(s+1)$, and the points
$$(p-m,q-m+1, m),\,\,\, (p,q,m), \,\,\, \textrm{and} \,\,\,(p+m, q+m-1,m)$$ 
have cohomological degrees $(q-p)+1$, $q-p$, and $(q-p)-1$, respectively, 
so all three are
stable by the induction hypothesis.
Hence the vertical maps in (\ref{Em}) are isomorphisms, and it follows that the induced map in homology is an isomorphism as well.
\end{proof}

Letting $s=\lfloor \sqrt{r} \rfloor$, we find that all points of cohomological degree at most $r-\lfloor \sqrt{r} \rfloor$
are stable on page $m = 2+\lfloor \sqrt{r} \rfloor$.

Next, we claim that for $m\geqs 2+\lfloor \sqrt{r} \rfloor$, all differentials in and out of the 
lines of cohomological degree at most $r-\lfloor \sqrt{r} \rfloor$ are zero.
Recall that
by Lemma~\ref{grading}, all groups $E^{p,q}_m$ with $q<p(p+1)=p^2+p$ are zero.
The line of cohomological degree $k$ (that is, the line $q=p+k$) intersects the curve $q=p^2+p$ at $p=\sqrt{k}$, so 
all groups of the form $E_m^{p,p+k}$ with $p>\sqrt{k}$ are zero.
In particular, if $p>\sqrt{r}$ then
the groups $E_m^{p,q}$ with cohomological degree at most $r-\lfloor \sqrt{r} \rfloor$
 are zero 
(since in this situation we have $p> \sqrt{r} \geqs \sqrt{r-\lfloor \sqrt{r}\rfloor} \geqs \sqrt{q-p}$).
On pages $m\geqs 2+\lfloor \sqrt{r} \rfloor$, differentials map at least $2+\lfloor \sqrt{r} \rfloor>\sqrt{r}$ units in the horizontal direction. 
Hence on pages $m\geqs 2+\lfloor \sqrt{r} \rfloor$, all differentials out 
of non-zero groups with cohomological degree at most $r-\lfloor \sqrt{r} \rfloor$ map to trivial groups (in columns $p<0$), while all differentials into non-zero groups with cohomological degree at most $r-\lfloor \sqrt{r} \rfloor$ map out of trivial groups (in columns $p>\sqrt{r}$ and cohomological degree at most  $r-\lfloor \sqrt{r} \rfloor - 1$). This proves the claim.

It follows that for each integer $k$ with $0\leqs k \leqs r-\lfloor \sqrt{r} \rfloor$, the groups 
$$E_{2+\lfloor \sqrt{r} \rfloor}^{p,q} (r+1) \,\,\,\, \textrm{and} \,\,\,\, E_{2+\lfloor \sqrt{r} \rfloor}^{p,q} (r)$$ 
of cohomological degree  
$k$ form the associated graded groups of filtrations on $H^k (\Hom(\Z^n, G_{r+1}))$ and $H^k (\Hom(\Z^n, G_{r}))$ (respectively). The maps 
 \begin{equation*}  E_{2+\lfloor \sqrt{r} \rfloor}^{p,q} (r+1) \maps E_{2+\lfloor \sqrt{r} \rfloor}^{p,q} (r)
\end{equation*} 
 are
the induced maps between the associated graded groups of these filtrations, and we have shown that these maps are isomorphisms. It follows that 
$$H^k (\Hom(\Z^n, G_{r+1})_1)\maps H^k (\Hom(\Z^n, G_{r})_1)$$
is an isomorphism as well, completing the proof.
\end{proof}

\begin{rmk}\label{rmk: +1} The argument above in fact yields the slightly better bound 
$$r  - \lfloor \sqrt{r} \rfloor + 1 \geqs k,$$
except in the case $r=1$, $k=1$, and $G_1 = \SO(2)$.
\end{rmk}

\begin{rmk} The methods from Section~\ref{stab} may be used to show that the sequences 
$r\goesto H_k (G_r/T_r \cross T_r^n)$ extend to uniformly representation stable $\FI_W$--modules (this is similar to the arguments in Proposition~\ref{prop: stability J} and Theorem~\ref{thm: equiv.coh. stability Hom, Comm,Bcom}). By Theorem~\ref{quot-stab}, this implies homological stability for the sequences $r\goesto \Hom(\bbZ^n, G_r)$. However, this approach does not appear to yield a bound on the stable range, because we have limited information about the $\FI_W$--module $r\goesto H_k (G_r/T_r)$. For instance, we do not know a bound on its stable range. Similar comments apply to the other sequences considered in this section. 
\end{rmk}

\section{Nilpotent representations and noncompact Lie groups}\label{sec: nil}

In this section, we extend our stability results to certain noncompact Lie groups, and to finitely generated nilpotent discrete groups.

Let $G$ be a complex reductive affine algebraic group (that is, the complexification of a compact Lie group). For a discrete group $\pi$,
let $\mathfrak{X}_{\pi} (G)$ denote the $G$--character variety of 
$\pi$, defined as the GIT quotient $\Hom(\Z^n,G)/\!\!/G$ of $\Hom(\Z^n,G)$ by $G$. This space is homeomorphic to the subspace of closed orbits in $\Rep(\Z^n, G) = \Hom(\Z^n, G)/G$, and the inclusion is in fact a homotopy equivalence, and hence a homology isomorphism~\cite[Proposition 3.4]{FLR}.  We denote by 
$ \mathfrak{X}_{\pi} (G)_1$ the connected component of the trivial
representation.

These results in this section are based on the following result, as well as earlier work of Florentino--Lawton~\cite{florentino2014topology} and 
Pettet--Souto~\cite{pettet2013souto} in the abelian case.

\begin{thm}[{Bergeron~\cite{bergeron2015topology}}]\label{thm: bergeron}
Let $\Gamma$ be a finitely generated nilpotent group and let $G$ be the group of
complex or real points of a $($possibly disconnected$)$ reductive linear algebraic group,
defined over $\R$ in the latter case. If $K$ is a  maximal compact subgroup of $G$,
then there is a $K$--equivariant strong deformation retraction of $\Hom(\Gamma,G)$ onto
$\Hom(\Gamma,K)$. In particular, $\Hom(\Gamma,G)_1$ deformation retracts to
$\Hom(\Gamma,K)_1$, and $\mathfrak{X}_{\Z^n} (G)_1$ deformation retracts to
$\Rep(\Z^n, K)_1$
\end{thm}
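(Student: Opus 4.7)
The plan is to reduce the nilpotent case to the already-known abelian case (Florentino--Lawton in the complex setting, Pettet--Souto in the real setting) by exploiting the structure theory of nilpotent linear algebraic groups. Given $\rho \in \Hom(\Gamma,G)$, the Zariski closure $H_\rho$ of $\rho(\Gamma)$ is a nilpotent algebraic subgroup of $G$ (in the real case one passes to the complexification and descends). For a nilpotent linear algebraic group in characteristic zero, the semisimple elements and the unipotent elements each form subgroups, and one has a canonical direct product decomposition $H_\rho = (H_\rho)_s \times (H_\rho)_u$, where $(H_\rho)_s$ is commutative and $(H_\rho)_u$ is unipotent.

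Using this, I would write $\rho(\gamma) = s(\gamma)\,u(\gamma)$ with $s(\gamma) \in (H_\rho)_s$ and $u(\gamma) \in (H_\rho)_u$. Because the decomposition is a direct product of algebraic groups, both $s \co \Gamma \to (H_\rho)_s$ and $u \co \Gamma \to (H_\rho)_u$ are group homomorphisms; in particular $s$ is abelian (factoring through $\Gamma^{\mathrm{ab}}$), and $u(\gamma) = \exp(N(\gamma))$ for a nilpotent element $N(\gamma)$. The deformation then splits into two phases. Phase one kills the unipotent part: set $\rho_t(\gamma) = s(\gamma)\exp((1-2t)N(\gamma))$ for $t \in [0,\tfrac{1}{2}]$, which at $t=\tfrac{1}{2}$ yields an abelian representation into $(H_\rho)_s$. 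Phase two applies the Pettet--Souto / Florentino--Lawton deformation to the resulting family of abelian representations, pushing them into $K$.

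To verify this is a $K$-equivariant strong deformation retraction onto $\Hom(\Gamma,K)$, one checks that if $\rho(\Gamma) \subset K$ then each $\rho(\gamma)$ already has trivial unipotent part (elements of a compact subgroup are semisimple), so $u \equiv 1$ and phase one is constant; and phase two fixes $K$-valued abelian representations by hypothesis. Equivariance with respect to $K$-conjugation follows from the uniqueness (hence naturality) of the Jordan decomposition: conjugating $\rho$ by $k \in K$ conjugates both the $s$ and $u$ factors accordingly.

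The main obstacle will be showing that the construction varies continuously in $\rho$. The Zariski closure $H_\rho$ and its Jordan decomposition depend on $\rho$ in a subtle way (dimensions can jump in families), so one cannot work naively with the relative decomposition. Instead one should use the \emph{absolute} Jordan decomposition of each matrix $\rho(\gamma) \in G$, which varies continuously with $\rho(\gamma)$, and then invoke nilpotency of $\rho(\Gamma)$ to conclude that the semisimple parts $s(\gamma)$ and unipotent parts $u(\gamma)$ of distinct $\rho(\gamma)$ commute pairwise --- this is the key input that makes the assignments $\gamma \mapsto s(\gamma)$ and $\gamma \mapsto u(\gamma)$ into homomorphisms, since in a nilpotent linear group the Jordan decomposition is multiplicative on commuting elements. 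Establishing this commutativity from nilpotency of $\rho(\Gamma)$, and verifying continuity of $\rho \mapsto N(\gamma)$ and of phase two, is the technical heart of the argument and is where finite generation of $\Gamma$ enters (to reduce continuity to a finite set of generators).
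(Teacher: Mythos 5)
The paper states this theorem as a cited result of Bergeron (with the abelian special cases attributed to Pettet--Souto and Florentino--Lawton) and gives no proof of its own, so the comparison is between your proposal and Bergeron's published argument rather than anything internal to this paper.

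Your overall architecture---split $\rho$ into a semisimple factor and a unipotent factor, linearly retract the unipotent factor, then invoke the abelian case---is close in spirit to what Bergeron does, and your identification of continuity as the technical heart is correct. But the device you use to restore continuity fails. The assertion that ``the absolute Jordan decomposition of each matrix $\rho(\gamma)\in G$ varies continuously with $\rho(\gamma)$'' is false. The Jordan decomposition is notoriously discontinuous, and the failure already occurs inside $\Hom(\Z,G)_1$: take $G = \GL_2(\C)$ and
\[
A_\epsilon \;=\; \begin{pmatrix} 1 & 1 \\ 0 & 1+\epsilon \end{pmatrix}.
\]
For $\epsilon \neq 0$ the eigenvalues $1$ and $1+\epsilon$ are distinct, so $A_\epsilon$ is semisimple and its unipotent Jordan factor is $I$; at $\epsilon = 0$ the matrix is unipotent with nontrivial unipotent part $A_0 \neq I$. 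Hence $\gamma \mapsto N(\gamma)$ is discontinuous along this one-parameter family, and your phase-one homotopy $\rho_t(\gamma) = s(\gamma)\exp\bigl((1-2t)N(\gamma)\bigr)$ is discontinuous in $\rho$. This is not a patchable technicality: even in the base case $\Gamma = \Z$, where the theorem is just the classical $K$-equivariant retraction of $G$ onto $K$, the decomposition one retracts along is the Cartan/polar decomposition $g = k\exp(p)$ with $k\in K$, $p\in\mathfrak{p}$, not the Jordan decomposition, and these genuinely disagree (a unipotent upper-triangular matrix in $\SL_2(\R)$ has trivial Jordan semisimple part but nontrivial $\exp(p)$ factor). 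Bergeron's actual proof, following Pettet--Souto, is built on this analytic/metric decomposition of the symmetric space $G/K$ and uses the nilpotency of $\rho(\Gamma)$ to keep the $K$- and $\exp(\mathfrak{p})$-components coherent under the group law and under scaling of the $\mathfrak{p}$-direction; continuity is baked in from the start rather than recovered from algebraic data. Repairing your proof would require replacing phase one with a Cartan-type deformation adapted to the nilpotent structure of $\rho(\Gamma)$, and that replacement is essentially the whole theorem.
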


We will mainly be interested in the free nilpotent groups, which we now define.
The descending central series of the free group $F_n$ is given by 
\begin{equation}\label{eqn: descending central series}
F_n = \Gamma^1 \rhd \Gamma^2 \rhd \Gamma^3 \rhd \cdots,
\end{equation}
where $\Gamma^2=[F_n,F_n]$ and inductively $\Gamma^{q+1}=[F_n,\Gamma^q]$, for all 
$q \geqslant 2$. The free nilpotent groups, then, are the quotients $F_n/\Gamma^q$.

We will also use the following interesting result of Bergeron and Silberman.
Their result works for any nilpotent group, but we state it only for $F_n/\Gamma^q.$

\begin{thm}[{Bergeron-Silberman~\cite{bergeron2016note}}]\label{thm: bergeron-silberman}
Let $K$ be a compact Lie group. Then the abelianization map
$F_n/\Gamma^q \maps \Z^n$ induces an inclusion
$$\Hom(\Z^n,K) \injects \Hom(F_n/\Gamma^q,K)$$
 for each $q\geqslant 2$, and on identity components this map is in fact a homeomorphism
 $$\Hom(\Z^n,K)_1 \srm{\homeo}\Hom(F_n/\Gamma^q,K)_1.$$
\end{thm}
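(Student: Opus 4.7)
The plan begins with the easy part. The inclusion $\Hom(\bbZ^n,K)\hookrightarrow \Hom(F_n/\Gamma^q,K)$ is induced by precomposition with the abelianization surjection $F_n/\Gamma^q\twoheadrightarrow \bbZ^n$; surjectivity of this quotient yields injectivity of the induced map on $\Hom$-sets, while continuity and the identification of the image with the closed locus $\{\rho : [\rho(x_i),\rho(x_j)]=e \text{ for all generators } x_i,x_j\}$ in $K^n$ show that it is a closed embedding. The forward inclusion $\Hom(\bbZ^n,K)_1\subseteq \Hom(F_n/\Gamma^q,K)_1$ is automatic since the map is continuous, both spaces contain the trivial representation, and connected sets map to connected sets.

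Next comes the crucial local analysis at the trivial representation. Given $\rho\in \Hom(F_n/\Gamma^q,K)_1$, a path from the trivial representation to $\rho$ ensures each $\rho(g_i)$ lies in the identity component $K^0$ of $K$, so I may assume $K$ is connected. For $\rho$ sufficiently close to the trivial representation, I would consider $H:=\overline{\langle \rho(g_1),\ldots,\rho(g_n)\rangle}\leqs K$, which is a compact Lie subgroup and inherits nilpotency of class $\leqs q-1$ by continuity of iterated commutators. Hence $H^0$ is a connected compact nilpotent Lie group, with Lie algebra $\mathfrak{h}$ nilpotent. The key ingredient is then: \emph{any nilpotent Lie subalgebra of a compact Lie algebra is abelian}. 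Indeed, Engel's theorem makes each $\mathrm{ad}(X)|_\mathfrak{h}$ ($X\in\mathfrak{h}$) nilpotent, while $\mathrm{ad}(X)$ is skew-adjoint with respect to an $\mathrm{Ad}$-invariant inner product on $\mathfrak{k}$ and hence diagonalizable with purely imaginary eigenvalues; a diagonalizable nilpotent operator vanishes. Thus $H^0$ is a torus, and for $\rho$ close enough to trivial, each $\rho(g_i)$ lies in the open neighborhood $H^0\subseteq H$, so the $\rho(g_i)$ all commute and $\rho$ is abelian.

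Finally, the abelian locus $A=\Hom(\bbZ^n,K)$ is closed in $\Hom(F_n/\Gamma^q,K)$ and contains an open neighborhood of the trivial representation by the previous step. To conclude that $A\cap \Hom(F_n/\Gamma^q,K)_1=\Hom(F_n/\Gamma^q,K)_1$, I would upgrade to openness at an arbitrary abelian representation $\rho_0$ in the identity component: by Theorem~\ref{Stafa}, $\rho_0$ is conjugate to a representation into a maximal torus, and rerunning the closure-and-Lie-algebra analysis in a neighborhood of $\rho_0$ gives the needed openness. Equivalently, along any path $\rho_t$ from the trivial representation to $\rho$, the set $\{t:\rho_t\in A\}$ is closed, nonempty, and (via this extended local analysis) open, hence all of $[0,1]$. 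The main obstacle is precisely this extension: the BCH-style local argument is cleanest at the identity, and adapting it uniformly to arbitrary abelian base points, where the generators need not be near $e$, requires a careful change-of-coordinates argument built around the maximal torus containing the image of $\rho_0$.
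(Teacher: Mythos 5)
This theorem is not proved in the paper; it is stated as a citation to Bergeron--Silberman~\cite{bergeron2016note}, so there is no internal proof to compare against. Taking the proposal on its own merits, the closed-embedding observations and the Lie-algebra fact (a nilpotent Lie subalgebra of a compact Lie algebra is abelian) are both correct, and the general strategy of an open-and-closed argument on the identity component is in the right spirit. But the local step has a genuine gap, beyond the one you flagged yourself.

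Your claim that ``for $\rho$ close enough to trivial, each $\rho(g_i)$ lies in $H^0\subseteq H$'' does not follow, because $H=\overline{\langle \rho(g_1),\dots,\rho(g_n)\rangle}$ varies with $\rho$ and there is no uniform neighborhood of $e$ that is captured by $H^0$ for all such $H$. The point is that ``close to $e$ in $K$'' does not force ``in the identity component of $H$'': take $K=S^1$ and $H=\bbZ/m$; the nontrivial elements of $H$ can be arbitrarily close to $e$ while $H^0=\{e\}$. More seriously, for a compact nilpotent $H$ one only knows $H^0$ is a torus, and $H/H^0$ can be an arbitrarily large finite nilpotent group, so the coset $\rho(g_i)H^0$ can approach $e$ without being $H^0$. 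What you actually need here is a Zassenhaus/Margulis-type statement: \emph{in a compact Lie group $K$ there is a neighborhood $U$ of $e$ such that every nilpotent subgroup generated by elements of $U$ is abelian.} That lemma is true (and is essentially what Bergeron--Silberman prove and use), but it is a nontrivial input and is not a consequence of the identity-component observation as you wrote it.

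The second issue, which you identified yourself, is also real. Knowing that the abelian locus $A=\Hom(\bbZ^n,K)$ is closed and contains a neighborhood of the trivial representation is not enough; you need $A\cap\Hom(F_n/\Gamma^q,K)_1$ to be open in $\Hom(F_n/\Gamma^q,K)_1$. Restarting the local analysis at an arbitrary abelian $\rho_0$ is not a routine re-run of the argument at $e$: the generators of $\rho_0$ are not near the identity, so the Zassenhaus lemma does not apply directly, and ``the maximal torus containing the image of $\rho_0$'' is not unique when $\rho_0$ has non-regular image. Bergeron--Silberman handle this by showing that the conjugation map $K\times_{N(T)}T^n\to\Hom(F_n/\Gamma^q,K)$ has image that is closed (automatic from compactness), connected, and open in the component of the trivial representation, with the openness coming from a genuinely different argument (an implicit-function-type analysis of the obstruction to abelianizing near a torus representation). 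In short: your outline is pointed in the right direction, but both the local step at the identity and the propagation to the whole component require the specific technical lemmas from~\cite{bergeron2016note}, and the $H^0$ shortcut in the local step is an incorrect substitute for the Zassenhaus lemma.
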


In other words, if a homomorphism $F_n/\Gamma^q \to K$ lies in the path component of the trivial representation, then its image is in fact abelian.

Let $\{G_r\}_{r\geqs 1}$ denote one of the classical infinite families of 
compact, connected Lie groups -- namely $G_r = \SU(r)$, $\U(r)$, $\SO(2r+1)$,   
$\Sp(r)$, or  $\SO(2r)$, and let $G_r (\C)$  denote its complexification (explicitly, these groups are $\SL_r (\C)$, $\GL_r (\C)$, $\SO(2r+1, \C)$, $\Sp(2r, \C)$, and $\SO(2r, \C)$, respectively).
Let $T_r (\C) = T (G_r (\C)) \leqs G_r (\C)$ denote the complexification of $T_r\leqs G_r$.
 Since the standard inclusions $G_r\injects G_{r+1}$ and the inclusions $T_r\injects G_r$ are algebraic maps, they induce maps between complexifications, which restrict to maps $T (G_r (\C))\to T (G_{r+1} (\C))$. It is a standard fact that these inclusions in fact induce isomorphisms 
 $NT_r/T_r \srm{\isom} NT(G_r(\C))/T(G_r(\C))$ (in the semi-simple case, this follows from~\cite[\S 5.1.4, Problem 24]{Onishchik-Vinberg}). We will denote both Weyl groups by $W_r$.

Combining Theorems~\ref{thm: bergeron} and~\ref{thm: bergeron-silberman}, we see that for each of the classical sequences of Lie groups, and for each $n\geqs 1$, the inclusion
$$\Hom(\Z^n, G_r)_1\injects \Hom(F_n/\Gamma^q, G_r (\C))_1$$
is a homotopy equivalence, and the same holds for the character varieties. Combined with Theorem~\ref{Hom-bound} and Theorem~\ref{stability-wrt-r-Rep}, this yields the following corollary.

\begin{cor}\label{cor: nil complex}
Fix positive integers $n$ and $q$, with $q\geqslant 2$, and let $G_r$ be as above. The 
sequences 
$$r\goesto \Hom(F_n/\Gamma^q, G_r (\C))_1 \,\,\, \textrm{ and }\,\,\,
r\goesto\mathfrak{X}_{F_n/\Gamma^q} (G_r (\C))_1$$
are strongly rationally homologically stable, and in degree $k$, stability holds for 
$r  - \lfloor \sqrt{r} \rfloor \geqs k$ in the former case and for $r\geqs k$ in the latter case.
\end{cor}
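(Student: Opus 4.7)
The plan is to reduce both stability statements to the already-established cases of $\Hom(\Z^n,G_r)_1$ (Theorem~\ref{Hom-bound}) and $\Rep(\Z^n,G_r)_1$ (Theorem~\ref{stability-wrt-r-Rep}) by exhibiting natural weak equivalences between the compact-abelian and the complex-nilpotent sequences.

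For the homomorphism spaces, I would combine Theorems~\ref{thm: bergeron-silberman} and~\ref{thm: bergeron} to produce a commutative ladder of inclusions
$$\Hom(\Z^n, G_r)_1 \injects \Hom(F_n/\Gamma^q, G_r)_1 \injects \Hom(F_n/\Gamma^q, G_r(\C))_1,$$
in which each step is natural in $r$ with respect to the standard inclusions $G_r\injects G_{r+1}$ and $G_r(\C)\injects G_{r+1}(\C)$ (the first map is induced by the abelianization $F_n/\Gamma^q\to \Z^n$, which does not depend on $r$, while the second is induced by the inclusion $G_r\injects G_r(\C)$, which is natural in $r$). The first inclusion is a homeomorphism on identity components by Theorem~\ref{thm: bergeron-silberman}, and the second is the inclusion of a strong deformation retract by Theorem~\ref{thm: bergeron} with $G=G_r(\C)$ and $K=G_r$. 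Applying rational homology to this ladder identifies the stabilization maps for the complex-nilpotent sequence with those for the compact-abelian sequence, so Theorem~\ref{Hom-bound} transfers the stable range $r-\lfloor\sqrt r\rfloor\geqs k$ to $\Hom(F_n/\Gamma^q,G_r(\C))_1$.

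For the character varieties, Bergeron's deformation retraction in Theorem~\ref{thm: bergeron} is $G_r$-equivariant with respect to conjugation, and via the standard Kempf--Ness--type argument (the extension of Florentino--Lawton's abelian case to the nilpotent setting, which is contained in Bergeron's work) it descends to a deformation retraction $\mathfrak{X}_{F_n/\Gamma^q}(G_r(\C))_1 \to \Rep(F_n/\Gamma^q,G_r)_1$. Similarly, the homeomorphism $\Hom(\Z^n,G_r)_1\isom \Hom(F_n/\Gamma^q,G_r)_1$ is $G_r$-equivariant and descends to a homeomorphism $\Rep(\Z^n,G_r)_1\isom \Rep(F_n/\Gamma^q,G_r)_1$. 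Chaining these yields natural weak equivalences $\Rep(\Z^n,G_r)_1\simeq \mathfrak{X}_{F_n/\Gamma^q}(G_r(\C))_1$ as $r$ varies, and Theorem~\ref{stability-wrt-r-Rep} delivers the bound $r\geqs k$.

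The principal obstacle is the bookkeeping at the GIT-quotient level: one must confirm that Bergeron's $G_r$-equivariant deformation retraction descends correctly to the GIT quotient of $\Hom(F_n/\Gamma^q,G_r(\C))_1$ by $G_r(\C)$, not merely to the orbit space by $G_r$. Once this is settled (by citing Bergeron's nilpotent extension of Florentino--Lawton), the remainder is a routine ladder argument using naturality in $r$ together with homotopy invariance of rational homology.
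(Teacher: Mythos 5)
Your argument matches the paper's: combine Theorems~\ref{thm: bergeron} and~\ref{thm: bergeron-silberman} to produce homotopy equivalences, natural in $r$, between $\Hom(\Z^n,G_r)_1$ and $\Hom(F_n/\Gamma^q,G_r(\C))_1$ (and between $\Rep(\Z^n,G_r)_1$ and $\mathfrak{X}_{F_n/\Gamma^q}(G_r(\C))_1$), then transfer the bounds from Theorems~\ref{Hom-bound} and~\ref{stability-wrt-r-Rep}. The GIT-quotient point you flag as the principal obstacle is already covered by the statement of Theorem~\ref{thm: bergeron} as quoted in the paper, which explicitly asserts the deformation retraction at the character-variety level for any finitely generated nilpotent $\Gamma$, so no additional argument is needed there.
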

 
The infinite-dimensional constructions from Section~\ref{sec: B(2,G) stability} also have nilpotent analogues.
The spaces
\begin{equation}\label{eqn: B_q-nil (G)}
B(q, G) :=|\Hom(F_\bullet/\Gamma^q,G)|
\end{equation}
were introduced in~\cite{adem2012commuting}, and were used to define \emph{nilpotent $K$--theory}~\cite{adem2015gomez,adem2017gomezlindtillman}.
(Note that the case $q=2$ corresponds to $\Bcom G$.) 
Since the spaces $\Hom(F_n/\Gamma^q,G)$
are not necessarily path-connected, we will focus on the
subspace $B(q, G)_1\subset B(q, G)$ defined by
$$
B(q, G)_1 :=|\Hom(F_\bullet/\Gamma^q,G)_1|.
$$
This gives filtrations of $BG$ as follows:
$$
\Bcom(G) = B(2,G) \subseteq B(3, G) \subseteq B(4, G) \subseteq \cdots \subseteq BG
$$
and 
$$
\Bcom(G)_1 = B(2,G)_1 \subseteq B(3, G)_1 \subseteq B(4, G)_1 \subseteq \cdots \subseteq BG.
$$

\begin{prop} \label{prop: Bcom-Comm}
Let $G$ be a compact connected Lie group. Then the inclusion $G\injects G(\C)$ induces homotopy equivalences
$$\Bcom (G)_1  \srm{\heq} B(q, G (\C))_1  \,\,\, \textrm{ and }\,\,\, \Comm(G)_1\srm{\heq}  \Comm(G (\C))_1$$
$($for each $q\geq 2$$)$.
\end{prop}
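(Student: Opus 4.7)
The plan is to reduce both statements to a level-wise application of Theorems~\ref{thm: bergeron} and~\ref{thm: bergeron-silberman}, and then promote these to equivalences of geometric realizations (for $\Bcom$) and of James-type colimits (for $\Comm$) via standard properness and closed cofibration arguments.

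For the first equivalence, both sides are geometric realizations of simplicial spaces, namely $[n]\goesto\Hom(\Z^n,G)_1$ and $[n]\goesto\Hom(F_n/\Gamma^q,G(\C))_1$. The map induced by $G\injects G(\C)$ together with precomposition along the abelianization $F_n/\Gamma^q\twoheadrightarrow\Z^n$ is simplicial, and in simplicial degree $n$ it is the diagonal of the commutative square
$$
\begin{tikzcd}
\Hom(\Z^n,G)_1 \arrow{r}{\homeo}\arrow{d} & \Hom(F_n/\Gamma^q,G)_1\arrow{d}\\
\Hom(\Z^n,G(\C))_1 \arrow{r} & \Hom(F_n/\Gamma^q,G(\C))_1,
\end{tikzcd}
$$
whose top horizontal arrow is a homeomorphism by Theorem~\ref{thm: bergeron-silberman} and whose two vertical arrows are strong deformation retracts by Theorem~\ref{thm: bergeron}; hence every arrow in the square is a homotopy equivalence. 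Both simplicial spaces are proper because the degeneracies insert the identity element in a coordinate and $\{1\}$ is a non-degenerate closed subspace of both $G$ and $G(\C)$. By May's realization theorem \cite[Appendix]{May-EGP}, a level-wise weak equivalence between proper simplicial spaces with CW levels yields a weak equivalence on realizations, and the realizations have CW homotopy types by the same result, producing the desired homotopy equivalence.

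For the second equivalence, nilpotent groups are not needed: Theorem~\ref{thm: bergeron} applied with $\Gamma=\Z^n$ already yields level-wise strong deformation retractions of $\Hom(\Z^n,G(\C))_1$ onto $\Hom(\Z^n,G)_1$. Filter $\Comm(G)_1$ by the closed subspaces $J_m(G)$ equal to the images of $\coprod_{k\leqs m}\Hom(\Z^k,G)_1$, and similarly for $G(\C)$; the inclusions $J_{m-1}(G)\injects J_m(G)$ are closed cofibrations and $\Comm(G)_1=\colim_m J_m(G)$. Each $J_m$ is a pushout $\Hom(\Z^m,-)_1\cup_{S_m(-)}J_{m-1}(-)$, where $S_m(-)\subset\Hom(\Z^m,-)_1$ is the closed subspace of tuples containing the identity in some coordinate. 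An induction on $m$ using the gluing lemma for pushouts along closed cofibrations reduces the problem to showing that $S_m(G)\injects S_m(G(\C))$ is a homotopy equivalence; since $S_m(-)$ is a finite union of copies of $\Hom(\Z^{m-1},-)_1$ whose iterated intersections are homeomorphic to $\Hom(\Z^{m-j},-)_1$, this follows inductively via a Mayer--Vietoris-type argument. Passing to the colimit along closed cofibrations then gives $\Comm(G)_1\heq\Comm(G(\C))_1$.

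The main obstacle is the bookkeeping for the $\Comm$ argument: verifying that $S_m(-)$ is a closed cofibration in $\Hom(\Z^m,-)_1$ (which reduces to non-degeneracy of the identity basepoint in each coordinate, combined with the stability of closed cofibrations under finite unions), and ensuring that the inductive equivalence on $S_m$ meshes correctly with the pushout presentation of $J_m$. The $\Bcom$ case is conceptually cleaner, requiring only properness of the simplicial models, which follows from non-degeneracy of the identity in each Lie group together with the fact that representation varieties in compact or reductive Lie groups are real semi-algebraic and hence admit CW homotopy types.
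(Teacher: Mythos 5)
Your $\Bcom$ argument follows the paper's route --- the commutative square you draw packages the level-wise comparison via Bergeron and Bergeron--Silberman in exactly the same way --- but the properness claim is where the paper actually does work, and your justification has a gap. Asserting that degeneracies insert the identity and that $\{1\}$ is non-degenerate in $G$ does not by itself show that the union of the degenerate subspaces of $\Hom(\Z^n, G)_1$ is a closed cofibration: these spaces are subvarieties of $G^n$, not products, and non-degeneracy of the basepoint in $G$ does not obviously propagate either to the degeneracy images inside $\Hom(\Z^n,G)_1$ or to their unions. The paper closes this gap by invoking Villarreal's triangulation argument (and Hofmann's in the non-compact case) to show that the degeneracy maps are inclusions of simplicial subcomplexes, and then applying Lillig's Union Theorem to conclude properness. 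Some equivalent triangulability or NDR-union argument is needed before you can cite May's realization theorem.

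Your $\Comm$ argument is a more explicit, filtration-level version of what the paper does. The paper observes directly that $\Comm(G)_1$ is a colimit of a diagram whose structure maps (insertion of the identity) are closed cofibrations because they are algebraic maps between algebraic sets; hence the colimit agrees with the homotopy colimit, and the level-wise equivalences $\Hom(\Z^n, G)_1 \injects \Hom(\Z^n, G(\C))_1$ (Pettet--Souto, equivalently your application of Bergeron with $\Gamma=\Z^n$) pass to the homotopy colimit. Your James-filtration presentation $J_m = \Hom(\Z^m,-)_1 \cup_{S_m(-)} J_{m-1}(-)$ plus the gluing lemma amounts to unwinding that same colimit by hand, which is fine. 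One imprecision: the comparison for $S_m(G) \injects S_m(G(\C))$ should be run by applying the gluing lemma inductively over the closed cover of $S_m$ by the coordinate hyperplanes, not by Mayer--Vietoris, since the target statement is a homotopy equivalence and not merely a homology isomorphism. Both routes land in the same place; the paper's is shorter because it buries the cofibration bookkeeping in a single sentence, while yours foregrounds it and thereby makes it the bottleneck.
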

\begin{proof} Theorems~\ref{thm: bergeron} and~\ref{thm: bergeron-silberman} show that the inclusion of simplicial spaces
\begin{equation}\label{eqn: lwwe}\Bcom (G)_1  \injects B(q, G (\C))_1
\end{equation}
is a level-wise homotopy equivalence. The degeneracy maps for these simplicial spaces are inclusions of simplicial complexes
by the argument in Villarreal~\cite[Theorem 2.19]{villarreal2017cosimplicial} (see also \cite{hofmann2009triangulation} in the non-compact case). 
By Lillig's Union Theorem~\cite{Lillig}, this implies that these simplicial spaces are \e{proper}, and hence the map (\ref{eqn: lwwe}) is a homotopy equivalence by the results in~\cite[Appendix]{May-EGP}.

The  spaces $\Comm(G)_1$ and $\Comm(G(\C))_1$ are colimits of diagrams built from the spaces $\Hom(\bbZ^n, G)_1$ and $\Hom(\bbZ^n, G (\C))_1$, respectively, with maps induced by the coordinate projections of $\bbZ^{n+1}$ onto $\bbZ^{n}$. As these induced maps are algebraic maps between algebraic sets, they are cofibrations, and hence these colimits are homotopy equivalent to the corresponding homotopy colimits. The inclusions $\Hom(\bbZ^n, G)_1 \injects \Hom(\bbZ^n, G (\C))_1$ are homotopy equivalences~\cite{pettet2013souto}, so they induce a homotopy equivalence between the corresponding homotopy colimits.
\end{proof}

Our stability results for compact groups now yield:

\begin{cor} \label{cor: nil stable}
Fix a positive integer  $q\geqslant 2$, and let $G_r(\C)$ be as above. The
sequences 
$$r\goesto B(q, G_r (\C))_1
 \,\,\, \textrm{ and }\,\,\,
r\goesto \Comm(G_r (\C))_1 
$$
are satisfy strong rationally homological stability, and in degree $k$, stability holds for $r  - \lfloor \sqrt{r} \rfloor \geqs k$.
\end{cor}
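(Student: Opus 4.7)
The plan is to reduce the statement, via Proposition~\ref{prop: Bcom-Comm}, to the corresponding stability result for the compact groups $G_r$ established in Theorem~\ref{Hom-bound}. The essential point to verify is that the homotopy equivalences
$$\Bcom(G_r)_1 \srm{\heq} B(q, G_r(\C))_1 \quad \text{and} \quad \Comm(G_r)_1 \srm{\heq} \Comm(G_r(\C))_1$$
of Proposition~\ref{prop: Bcom-Comm} are natural in $r$, in the sense that they fit into commutative diagrams with the stabilization maps induced by the standard inclusions $G_r(\C) \injects G_{r+1}(\C)$.

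First I would note that the standard inclusions $G_r \injects G_{r+1}$ are restrictions of the algebraic embeddings $G_r(\C) \injects G_{r+1}(\C)$, so for each $n$ and each $q \geqs 2$ they fit into commutative squares
$$
\begin{tikzcd}
\Hom(\Z^n,G_r)_1 \ar[r,hook] \ar[d] & \Hom(F_n/\Gamma^q, G_r(\C))_1 \ar[d] \\
\Hom(\Z^n,G_{r+1})_1 \ar[r,hook] & \Hom(F_n/\Gamma^q, G_{r+1}(\C))_1,
\end{tikzcd}
$$
in which by Theorems~\ref{thm: bergeron} and~\ref{thm: bergeron-silberman} the horizontal maps are homotopy equivalences. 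Taking geometric realization of these squares simplicially in $n$ yields the required naturality for $B(q,-)$; similarly, passing to the James-type colimits in $n$ gives naturality for $\Comm(-)$. As in the proof of Proposition~\ref{prop: Bcom-Comm}, properness of the simplicial spaces (via Villarreal's result and Lillig's Union Theorem) and cofibrancy of the coordinate projection maps ensure that the relevant geometric realizations and colimits agree with their homotopy analogues, so the horizontal arrows remain weak equivalences after these constructions.

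Combining these naturality squares with Theorem~\ref{Hom-bound}, we get for each $k$ a commutative diagram
$$
\begin{tikzcd}
H_k(\Bcom(G_r)_1;\Q) \ar[r,"\isom"] \ar[d] & H_k(B(q,G_r(\C))_1;\Q) \ar[d] \\
H_k(\Bcom(G_{r+1})_1;\Q) \ar[r,"\isom"] & H_k(B(q,G_{r+1}(\C))_1;\Q),
\end{tikzcd}
$$
in which the left vertical map is an isomorphism whenever $r - \lfloor\sqrt r\rfloor \geqs k$, and an identical diagram for $\Comm$ in place of $\Bcom$ and $B(q,-)$. The right vertical maps are therefore also isomorphisms in this range, giving the claimed stability bound.

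The only part requiring care is the verification that the equivalences of Proposition~\ref{prop: Bcom-Comm} are compatible with the stabilization maps at the level of simplicial spaces (respectively colimiting diagrams), rather than merely as abstract homotopy equivalences at each fixed $r$. This is essentially automatic because Bergeron's retractions and the Bergeron--Silberman homeomorphisms are defined functorially in the target Lie group via maximal compact subgroups (which are compatible with the standard inclusions), so no serious obstacle should arise here.
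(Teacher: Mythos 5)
Your proof is correct and follows the same route the paper intends: combine the homotopy equivalences of Proposition~\ref{prop: Bcom-Comm} with the compact-group stability bound from Theorem~\ref{Hom-bound}, using the fact that the inclusions $\Hom(\Z^n,G_r)_1 \injects \Hom(F_n/\Gamma^q,G_r(\C))_1$ commute with the stabilization maps. The paper leaves the naturality step implicit ("Our stability results for compact groups now yield"), and your write-up simply makes that step explicit; the only minor quibble is that you do not need the Bergeron \emph{retractions} themselves to be natural — only the inclusions that they exhibit as homotopy equivalences, which commute for the trivial reason you already give.
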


Nilpotent analogues of $\Comm (G)$ were introduced in~\cite{cohen2016spaces}.
Define
\begin{equation}\label{eqn: X(q,G)}
\X(q,G) : = \bigg(\coprod_{n \geq 0} \Hom(F_n/\Gamma^q,G)\bigg)\bigg/\sim,
\end{equation}
where $\X(2,G)=\Comm(G)$, and the corresponding subspaces
$$
\X(q,G)_1 : = \bigg(\coprod_{n \geq 0} \Hom(F_n/\Gamma^q,G)_1\bigg)\bigg/\sim.
$$
We thereby obtain filtrations of $J(G)$ as follows:
$$
\Comm(G) = \X(2,G)  \subseteq \X(3,G) \subseteq \X(4,G) \subseteq \cdots \subseteq J(G)
$$
and
$$
\Comm(G)_1 = \X(2,G)_1 \subseteq \X(3,G)_1 \subseteq \X(4,G)_1 \subseteq \cdots \subseteq J(G).
$$

We now prove (weak) homological stability for these constructions.

\begin{cor}\label{cor: X(q,G)}
Fix a positive integer $q\geqslant 2$, and let $G_r(\C)$ be as above. Then there are isomorphisms
$$H_k \X(q, G_r (\C))_1 \isom H_k\X(q, G_{r+1} (\C))_1$$
for $r  - \lfloor \sqrt{r} \rfloor \geqs k$.
\end{cor}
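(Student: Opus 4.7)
The plan is to reduce the claim to the already-established stability for $\Comm(G_r(\C))_1$ by producing a chain of (weak) homotopy equivalences
$$\X(q, G_r(\C))_1 \,\heq\, \X(q, G_r)_1 \,\homeo\, \Comm(G_r)_1 \,\heq\, \Comm(G_r(\C))_1$$
and then invoking Corollary~\ref{cor: nil stable} (or equivalently Theorem~\ref{Hom-bound}(iii) after transporting via the chain).

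First I would construct the middle homeomorphism by assembling the Bergeron--Silberman homeomorphisms $\Hom(\Z^n, G_r)_1 \homeo \Hom(F_n/\Gamma^q, G_r)_1$ from Theorem~\ref{thm: bergeron-silberman}, which are induced by the abelianization $F_n/\Gamma^q \twoheadrightarrow \Z^n$. Since abelianization is functorial and, in particular, is compatible with the coordinate projections used to define the $\sim$ relation (namely the projections that insert the identity element into a coordinate), these level-wise homeomorphisms descend to a homeomorphism $\Comm(G_r)_1 \homeo \X(q, G_r)_1$ of the quotient spaces.

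Second, I would establish the two outer equivalences by mimicking the proof of Proposition~\ref{prop: Bcom-Comm}. The key inputs are: (i) the level-wise deformation retractions $\Hom(F_n/\Gamma^q, G_r(\C)) \to \Hom(F_n/\Gamma^q, G_r)$ supplied by Theorem~\ref{thm: bergeron}, which restrict to deformation retractions on the identity components; and (ii) the fact that $\X(q, G)_1$ is a colimit of the spaces $\Hom(F_n/\Gamma^q, G)_1$ along closed algebraic embeddings (the identity-insertion maps), which are cofibrations, so that these colimits agree with the corresponding homotopy colimits. A pointwise homotopy equivalence of diagrams then induces a homotopy equivalence on homotopy colimits, yielding $\X(q, G_r)_1 \heq \X(q, G_r(\C))_1$; the same argument applied with $q=2$ recovers the $\Comm$ equivalence.

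With the chain of equivalences in hand, the stability range $r - \lfloor \sqrt{r} \rfloor \geqs k$ transfers directly from Corollary~\ref{cor: nil stable}. The main obstacle I anticipate is verifying the cofibration and CW-homotopy-type hypotheses needed to replace the colimit by a homotopy colimit in the second step: $\Hom(F_n/\Gamma^q, G)$ is cut out by nontrivial polynomial identities (iterated commutators) rather than being a simple product, and the $\sim$ relation identifies points across different values of $n$. Handling these points carefully is analogous to (and slightly more delicate than) the paracompactness and CW-type verifications already carried out for $\Comm(G)_1$ in Corollary~\ref{cor: G-equiv. cohomology of hom, comm, b_com} and in the proof of Theorem~\ref{thm: stability for BcomG/G and ComG/G}.
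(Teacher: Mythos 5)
Your proof is correct but takes a genuinely different route from the paper's. The paper's actual argument reduces to $H_k \X(q, G_r (\C))_1 \isom H_k \Comm(G_r (\C))_1$ directly over the complex group, using the stable splitting $\Sigma \X(q,G_r(\C))_1 \simeq \Sigma \bigvee_{n\geqslant 1} \widehat{\Hom}(F_n/\Gamma^q,G_r(\C))_1$ from Cohen--Stafa and the homotopy equivalences $\widehat{\Hom}(\Z^n,G_r(\C))_1 \srt{\heq} \widehat{\Hom}(F_n/\Gamma^q,G_r(\C))_1$ established in Ramras--Stafa (which already live over the complex group). You instead descend to the compact form and assemble the Bergeron--Silberman homeomorphisms into a genuine homeomorphism $\Comm(G_r)_1 \homeo \X(q, G_r)_1$ (since the abelianization quotients $F_{n}/\Gamma^q \twoheadrightarrow \Z^{n}$ commute with the coordinate-killing projections, the level-wise homeomorphisms of identity components are compatible with the relation $\sim$ on both sides), and then transport along the deformation retractions of Theorem~\ref{thm: bergeron}.

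What each buys: your route avoids the stable-splitting machinery entirely and in fact produces a homeomorphism at the compact level rather than just a homology isomorphism, so it is more self-contained and slightly sharper. The paper's route, by comparing the hat-quotients over $G_r(\C)$ directly, does not require the intermediate step $\X(q, G_r(\C))_1 \heq \X(q, G_r)_1$ and therefore does not need to re-run the colimit-equals-homotopy-colimit argument for the nilpotent James-type construction -- which is precisely the step you flag as the delicate one. That concern is legitimate but surmountable: $\Hom(F_n/\Gamma^q, G)$ is an affine algebraic set (cut out by iterated commutator identities), the identity-insertion maps $\Hom(F_n/\Gamma^q, G)_1 \injects \Hom(F_{n+1}/\Gamma^q, G)_1$ are closed algebraic embeddings, and hence are cofibrations by the same triangulability reasoning the paper already invokes in the proof of Proposition~\ref{prop: Bcom-Comm} for the $q=2$ case. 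One small terminological slip: the structure maps defining $\sim$ that insert the identity are inclusions induced by group projections, not themselves projections, though this does not affect your argument.
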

\begin{proof} In light of Theorem \ref{thm: bergeron-silberman} and Theorem~\ref{Hom-bound}, it suffices to show that 
$$H_k \X(q, G_r (\C))_1 \isom H_k \Comm(G_r (\C))_1$$
for each $r\geqs 1$.
To begin, note that the stable splitting in \cite[Theorem 5.2]{cohen2016spaces} (stated there for compact groups) still holds for $G_r (\C)$ since (as discussed in the proof of Corollary~\ref{cor: nil stable}) the inclusions 
$$\Hom(F_n/\Gamma^q, G_r (\C))_1 \injects \Hom(F_{n+1}/\Gamma^q, G_r (\C))_1$$
are cofibrations.
That is, we have decompositions 
$$
\Sigma X(q,G_r(\C))_1 \simeq \Sigma \bigvee_{n\geqslant 1} \widehat{\Hom}(F_n/\Gamma^q,G_r(\C))_1,
$$
where $\widehat{\Hom}(F_n/\Gamma^q,G_r(\C))_1$ is the quotient of ${\Hom}(F_n/\Gamma^q,\GL_r(\C))_1$ 
by the subspace $S_{n,q}(G_r(\C))$ consisting of all the nilpotent $n$--tuples with at least one coordinate equal to 
the identity element of $G_r(\C)$.
The argument in~\cite[Theorem 4.1]{ramras2017hilbert} now shows that the natural map
$$\widehat{\Hom}(\Z^n,G_r(\C))_1\maps \widehat{\Hom}(F_n/\Gamma^q,G_r(\C))_1$$
is a homotopy equivalence, completing the proof.
\end{proof}

As an immediate consequence of Theorem~\ref{thm: bergeron} and Theorem~\ref{thm: stability for fixed G}, we also have stability with respect to the maps of discrete groups $F_n/\Gamma^q\maps F_{n+1}/\Gamma^q$.

\begin{cor}\label{cor: stability in n nil} Let $G$ be as in Theorem~\ref{thm: bergeron}. Then for each $q\geqs 2$, the sequences 
$$n\goesto \Hom(F_n/\Gamma^q, G)_1\,\,\, \textrm{ and }
\,\,\, n\goesto \mathfrak{X}_{F_n/\Gamma^q} (G)_1$$
are strongly rationally homologically stable, and in homological degree $k$, stability holds for $n\geqs k$.
\end{cor}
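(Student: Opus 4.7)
The plan is to chain the two input theorems in order to reduce to the corresponding stability result for commuting tuples in a compact group. Let $K \leqs G$ be a maximal compact subgroup. Theorem~\ref{thm: bergeron} (together with its extension to general finitely generated nilpotent $\Gamma$, as discussed in the introduction via Bergeron's work) provides $K$--equivariant strong deformation retractions
\[
\Hom(F_n/\Gamma^q, G)_1 \simeq \Hom(F_n/\Gamma^q, K)_1 \quad \textrm{and} \quad \mathfrak{X}_{F_n/\Gamma^q}(G)_1 \simeq \Rep(F_n/\Gamma^q, K)_1.
\]
Theorem~\ref{thm: bergeron-silberman} then identifies, via the abelianization $F_n/\Gamma^q \twoheadrightarrow \Z^n$,
\[
\Hom(\Z^n, K)_1 \cong \Hom(F_n/\Gamma^q, K)_1,
\]
and since this homeomorphism is $K$--equivariant it descends to $\Rep(\Z^n, K)_1 \cong \Rep(F_n/\Gamma^q, K)_1$.

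Chaining these together yields homotopy equivalences
\[
\Hom(F_n/\Gamma^q, G)_1 \simeq \Hom(\Z^n, K)_1 \quad \textrm{and} \quad \mathfrak{X}_{F_n/\Gamma^q}(G)_1 \simeq \Rep(\Z^n, K)_1.
\]
The step requiring genuine attention is naturality in $n$. Both the Bergeron deformation retract and the Bergeron--Silberman homeomorphism arise by functorially applying $\Hom(-, K)$ or $\Hom(-, G)$ to group homomorphisms, so the commutative square relating the standard inclusions $F_n/\Gamma^q \hookrightarrow F_{n+1}/\Gamma^q$ and $\Z^n \hookrightarrow \Z^{n+1}$ under abelianization pulls back to a commutative square of homotopy equivalences between the corresponding $\Hom$--spaces and their $K$--orbit spaces. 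Compatibility with the symmetric group actions permuting generators (on one side) and standard basis vectors (on the other) is likewise immediate from the functorial nature of the construction.

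The conclusion is now immediate from Theorem~\ref{thm: stability for fixed G}, applied to the compact, connected Lie group $K$: that theorem supplies the desired strong rational homological stability with stable range $n \geqs k$ in homological degree $k$ for the relevant sequences derived from $n \goesto \Hom(\Z^n, K)_1$ and $n \goesto \Rep(\Z^n, K)_1$, and this transfers along the equivalences above to yield the claim. The only genuine obstacle is the naturality bookkeeping in the previous paragraph; once that is recorded, the corollary really is an immediate consequence of Theorems~\ref{thm: bergeron} and~\ref{thm: stability for fixed G}, as promised in the sentence introducing the statement.
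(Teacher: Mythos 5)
Your overall approach—reduce to the compact case via Theorem~\ref{thm: bergeron} and Theorem~\ref{thm: bergeron-silberman} and then invoke Theorem~\ref{thm: stability for fixed G}—is exactly what the paper has in mind; its proof is simply the one-sentence remark preceding the statement, which cites precisely these results.

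However, your final paragraph contains a gap that needs to be addressed. Theorem~\ref{thm: stability for fixed G} gives the $n\geqs k$ stability range only for the $S_n$-quotients $\Hom(\Z^n, K)_1/S_n$ and $\Rep(\Z^n, K)_1/S_n$; for the ordered spaces $\Hom(\Z^n, K)_1$ and $\Rep(\Z^n, K)_1$ themselves it gives only uniform representation stability with the weaker range $n\geqs 2k$, and indeed $\dim_\Q H_k$ of those ordered spaces typically grows with $n$ (for instance $H_2(\Rep(\Z^n,\SU(2))_1;\Q)\isom \Q^{\binom{n}{2}}$). Your phrase ``the relevant sequences derived from $n\goesto \Hom(\Z^n,K)_1$'' glosses over this: to extract the $n\geqs k$ bound you must apply Theorem~\ref{thm: stability for fixed G} to the unordered sequences $\Hom(F_n/\Gamma^q, G)_1/S_n$ and $\mathfrak{X}_{F_n/\Gamma^q}(G)_1/S_n$ (which is how the corollary is meant to be read, consistent with Theorem~\ref{unordered-thm} and Table~\ref{table: all sequences of spaces}). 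In turn this makes the $S_n$-equivariance you mention in passing for the Bergeron retraction and the Bergeron--Silberman homeomorphism the load-bearing part of the ``naturality bookkeeping'': you need equivariant equivalences, not merely homotopy equivalences compatible with the stabilization maps, in order to descend to the $S_n$-quotients and transfer the isomorphism. That deserves to be stated explicitly rather than left implicit.
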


\noindent {\bf Real reductive groups.}
Since Theorems~\ref{thm: bergeron} applies to real reductive groups, we can also consider families of such groups whose maximal compact subgroups correspond to the classical sequences of compact Lie groups. For instance, the maximal compact subgroup of $\SL_r (\bbR)$ is $\SO(r)$, and hence we obtain strong homological stability (with the same bounds as above) for all four sequences 
$$\{\Hom(F_n/\Gamma^q, \SL_r(\bbR))_1\}_{r\geqs 1}, \,\,\,\{\mathfrak{X}_{F_n/\Gamma^q} (\SL_r(\bbR))_1\}_{r\geqs 1},$$
$$\{B(q, \SL_r(\bbR))_1\}_{r\geqs 1},\,\,\, \textrm{ and }\,\,\, \{\Comm (\SL_r(\bbR))_1\}_{r\geqs 1}.$$
A similar statement applies to the sequence of real symplectic groups $\Sp(2r, \bbR)$, which are reductive with maximal compact $\U(r)$. 

Finally, consider the indefinite groups $\U(s,r)$ and $\SO(s,r)$, which are real reductive with maximal compact subgroups $\U(s)\cross \U(r)$ and $\SO(s)\cross \SO(r)$, respectively. In these cases, we may stabilize with respect to either variable $r$ or (equivalently) $s$. In general, for Lie groups $G$ and $H$ there are homeomorphisms 
$$\Hom(\Z^n, G\cross H) \isom \Hom(\Z^n, G)\cross \Hom(\Z^n, H),$$
and similarly for the character varieties. Moreover, by Ebert--Randal-Williams~\cite[Theorem 7.2]{ERW}, geometric realization commutes with products of simplicial spaces up to weak homotopy equivalence, so the map
$$\Bcom (G\cross H) \maps \Bcom (G)\cross \Bcom (H)$$
is a weak homotopy equivalence, and hence an isomorphism in (co)homology.\footnote{Here it is important that we give $\Bcom (G)\cross \Bcom (H)$ the compactly generated topology associated to the product topology.}
The K\"unneth Theorem now shows that the sequences
$$\{\Hom(\Z^n, \U(s,r))\}_{r\geqs 1}, \,\,\,\{\mathfrak{X}_{\Z^n} (\U(s,r))\}_{r\geqs 1},\,\,\, \textrm{ and }\,\,\, \{\Bcom (\U(s,r))\}_{r\geqs 1}$$
are all strongly rationally homologically stable (with the same stability bounds), and similarly for  $\SO(s,r)$ in place of $\U(s,r)$ (at least after restricting to the connected components of the trivial representations).

\begin{rmk}\label{rmk: eqvt11} The results in this section can be extended to equivariant homology. For instance, consider the $G_r (\C)$--equivariant homology of $\Hom(\Z^n, G_r (\C))_1$.
Comparing the fibrations
$$(\Hom(\Z^n, G_r (\C))_1)_{hG_r (\C)}\maps BG_r (\C)$$
and
$$(\Hom(\Z^n, G_r)_1)_{hG_r}\maps BG_r,$$
one sees (using Theorem~\ref{thm: bergeron}) that the homotopy orbit spaces are in fact homotopy equivalent. The other cases are similar.
\end{rmk}

\section{Finite covers of Lie groups}\label{sec: covers}

In this section we show that passing to a finite cover of the underlying Lie group does not change
the homology of the various spaces of commuting elements considered in this article. In particular, this allows us to extend our stability results to the Spin groups, and to the projective unitary and general linear groups 
(although in the latter case, there are no stabilization maps, so we have only weak stability).

\begin{prop}\label{prop: fin cov} Let $p\co G\to H$ be a finite covering homomorphism between connected Lie groups, and assume that $G$ and $H$ are either compact or complex reductive affine algebraic groups.
Then for each $n\geqs 1$ and each $q\geqs 2$, the induced maps 
\begin{equation}\label{HX} \Hom(F_n/\Gamma^q, G)_1\maps \Hom(F_n/\Gamma^q, H)_1  \,\,\, \textrm{and}\,\,\,\mathfrak{X}_{F_n/\Gamma^q} (G)_1\maps \mathfrak{X}_{F_n/\Gamma^q} (H)_1\end{equation}
are $($rational$)$ homology isomorphisms, as are the maps
$$B(q, G)_1\maps B(q, H)_1 \,\,\, \textrm{and}\,\,\,
\Comm(G)_1\maps \Comm(H)_1.$$
Finally, when $G$ and $H$ are compact, the same holds for
$$\Bcom (G)_1/G \to \Bcom (H)_1/H\,\,\, \textrm{and}\,\,\, \Comm(G)_1/G \to \Comm(H)_1/H.$$
\end{prop}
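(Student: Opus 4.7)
The plan is to reduce the six assertions to one rational (co)homology calculation comparing spaces built from maximal tori and Weyl groups of $G$ and $H$. The key structural observation is that $K := \ker(p)$ is a finite central subgroup, and hence lies in any maximal torus $T_G \leqs G$. Consequently, for a choice of maximal torus $T_G \leqs G$, the image $T_H := p(T_G)$ is a maximal torus of $H$, $p$ restricts to a finite covering $T_G \to T_H$ with kernel $K$, it carries $N_G T_G$ onto $N_H T_H$ and induces a canonical isomorphism $W_G \isom W_H$ (to be denoted $W$), and it descends to a $W$-equivariant homeomorphism $G/T_G \srm{\homeo} H/T_H$.

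Next I would reduce to the compact case. When $G$ and $H$ are complex reductive, Theorems~\ref{thm: bergeron} and~\ref{thm: bergeron-silberman}, along with Proposition~\ref{prop: Bcom-Comm} and Corollary~\ref{cor: X(q,G)}, provide natural homotopy equivalences $\Hom(\Z^n, K_G)_1 \heq \Hom(F_n/\Gamma^q, G)_1$ and their analogues for $\mathfrak{X}$, $B(q, -)_1$, and $\X(q, -)_1$, where $K_G \leqs G$ is a maximal compact subgroup. Choosing $K_G$ and $K_H$ compatibly with $p$ (so that $p(K_G) = K_H$ and $p|_{K_G} \co K_G \to K_H$ is still a finite covering with kernel $K$), the finite cover $p$ induces maps fitting into commutative squares with these horizontal equivalences, reducing each of the six assertions to the analogous assertion for the finite covering $K_G \to K_H$ of compact connected Lie groups and for $\Z^n$ in place of $F_n/\Gamma^q$. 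I would then invoke Baird's theorem (Theorem~\ref{Baird}), together with Theorem~\ref{Stafa}, Theorem~\ref{thm: phi}, and Proposition~\ref{prop: homeo}, and the naturality of Remark~\ref{rmk: phi natural}, to identify each of the six maps (in rational homology) with the map on $W$-orbits induced by the corresponding map between $T_G^n$, $G/T_G \times T_G^n$, $BT_G$, $G/T_G \times BT_G$, $J(T_G)$, or $G/T_G \times J(T_G)$ and the analogous $H$-spaces.

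The heart of the argument is that $p|_{T_G}$, being a finite covering between tori of equal dimension, induces a rational cohomology equivalence: on $\pi_1$ it is an inclusion of free abelian groups of the same rank with finite cokernel, hence an isomorphism after tensoring with $\bbQ$, and the K\"unneth formula for tori then gives an isomorphism on all rational cohomology. The same argument applies to $T_G^n \to T_H^n$, and functoriality shows that the induced maps on the classifying spaces $BT_G \to BT_H$ (polynomial rings on $H^2$) and on the James constructions $J(T_G) \to J(T_H)$ (tensor algebras on $\widetilde H_*$) are also rational (co)homology equivalences; each of these equivalences is $W$-equivariant under the identification $W_G \isom W_H$. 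Combining with the homeomorphism $G/T_G \isom H/T_H$ via K\"unneth when needed, Propositions~\ref{coh-quot} and~\ref{coh-quot2} then transfer these equivalences to $W$-quotients, provided all the actions are good; the required good-action verifications are essentially the same as those used in Theorem~\ref{thm: stability for BcomG/G and ComG/G} and Corollary~\ref{cor: G-equiv. cohomology of hom, comm, b_com}.

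The main obstacle I anticipate is the bookkeeping in the reduction to the compact case: one must verify that compatible choices of maximal compact subgroups exist so that $p$ restricts to a finite covering between them, and that the Bergeron and Bergeron--Silberman homotopy equivalences can be arranged to sit in commutative squares with the map induced by $p$ (which is immediate for the inclusion maps themselves, though less transparent for the retractions). Once these verifications are in hand, the rest of the argument is a formal consequence of Baird's theorem and the elementary observation that a finite cover of equidimensional tori is a rational cohomology equivalence.
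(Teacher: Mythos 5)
Your proposal is correct and follows the same general strategy as the paper --- reduce to the compact case, then compare the Weyl-quotient models coming from Theorems~\ref{Baird}, \ref{Stafa}, \ref{thm: phi}, and Proposition~\ref{prop: homeo} via $\FI_W$-style naturality and Propositions~\ref{coh-quot} and~\ref{coh-quot2} --- but a few of your steps are handled more directly (and more cleanly) than in the paper. For the maps on $\Hom$ and $\mathfrak{X}$, the paper first shows equality of ranks via the Poincar\'e polynomial formulas~(\ref{eqn: Hom-PP})--(\ref{eqn: Rep-PP}) and then establishes surjectivity using Goldman's observation that $\Hom(F_n/\Gamma^q,G)_1\to\Hom(F_n/\Gamma^q,H)_1$ is a normal finite covering with structure group $\ker(p)^n$, together with the transfer/invariants argument of Proposition~\ref{coh-quot}; you instead push the comparison through the conjugation map $\phi_n$ and Remark~\ref{rmk: phi natural}, which makes all six cases uniform. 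For the flag manifolds, the paper compares $H_*(G/\wt T)\to H_*(H/T)$ via the classifying maps into $B\wt T$ and $BT$ and Proposition~\ref{prop: G/T}, whereas you simply observe that $\ker(p)\leqs Z(G)\leqs T_G$ forces $G/T_G\to H/T_H$ to be a $W$-equivariant homeomorphism; this is correct and considerably simpler. You also reduce to the compact case at the outset rather than at the end, but the square of inclusions you use is the same as the paper's. The only thing to flag is that the naturality of $\phi'$ and $\phi''$ (from Theorem~\ref{thm: phi}) is not explicitly stated in the paper, though it follows from the naturality of the maps $\phi_n$ recorded in Remark~\ref{rmk: phi natural}, and you would want to say a word about why; the paper itself uses this naturality implicitly in its Diagram~(\ref{eqn: phi'2}).
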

\begin{proof} First we consider the case in which $G$ and $H$ are compact.
Let $T\leqs H$ be a maximal torus, and define $\wt{T} := p^{-1} (T)$.
We claim that $\wt{T}$ is a maximal torus in $G$, and that the induced map of Weyl groups is an isomorphism.
Note that $p$ induces an isomorphism of Lie algebras, and hence the maximal tori in $G$ and $H$ have the same rank.
 Since all finite covers of a torus are disjoint unions of tori (of the same rank as the base torus), the identity component   $p^{-1} (T)_0 \leqs p^{-1} (T)$  is a maximal torus in $G$, so it  suffices to show that $p^{-1} (T)$ is connected.
Observe that $\ker (p)\leqs G$ is discrete and normal, hence central by~\cite[Theorem 6.13]{HM06},
so $p^{-1} (T)$  centralizes $p^{-1} (T)_0$
(note that by an elementary covering space argument, $p^{-1} (T)$ is generated by $p^{-1} (T)_0$ together with $\ker (p)$). Since maximal tori in compact Lie groups are their own centralizers, this shows that $\wt{T}$  is a maximal torus of $G$. 
It follows that $p$ induces an isomorphism $\wt{W}\to W$ between the Weyl groups of 
$\wt{T}$ and $T$, since these groups are naturally isomorphic to the Weyl groups of the  root systems associated to $\wt{T}$ and 
$T$~\cite[Chapter 11]{Hall-Lie-groups}, and $p$ induces an isomorphism of Lie algebras.\footnote{Alternatively, surjectivity of $p$ implies that $p^{-1} (NT) = N\wt{T}$, and then the 9-lemma implies that $\wt{W}\to W$ is an isomorphism.}

To study the maps (\ref{HX}), first recall that the
Poincar\'{e} polynomial of these spaces are given by the formulas (\ref{eqn: Hom-PP}) and (\ref{eqn: Rep-PP}), which depend only on the Weyl group and its action on the Lie algebra of the maximal torus. Since $p$ induces an isomorphism $\wt{W} \srm{\isom} W$ and a Weyl--equivariant diffeomorphism $\wt{T}\to T$, the Poincar\'e polynomials (that is, the ranks of the rational homology groups) are unchanged under passage from $G$ to $H$. It will thus suffice to show that  the maps (\ref{HX}) are surjective on rational homology. 

By a result of Goldman~\cite[Lemma 2.2]{goldman1988topological}, 
the map of homomorphism spaces is a normal covering map with structure group $\Hom(F_n/\Gamma^q, \ker(p)) = \ker(p)^n$ (note that, as observed above, $\ker (p)$ is central in $G$ and in particular is abelian). Hence $\Hom(F_n/\Gamma^q, H)_1$ is the quotient of $\Hom(F_n/\Gamma^q, G)_1$ by the finite group $\ker(p)^n$, and by Lawton--Ramras~\cite[Lemmas 3.7 and 3.9]{Lawton-Ramras}, we have 
$$\mathfrak{X}_{F_n/\Gamma^q} (G)_1/\ker(p)^n \homeo \mathfrak{X}_{F_n/\Gamma^q} (H)_1$$
as well. Now Proposition~\ref{coh-quot} implies that the maps (\ref{HX}) are surjective on rational homology (for homomorphism spaces, this can also be seen by considering the transfer map).

The result for $B(q, G)_1\to B(q, H)_1$ now follows from the fact that a level-wise homology equivalence between proper simplicial spaces is a homology equivalence on realizations (May~\cite[Appendix]{May-EGP}). The assumption that $G$ and $H$ are compact ensures properness, as discussed in the proof of Proposition~\ref{prop: Bcom-Comm}.

Next we show that $\Comm(G)_1\to \Comm(H)_1$ is a homology equivalence.
Our choice of maximal tori implies that $p$ induces a commutative diagram of the form
\begin{equation}\label{eqn: phi'2}
\begin{tikzcd}  
G/\wt{T}\cross_{\wt{W}} J(\wt{T}) \arrow[d, "p_*"] \arrow[r, "\phi''"]   
	& \Comm(G)_1 \arrow[d, "p_*"]   
	 \\
H/T\cross_{W} J(T)  \arrow[r, "\phi''"]   
	& \Comm(H)_1.
\end{tikzcd}
\end{equation}
The horizontal maps in (\ref{eqn: phi'2}) are homology equivalences by Theorem~\ref{thm: phi}, so to show that $\Comm(G)_1\to \Comm(H)_1$ is a homology equivalence it suffices to prove the same for the map $G/\wt{T}\cross_{\wt{W}} J(\wt{T}) \to H/T\cross_{W} J(T)$ induced by $p$. 
By Proposition~\ref{coh-quot} and the K\"unneth Theorem, it suffices  to show that the maps 
\begin{equation}\label{J} H_* (J(\wt{T}))\to H_* (J(T))
\end{equation}
  and 
 \begin{equation}\label{/T} H_* (G/\wt{T})\to H_* ( H/T)
 \end{equation}
 induced by $p$ are homology equivalences.\footnote{To apply Proposition~\ref{coh-quot}, we need to establish that the Weyl group actions in (\ref{eqn: phi'2}) are good. This follows from the argument in the proof of Theorem~\ref{thm: stability for BcomG/G and ComG/G}.}

To see that (\ref{J}) is an isomorphism, first note that 
 $p\co \wt{T}\to T$ is a covering map between spaces with isomorphic homology. As discussed above, this implies that $p$ is a homology equivalence. Now the natural isomorphism $H_*(J(X)) \isom \mathcal{T} (\wt{H}_* (X))$ shows that (\ref{J}) is a homology equivalence as well.

To analyze (\ref{/T}), we construct a commutative diagram
\begin{center}
\begin{tikzcd}
G/\wt{T}  \arrow{r}{\wt{j}}  \arrow{d}{p}  & 
	B\wt{T} \arrow[d, "Bp"] 
	 \\
H/T  \arrow{r}{j} &
BT, 
\end{tikzcd}
\end{center}
where the horizontal maps are classifying maps for the bundles $G\to G/\wt{T}$ and $H\to H/T$. To obtain these compatible classifying maps, choose a point $e_0\in EG$ and let $\overline{e_0}$ denote its image in $EH$ under the map $EG\to EH$ induced by $p$. The inclusions $G/\wt{T} \subset EG/\wt{T}$, $[g]\goesto e_0\cdot g$ and $H/T \subset EH/T$, $[h]\goesto \overline{e_0}\cdot h$ yield the desired classifying maps.

By Proposition~\ref{prop: G/T}, the horizontal maps $\wt{j}$ and $j$ become isomorphisms in cohomology after modding out the ideal of positive degree Weyl--invariants on the right-hand side. The map $Bp\co B\wt{T}\to BT$ is Weyl--equivariant and a (co)homology equivalence (again by~\cite[Appendix]{May-EGP}), so it restricts to an isomorphism between these ideals. It follows that 
the map $G/\wt{T}\to H/T$ is an isomorphism in (co)homology, as desired.

The fact that $p$ induces homology isomorphisms
$$\Bcom (G)_1/G \to \Bcom (H)_1/H\,\,\, \textrm{and}\,\,\, \Comm(G)_1/G \to \Comm(H)_1/H$$
follows similarly, using Proposition~\ref{prop: homeo}.

Finally, we consider the case in which $G$ and $H$ are complex. First, note that if $K\leq H$ is a maximal compact subgroup, then since finite covers preserve compactness, $p^{-1} (K)$ is a maximal compact subgroup of $G$. Now if $F$ is any of the functors under consideration, we have a commutative diagram
\begin{center}
\begin{tikzcd}
F(p^{-1} K) \arrow{r} \arrow{d}  & 
	F(G) \arrow[d] 
	 \\
F(K) \arrow{r}&
F(H), 
\end{tikzcd}
\end{center}
in which the left-hand vertical map is a homology isomorphism, while the horizontal arrows are homotopy equivalences (see Theorem~\ref{thm: bergeron} and Proposition~\ref{prop: Bcom-Comm}), and it follows that the right-hand vertical map is a homology isomorphism as well.
\end{proof}

We end by discussing some examples in which Proposition~\ref{prop: fin cov} applies.

\begin{ex}\label{ex: Spin} For $r\geqs 3$, the Spin group $\Spin(r)$ is the universal covering group of $\SO(r)$, and since $\pi_1 (\SO(r)) = \Z/2$ for $r\geqs 3$, this is in fact a double covering. Let $p \co \Spin(r)\to \SO(r)$ be the covering map. The standard inclusions 
$$\SO(r)\injects \SO(r+1)$$ 
(block sum with the $1\cross 1$ identity matrix) induce maps $\Spin(r)\to \Spin(r+1)$, and it follows from Proposition~\ref{prop: fin cov} (and the earlier results in the article) that these maps induce homology isomorphisms after applying any of the functors considered in Proposition~\ref{prop: fin cov}, except possibly in the case of character varieties (see also Remark~\ref{rmk: SO}). Moreover, the stable ranges are the same as for the special orthogonal groups.
In the case of character varieties, the proof of Proposition~\ref{prop: fin cov} involves comparing Poincar\'{e} polynomials, so we obtain only weak stability in this case.
\end{ex} 

\begin{ex}\label{ex: proj} For each of the families of compact or complex Lie groups considered in this article, there is an associated family of projective groups obtained by modding out the centers (although in some cases the center is trivial). In each case, we obtain (weak) homological stability results for the various functors considered in Proposition~\ref{prop: fin cov}. Note that the standard inclusions \e{do not} map centers to centers, and hence we do not have maps between the projective groups inducing these homology isomorphisms.
\end{ex}

\begin{rmk}\label{rmk: eqvt12} As in Remark~\ref{rmk: eqvt11}, 
the results in this section extend to equivariant homology. For instance, if $p\co G\to H$ is a finite covering, to compare the $G$--equivariant homology of $\Hom(\Z^n, G)_1$ to the $H$--equivariant homology of $\Hom(\Z^n, H)_1$, we compare the 
 fibrations
$$(\Hom(\Z^n, G)_1)_{hG}\maps BG$$
and
$$(\Hom(\Z^n, H)_1)_{hH}\maps BH.$$
The induced map $Bp\co BG\to BH$ is a (rational) homology equivalence by~\cite[Appendix]{May-EGP}, and the map of fibers is as well (by Proposition~\ref{prop: fin cov}). Comparing the Serre spectral sequences for the two fibrations, we obtain the desired isomorphism in equivariant homology.
\end{rmk}


\begin{thebibliography}{10}


\bibitem{adem2007commuting}
Adem, A. and F. R. Cohen. 
\newblock Commuting elements and spaces of homomorphisms.
\newblock {\em Math. Ann.} 338 (2007), no. 3, 587--626.



\bibitem{adem2012commuting}
Adem, A., F. R. Cohen, and E. Torres Giese.
\newblock Commuting elements, simplicial spaces and filtrations of classifying
  spaces.
\newblock {\em Math. Proc. Cambridge Philos. Soc.} 152 (2012), no. 1, 91--114.



\bibitem{adem2015gomez}
Adem, A. and J. G{\'o}mez.
\newblock A classifying space for commutativity in {L}ie groups.
\newblock {\em Algebr. Geom. Topol.} 15 (2015), no. 1, 493--535.



\bibitem{adem2017gomezlindtillman}
Adem, A., J. G{\'o}mez, J. Lind, and U. Tillmann.
\newblock Infinite loop spaces and nilpotent {K}--theory.
\newblock {\em Algebr. Geom. Topol}. 17 (2017), no. 2, 869--893.



\bibitem{AGV}
Antol{\'\i}n-Camarena, O., S. Gritschacher, and B. Villarreal.
\newblock Classifying spaces for commutativity of low-dimensional {L}ie groups.
\newblock To appear in {\em Math. Proc. Cambridge Philos. Soc.} (Published online July 2019).  



\bibitem{arnol1969cohomology}
Arnol'd, V. I.
\newblock The cohomology ring of the colored braid group.
\newblock {\em Math. Notes} 5 (1969), no. 2, 138--140.



\bibitem{baird2007cohomology}
Baird, T. J.
\newblock {Cohomology of the space of commuting $n$--tuples in a compact {L}ie
  group}.
\newblock {\em Algebr. Geom. Topol}. 7 (2007), 737--754.



\bibitem{bergeron2015topology}
Bergeron, M.
\newblock The topology of nilpotent representations in reductive groups and
  their maximal compact subgroups.
\newblock  {\em Geom. Topol.} 19 (2015), no. 3, 1383--1407.



\bibitem{bergeron2016note}
Bergeron, M. and L. Silberman.
\newblock A note on nilpotent representations.
\newblock {\em J. Group Theory} 19 (2016), no. 1, 125--135.



\bibitem{BLR}
Biswas, I., S. Lawton, and D. Ramras.
\newblock Fundamental groups of character varieties: surfaces and tori.
\newblock {\em Math. Z.} 281 (2015), no. 1--2, 415--425.



\bibitem{Borel57}
Borel, A.
\newblock Sur la cohomologie des espaces fibr\'es principaux et des espaces
  homog\`enes de groupes de {L}ie compacts.
\newblock {\em Ann. of Math. (2)} 57 (1953), 115--207.



\bibitem{borel2002almost}
Borel, A., R. Friedman, and J. W. Morgan.
\newblock {\em Almost commuting elements in compact {L}ie groups}.
\newblock {\em Mem. Amer. Math. Soc.} 157 (2002), no. 747, x+136 pp. 



\bibitem{Bredon}
Bredon, G. E.
\newblock \e{Introduction to compact transformation groups.}
\newblock Pure and Applied Mathematics, Vol. 46.
\newblock {\em Academic Press, New York}, 1972.



\bibitem{bredon2012sheaf}
Bredon, G. E.
\newblock {\em Sheaf theory}, Second edition. Grad. Texts in Math., 170.
\newblock {\em Springer-Verlag, New York,} 1997.



\bibitem{BTD}
Br\"ocker, T. and T. tom Dieck.
\newblock {\em Representations of compact {L}ie groups}, 
{Grad. Texts in Mathematics}, 98.
\newblock {\em Springer-Verlag, New York,} 1995.
\newblock Translated from the German manuscript. Corrected reprint of the 1985
  translation.



\bibitem{church2012homological}
Church, T.
\newblock Homological stability for configuration spaces of manifolds.
\newblock {\em Invent. Math.} 188 (2012), no. 2, 465--504.



\bibitem{church2015fi}
Church, T., J. S. Ellenberg, and B. Farb.
\newblock F{I}--modules and stability for representations of symmetric groups.
\newblock {\em Duke Math. J.} 164 (2015), no. 9, 1833--1910.



\bibitem{church2013representation}
Church, T. and B. Farb.
\newblock Representation theory and homological stability.
\newblock {\em Adv. Math.} 245 (2013), 250--314.



\bibitem{cohen1972thesis}
Cohen, F. R. 
\newblock {\em The Cohomology of Braid Spaces}.
\newblock {PhD} thesis, The University of Chicago, 1972.



\bibitem{cohen2016spaces}
Cohen, F. R. and M. Stafa.
\newblock On spaces of commuting elements in {L}ie groups.
\newblock {\em Math. Proc. Cambridge Philos. Soc.} 161 (2016), no. 3, 381--407.



\bibitem{ERW}
Ebert, J. and O. Randal-Williams.
\newblock Semisimplicial spaces.
\newblock {\em  Algebr. Geom. Topol.} 19 (2019), no. 4, 2099--2150.



\bibitem{florentino2014topology}
Florentino, C. and S. Lawton.
\newblock Topology of character varieties of {A}belian groups.
\newblock {\em Topology Appl.} 173 (2014), 32--58.



\bibitem{FLR}
Florentino, C., S. Lawton, and D. Ramras.
\newblock Homotopy groups of free group character varieties.
\newblock {\em Ann. Sc. Norm. Super. Pisa Cl. Sci.} (5) 17 (2017), no. 1, 143--185.



\bibitem{FS2017}
Florentino, C. A. and J. A. M. Silva.
\newblock Hodge-deligne polynomials of abelian character varieties.
\newblock December 2017.  { \href{https://arxiv.org/abs/1711.07909}{arXiv:1711.07909}}.



\bibitem{geck2000characters}
Geck, M. and G. Pfeiffer.
\newblock {\em Characters of finite {C}oxeter groups and {I}wahori-{H}ecke algebras}, 
			London Mathematical Society Monographs. New Series, 21.
\newblock {\em The Clarendon Press, Oxford Univ. Press, New York,} 2000.



\bibitem{goldman1988topological}
Goldman, W. M.
\newblock Topological components of spaces of representations.
\newblock {\em Invent. Math.} 93 (1988), no. 3, 557--607.



\bibitem{gritschacher2018spectrum}
Gritschacher, S.
\newblock The spectrum for commutative complex {K}--theory.
\newblock {\em Algebr. Geom. Topol.} 18 (2018), no. 2, 1205--1249.



\bibitem{Grothendieck}
Grothendieck, A.
\newblock  Sur quelques points d'alg\`ebre homologique,
\newblock {\em Tohoku Math. J.} 9 (1957), no. 2, 119--221.



\bibitem{Hall-Lie-groups}
Hall, B. C.
\newblock {\em Lie groups, {L}ie algebras, and representations. An elementary
  introduction}, Second edition. Grad. Texts in Math., 222.
\newblock {\em Springer, Cham,} 2015



\bibitem{HM06}
 Hofmann, K. H. and S. A. Morris.
\newblock {\em The structure of compact groups. A primer for the student--a
  handbook for the expert}, Third edition, revised and augmented. De Gruyter Stud. in Math., 25.
\newblock  {\em De Gruyter, Berlin,} 2013.



\bibitem{hofmann2009triangulation}
Hofmann, K. R.
\newblock {\em Triangulation of locally semi-algebraic spaces}.
\newblock PhD thesis, University of Michigan, 2009.



\bibitem{kac2000Smilga}
Kac, V. G. and A. V. Smilga.
\newblock Vacuum structure in supersymmetric {Y}ang-{M}ills theories with any gauge group. 
\newblock  {\em The many faces of the superworld,} 185--234, {\em World Sci. Publ., River Edge, NJ,} 2000.



\bibitem{Langalgebra}
Lang, S.
\newblock {\em Algebra}, Revised third edition. Grad. Texts in Math., 211. 
\newblock {\em Springer-Verlag, New York,} 2002.



\bibitem{Lawton-Ramras}
Lawton, S. and D. Ramras.
\newblock Covering spaces of character varieties. {W}ith an appendix by
  {N}.-{K}. {H}o and {C}.-{C}. {M}. {L}iu.
\newblock {\em New York J. Math.} 21 (2015), 383--416.



\bibitem{Lillig}
Lillig, J.
\newblock A union theorem for cofibrations.
\newblock {\em Arch. Math. (Basel)} 24 (1973), 410--415.



\bibitem{MacDonald}
Macdonald, I. G.
\newblock{Symmetric products of an algebraic curve.}
\newblock {\em Topology} 1 (1962), 319--343.



\bibitem{May-GOILS}
May, J. P.
\newblock {\em The geometry of iterated loop spaces}.
\newblock Lecture Notes in Math., Vol. 271.
\newblock {\em Springer-Verlag, Berlin-New York,} 1972.



\bibitem{May-EGP}
May, J. P.
\newblock {$E_{\infty }$} spaces, group completions, and permutative categories.
\newblock {\em New developments in topology (Proc. Sympos. Algebraic Topology, Oxford, 1972)}, pp. 61-93. 
\newblock London Math. Soc. Lecture Note Ser., No. 11. 
\newblock {\em Cambridge Univ. Press, London,} 1974.
  
  

\bibitem{may1992simplicial}
May, J. P.
\newblock {\em Simplicial objects in algebraic topology}.
\newblock Chicago Lectures in Mathematics. {\em University of Chicago Press,} Chicago, IL, 1992.
\newblock Reprint of the 1967 original.



\bibitem{mccleary2001ssbook}
McCleary, J.
\newblock {\em A user's guide to spectral sequences.}
\newblock Second edition. Cambridge Stud. Adv. Math., 58.
\newblock {\em Cambridge Univ. Press, Cambridge,} 2001.



\bibitem{Michael57}
Michael, E. 
\newblock Another note on paracompact spaces.
\newblock {\em Proc. Amer. Math. Soc.} 8 (1957), 822--828.

\bibitem{Michael-selection}
Michael, E. 
\newblock Continuous selections. I.
\newblock  {\em Ann. of Math. (2)} 63 (1956), 361--382.

\bibitem{Milnor56}
Milnor, J.
\newblock Construction of universal bundles. {II}.
\newblock {\em Ann. of Math. (2)} 63 (1956), 430--436.


\bibitem{Toda-Mimura}
Mimura, M. and H. Toda.
\newblock {\em Topology of {L}ie groups. {I}, {II}},
\newblock Translated from the 1978 Japanese edition by the authors. 
\newblock Transl. Math. Monogr., 91.
\newblock {\em American Mathematical Society, Providence, RI,} 1991.



\bibitem{nLab-colim}
{nLab authors}.
\newblock colimits of paracompact {{H}}ausdorff spaces.
\newblock
  \url{http://ncatlab.org/nlab/show/colimits\%20of\%20paracompact\%20Hausdorff\%20spaces},
 Accessed February 2020.
\newblock
 {\em{ \href{http://ncatlab.org/nlab/revision/colimits\%20of\%20paracompact\%20Hausdorff\%20spaces/14}{Revision
  14}}}




\bibitem{Onishchik-Vinberg}
Onishchik, A.~L.  and \`E.~B. Vinberg.
\newblock {\em Lie groups and algebraic groups}.
\newblock Translated from the Russian and with a preface by D. A. Leites. 
\newblock Springer Series in Soviet Mathematics. {\em Springer-Verlag, Berlin,} 1990. 




\bibitem{pettet2013souto}
Pettet, A. and J. Souto.
\newblock Commuting tuples in reductive groups and their maximal compact subgroups.
\newblock {\em Geom. Topol.} 17 (2013), no. 5, 2513--2593.



\bibitem{Quillen}
Quillen, D.
\newblock Finite generation of the groups {$K_{i}$} of rings of algebraic integers.
\newblock {\em Algebraic K-theory, I: Higher K-theories 
			(Proc. Conf., Battelle Memorial Inst., Seattle, Wash., 1972)}, 
			pp. 179--198. Lecture Notes in Math., Vol. 341. 
\newblock {\em Springer, Berlin,} 1973.



\bibitem{ramras2017hilbert}
Ramras, D. A. and M. Stafa.
\newblock {Hilbert-Poincar\'e series of nilpotent representations in {L}ie groups}.
\newblock {\em Math. Z.}  292 (2019), no. 1-2, 591--610.



\bibitem{reeder1995cohomology}
Reeder, M.
\newblock {On the cohomology of compact {L}ie groups}.
\newblock {\em Enseign. Math. (2)}  41 (1995), no. 3-4, 181--200.



\bibitem{Schwarz}
Schwarz, G. W.
\newblock {Smooth functions invariant under the action of a compact {L}ie group.}
\newblock {\em Topology} 14 (1975), 63--68.

\bibitem{segal1968csss}
Segal, G.
\newblock Classifying spaces and spectral sequences.
\newblock {\em Inst. Hautes \'{E}tudes Sci. Publ. Math.} No. 34 (1968), 105--112.


  
\bibitem{Pazzis}
de Seguins Pazzis, C.
\newblock The geometric realization of a simplicial Hausdorff space is Hausdorff.
\newblock {\em Topology Appl.} 160 (2013), no. 13, 1621--1632.



\bibitem{Sella}
Sella, Y.
\newblock Comparison of sheaf cohomology and singular
cohomology.
\newblock   March 2016.
{  \href{https://arxiv.org/abs/1602.06674}{arXiv:1602.06674}}.




\bibitem{smith_Eilenberg-Moore-SS}
Smith, L.
\newblock {\em Lectures on the {E}ilenberg-{M}oore spectral sequence}.
\newblock Lecture Notes in Math., Vol. 134. 
\newblock {\em Springer-Verlag, Berlin-New York,} 1970.



\bibitem{stafa2017poincare}
Stafa, M.
\newblock Poincar\'e series of character varieties for nilpotent groups.
\newblock {\em J. Group Theory} 22 (2019), no. 3, 419--440.



\bibitem{Steenrod-convenient}
Steenrod, N. E.
\newblock A convenient category of topological spaces.
\newblock {\em Michigan Math. J.} 14 (1967), 133--152.



\bibitem{steenrod}
Steenrod, N. E.
\newblock Cohomology operations, and obstructions to extending continuous functions.
\newblock {\em Adv. Math.} 8 (1972), 371--416.




\bibitem{torres2008fundamental}
Torres Giese, E. and D. Sjerve.
\newblock Fundamental groups of commuting elements in {L}ie groups.
\newblock {\em Bull.  Lond. Math. Soc.} 40 (2008), no. 1, 65--76.



\bibitem{villarreal2017cosimplicial}
Villarreal, B.
\newblock Cosimplicial groups and spaces of homomorphisms.
\newblock {\em Algebr. Geom. Topol.}  17 (2017), no. 6, 3519--3545.



\bibitem{Wilson-Math-Z}
Wilson, J. C. H.
\newblock {FI$_W$}-modules and constraints on classical {W}eyl group characters.
\newblock {\em Math. Z.} 281 (2015), no. 1-2, 1--42.



\bibitem{wilson2014fiw}
Wilson, J. C. H.
\newblock {FI$_W$-modules and stability criteria for representations of classical Weyl groups}.
\newblock {\em J. Algebra}, 420 (2014), 269--332.



\bibitem{witten1998toroidal}
Witten, E.
\newblock Toroidal compactification without vector structure.
\newblock {\em J. High Energy Phys.} 1998, no. 2, Paper 6, 43 pp.

\end{thebibliography}

\end{document}